\documentclass{amsart}

\headheight=8pt
\topmargin=0pt
\textheight=624pt
\textwidth=432pt
\oddsidemargin=18pt
\evensidemargin=18pt

\usepackage{amsmath}
\usepackage{amsfonts}
\usepackage{amssymb}
\usepackage{amsthm}
\usepackage{comment}
\usepackage{epsfig}
\usepackage{psfrag}
\usepackage{mathrsfs}
\usepackage{amscd}
\usepackage[all]{xy}
\usepackage{rotating}
\usepackage{lscape}
\usepackage{amsbsy}
\usepackage{verbatim}
\usepackage{moreverb}
\usepackage{color}
\usepackage{bbm}
\usepackage{eucal}
\usepackage{soul}
\usepackage[normalem]{ulem}
\usepackage{xcolor}
\usepackage{soul}
\newcommand{\mathcolorbox}[2]{\colorbox{#1}{$\displaystyle #2$}}

\usepackage{tikz}
\usetikzlibrary{patterns,shapes.geometric,arrows,decorations.markings,arrows.meta}
\usepackage{tikz-3dplot}

\usepackage{caption}
\usepackage{subcaption}

\colorlet{lightgray}{black!15}

\tikzset{->-/.style={decoration={
  markings,
  mark=at position .5 with {\arrow{>}}},postaction={decorate}}}
\tikzset{midarrow/.style={decoration={
    markings,
    mark=at position {#1} with {\arrow{>}}},postaction={decorate}}}

\pagestyle{plain}

\newtheorem{theorem}{Theorem}[section]
\newtheorem{prop}[theorem]{Proposition}
\newtheorem{lemma}[theorem]{Lemma}
\newtheorem{cor}[theorem]{Corollary}

\theoremstyle{definition}
\newtheorem{definition}[theorem]{Definition}

\newtheorem{observation}[theorem]{Observation}
\newtheorem{construction}[theorem]{Construction}
\newtheorem{terminology}[theorem]{Terminology}
\newtheorem{remark}[theorem]{Remark}
\newtheorem{example}[theorem]{Example}
\newtheorem{q}[theorem]{Question}
\newtheorem{notation}[theorem]{Notation}
\newtheorem{criterion}[theorem]{Criterion}

\theoremstyle{remark}

\definecolor{orange}{rgb}{.95,0.5,0}
\definecolor{light-gray}{gray}{0.75}
\definecolor{brown}{cmyk}{0, 0.8, 1, 0.6}
\definecolor{plum}{rgb}{.5,0,1}

\DeclareMathOperator{\Link}{\sf Link}
\DeclareMathOperator{\Fin}{\sf Fin}

\DeclareMathOperator{\Vect}{\cV{\sf ect}}

\DeclareMathOperator{\pr}{\mathsf{pr}}
\DeclareMathOperator{\ev}{\mathsf{ev}}

\DeclareMathOperator{\uno}{\mathbbm{1}}

\DeclareMathOperator{\Alg}{\sf Alg}
\DeclareMathOperator{\CAlg}{\sf CAlg}

\DeclareMathOperator{\unzip}{\sf Unzip}

\DeclareMathOperator{\TwAr}{\sf TwAr}
\DeclareMathOperator{\cSpan}{\sf cSpan}

\DeclareMathOperator{\Psh}{\sf PShv}

\DeclareMathOperator{\Aut}{\sf Aut}
\DeclareMathOperator{\colim}{{\sf colim}}

\DeclareMathOperator{\limit}{{\sf lim}}

\DeclareMathOperator{\Fun}{{\sf Fun}}

\DeclareMathOperator{\Map}{{\sf Map}}

\DeclareMathOperator{\exit}{\sf Exit}
\DeclareMathOperator{\Exit}{\bcE{\sf xit}}

\DeclareMathOperator{\cylr}{{\sf Cylr}}
\DeclareMathOperator{\cylo}{{\sf Cylo}}

\DeclareMathOperator{\Cat}{{\sf Cat}}

\DeclareMathOperator{\Ar}{{\sf Ar}}

\DeclareMathOperator{\psh}{\sf PShv}


\DeclareMathOperator{\Diff}{{\sf Diff}}

\DeclareMathOperator{\op}{\mathsf{op}}

\DeclareMathOperator{\bun}{\sf Bun}

\DeclareMathOperator{\cMfld}{{\sf c}\cM\mathsf{fld}}
\DeclareMathOperator{\cBun}{{\sf c}\cB\mathsf{un}}
\DeclareMathOperator{\Bun}{\cB\mathsf{un}}

\DeclareMathOperator{\sd}{\mathsf{sd}}

\DeclareMathOperator{\cls}{\mathsf{cls}}
\DeclareMathOperator{\act}{\mathsf{act}}
\DeclareMathOperator{\rf}{\mathsf{ref}}

\DeclareMathOperator{\opn}{\mathsf{open}}
\DeclareMathOperator{\emb}{\mathsf{emb}}

\DeclareMathOperator{\cbl}{\mathsf{cbl}}

\DeclareMathOperator{\pcbl}{\mathsf{p.cbl}}

\DeclareMathOperator{\Mfd}{{\cM}\mathsf{fd}}
\DeclareMathOperator{\cMfd}{{\sf c}{\cM}\mathsf{fd}}
\DeclareMathOperator{\Mfld}{{\cM}\mathsf{fd}}

\DeclareMathOperator{\Emb}{\mathsf{Emb}}

\DeclareMathOperator{\strat}{\mathsf{Strat}}
\DeclareMathOperator{\Strat}{\cS\mathsf{trat}}
\DeclareMathOperator{\kan}{\mathsf{Kan}}

\DeclareMathOperator{\poset}{\mathsf{Poset}}

\DeclareMathOperator{\spaces}{\cS\mathsf{paces}}
\DeclareMathOperator{\Spaces}{\cS\mathsf{paces}}

\DeclareMathOperator{\Disk}{\cD{\mathsf{isk}}}

\DeclareMathOperator{\cDisk}{{\sf c}\cD{\mathsf{isk}}}
\DeclareMathOperator{\sing}{\mathsf{Sing}}

\DeclareMathOperator{\vfr}{\sf vfr}
\DeclareMathOperator{\fr}{\sf fr}
\DeclareMathOperator{\sfr}{\sf sfr}

\DeclareMathOperator{\Bord}{\cB{\sf ord}}

\DeclareMathOperator{\Corr}{{\sf Corr}}

\DeclareMathOperator{\BO}{{\mathsf BO}}

\def\ot{\otimes}

\DeclareMathOperator{\oo}{\infty}

\DeclareMathOperator{\tr}{\triangleright}
\DeclareMathOperator{\tl}{\triangleleft}

\newcommand{\lag}{\langle}
\newcommand{\rag}{\rangle}

\newcommand{\w}{\widetilde}
\newcommand{\un}{\underline}
\newcommand{\ov}{\overline}

\newcommand{\ra}{\rightarrow}
\newcommand{\la}{\leftarrow}
\newcommand{\xra}{\xrightarrow}
\newcommand{\xla}{\xleftarrow}

\def\cA{\mathcal A}\def\cB{\mathcal B}\def\cC{\mathcal C}\def\cD{\mathcal D}
\def\cE{\mathcal E}\def\cF{\mathcal F}
\def\cJ{\mathcal J}\def\cK{\mathcal K}\def\cL{\mathcal L}
\def\cM{\mathcal M}\def\cO{\mathcal O}\def\cP{\mathcal P}
\def\cR{\mathcal R}\def\cS{\mathcal S}\def\cT{\mathcal T}
\def\cU{\mathcal U}\def\cV{\mathcal V}\def\cX{\mathcal X}
\def\cZ{\mathcal Z}

\def\DD{\mathbb D}

\def\PP{\mathbb P}
\def\RR{\mathbb R}

\def\ZZ{\mathbb Z}

\def\sB{\mathsf B}\def\sC{\mathsf C}\def\sD{\mathsf D}

\def\sO{\mathsf O}
\def\sS{\mathsf S}\def\sT{\mathsf T}
\def\sV{\mathsf V}

\def\bdelta{\mathbf\Delta}

\def\fB{\frak B}\def\fC{\frak C}\def\fD{\frak D}
\def\fF{\frak F}

\def\bcD{\boldsymbol{\mathcal D}}\def\bcE{\boldsymbol{\mathcal E}}

\DeclareMathOperator{\Stri}{\boldsymbol{\cS}{\sf tri}}
\DeclareMathOperator{\btheta}{\boldsymbol{\Theta}}

\begin{document}

\title{Factorization homology I: higher categories}
\author{David Ayala}
\author{John Francis}
\author{Nick Rozenblyum}

\address{Department of Mathematics\\Montana State University\\Bozeman, MT 59717}
\email{david.ayala@montana.edu}
\address{Department of Mathematics\\Northwestern University\\Evanston, IL 60208}
\email{jnkf@northwestern.edu}
\address{Department of Mathematics\\University of Chicago\\Chicago, IL 60637}
\email{nick@math.uchicago.edu}
\thanks{DA was supported by the National Science Foundation under award 1507704. JF was supported by the National Science Foundation under awards 1207758 and 1508040. Parts of this paper were written while JF was a visitor at the Mathematical Sciences Research Institute and at Universit\'e Pierre et Marie Curie.}

\begin{abstract}
We construct a pairing, which we call factorization homology, between framed manifolds and higher categories. The essential geometric notion is that of a vari-framing of a stratified manifold, which is a framing on each stratum together with a coherent system of compatibilities of framings along links between strata.
Our main result constructs labeling systems on disk-stratified vari-framed $n$-manifolds from $(\infty,n)$-categories. These $(\oo,n)$-categories, in contrast with the literature to date, are not required to have adjoints. 
This allows the following conceptual definition: the factorization homology 
\[
\int_M\mathcal{C}
\]
of a framed $n$-manifold $M$ with coefficients in an $(\infty,n)$-category $\mathcal{C}$ is the classifying space of $\cC$-labeled disk-stratifications over $M$. 
\end{abstract}

\keywords{Factorization homology. Stratified spaces. Vari-framed stratified manifolds. $(\oo,n)$-Categories. Complete Segal spaces. Exit-path categories.}

\subjclass[2010]{Primary 58D29. Secondary 57R56, 57N80, 57S05, 57R19, 18B30, 57R15.}

\maketitle

\tableofcontents

\section*{Introduction}

In this work, we construct the factorization homology of $n$-manifolds with coefficients in $(\oo,n)$-categories. We posit  this forms the fundamental relation between manifold topology and higher category theory, answering a question which we now motivate and describe.

\smallskip

In 1988, Atiyah \cite{atiyah} proposed a mathematical framework for topological quantum field theory modeled on Segal's earlier axioms for conformal field theory \cite{segalconformal}. An explosion in physically motivated topology over the previous five years informed his proposal. These advances were carried out by new studies of gauge theory; this includes both Atiyah \& Bott's analysis of the Morse theory of the Yang--Mills functional to compute the cohomology of algebraic bundles on Riemann surfaces in \cite{atiyahbott}, as well as Donaldson's revolution in smooth 4-manifold topology based on the self-dual Yang--Mills equations in \cite{donaldson}. These advances led to Atiyah's open challenge, given at the Hermann Weyl Symposium, to marry other low-dimensional topology invariants, such as the Casson invariant and the Jones polynomial, with mathematical physics. Witten answered this challenge by introducing Chern--Simons theory \cite{witten}, a gauge theory in which the standard Yang--Mills action is replaced by the Chern--Simons 3-form of the connection. At a physical level of rigor, Witten showed that the Jones polynomial is the expectation associated to loop observables in Chern--Simons theory. 

\smallskip

Atiyah's proposed axioms were most influenced by Chern--Simons theory and Witten's notion of topologically invariant quantum field theories. In Chern--Simons theory, a 3-manifold $M$ is assigned an element in a vector space $Z(\partial M)$ associated to its boundary. 
This association satisfies a local-to-global expression with respect to surgery on manifolds. 
Atiyah added axioms to encode this surgery-locality in terms of Thom's cobordism theory: in this now ubiquitous definition, a topological quantum field theory is a functor from a category whose object are $(n-1)$-manifolds and whose morphisms are $n$-dimensional cobordisms.

\smallskip

By the early 1990s, it had become clear that if codimension-1 boundary conditions form a vector space, then higher codimension defects should correspond to higher categorical objects. Earliest publications of this include works of Lawrence \cite{lawrence}, Freed \cite{freed}, Ooguri \cite{ooguri1} and \cite{ooguri2}, and Crane--Yetter \cite{craneyetter}, but the insight is often attributed collectively to many mathematicians, including Baez, Dolan, Kapranov, Kazhdan, Reshetikhin, Turaev, Voevodsky, and others. Relevant works include \cite{baezdolan}, \cite{kv}, \cite{rt}, \cite{walker}, and \cite{kapustin}; see in particular, Freed's work on quantum groups \cite{freed2} and the Baez--Dolan cobordism hypothesis, which specified many features that should be true of this connection between manifolds and higher category theory in terms of an extensive surgery-locality based on Morse theory.

\smallskip

While it appeared clear that higher categories bore a close connection to field theory, a basic question remained unanswered: what is it that connects them? For instance, field theories are defined by integration -- is there integration on the categorical side? Or does the category theory only serve as an elaborate system of bookkeeping?

\smallskip

During this same period, Beilinson \& Drinfeld introduced a beautiful theory of chiral and factorization algebras, an algebro-geometric approach to chiral conformal field theory; their work was finally published a decade later in \cite{bd}. Therein, they devised a fantastic procedure---chiral homology---in which one integrates a chiral algebra coherently over all configuration spaces of a curve to produce a conformal field theory. The conformal blocks of the field theory occur as the zeroth chiral homology group. They defined algebro-geometric forms of standard vertex algebras, and calculated their chiral homologies in several cases of especial interest, including lattice algebras and central extensions of enveloping algebras of Lie algebras.

\smallskip

This theory of factorization algebras, and of coherently integrating over all configuration spaces at once, inspired and connected with a number of works in differing areas. These include: quantum groups in \cite{bfs}; manifold topology and mapping spaces in \cite{HA}, \cite{salvatore}, \cite{segallocal}, and \cite{fact}; $\ell$-adic cohomology and bundles on curves in \cite{dennisjacob}. In mathematical physics, Costello \cite{kevin} developed a rigorous system of renormalization for perturbative quantum field theories based on the Batalin--Vilkovisky formalism \cite{bv}. Analyzed in great depth by Costello \& Gwilliam \cite{kevinowen}, the quantum observables in these renormalized theories obtain the structure of a factorization algebra in a topological sense. Assuming the theory is perturbative, then the global observables are computed by an analogous process of factorization homology: one integrates over all embedded disks, rather than configuration spaces. This theory accommodates a wealth of examples, from perturbative Chern--Simons theory to twisted supersymmetric gauge theories \cite{kevin2}.

\smallskip

Consequently, for conformal field theory as well as for perturbative quantum field theory, our basic motivating question has an answer: there is integration on the categorical side, and it is chiral/factorization homology. The field theory itself is implemented by integration over manifolds from an algebraic input, which is a chiral/factorization/$\cE_n$-algebra. 
However, this forms only a partial solution to our basic question, because Chern--Simons theory and the other field theories involved are not perturbative.
Their perturbative sectors do not account for the entire theories.
Said differently, the global observables in these theories are not computed as the factorization homology of the point-local observables, which are organized as an $\cE_n$-algebra:
there is always a natural assembly map from the factorization homology of the local observables to the global observables,
\[
\int_M {\sf Obs}(\RR^n)
\longrightarrow 
{\sf Obs}(M)~,
\]
but this map need not be an equivalence. Failure for this map to be an equivalence is to be expected if, for example, the space of maps in a quantized sigma model is larger than the formal neighborhood of the subspace of classical solutions.

\smallskip

From the point of view of the cobordism hypothesis of Baez--Dolan, further developed by Lurie and Hopkins--Lurie in \cite{cobordism} after Costello \cite{cycat}, certain higher categories are given by the Morita theory of $\cE_n$-algebras. These account for those TQFTs whose value on a point is Morita equivalent to an $\cE_n$-algebra, i.e., to an $(\oo,n)$-category with a single object and a single $k$-morphism for $k<n$. (The collection of $n$-morphisms then forms an $\cE_n$-algebra, just as the collection of 1-morphisms in a category with a single object forms an algebra.) For this special class of $(\oo,n)$-categories, the output field theory, as expected by the cobordism hypothesis, can be implemented by taking factorization homology of $\cE_n$-algebras. Consequently, we can now give a more precise rephrasing of our basic question.

\begin{q}
What higher codimensional enhancement of chiral/factorization homology implements topological quantum field theory?
\end{q}

That is, we wish to solve the theoretical problem of comparing category theory and field theory, after Baez--Dolan, within the philosophy of Beilinson--Drinfeld. In the narrative we pursue in this introduction, this theory should fill the last entry in the following table.

\medskip

\begin{center}
    \begin{tabular}{|p{4cm}|  p{4cm} |p{6cm}| }
    \hline
        {\bf Physics} & {\bf Algebra} & {\bf Integration} \\ \hline
    \begin{center}CFT \end{center} & \begin{center}chiral algebra\end{center}  & \begin{center}chiral homology (\cite{bd})\end{center} \\ \hline
    \begin{center}perturbative TQFT \end{center} & \begin{center}$\cE_n$-algebra/stack\end{center}  & \begin{center}factorization homology (\cite{HA}, \cite{fact}) \end{center}   \\ \hline
    \begin{center}perturbative QFT \end{center}  & \begin{center}factorization algebra\end{center}  & \begin{center}factorization homology (\cite{kevinowen})\end{center}  \\ \hline
       \begin{center} TQFT  \end{center} & \begin{center}$(\oo,n)$-category\end{center}  &  \\ \hline

    \end{tabular}
\end{center}

\medskip

Our proposed solution, which we again call factorization homology, has a simple summary: rather than integrating over configuration spaces -- i.e., over the moduli space of finite subsets -- integrate over a moduli space of disk-stratifications. The conclusion of this paper is that this heuristic definition can be made well-defined.

\smallskip

Before describing what technical features this problem absorbs and how they are overcome, we first make an observation and comment. In the diagram above, we have listed $(\oo,n)$-categories instead of $(\oo,n)$-categories with duals or adjoints. As far as we are aware, the TQFT literature to date has uniformly emphasized the necessity of adjoints in the category theory; these adjoints mirror categorically the Morse theory and surgery-locality of Atiyah's axioms and the cobordism hypothesis after \cite{baezdolan} and \cite{cobordism}. However, examples such as Donaldson theory have not fit into these axioms. There are genuine topological obstructions to defining the requisite Floer theory on the full bordism category; see \cite{unitary}. In particular, the monopole Floer homology of Kronheimer--Mrowka \cite{kronheimermrowka} is defined only on a bordism category whose morphisms are connected bordisms. We are hopeful that these important Floer theories may still fit in the factorization paradigm after Beilinson \& Drinfeld, exactly because we can fill in the missing square in the above diagram without requiring adjoints in the coefficient $(\infty,n)$-categories.

\smallskip

For instance, Donaldson theory (\cite{donaldson2}, \cite{kronheimer}, \cite{km1}, \cite{km2}, \cite{km3}) and Seiberg--Witten theory (\cite{taubes}) are closely related to embedding spaces such as $\Emb(\Sigma, M)$, of surfaces $\Sigma$ in a 4-manifold $M$. However, as discussed in \cite{fact}, the factorization homology of $\cE_4$-algebras is only sensitive to the space $T_\infty\Emb(\Sigma, M)$, the limit of the Goodwillie--Weiss embedding calculus tower \cite{weiss}. Little is known about the convergence of the embedding tower in this case: it is just outside the range of convergence established by Goodwillie--Klein \cite{goodwillieklein}. For instance, it is not established whether this canonical map, at the level of connected components,
\[
\pi_0 \Emb(\Sigma, M)\longrightarrow\pi_0 T_\infty\Emb(\Sigma, M)
\]
is injective or surjective. We imagine the general role of our present techniques in differential topology as advancing beyond the range of convergence of the Goodwillie--Weiss tower.

\smallskip

We now describe our solution. First, we recall the corresponding simpler case in codimension-0, factorization homology with coefficients in an $\cE_n$-algebra. If $A$ is an $\cE_n$-algebra and $M$ is an framed $n$-manifold, one heuristically constructs factorization homology as
\begin{center} $\displaystyle\int_M A  \ \approx  \ \Bigl|A$-labeled $n$-disks in  $M\Bigr|$~,\end{center}
the classifying space of a category, an object of which is a collection of disjointly embedded $n$-disks in $M$ each of which is labeled by a point of $A$. There are several important classes of morphisms.
\begin{enumerate}
\item {\bf compositions}: two disks are embedded in a third disk, and the labels multiply in $A$.
\item {\bf units}: a disk is added to a configuration, labeled by the unit of $A$.
\item {\bf coherence}: disks are moved through an isotopy of embedding.

\end{enumerate}

\medskip

We wish to make a corresponding construction where the $\cE_n$-algebra $A$ is replaced by an $(\oo,n)$-category $\cC$. The factorization homology of a framed $n$-manifold $M$ with coefficients in $\cC$ should be
\begin{center} $\displaystyle\int_M\cC  \ \approx  \ \Bigl|\cC$-labeled disk-stratifications  of  $M\Bigr|$~,\end{center}
the classifying space of a category, an object of which consists of a coherent system of:
\begin{itemize}
\item a stratification of $M$, each closed component of which is a $k$-disk;
\item a $k$-morphism of $\cC$ for each $k$-dimensional component of the stratification of $M$.
\end{itemize}
There are several important classes of morphisms.
\begin{enumerate}
\item\label{one} {\bf refinements/compositions}: a stratum is refined away, forgotten, and the labels are composed.
\item\label{two} {\bf creations/units}: a new stratum is created, labeled by identity morphisms.
\item {\bf coherence}: a stratification is moved via diffeomorphism to another stratification.
\end{enumerate}
This template for making factorization homology is, however, afflicted by the absence of any known model for $(\oo,n)$-categories which can define such a system of labels. Most models for $(\oo,n)$-categories are constructed in terms of presheaves on a combinatorially defined category, such as $\btheta_n$ or the $n$-fold product $\bdelta^n$, and none of these are manifestly suitable for decorating a disk-stratification.

\smallskip

We encountered a similar, easier, obstruction in the guiding simpler case above. One cannot obviously define factorization homology with coefficients in an algebra for the little $n$-cubes operad. One requires an intermediate notion, namely the operad of framed embeddings (which is infinite dimensional but homotopy finite dimensional) and a comparison result, that the two operads are weakly homotopy equivalent. This allows one to Kan extend the $\cE_n$-algebra along the inclusion of rectilinear embeddings into framed embedding without altering the homotopy type; once one has expressed the $\cE_n$-algebra in terms of framed embeddings, the definition is manifestly well-defined.

\smallskip

We solve this issue in our setting in three steps. In the first step, we construct an $\oo$-category of labeling systems for stratifications on framed $n$-manifolds. In the second step, we show that the $\infty$-category of
$(\oo,n)$-categories maps into that of labeling systems. 
In the third step, we define factorization homology with coefficients in the specified labeling systems. 
We elaborate on these steps below.

\smallskip

{\bf First step}: In our antecedent work on striation sheaves \cite{striat}, we constructed an $\oo$-category $\cBun$ whose objects are compact conically smooth stratified spaces and whose morphisms include refinements and stratum-creating maps, exactly as in points (\ref{one}) and (\ref{two}) above. Now, starting from $\cBun$, we restrict to the $\oo$-subcategory $\cDisk\subset \cBun$ of objects which are disk-stratified, as above. We then introduce the notion of a variform framing -- for short, vari-framing -- on a stratified space. A vari-framing consists of a framing on each stratum together with compatibilities between these framings in links of strata. From this, we define $\cDisk^{\vfr}_n$ as the collection of compact disk-stratified manifolds of dimension less or equal to $n$ and equipped with a vari-framing. Lastly, the $\oo$-category of labeling systems is \[\Fun(\cDisk^{\vfr}_n,\spaces)~,\] space-valued functors on vari-framed compact disk-stratified $n$-manifolds.

\smallskip

{\bf Second step}: We use Rezk's presentation \cite{rezk-n} of the $\oo$-category of $(\oo,n)$-categories $\Cat_{(\oo,n)}$ as a full $\oo$-subcategory of $\psh(\btheta_n)$, presheaves on Joyal's category $\btheta_n$ of \cite{joyaltheta}. We construct a functor out of Joyal's category
\begin{equation}\label{e1}
\lag - \rag \colon \btheta_n^{\op} \longrightarrow \cDisk^{\vfr}_n
\end{equation}
which we call {\it cellular realization} (Definition~\ref{def.cellular}).

\smallskip

This cellular realization functor~(\ref{e1}) carries an object in $\btheta_n$ to its associated pasting diagram, which is viewed as a stratified space.  
In a sense, the $\infty$-category $\cDisk^{\vfr}_n$ is crafted just so that this cellular realization functor exists and is fully faithful.
The $\infty$-category $\cMfd^{\vfr}_n$ of \emph{vari-framed stratified $n$-manifolds}, which houses $\cDisk^{\vfr}_n$ as those that are disk-stratified, is likewise designed around the following essential issues involved in connecting higher categories to manifolds.
\begin{itemize}
\item
$\cMfd^{\vfr}_n$ contains moduli spaces of framed smooth $n$-manifolds.
More generally, an object of $\cMfd^{\vfr}_n$ is a stratified space with dimension at most $n$, equipped with a vari-framing.

\item
Morphisms in $\btheta_n$ do not in any sense determine stratified maps between their pasting diagrams.  
Consequently, for there to exist a functor $\btheta_n^{\op} \to \cMfd^{\vfr}_n$ calls for the consideration of an exotic notion of morphisms between stratified spaces in the target, possibly with prescribed tangential structure.

\end{itemize}

\smallskip

{\bf Third step}: Lastly, we left Kan extend from $\cDisk^{\vfr}_n$ to $\cMfd_n^{\vfr}$. That is, factorization homology is the composite
\[
\int: \Cat_{(\oo,n)}\longrightarrow \Fun(\cDisk^{\vfr}_n,\spaces) \longrightarrow \Fun(\cMfd_n^{\vfr},\spaces)
\]
where the first functor is that of the second step, and the second functor is left Kan extension along the inclusion $\cDisk^{\vfr}_n \subset \cMfd_n^{\vfr}$. Equivalently, the factorization homology
\[\int_M \cC\]
is the classifying space of the Grothendieck construction of the composite functor
\[\cDisk^{\vfr}_{n/M}\longrightarrow \cDisk^{\vfr}_n\overset{\cC}\longrightarrow\spaces\]
where the functor $\cC$ is the right Kan extension of $\cC:\btheta_n^{\op}\ra \spaces$ along the cellular realization functor $\btheta_n^{\op}\ra \cDisk^{\vfr}_n$.

\smallskip

We now state the main result of the present work.
This result can be interpreted as a construction of space-valued invariants of smooth framed $n$-manifolds for each $(\infty,n)$-category; this invariant is manifestly functorial and continuous in each of these inputs.

\begin{theorem}\label{main.theorem}
\footnote{
Per the erratum of~\S\ref{sec.erratum}, this functor, which is defined in Definition~\ref{def.fact.homology} in~\S\ref{sec.fact.def}, need not be fully faithful for $n \geq 3$ as was asserted in the previous version of this work.  
}
There is functor from the $\infty$-category of $(\oo,n)$-categories into space-valued functors of vari-framed $n$-manifolds:
\[
\int: \Cat_{(\oo,n)} \longrightarrow \Fun\bigl(\cMfd_n^{\vfr},\spaces\bigr)
~.
\] 
For $n<3$, this functor is fully faithful. 
\end{theorem}

In future works, we apply this higher codimension form of factorization homology to construct topological quantum field theories.

\subsection*{Future works}
This work is the third paper in a larger program, currently in progress. We now outline a part of this program, in order of logical dependency. This part consists of a number of papers, the last of which proves the cobordism hypothesis, after Baez--Dolan \cite{baezdolan}, Costello \cite{cycat}, Hopkins--Lurie (unpublished), and Lurie \cite{cobordism}.
\begin{enumerate}

\item[\bf \cite{aft1}:] {\bf Local structures on stratified spaces}, by the first two authors with Hiro Lee Tanaka, establishes a theory of stratified spaces based on the notion of conical smoothness.
This theory is tailored for the present program, and intended neither to supplant or even address outstanding theories of stratified spaces.
This theory of conically smooth stratified spaces and their moduli is closed under the basic operations of taking products, open cones of compact objects, restricting to open subspaces, and forming open covers, and it enjoys a notion of derivative which, in particular, gives the following: 
\begin{itemize}
\item[~] For the open cone $\sC(L)$ on a compact stratified space $L$, taking the derivative at the cone-point implements a homotopy equivalence between \emph{spaces} of conically smooth automorphisms
\[
\Aut\bigl(\sC(L)\bigr)~\simeq~\Aut(L)~.
\] 
\end{itemize}
This work also introduces the notion of a constructible bundle, along with other classes of maps between stratified spaces.

\item[\bf \cite{striat}:]  {\bf A stratified homotopy hypothesis} proves stratified spaces are parametrizing objects for $\infty$-categories.
Specifically, we construct a functor $\exit\colon \strat \to \Cat_\infty$ and show that the resulting restricted Yoneda functor $\Cat_\infty \to \Psh(\strat)$ is fully faithful.
The image is characterized by specific geometric descent conditions.
We call these presheaves \emph{striation sheaves}. 
We develop this theory so as to construct particular examples of $\infty$-categories by hand from stratified geometry: $\Bun$, $\Exit$, and variations thereof. 
As striation sheaves, $\Bun$ classifies constructible bundles, $\Bun \colon  K\mapsto \{X\xra{\sf cbl} K\}$,
while $\Exit$ classifies constructible bundles with a section.

\item[\bf Present:]
In the present work, we construct a fiberwise tangent classifier $\sT^{\sf fib} \colon \Exit \to \Vect^{\sf inj}$ to an $\infty$-category of vector spaces and injections thereamong.  
We use this to define $\infty$-categories $\cMfd_n^{\vfr}$ of \emph{vari-framed compact $n$-manifolds}, and $\cMfd_n^{\sfr}$ of \emph{solid $n$-framed compact $n$-manifolds}. As a striation sheaf, $\cMfd_n^{\vfr}$ classifies proper constructible bundles equipped with a trivialization of their fiberwise tangent classifier, and $\cMfd_n^{\sfr}$ classifies proper constructible bundles equipped with an injection of their fiberwise tangent classifier into a trivial  $n$-dimensional vector bundle.
We then construct a functor $\fC\colon (\cMfd_n^{\vfr})^{\op} \to \Cat_{(\infty,n)}$ between $\infty$-categories, and use this to define \emph{factorization homology}. This takes the form of a functor between $\infty$-categories
\[
\int \colon \Cat_{(\infty,n)} \longrightarrow \Fun(\cMfd_n^{\vfr}, \Spaces)
\]
Subsequent papers establish analogous results for $(\oo,n)$-categories with adjoints, as they relate to solidly $n$-framed compact manifolds.

\item[\bf \cite{bord}:]
{\bf The cobordism hypothesis}, by the first two authors, proves the cobordism hypothesis.  
Namely, for $\cX$ a symmetric monoidal $(\infty,n)$-category with adjoints and with duals, the space of fully extended (framed) topological quantum field theories is equivalent to the underlying $\infty$-groupoid of $\cX$: 
\[
\Map^{\ot}(\Bord_n^{\fr}, \cX) ~\simeq~ \cX^\sim~.
\]  
The cobordism hypothesis is a limiting consequence of the tangle hypothesis, one form of which states that, for $\ast \xra{\uno} \cC$ a pointed $(\infty,n+k)$-category with adjoints, there is a canonical identification 
$\Map^{\ast/}(\cT{\sf ang}_{n\subset n+k}^{\fr}, \cC) \simeq k{\sf End}_\cC(\uno)$ between the space of pointed functors and the space of $k$-endomorphisms of the point in $\cC$.  
The tangle hypothesis is proved in two steps.
The first step establishes versions of the factorization homology functors above in which the higher categories are replaced by \emph{pointed} higher categories, and the manifolds are replaced by possibly non-compact manifolds.  
The second step shows that the pointed $(\infty,n+k)$-category $\cT{\sf ang}^{\fr}_{n\subset n+k}$, as a copresheaf on $\Mfd_{n+k}^{\sfr}$, is represented by the object $\RR^k$. The bordism hypothesis follows from the tangle hypothesis, represented by the equivalence $\Bord_n^{\fr} \simeq \varinjlim \Omega^k \RR^k$ as copresheaves on $\Mfd_n^{\sfr}$.

\end{enumerate}

\subsection*{Linear overview}
We conclude the introduction by a linear overview of this work, followed by a comparison with spiritually similar works.

\smallskip

{\bf Section 1} recalls the requisite definitions and results on stratified spaces from the antecedent works \cite{striat} and \cite{aft1}. In the joint work \cite{aft1} with Hiro Lee Tanaka, a theory of smoothly stratified spaces founded on the key technical notion of conical smoothness was developed. This technical feature allowed for well-behaved homotopy types of mapping spaces and such bedrock results as an inverse function theorem, an isotopy extension theorem, and the unzipping construction, which is a functorial resolution of singularities. One could take the collective results of \cite{aft1} as meaning that there is a well-behaved theory of smooth moduli of stratified spaces, in particular with smooth parameter spaces and in which all fibers are generic.

\smallskip

In \cite{striat}, we developed this theory further, showing that it extends to a well-behaved theory of {\it singular} moduli of stratified spaces. 
An $\oo$-category $\Bun$ encodes this theory of singular moduli of stratified spaces, with specialization maps from special to generic fibers. A morphism in $\Bun$ can be regarded as a constructible bundle over the standardly stratified interval $\{0\}\subset [0,1]$. 
A close relative, the absolute exit-path $\oo$-category $\Exit$, similarly encodes the theory of pointed singular moduli. The construction of $\Bun$ and $\Exit$ formed the main result of that work. Their existence is non-formal, because 1-morphisms do not obviously compose: one wants to compose by gluing intervals end-to-end, but the resulting total space no longer maps constructibly to the interval. One must resolve singularities and retract floating strata to fix the total space.

\smallskip

Consequently, the existence of $\Bun$ and $\Exit$ as $\oo$-categories requires one to verify horn-filling conditions by hand using a sort of d\'evissage of conically smooth stratified structures. In order to perform this by-hand construction, we broke the problem into two conceptual steps. First, we introduced striation sheaves, sheaves on stratified spaces which satisfy additional descent conditions, and we proved that $\Bun$ and $\Exit$ are striation sheaves. This required showing this theory of singular moduli satisfied descent for blow-ups and for gluing along consecutive strata, among other conditions.
Second, we proved that striation sheaves are equivalent to $\oo$-categories. To do so, we showed that there is a fully faithful functor
\[
\Strat\overset{\exit}\longrightarrow \Cat_{\oo}
\]
given by a complete Segal space form of the exit-path $\oo$-category of Lurie \cite{HA} and MacPherson. Our construction of $\exit$ is by restricting the Yoneda embedding along a functor
\[{\sf st}:\bdelta\hookrightarrow \Strat\]
defined by sends the object $[p]$ to the topological $p$-simplex $\Delta^p$ with the standard stratification -- the conical stratification given by regarding $\Delta^p$ as the $p$-fold cone on a point. By analysis of the homotopy type of conically smooth stratified maps between cones, we obtained that the functor ${\sf st}$ is fully faithful, and thus that there is an embedding of $\psh(\bdelta)$, hence $\oo$-categories, into presheaves on stratified spaces. The result then followed by applying a d\'evissage of stratified spaces, showing that the values of a striation sheaf are determined by two values (on a point and on a 1-simplex) after a combination of resolving singularities and induction on depth of singularity type.

\smallskip

{\bf Section 2} begins the new material of the present work. Intuitively, presheaves on the $\oo$-category $\cBun$ should present a theory of ``$(\oo,\oo)$-categories with pseudoisotopic duals'' in the same way that presheaves on the simplex category $\bdelta$ presents usual $\oo$-categories. However, in this work we do not want to express ``$(\oo,\oo)$-categories with pseudoisotopic duals'' in terms of manifolds; our goal is merely to express $(\oo,n)$-categories in terms of manifolds. Consequently, we modify $\cBun$ in two ways for this purpose.
\begin{enumerate}
\item We restrict to objects of $\cBun$ of dimension less than or equal to $n$.
This has the effect of eliminating noninvertible $k$-morphisms for $k>n$.
\item We introduce a stratified tangential structure -- a vari-framing.
This has the effect of eliminating the duals and pseudoisotopies.
\end{enumerate}
In order to define the vari-framing, in \S\ref{sec.tgt-char} we enhance the usual definition of the tangent bundle of a stratified space (see \S2.1 of \cite{pflaum}) so as to be defined in singular families parametrized by stratified spaces. A vari-framing of a single stratified space is a trivialization of its constructible tangent bundle.
This is a more subtle notion than just a framing on the underlying space.
A vari-framing of a family is then a trivialization of the fiberwise constructible tangent bundle.
We then define $\cMfd_n^{\vfr}$ as the sheaf on stratified spaces which classifies such proper families of vari-framed $n$-manifolds. To verify that $\cMfd_n^{\vfr}$ satisfies the striation sheaf axioms, and so defines an $\oo$-category, we reduce to showing that the functor $\Exit\ra \Bun$ satisfies the exponentiability property of Giraud \cite{giraud} and Conduch\'e \cite{conduche} -- this is shown in the appendix.

\smallskip

{\bf Section 3} equates the topology of the constructions of \S 2 with combinatorics.
First, we cut down the topology by restricting to $\cDisk^{\vfr}_n$, a full $\oo$-subcategory of $\cMfd_n^{\vfr}$ whose objects are disk-stratified. This notion is defined in \S3.3.
The principal construction of this section is a cellular realization functor
\[\xymatrix{\btheta_n^{\op} \ar[rr]^{\lag-\rag}&& \cDisk^{\vfr}_n}\]
from Joyal's category $\btheta_n$, the $n$-fold wreath product of $\bdelta$, the usual simplex category. We construct our cellular realization in two steps.
\begin{itemize}
\item For $n=1$, we directly construct the cellular realization $\btheta_1^{\op} = \bdelta^{\op} \longrightarrow \cDisk_1^{\vfr}$.
\item We construct a functor $\bigl(\cDisk_1^{\vfr}\bigr)^{\wr n}\longrightarrow \cDisk^{\vfr}_n$.
\end{itemize}
This cellular realization then establishes the following pairing of concepts:
\begin{center}
    \begin{tabular}{|p{5cm}|  p{6.8cm}  |}
    \hline
        {\bf Category theory} & {\bf Stratified manifolds}  \\ \hline
    an $(\oo,n)$-category $\cC$ & a vari-framed disk-stratified $n$-manifold $M$ \\ \hline
    a $k$-morphism  & a connected $k$-dimensional stratum    \\ \hline
    composition of $k$-morphisms & merging $k$-dimensional strata by refinement  \\ \hline
        identity $k$-morphisms & creating a $k$-dimensional stratum  \\ \hline
        source \& target maps & eliminating strata by closed morphisms \\ \hline

    \end{tabular}
\end{center}

\smallskip

{\bf Section 4} is formal and short. As a result of \S3, we can extend an $(\oo,n)$-category $\cC$ to a functor
\[
\xymatrix{
\btheta_n^{\op}\ar[d]\ar[rr]^{\cC}&&\spaces\\
\cDisk^{\vfr}_n\ar@{-->}[rru]_{\cC}\\}\] by right Kan extension along $\btheta_n^{\op}\hookrightarrow \cDisk^{\vfr}_n$, which we give the same notation. 
To define factorization homology $\int_M \cC$ for a general vari-framed $n$-manifold $M$, we left Kan extend:
\[
\xymatrix{
\cDisk^{\vfr}_n\ar[d]\ar[rr]^{\cC}
&&
\spaces
\\
\cMfd_n^{\vfr}\ar@{-->}[rru]_{\displaystyle\int\cC}
&&
.
}
\]
This completes the construction of the fully faithful functor $\Cat_{(\oo,n)}\longrightarrow \Fun(\Mfd_n^{\vfr},\spaces)$.

\smallskip

{\bf Section 5} is an appendix concerning some checkable criteria in higher category theory.
This appendix has four parts, each of which is entirely formal and contains essentially no new ideas.  
The first part establishes the notion of a \emph{monomorphism} among $\infty$-categories, which we find to be a convenient way to articulate comparisons among $\infty$-categories.
The second part records some easy facts about $\infty$-categories of cospans.
The third part addresses when base change among $\infty$-categories is a left adjoint.
This is an essential aspect of how we endow fiberwise structures on constructible families of stratified spaces.  
The fourth part simply explains how the univalence condition among Segal $\btheta_n$-spaces is implied by lacking non-trivial higher idempotents.

\subsection*{Comparison with other works}
Our notion of factorization homology is a direct generalization, from the $\cE_n$-algebra case, of the labeled configuration spaces of Salvatore \cite{salvatore} and Segal \cite{segallocal}. These are both special cases of factorization homology, or topological chiral homology, after Lurie \cite{HA}.
An approach to manifold invariants via labeling fine stratifications of a manifold by a higher category was first well-demonstrated in 3-dimensions by Turaev--Viro~\cite{turaev-viro} as state-sums.

\smallskip

The essential notion underlying this work, of defining a homology theory by integrating over disk-stratifications, was earlier conceived by Morrison--Walker \cite{blob}.
Likewise, this is the essential notion for their blob homology. Our factorization homology is thus a spiritual cousin of their blob homology.
However, there are significant differences in concept, execution, and result.
We now survey these differences, with the dual purpose of discussing some features of our present framework that embody certain key developmental choices in our setup.  

\smallskip

First, there is a difference in end result: we prove that an $(\oo,n)$-category defines an input for factorization homology.  To accomplish this takes the combined work of the present paper, of \cite{striat}, and of \cite{aft1}, to fuse combinatorics and differential topology. It makes use of: the introduction of the notion of conical smoothness of stratifications and a host of results about the differential topology thereof; the striation sheaf model of $\oo$-categories; the striation sheaf property of $\Bun$, showing existence of composition of morphisms via resolution of singularities; the homotopy equivalence between conically smooth diffeomorphisms of $\DD^n$ and its space of vari-framings. 
Morrison \& Walker have not yet shown that their blob homology can take as input an $(\oo,n)$-category, with or without duals/adjoints. 
Instead, they conceive their own notion of an $(\oo,n)$-category, with duals, and suggest that examples of interest will naturally fall within their framework.

\smallskip

A technical difference is that our definitions of homology are, in detail, quite different and not easily comparable.
We define factorization homology as a colimit over an $\oo$-category $\cDisk_{n/M}^{\vfr}$ whose morphisms are compositions of four basic types: (1) refining strata, (2) creating strata, (3) isotoping strata, and (4) eliminating strata. Morrison \& Walker define blob homology as a colimit over a poset $\fD(M)$ of stratifications of $M$, the morphisms in which are of type (1), namely refining strata. Also, factorization homology is naturally defined on the $\oo$-category of $(\oo,n)$-categories -- in particular, it is a homotopy invariant of an $(\oo,n)$-category.
In contrast, their work does not consider naturality in functors or homotopy-invariance in the higher category variable.

\smallskip

Variants of these four types of morphisms all occur in \cite{blob}, where they are called anti-refinements, pinched products, isotopies, and the boundary natural transformations.
However, how to compose these maps, such as how to compose a refinement and pinched product, is not part of their schema. 
We address this problem of composition by verifying that the simplicial space determined by $\Bun$ satisfies the Segal condition; so composition is defined only up to coherent homotopy.  
Thereafter, the entities $\Disk_n^\cB$ and $\Mfd_n^\cB$ are $\infty$-categories, thereby organizing the compositions of these types of morphisms. 
We do not think that homotopy coherent compositions of these classes of morphisms in these $\oo$-categories satisfy a universal property with respect to the individual classes, i.e., can be viewed as a condition.
In particular, there is no four-term factorization system for these morphisms, and such a factorization system is still far weaker than an axiomatization. 
Consequently, we do not see how our model in terms of $\Bun$ and $\Mfd_n^\cB$ could be characterized by the axioms given at the end of \S6.1 of \cite{blob}.

\smallskip

Regarding (3), another difficulty of comparison is the absence of common point-set refinements of two disk-stratifications. If one allows refining strata to be isotoped, then common refinements do exist. 
This leads to organizing morphisms in $\Bun$ as \emph{spaces}, the paths in which account for isotopies.  
A lack of topology on mapping spaces would also obstruct any comparison with combinatorial models of $(\oo,n)$-categories. In particular, we are able to define a fully faithful functor $\btheta_n^{\op} \ra \cDisk^{\vfr}_n$ exactly because the righthand side is topologized; without a topology on mapping spaces (e.g., allowing isotopies of stratifications as invertible morphisms) a discrete version of the righthand side---as is used in the blob setting---need not receive any functor from $\btheta_n^{\op}$ or any other collection of combinatorial generators for $(\oo,n)$-categories. One can work with a discrete category of refinements and take the Dwyer--Kan localization with respect to stratified isotopies, but this leads to the difficult problem of identifying this Dwyer--Kan localization. Further, it is unclear how one would match this approach to refinements with the other morphisms, such as creations morphisms: even if the $\oo$-subcategory $\Disk_n^{\rf}$ of refinements among disk-stratified manifolds might be realized as a Dwyer--Kan localization of an ordinary category of refinements, it unclear how to even conjecturally extend this picture to the entire $\oo$-category $\Disk_n$, the construction of which is inherently $\oo$-categorical and lacking strict compositions -- see the verification of the Segal condition for $\Bun$ from \cite{striat}.

\smallskip

A last difference stems from our introduction of the vari-framing. The rigid geometric structure of the vari-framing allows for factorization homology to take coefficients in $(\oo,n)$-categories, rather than $(\oo,n)$-categories with adjoints. If one used a more naive notion of a framing, such as a framing on the ambient manifold, then this would require the input $(\oo,n)$-category to have adjoints. The existence of adjoints is an extremely restrictive condition on an $(\oo,n)$-category, and becomes more restrictive as $n$ increases, so this allows factorization homology to be defined with far more possible inputs.

\subsection*{Acknowledgements}
JF and NR thank Alexander Beilinson for his inspiring mathematics and for his kind encouragement.
DA thanks Richard Hepworth, discussions with whom informed our treatment of $\btheta_n$.
DA extends warm appreciation to Ana Brown for her persistent support.
All of the authors thank the anonymous referee for positive and careful comments, which improved the content and presentation of this article a great deal.

\section{Recollections of striation sheaves}
In this section, we recall definitions and results from our antecedent works. The reference for \S1.1 is \cite{aft1}, and the reference for the subsequent sections is \cite{striat}. This section is only an overview, so see those works for precise definitions and details; all the assertions below are substantiated in those works.

\subsection{Stratified spaces}

The work \cite{aft1} presents a theory of smoothly stratified spaces founded on a key technical feature of conical smoothness. 
We summarize this theory below, first indicating its operational advantages.

A \emph{conically smooth} structure on a stratified topological space is analogous to, and generalizes, a \emph{smooth} structure on a topological manifold.
In a \emph{conically smooth} stratified space, each stratum, and each link between strata, has the structure of a smooth manifold (possibly with corners).
This regularity on links imposed by conical smoothness removes the possibility of maps between stratified spaces that witness sheering along strata.  
For instance, consider the stratification $\RR^{\{1\}}\subset \RR^n$ of Euclidean space by its $1$-axis; this stratified space admits a standard conically smooth structure.
Now, it is possible to construct a stratified self-homeomorphism of this stratified space that restricts to the $1$-axis and its complement as a smooth map, yet which does not determine a homeomorphism between the link $S^{n-2}\times \RR^{\{1\}}$ of the 1-axis.\footnote{
For the case $n=2$, take the piecewise-linear self-homeomorphism of $\RR^2$ whose value on $(x,y)\in \RR^2$ is $(x,y)$ if $x\leq 0$ and is $(x,x+y)$ if $x>0$.}
Such a self-homeomorphism does not respect this standard conically smooth structure.
The essential regularity imposed by conical smoothness is this.
Let $L$ and $M$ be compact smooth manifolds.
Consider their \emph{open cones}, $\sC(L)$ and $\sC(M)$ (which are introduced within the coming page) -- these are stratified by the cone-point.  
A stratified embedding $f\colon\sC(L)\hookrightarrow \sC(M)$ that is conically smooth determines a smooth embedding $\sD_\ast f\colon L \hookrightarrow M$ between links of cone-points.
This map $\sD_\ast f$ is defined via a limit quotient, just like derivatives of smooth maps.
Defined in such a way, the association $f\mapsto \sD_\ast f$ can be enhanced as a deformation retraction from the space $\bigr\{\sC(L) \hookrightarrow \sC(M)\bigr\}$ of conically smooth embeddings onto the space $\bigl\{L\hookrightarrow M\bigr\}$ of smooth embeddings.

We give now give a rough definition of conical smoothness. 
Following the development in~\cite{aft1}, we do this in two steps.
We first define a \emph{stratified topological space}; we then define a \emph{conically smooth stratified space}, (or simply a \emph{stratified space}, for short).

A \emph{stratified topological space} is a topological space $X$ together with a continuous map $X\to P$ to a poset endowed with the topology for which $U\subset P$ is closed if it is downward closed: $q\in U$ and $p\leq q$ implies $p\in U$.  
For $X\to P$ a stratified topological space, and for $p\in P$, its \emph{$p$-stratum} is the subspace $X_p$ which is the preimage of $p$.  
The \emph{depth} of a stratified topological space $X\to P$ is the maximal cardinality of a linearly ordered sub-poset of $P$.
We typically denote a stratified topological space $X\to P$ simply as $X$ alone, when the poset and the map to it are understood.  
A \emph{stratified map} from a stratified topological space $X\to P$ to a stratified topological space $Y\to Q$ is a continuous map $f\colon X\to Y$ and a functor $\ov{f} \colon P \to Q$ for which the diagram
\[
\xymatrix{
X  \ar[rr]^-f  \ar[d]  
&&
Y \ar[d]
\\
P  \ar[rr]^-{\ov{f}}
&&
Q
}
\]
commutes.

Here are a few notions among stratified topological spaces.
\begin{enumerate}

\item
For $X\to P$ a compact stratified topological space, its \emph{open cone} is the stratified topological space
\[
\sC(X) := \ast \underset{\{0\}\times X} \coprod  [0,1) \times X
~\subset~ \ov{\sC}(X) := \ast \underset{\{0\}\times X} \coprod  [0,1] \times X
\qquad \longrightarrow \qquad
P^{\tl} := \ast \underset{\{0\} \times P }\amalg \{0<1\} \times P
\]
in which the map is defined upon declaring the $\ast$ to be the $\ast$-stratum; 
likewise, its \emph{closed cone} is
\[
\ov{\sC}(X) := \ast \underset{\{0\}\times X} \coprod  [0,1] \times X
\qquad \longrightarrow \qquad
P^{\tl} ~.
\]

\item
For $X\to P$ and $Y\to Q$ stratified topological spaces, their \emph{product} is the stratified topological space
\[
X\times Y 
\longrightarrow
P\times Q~.
\]

\item 
For $X\to P$ a stratified topological space, and for $U\subset X$ an open subset, its \emph{inherited stratification} is the canonical continuous map $U \to P_U$ where $P_U$ is the subposet of $P$ defined by the image of $U$.
A stratified map $g$ from $U \to R$ to $X\to P$ is a \emph{stratified open embedding} if the continuous map $U\to X$ is an open embedding and $U_r \ra X_{gr}$ is an open embedding for each $r\in R$.

\item
An \emph{open cover} of a stratified topological space $X\to P$ is a collection of stratified open embeddings to $X\to P$ whose images cover $X$.

\end{enumerate}
The collection of \emph{$C^0$-stratified space} is the smallest collection of stratified topological spaces that contains the stratified topological space $\emptyset \to \emptyset$ and that is closed under each of the above formations.  

\begin{remark}
If $X\to P$ is a $C^0$-stratified space and $P$ has depth zero, then $X$ is a topological manifold.
Conversely, each topological manifold $X$ determines a $C^0$-stratified space whose underlying topological space is $X$ and whose poset is the set of connected components of $X$.  

\end{remark}

\begin{remark}
Each $C^0$-stratified space admits an open cover by stratified topological spaces of the form $\RR^i\times {\RR_{\geq 0}}^{i'}\times \sC(Z)$ for $i\geq 0$ and $L$ a compact $C^0$-stratified topological space.
In particular, each $C^0$-stratified space admits an open cover by $C^0$-stratified topological spaces that are finite dimensional and that have finite depth.  

\end{remark}

\begin{remark}
Each $C^0$-stratified space $X = (X\to P)$ has the following properties.
\begin{itemize}
\item
The poset $P$ is necessarily countable.
Also, $P$ is \emph{Artinian}, meaning that each functor $\ZZ_{\leq 0}\to P$ from the poset of non-positive integers and inequality among them, factors through a finite poset over $P$.  

\item
The topological space $X$ is locally compact and Hausdorff.

\end{itemize}

\end{remark}

We now turn to a rough definition of a \emph{$C^\infty$-stratified spaces}, or \emph{conically smooth stratified spaces}, or simply \emph{stratified spaces}.
The definition of a stratified space is by simultaneous induction on depth and topological dimension, so it will appear to be circular.
See~\cite{aft1} for a detailed treatment.

In the case of depth 0, a conically smooth structure is the structure of a smooth manifold with corners.
A \emph{basic} is a stratified space of the form $\RR^i\times {\RR_{\geq 0}}^{i'}\times \sC(Z) = \bigl(\RR^i\times {\RR_{\geq 0}}^{i'} \times \sC(Z) \to \sC(Z) \to P^{\tl}\bigr)$ with $Z=(Z\to P)$ a compact stratified space.
Each basic has a \emph{cone-locus} as well as an \emph{origin}:
\[
\RR^i\times {\RR_{\geq 0}}^{i'}  \subset \RR^i\times {\RR_{\geq 0}}^{i'}\times\sC(Z)
\qquad \text{ and }\qquad 
0 \in \RR^i\times {\RR_{\geq 0}}^{i'} \subset \RR^i\times {\RR_{\geq 0}}^{i'}\times \sC(Z)~.
\]
Scaling in the Euclidean and the cone coordinates defines an action of the monoid $\RR_{>0}$ of positive real numbers on such a basic: $t\cdot (p,s,z)\mapsto (tp,ts,z)$.  
A map between basics $f\colon \RR^i \times {\RR_{\geq 0}}^{i'}\times \sC(Y) \to \RR^j\times {\RR_{\geq 0}}^{j'}\times \sC(Z)$ is \emph{conically smooth} if 
\begin{itemize}
\item it is the restriction of a stratified continuous map $\w{f}\colon \RR^{i+i'} \times \sC(Y) \dashrightarrow \RR^{j+j'}\times \sC(Z)$ whose domain of definition is open;
\item if its restriction $f_{|}\colon \RR^i\times {\RR_{\geq 0}}^{i'}\times (0,1)\times Z\to \RR^j\times {\RR_{\geq 0}}^{j'}\times \sC(Z)$ is conically smooth;
\item if the following condition holds, which breaks up as two cases:
\begin{enumerate}
\item Suppose $f$ factors through the complement:
\[
f\colon \RR^i\times {\RR_{\geq 0}}^{i'}\times \sC(Y) ~\longrightarrow~ 
\bigl(   \RR^j\times {\RR_{\geq 0}}^{j'}\times\sC(Z)  \bigr)
\smallsetminus 
( \RR^j   \times {\RR_{\geq 0}}^{j'}  )  
= \RR^j\times {\RR_{\geq 0}}^j\times (0,1)\times Z~.
\]
In this case, the condition is that this factored map is conically smooth.

\item
Suppose $f$ carries $\RR^i\times {\RR_{\geq 0}}^{i'}$ into $\RR^j\times {\RR_{\geq 0}}^{j'}$.
Denote the map $\w{f}$ in coordinates $\w{f} = (\w{f}_{\sf Euc} , \w{f}_{\sC(Z)})$.  
For each $(p,s,y)\in \RR^i\times {\RR_{\geq 0}}^{i'}\times \sC(Y)$, and each $v\in \RR^{i+i'}$, the condition is that the limit 
\[
\underset{t\to0}{\sf lim}~\Bigl(~ \frac{\w{f}_{\sf Euc}(p+tv,ts,y) - \w{f}_{\sf Euc}(p)}{t} ~,~ \frac{\w{f}_{\sC(Z)}(p+tv,ts,y)}{t} ~ \Bigr)~{}~ \in ~{}~\RR^j\times {\RR_{\geq 0}}^{j'}\times \sC(Z)
\]
exists and is again \emph{conically smooth} in the arguments $(p,s,y)$ and $v$.  

\end{enumerate}
\end{itemize}
A \emph{conically smooth} atlas on a $C^0$-stratified space $X$ is a collection of stratified open embeddings from basics into $X$ whose images cover $X$ and whose transition maps are conically smooth.
A \emph{conically smooth stratified space}, or \emph{stratified space} for short, is a stratified topological space equipped with a maximal conically smooth atlas.
We typically omit the maximal atlas of a stratified space when referring to it.  

\begin{remark}
The above heuristic definition of a stratified space might appear to be circular.  
With more care, this seemingly circular definition can be crafted into an inductive definition.  
The induction is simultaneously by dimension and by depth.  
This induction is founded on the fact that, by definition, each stratified space is openly covered by basics, and each basic $\RR^i\times {\RR_{\geq 0}}^{i'}\times \sC(Z)$ has finite depth and dimension and $Z$ has strictly less depth and dimension than does the basic.  
See~\cite{aft1} for a detailed treatment.  

\end{remark}

\begin{notation}
For $\RR^i\times {\RR_{\geq 0}}^{i-k}\times \sC(L)$ a basic singularity type, we typically compress the first two factors as $\RR^{i}\times \sC(L)$ in how we denote it.  

\end{notation}

For $X$ and $Y$ stratified spaces, a stratified map $X\to Y$ is \emph{conically smooth} if it restricts to and over each chart of $X$ and $Y$ as a conically smooth atlas between basics, as outlined above.  
The identity map on any stratified space is conically smooth, and conically smooth maps are closed under composition.
The resulting category of stratified spaces and conically smooth maps between them is
\[
\strat~.
\]
\begin{remark}
For $X\to P$ a stratified space such that $P$ has depth zero, then $X$ is a smooth manifold.  
Conversely, each smooth manifold $X$ determines a stratified space whose underlying topological space is that of $X$ and whose poset is the set of connected components of $X$.  
In this way, we regard each smooth manifold as a stratified space. 
This is assignment of a stratified space to each smooth manifold describes a fully faithful functor
\[
{\sf Man}~\hookrightarrow~\strat~. 
\]

\end{remark}

Here are a number of notable classes of morphisms in $\strat$.
\begin{definition}\label{def.classes-of-maps}
Let $f\colon X\ra Y$ be a conically smooth map of stratified spaces.
\begin{itemize}
\item {\bf Embedding (${\sf emb}$):}
$f$ is an \emph{open embedding} if it is an isomorphism onto its image as well as open map of underlying topological spaces.
\item {\bf Refinement (${\sf ref}$):}
$f$ is a \emph{refinement} if it is a homeomorphism of underlying topological spaces, and, for each stratum $X_p\subset X$, the restriction $f_|\colon X_p \to Y$ is an isomorphism onto its image.
\item {\bf Open (${\sf open}$):}
$f$ is \emph{open} if it is an open embedding of underlying topological spaces and a refinement onto its image.  

\item {\bf Fiber bundle:}
$f$ is a \emph{fiber bundle} if there is a collection of pullback diagrams in $\strat$,
\[
\xymatrix{
F_\alpha\times O_\alpha \ar[r]  \ar[d]
&
X  \ar[d]
\\
O_\alpha \ar[r]^-{\psi_\alpha}
&
Y,
}
\]
for which $\bigl\{ \psi_\alpha\colon O_\alpha \to Y\bigr\}$ is a collection of open embeddings covering $Y$.

\item {\bf Constructible bundle ($\cbl$):}
 $f$ is a \emph{weakly constructible bundle} if, for each stratum $Y_q\subset Y$, the restriction $f_{|}\colon f^{-1}Y_q \to Y_q$ is a fiber bundle. The definition of a constructible bundle is inductive based on depth: in the base case of smooth manifolds, $f\colon X\ra Y$ is a constructible bundle if it is a fiber bundle; in the inductive step of the definition, $f\colon X\ra Y$ is a constructible bundle if it is a weakly constructible bundle and, additionally, if for each stratum $Y_q\subset Y$ the natural map
 \[
\Link_{f^{-1}Y_q}(X) \longrightarrow f^{-1}Y_q \underset{Y_q}\times \Link_{Y_q}(Y)
\]
is a constructible bundle.

\item {\bf Proper constructible ($\pcbl$):}
$f$ belongs to the class ($\pcbl$) if it is a constructible bundle and it is \emph{proper}, i.e., if $f^{-1}C\subset X$ is compact for each compact subspace $C\subset Y$. $f$ belongs to either of the classes $(\pcbl, {\sf surj})$ or $({{\pcbl}, {\sf inj}})$ if it is proper constructible as well as either surjective or injective, respectively. 

\end{itemize}

\end{definition}

There are two natural cosimplicial stratified spaces.  
\begin{definition}\label{def.simplices}
The \emph{extended} cosimplicial stratified space is the functor 
\[
\Delta^\bullet_e\colon \bdelta\longrightarrow \strat~,\qquad [q]\mapsto \Delta^q_e:= \bigl\{ \{0,\dots,p\}\xra{t} \RR\mid \sum_i t_i = 1 \bigr\}
\]
with the values on morphisms standard.
The \emph{standard} cosimplicial stratified space is the functor
\[
\Delta^\bullet \colon \bdelta\longrightarrow \strat~,\qquad [p]\mapsto \Bigl(\Delta^p\ni t \mapsto {\sf Max}\{i\mid t_i\neq 0\}\in [p]\Bigr)
\]
with the values on morphisms standard.  
\end{definition}

\begin{remark}
Note the isomorphism of stratified spaces $\Delta^q_e \cong \RR^q$ with the smooth Euclidean space, as well as the isomorphism of stratified spaces $\Delta^p\cong \ov{\sC}(\Delta^{p-1})$, where $\ov{\sC}$ is the \emph{closed} cone which is stratified by the left-cone on the stratifying poset for $\Delta^{p-1}$.  

\end{remark}

The extended cosimplicial stratified space accommodates a natural enrichment of $\strat$ over Kan complexes.  
\begin{definition}[$\Strat$]\label{def.Strat}
The $\infty$-category $\Strat$ is that associated to the $\kan$-enriched category for which an object is a stratified space and the Kan complex of morphisms from $X$ to $Y$ is the simplicial set
\[
\Strat(X,Y)~:=~\strat_{/\Delta^\bullet_e}(X\times\Delta^\bullet_e,Y\times \Delta^\bullet_e)~;
\]
composition is given by composition in $\strat$ over $\Delta^\bullet_e$.  
There are a number of notable subsidiary $\infty$-categories
\[
\Strat^{\sf ref}~,~\Strat^{\sf emb}~\ra~\Strat^{\opn}~\to~\Strat~\la~\Strat^{\pcbl}~\la~\Strat^{\pcbl, {\sf surj}}~,~\Strat^{{\pcbl}, {\sf inj}}
\]
which are each associated to Kan enriched categories that have the same objects as $\strat$ yet with Hom-Kan complexes consisting of those simplices of $\strat_{/\Delta^\bullet_e}(-,-)$ which are fiberwise over $\Delta^\bullet_e$ of the indicated class.  
\end{definition}
\noindent
(The regularity along strata ensured by conical smoothness can be used to verify that these Hom-simplicial sets are indeed Kan complexes.)

Manifest is a functor $\strat \to \Strat$ between $\infty$-categories.
This functor has the property that, for each stratified space $X$, the morphism $X\times \RR\xra\pr X$ in $\strat$ is carried to an equivalence in $\Strat$.  
\begin{theorem}[\cite{striat}]\label{Strat-localize}
The functor $\strat \to \Strat$ witnesses an equivalence between $\infty$-categories:
\[
\strat[\RR^1\times -^{-1}]\xra{~\simeq~} \Strat
\]
from the localization on the collection of morphisms which are projections off of $\RR$.

\end{theorem}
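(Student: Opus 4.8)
The plan is to prove the functor is an equivalence by exhibiting it as essentially surjective and fully faithful, after first factoring it through the localization. Since the canonical functor $\strat \to \Strat$ carries each projection $X \times \RR \xra{\pr} X$ to an equivalence, the universal property of localization produces a functor $F \colon \strat[\cW^{-1}] \to \Strat$, where $\cW$ denotes the class of projections off of $\RR$, through which $\strat \to \Strat$ factors. Both $\strat[\cW^{-1}]$ and $\Strat$ carry the same objects as $\strat$, and $F$ is the identity on objects; so $F$ is essentially surjective, and it remains only to prove that $F$ is fully faithful, i.e., that for every pair of stratified spaces $X,Y$ the comparison map on mapping spaces
\[
\Map_{\strat[\cW^{-1}]}(X,Y) \longrightarrow \Strat(X,Y)
\]
is a weak homotopy equivalence.

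First I would unwind the target. A conically smooth map $X \times \Delta^q_e \to Y \times \Delta^q_e$ over $\Delta^q_e$ is exactly a pair $(g,\pr)$ with $g \colon X \times \Delta^q_e \to Y$ conically smooth, and this correspondence is a bijection since the second component is forced to be the projection and a product/projection of conically smooth maps is conically smooth. Using the isomorphism $\Delta^q_e \cong \RR^q$, this identifies the mapping Kan complex with the conically smooth singular complex
\[
\Strat(X,Y)_\bullet \;\cong\; \bigl([q] \mapsto \strat(X \times \RR^q, Y)\bigr),
\]
whose $0$-simplices are conically smooth maps $X \to Y$, whose $1$-simplices are conically smooth homotopies $X \times \RR \to Y$ (the two faces being restriction to $\RR$-heights $0$ and $1$), and so on. The regularity along strata forced by conical smoothness is exactly what guarantees this simplicial set is a Kan complex.

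The heart of the argument is to show that this singular complex computes the localization mapping space. Here I would exploit that $\Delta^\bullet_e$ is a cosimplicial interval: it has two vertices $\{0\},\{1\} \hookrightarrow \RR \cong \Delta^1_e$, subdivision and degeneracy maps assembling the $\RR^q$ into a coherent cylinder, and each structure map $X \times \RR^q \to X \times \RR^{q-1}$ collapsing an $\RR$-coordinate lies in the saturation of $\cW$. Thus $X \mapsto X \times \RR$ is a functorial cylinder on the relative category $(\strat,\cW)$, and the two sections $X \rightrightarrows X \times \RR$ at heights $0$ and $1$ both become inverse to the inverted projection, hence equal, in $\strat[\cW^{-1}]$. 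Consequently conically smoothly homotopic maps become equal after localization, which already produces a candidate inverse on $\pi_0$; more generally I would invoke the principle that, for a relative category presented by such a cosimplicial interval, the Dwyer--Kan (hammock) localization mapping space is modeled by the associated simplicial object $\strat(X \times \Delta^\bullet_e, Y)$. Verifying its hypotheses in our setting reduces to the Kan condition just noted together with the coherent compatibility of the canonical sections of $\cW$-maps through the subdivisions of $\Delta^\bullet_e$.

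The main obstacle I expect is precisely this last comparison---converting the a priori zig-zag description of $\Map_{\strat[\cW^{-1}]}(X,Y)$, whose morphisms are arbitrary alternating chains of conically smooth maps and backward $\cW$-projections $\bullet \times \RR \xla{\pr} \bullet$, into single conically smooth maps out of some $X \times \RR^q$. This is a calculus of fractions up to coherent homotopy for the interval $\RR$: each backward projection must be replaced by a canonical section, the ambiguity in the height of that section absorbed into a homotopy, and nested zig-zags straightened compatibly across all simplicial degrees. The leverage for this straightening comes from conical smoothness and the isotopy extension theorem of the antecedent theory, which supply the conically smooth homotopies filling the required simplices; these are exactly the fillers that certify the cylinder $-\times\RR$ is ``good'' and force the comparison map to be a weak equivalence. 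Granting this straightening, $F$ is fully faithful, and with essential surjectivity already in hand it is an equivalence.
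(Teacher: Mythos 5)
The paper does not actually prove this statement: it sits in \S1, ``Recollections of striation sheaves,'' and is quoted from the antecedent work \cite{striat} (the section explicitly defers all substantiation to that paper), so there is no in-text proof to compare against. Judged on its own terms, your outline has the right skeleton and is the standard one: factor $\strat\to\Strat$ through the localization via its universal property, note that essential surjectivity is automatic since all three categories have the same objects, identify $\Strat(X,Y)$ with the singular complex $[q]\mapsto\strat(X\times\Delta^q_e,Y)$, and reduce everything to showing that this complex computes $\Map_{\strat[\cW^{-1}]}(X,Y)$. Your observation that the two sections $X\rightrightarrows X\times\RR$ are both right-inverse to an inverted projection, hence agree in the localization, is also correct and is genuinely needed.

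The gap is that the remaining step is the entire content of the theorem, and you dispose of it by ``invoking the principle'' that for a relative category presented by a cosimplicial interval, the hammock localization mapping space is modeled by $\strat(X\times\Delta^\bullet_e,Y)$. That principle is true in the relevant generality, but it is itself a theorem --- a Dwyer--Kan-type comparison for a category equipped with a functorial cylinder whose enriched mapping complexes are Kan --- and not a formal consequence of the definitions; you neither state its hypotheses precisely, nor prove it, nor cite a source (compare the $\AA^1$-local situation treated in \cite{dugger-isaksen}, which the bibliography anticipates). Concretely, one must show that the classifying space of the category of zig-zags $X\la \bullet \ra \bullet \la \cdots \ra Y$, with backward arrows in the saturation of $\cW$, receives an equivalence from the subspace of one-step hammocks $X\xla{~\pr~}X\times\Delta^q_e\xra{~g~}Y$; this is a finality argument about the indexing category of hammocks, not merely the existence of sections and connecting homotopies in each instance. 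Relatedly, the geometric input you name --- the isotopy extension theorem --- is not what does the work. The two geometric inputs actually required are (i) the Kan condition on $\strat(X\times\Delta^\bullet_e,-)$, which the paper attributes to the regularity along strata ensured by conical smoothness (horn-filling here is an extension problem for conically smooth maps off unions of affine hyperplanes in $\Delta^q_e\cong\RR^q$, handled with bump functions and linear retractions, not isotopy extension), and (ii) the identification of the two sections of each projection, which you do supply. Everything else is category theory, and that category theory is what is missing from the proposal.
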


\subsection{Exit-paths}

The enrichment $\Strat$ of stratified spaces allows for a very natural presentation of the exit-path $\oo$-category of a stratified space, after Lurie \cite{HA} and MacPherson. 
Following \cite{striat}, we use the standard simplices to define the exit-path $\oo$-category $\exit(X)$ as a complete Segal space.
As a simplicial object, $\exit(X)$ is the stratified version of the singular simplicial object $\sing_\bullet(X)$.  

\begin{definition}\label{def.exit-path}
The \emph{exit-path $\infty$-category functor} is the restricted Yoneda functor
\[
\exit\colon \strat \longrightarrow \Psh(\bdelta)~,\qquad X\mapsto \Bigl([p]\mapsto \Strat(\Delta^p,X) = |\strat(\Delta^p\times \Delta^\bullet_e,X)| \Bigr)~.
\]
\end{definition}

The following is one of the main results of \cite{striat}.

\begin{theorem}[\cite{striat}]\label{thm.exit-facts}
The functor $\exit$ admits a unique factorization in the following diagram among $\infty$-categories,
\begin{equation}\label{exit-to-cat}
\xymatrix{
\strat \ar[rr]^-{\exit}  \ar[d]^-{\sf loc}_-{\rm Thm~\ref{Strat-localize}}
&&
\Psh(\bdelta)  
\\
\Strat  \ar@{-->}[rr]^-{\exit}
&&
\Cat_\infty  ,    \ar@{_{(}->}[u]
}
\end{equation}
as a functor from the localization in Theorem~\ref{Strat-localize} to the $\infty$-category of $\infty$-categories, here incarnated as complete Segal spaces.  
Also, the following diagrams in $\Strat$ are colimit diagrams, and this factorizing functor~(\ref{exit-to-cat}) carries each of these diagrams to colimit diagrams among $\infty$-categories:
\begin{itemize}

\item open hypercovering diagrams $\cU^{\tr} \to \strat \to \Strat$;

\item blow-up diagrams
\[
\xymatrix{
{\sf Link}_{X_0}(X)  \ar[r]  \ar[d]
&
\unzip_{X_0} (X)  \ar[d]  
\\
X_0  \ar[r]
&
X;
}
\]

\item
iterated cone diagrams
\[
\xymatrix{
\ov{\sC}(\emptyset) \ar[rr]^-{\sC(\emptyset \hookrightarrow L)}  \ar[d]
&&
\ov{\sC}(L) \ar[d]
\\
\ov{\sC}^2(\emptyset) \ar[rr]^-{\sC^{2}(\emptyset\hookrightarrow L)}
&&
\ov{\sC}^2(L)~;
}
\]

\item the univalence diagram 
\[
\xymatrix{
\Delta^{\{0<2\}}\amalg \Delta^{\{1<3\}}  \ar[r] \ar[d]
&
\Delta^{\{0<1<2<3\}} \ar[d]
\\
\Delta^{\{0=2\}}\amalg \Delta^{\{1=3\}}  \ar[r]
&
\ast.
}
\]
\end{itemize}

\end{theorem}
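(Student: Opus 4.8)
The plan is to build $\exit$ as the restricted Yoneda functor along the cosimplicial stratified space $\st\colon\bdelta\to\Strat$, $[p]\mapsto\Delta^p$, and to deduce its full faithfulness from a single local input—the rigidity of conically smooth maps between cones—propagated globally by a dévissage on depth. Concretely, $\exit(X)=\Strat(\st(-),X)$, so $\exit$ is the composite of the Yoneda embedding $\Strat\hookrightarrow\Psh(\Strat)$ with restriction along $\st$.

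First I would prove that $\st$ is fully faithful, i.e.\ that $\Strat(\Delta^p,\Delta^q)$ is homotopy equivalent to the discrete set $\bdelta([p],[q])$ of order-preserving maps. Using the isomorphism $\Delta^p\cong\ov{\sC}(\Delta^{p-1})$ exhibiting each standard simplex as an iterated closed cone, this reduces to the cone-rigidity of \cite{aft1}: a conically smooth map preserving cone-loci is, up to a contractible space of choices, the cone on a map of links, with base case the derivative-at-the-cone-point equivalence $\Aut(\sC(L))\simeq\Aut(L)$. Inducting on depth collapses the mapping space between simplices onto its (discrete) set of path components, which one matches combinatorially with simplicial operators. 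This yields a fully faithful $\Psh(\bdelta)\hookrightarrow\Psh(\Strat)$ through which $\exit$ factors.

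Next I would verify that each $\exit(X)$ is a complete Segal space, so that $\exit$ lands in $\Cat_\oo$. The Segal condition is the assertion that $\Strat(-,X)$ turns the decomposition $\Delta^p\simeq\Delta^{\{0<1\}}\amalg_{\Delta^{\{1\}}}\cdots\amalg_{\Delta^{\{p-1\}}}\Delta^{\{p-1<p\}}$ into a limit; completeness is read off from the univalence diagram, whose $\exit$-image identifies the space of objects with the space of equivalences. Crucially, these are the same descent patterns appearing in the theorem's list, so proving $\exit$ is complete-Segal-valued and proving colimit-preservation are two faces of one computation. I would therefore establish that each listed diagram is a colimit in $\Strat$—for open hypercovers by descent for the open-cover topology, for blow-ups via the unzipping resolution $\unzip_{X_0}(X)\to X$ together with its link, and for iterated cones and univalence by direct verification—and that $\exit$ carries these to the corresponding colimits of complete Segal spaces. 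The smooth projections $\Delta^q_e\cong\RR^q$ are absorbed by the localization of Theorem~\ref{Strat-localize}.

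Finally, global full faithfulness follows by dévissage. Both contravariant functors $X\mapsto\Strat(X,Y)$ and $X\mapsto\Map_{\Cat_\oo}(\exit(X),\exit(Y))$ send the open-cover, blow-up, and iterated-cone diagrams to limits—the first because these are colimits in $\Strat$, the second because $\exit$ preserves those colimits—and they agree on basics $\RR^i\times\sC(Z)$ by the cone analysis above. Since the unzipping construction resolves an arbitrary stratified space into basics in finitely many steps indexed by depth, the comparison transformation $\Strat(X,Y)\to\Map_{\Cat_\oo}(\exit X,\exit Y)$ is an equivalence for all $X$; equivalently, the values of the striation sheaf $\exit Y$ on any $X$ are determined by its values on a point and a $1$-simplex. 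I expect the principal obstacle to be the cone-rigidity input: controlling the full homotopy type—not merely $\pi_0$—of $\Strat(\sC(L),\sC(M))$ and proving its discreteness for simplices, since this is precisely where conical smoothness and the derivative construction of \cite{aft1} carry the argument.
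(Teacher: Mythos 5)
Your proposal follows essentially the same route as the proof in \cite{striat} that this paper summarizes: defining $\exit$ as restricted Yoneda along the standard cosimplicial stratified space, deducing full faithfulness of ${\sf st}\colon\bdelta\to\Strat$ from the cone-rigidity and derivative-at-the-cone-point results of \cite{aft1}, verifying the listed diagrams are colimits preserved by $\exit$, and closing the argument by d\'evissage on depth via the unzipping resolution. You have also correctly located the main technical burden in controlling the full homotopy type of mapping spaces between cones, which is exactly where the paper places it.
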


The main geometric ingredient supporting the proof of Theorem~\ref{thm.exit-facts} is the next result.
For $\cX$ an $\infty$-category, the \emph{$\infty$-category of arrows in $\cX$}
\[
\Ar(\cX)~:=~\Fun([1],\cX)
\]
is that of functors from $[1]$.
\begin{lemma}[\cite{striat}]\label{exit-explicit}
Let $X = (X\to P)$ be a stratified space.
There is a natural identification of the maximal $\infty$-subgroupoid of $\exit(X)$
\[
\exit(X)^\sim~\simeq~ \underset{p\in P}\coprod X_p
\]
as the coproduct of the underlying spaces of the strata of $X$.
For each pair of strata $p,q\in P$ there is an identification of the space of morphisms in $\exit(X)$ with source in $X_p$ and target in $X_q$,
\[
\Ar(\exit(X))_{|X_p\times X_q}~\simeq~ \Link_{X_p}(X_{|P^{p/}})_q~,
\]
as the underlying space of the $q$-stratum of the link of $X_p$ in the open stratified subspace $X_{|P^{p/}}\subset X$.  

\end{lemma}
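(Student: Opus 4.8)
The plan is to establish the two identifications separately, in each case reducing the homotopy type of the relevant mapping space $\Strat(\Delta^p,X) = |\strat(\Delta^p\times\Delta^\bullet_e,X)|$ to conically smooth geometry near a single stratum, and to carry out every argument in $\Delta^\bullet_e$-families so that the resulting equivalences are equivalences of spaces in the Kan-enrichment. I would deliberately avoid invoking Theorem~\ref{thm.exit-facts}, since the present lemma is precisely its main geometric input; instead the arguments are local-geometric, drawing on the conical neighborhood structure, on the cone-automorphism rigidity $\Aut(\sC(L))\simeq\Aut(L)$, and on the isotopy extension theorem of \cite{aft1}.

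For the first identification, observe that the space of objects is $\exit(X)_0 = \Strat(\ast,X) = |\strat(\Delta^\bullet_e,X)|$, and that each $\Delta^q_e\cong\RR^q$ is a single smooth stratum. A conically smooth map out of a single stratum covers a constant map of posets, hence lands in one locally closed stratum $X_p$, where it is an ordinary smooth map; this splits $\strat(\Delta^\bullet_e,X)\cong\coprod_p \sing^{\sm}_\bullet(X_p)$ as a coproduct of (extended) smooth singular complexes, whose realization is $\coprod_p X_p$. The same reasoning applied to $\Delta^n$ shows that an $n$-simplex all of whose edges are invertible is exactly one whose poset-image is constant, i.e. a smooth simplex in a single $X_p$; hence the maximal subgroupoid is the coproduct of the underlying homotopy types of the strata, giving $\exit(X)^\sim\simeq\coprod_p X_p$.

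For the second identification I would use the model $\Delta^1\cong\ov{\sC}(\ast)$, so that the fiber of the endpoint map $\exit(X)_1\to \exit(X)_0^{\times 2}$ over $X_p\times X_q$ is the space of conically smooth exit paths carrying the cone-point stratum into $X_p$ and the open stratum into $X_q$ (necessarily with $q\in P^{p/}$, so that nearby the path lives in $X_{|P^{p/}}$; when $q\notin P^{p/}$ both sides are empty). The key construction is the \emph{initial-direction map}, sending such an exit path to the limit~\eqref{derivative} of its derivative at the cone-point: conical smoothness guarantees this limit exists and, after discarding the cone-locus directions, records a point of the $q$-stratum of the link, assembling into a map to $\Link_{X_p}(X_{|P^{p/}})_q$. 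A candidate section comes from the conical neighborhood identification of a neighborhood of $X_p$ in $X_{|P^{p/}}$ with the open mapping cylinder of $\Link_{X_p}(X_{|P^{p/}})\to X_p$: a link point determines the radial exit path into that cylinder.

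The main obstacle, and the real geometric content of the lemma, is to show that the initial-direction map is an equivalence — equivalently, that the space of conically smooth exit paths with a prescribed basepoint and initial direction is contractible. Here the conical (rather than merely stratumwise-smooth) structure is indispensable: the no-shearing property forces every such exit path to be conically isotopic, rel its initial data, to its radial straightening, and the space of radial straightenings is contractible by reparametrization. I would make this precise in $\Delta^\bullet_e$-families by straightening the cone-direction coordinate through the derivative~\eqref{derivative} and then contracting the fibers via the isotopy extension theorem together with the rigidity $\Aut(\sC(L))\simeq\Aut(L)$ of \cite{aft1}; the degenerate case $q=p$, where the cone direction collapses, recovers the within-stratum paths and is consistent with the first identification.
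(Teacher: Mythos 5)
A preliminary remark: the present paper does not prove this lemma --- it is imported from \cite{striat} --- so there is no in-text proof to compare against, only the argument of that antecedent work. Your treatment of the first identification is correct and is essentially the standard one: a conically smooth map from the trivially stratified $\Delta^q_e\cong \RR^q$ lies over a constant map of posets, so $\strat(\Delta^\bullet_e,X)$ splits as a coproduct over $p\in P$ of smooth singular complexes of the strata; and since an edge of $\exit(X)$ lies over a relation $p\leq q$ in the poset $P$, an invertible edge forces $p=q$, so the maximal sub-$\infty$-groupoid is $\underset{p}\coprod X_p$ as claimed.

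The second identification, however, has a genuine gap at its central device: the initial-direction map is not defined on the whole morphism space. Conical smoothness only requires the limit~(\ref{derivative}) to \emph{exist}; it is permitted to land in the cone locus. Concretely, in a basic chart $\sC(Z)$ about $\gamma(0)\in X_p$, the path $\gamma(t)=(z_0,t^2)$ is a conically smooth exit path into the $q$-stratum, yet $\underset{t\to 0}\lim\, \gamma(ts)/t = \ast$ is the cone point, so no point of $\Link_{X_p}(X_{|P^{p/}})_q$ is recorded. Such degenerate paths are not a negligible locus that can be ignored, and your proposed remedy --- that the no-shearing property forces every exit path to be conically isotopic, rel its initial data, to its radial straightening --- fails for exactly these paths: the scaling homotopy $\gamma_u(t)=\gamma(ut)/u$ ends at the derivative path, which for a degenerate $\gamma$ is constant at $\gamma(0)\in X_p$ and hence has left the space of exit paths from $X_p$ to $X_q$ altogether (the endpoint $\gamma_u(1)=\gamma(u)/u$ does not remain in $X_q$ in the limit). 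Note also that the rigidity $\Aut(\sC(L))\simeq \Aut(L)$ you invoke is special to automorphisms, for which the derivative at the cone point is automatically non-degenerate; it does not transport to arbitrary cone-point-preserving maps.

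The repair --- and the route taken in \cite{striat} --- is to read off the link point at the \emph{smooth} end of the path rather than by differentiating at the singular end. One first deformation-retracts the space of exit paths from $X_p$ to $X_q$ onto those contained in a collar neighborhood of $X_p$, identified via the unzip/collar structure of \cite{aft1} with the open mapping cylinder of $\Link_{X_p}(X_{|P^{p/}})\to X_p$; inside the collar the projection to the link is globally defined away from $X_p$, so evaluation near the terminal point of the path produces the desired point of $\Link_{X_p}(X_{|P^{p/}})_q$, and contractibility of the fibers is a reparametrization argument in the cylinder coordinate. Your candidate section (the radial path attached to a link point) is the right inverse; the equivalence just has to be established from that end of the path.
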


\subsection{Striation sheaves}

\begin{definition}
The $\infty$-category $\Stri$ of \emph{striation sheaves} is the full $\infty$-subcategory of $\Psh(\strat)$ consisting of those presheaves $\cF$ that are $\RR^1$-invariant and that carry the opposites of each of the distinguished colimit diagrams of Theorem~\ref{thm.exit-facts} to limit diagrams among spaces.  
\end{definition}

\begin{remark}
Using the identification of $\Strat$ as a localization of $\strat$ from Theorem~\ref{Strat-localize}, the $\infty$-category $\Stri$ could be equivalently defined as consisting of those presheaves $\cF\in \psh(\Strat)$ that carry the opposites of each of the distinguished colimit diagrams of Theorem~\ref{thm.exit-facts} to limit diagrams among spaces.
\end{remark}

The standard cosimplicial stratified space $\bdelta \xra{\Delta^\bullet} \strat$ yields the restriction functor $\Stri \to \Psh(\bdelta)$.  
\begin{theorem}[\cite{striat}]\label{strict} 
Restriction along the cosimplicial stratified space $\Delta^\bullet$ determines an equivalence of $\infty$-categories
\[
\Stri~\simeq~\Cat_\infty
\]
between striation sheaves and $\infty$-category of $\infty$-categories, incarnated here as complete Segal spaces.
This equivalence sends an $\oo$-category $\cC$ to the presheaf on $\strat$ that takes the values
\[
K \mapsto\Map_{\Cat_\infty}(\exit(K), \cC)~.
\]
\end{theorem}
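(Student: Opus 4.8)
The plan is to realize restriction along $\Delta^\bullet$ as the left adjoint of an adjoint equivalence whose right adjoint is the stated nerve functor, and then to reduce the equivalence to a d\'evissage of stratified spaces against the distinguished colimit diagrams. Since $\Stri$ consists of $\RR^1$-invariant presheaves, Theorem~\ref{Strat-localize} lets me work inside $\Psh(\Strat)$, where the standard cosimplicial object $u = \Delta^\bullet\colon \bdelta \to \Strat$ induces restriction $\rho = u^*\colon \Psh(\Strat)\to\Psh(\bdelta)$ together with its right adjoint, the right Kan extension $\Phi = u_*$. Using the tautological (co-Yoneda) presentation $\exit(K)\simeq \colim_{(\Delta^p\to K)}[p]$ and the fact that $\exit = u^*\circ y_{\Strat}$ is the restricted Yoneda functor of Definition~\ref{def.exit-path}, the right Kan extension evaluates as
\[
\Phi(\cC)(K)~=~\lim_{(\Delta^p\to K)}\cC([p])~\simeq~\Map_{\Psh(\bdelta)}\bigl(\exit(K),\cC\bigr)~,
\]
which for a complete Segal space $\cC$ is exactly $\Map_{\Cat_\infty}(\exit(K),\cC)$. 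Thus $\rho\dashv\Phi$, and the theorem becomes the claim that the unit and counit of this adjunction are equivalences once restricted to $\Stri$ and $\Cat_\infty$.

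First I would check that the adjunction restricts appropriately. That $\Phi$ lands in $\Stri$ is immediate from the two cited theorems: by Theorem~\ref{thm.exit-facts}, $\exit$ carries the distinguished diagrams (open hypercovers, blow-ups, iterated cones, univalence) to colimit diagrams, and by Theorem~\ref{Strat-localize} it is $\RR^1$-invariant; hence $\Map_{\Cat_\infty}(\exit(-),\cC)$ sends the opposite diagrams to limit diagrams and is $\RR^1$-invariant, i.e.\ is a striation sheaf. Conversely, $\rho$ carries $\Stri$ into complete Segal spaces: after restriction along $\Delta^\bullet$, the limit conditions imposed on a striation sheaf by the distinguished diagrams translate precisely into the Segal and completeness conditions defining a complete Segal space, so $[p]\mapsto\cF(\Delta^p)$ lies in $\Cat_\infty$.

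The counit $\rho\Phi\to\mathrm{id}$ is the easy direction. Using the identification $\exit(\Delta^p)\simeq[p]$ of the exit-path category of the standardly stratified $p$-simplex with the poset $[p]$, I compute
\[
\rho\Phi(\cC)([p])~=~\Map_{\Cat_\infty}\bigl(\exit(\Delta^p),\cC\bigr)~\simeq~\Map_{\Cat_\infty}([p],\cC)~\simeq~\cC([p])~,
\]
naturally in $[p]$ and in $\cC$; hence $\rho\Phi\simeq\mathrm{id}_{\Cat_\infty}$.

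The unit $\cF\to\Phi\rho(\cF)$ is the main obstacle. By the triangle identity together with the previous computation, this map is already an equivalence after evaluation on every standard simplex $\Delta^p$, and both $\cF$ and $\Phi\rho(\cF)$ lie in $\Stri$. It therefore suffices to prove the d\'evissage statement that a morphism of striation sheaves which is an equivalence on all $\Delta^p$ is an equivalence --- equivalently, that the standard simplices generate $\Strat$ under the distinguished colimit diagrams and $\RR^1$-invariance. I would prove this by induction on the depth of the singularity type. An open hypercover by basics (a distinguished diagram) reduces an arbitrary $X$ to basics $\RR^i\times\sC(L)$ with $L$ compact; $\RR^1$-invariance strips the Euclidean factors, reducing to cones $\sC(L)$; and for nonempty $L$ the blow-up square
\[
\xymatrix{
\Link_\ast(\sC(L)) \ar[r]\ar[d] & \unzip_\ast(\sC(L)) \ar[d]\\
\ast \ar[r] & \sC(L)
}
\]
--- whose three other corners are the cone-point $\ast=\Delta^0$, the link $L$, and the unzip, all of strictly smaller depth and hence generated by the inductive hypothesis --- exhibits $\sC(L)$ as a distinguished colimit; the base case of smooth manifolds is handled by a good open hypercover together with $\RR^1$-invariance collapsing each chart to $\Delta^0$. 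Since a striation sheaf converts each such colimit into a limit, its value on any $X$ is determined by its values on simplices, and the levelwise equivalences assemble to show the unit is an equivalence. The delicate points inherited from \cite{striat}, and the crux of the whole argument, are that the unzip/link squares genuinely belong to the distinguished colimit diagrams and that each inductive step strictly decreases depth, so that the induction terminates at the simplices.
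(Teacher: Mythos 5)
Your overall architecture matches the one the paper attributes to \cite{striat}: package restriction along $\Delta^\bullet$ and the nerve $K\mapsto\Map_{\Cat_\infty}(\exit(K),\cC)$ as an adjunction, dispose of the counit via the identification $\exit(\Delta^p)\simeq[p]$ (i.e.\ the fully faithfulness of $\bdelta\to\Strat$, which is a genuine geometric input you are entitled to quote), and reduce the unit to a d\'evissage showing that a striation sheaf is determined by its values on standard simplices. The checks that the adjunction restricts, and the triangle-identity argument that the unit is an equivalence on each $\Delta^p$, are fine.

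The gap is in the d\'evissage. You claim that in the blow-up square for $\sC(L)$ all three corners other than $\sC(L)$ have strictly smaller depth, but the unzip corner $\unzip_{\ast}(\sC(L))\cong L\times[0,1)$ (with $\{0\}$ a stratum) has depth equal to $\mathrm{depth}(L)+1=\mathrm{depth}(\sC(L))$, so your induction does not descend there. The failure is already visible at $L=\ast$: then $\sC(\ast)=[0,1)$ and the blow-up square is
\[
\xymatrix{
\ast \ar[r]\ar[d] & [0,1) \ar[d]\\
\ast \ar[r] & [0,1)
}
\]
which is degenerate and gives no information, so your scheme never computes $\cF([0,1))$ --- and hence never computes $\cF(\Delta^1)$, whose hypercover by basics feeds back into $\cF(\sC(\ast))=\cF([0,1))$. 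This is precisely why the paper's account says the values reduce to those on a point \emph{and} a $1$-simplex, and why the list of distinguished diagrams includes the iterated cone squares $\sC^{2}(L)\simeq \sC(L)\amalg_{\sC(\emptyset)}\sC^{2}(\emptyset)$, which your argument never invokes: these (together with an isotopy-equivalence argument identifying $[0,1)$ with $\Delta^1$ in $\Strat$, which requires more than bare invariance under $X\times\RR\to X$) are what handle the collar terms $L\times[0,1)$ produced by unzipping. Without them your induction does not terminate, so the unit is not shown to be an equivalence.
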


We use striation sheaves to make interesting $\oo$-categories by hand from smooth stratified geometry. The principal such object is the $\oo$-category $\Bun$, the construction of which we now indicate.

\begin{definition}[$\Bun$ and $\Exit$]\label{def.Bun}
$\Bun$ is the presheaf on stratified spaces that classifies constructible bundles:
\[
\Bun\colon K \mapsto |\{X\underset{\pi,~ \cbl}\longrightarrow K\times \Delta^\bullet_e\}|~,
\]
the moduli space of constructible bundles over $K$.  
$\cBun$ is the subpresheaf on stratified spaces that classifies \emph{proper} constructible bundles:
\[
\cBun\colon K \mapsto |\{X\underset{\pi,~ \sf p.cbl}\longrightarrow K\times \Delta^\bullet_e\}|~,
\]
the moduli space of proper constructible bundles.  
\\
$\Exit$ is the presheaf on stratified spaces that classifies constructible bundles equipped with a section:
\[
\Exit\colon K\mapsto |\bigl\{X \underset{\pi,~\cbl}{\overset{\sigma}\leftrightarrows} K\times \Delta^\bullet_e \mid \sigma \pi = 1 \bigr\}|~.
\]

\end{definition}

The cumulative result of the work of \cite{striat}, and of all the regularity around substrata ensured by conical smoothness, is the following.

\begin{theorem}[\cite{striat}]
The presheaves $\Bun$ and $\cBun$ and $\Exit$ are striation sheaves and, consequently, form $\oo$-categories via the equivalence $\Stri \underset{\rm Thm~\ref{strict}}\simeq \Cat_\infty$.
Furthermore, the natural functor
\[
\cBun \longrightarrow \Bun
\]
is a monomorphism of $\infty$-categories (in the sense of \S\ref{sec.monos}).  

\end{theorem}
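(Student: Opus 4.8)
The plan is to verify directly that each of the three presheaves is $\RR^1$-invariant and carries the opposites of the four distinguished colimit diagrams of Theorem~\ref{thm.exit-facts} to limits of spaces; the monomorphism statement I would treat separately and more elementarily. Since all three presheaves are moduli of (proper) constructible bundles, possibly with a section, each condition becomes a geometric descent statement for constructible bundles, to be checked with the conical-smoothness technology of \cite{aft1}---the inverse and isotopy-extension theorems, and especially the unzip construction. The $\RR^1$-invariance is the simplest: because $\Delta^\bullet_e \cong \RR^\bullet$ is smooth, a constructible bundle over $K \times \RR$ is a fiber bundle in the extra coordinate, which a parametrized isotopy trivializes, so the restriction $\Bun(K\times\RR) \to \Bun(K)$ is an equivalence (and likewise with properness or a section carried along). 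Open-hypercover descent holds because ``constructible bundle'' is a condition local on strata and on links, per Definition~\ref{def.classes-of-maps}, so these bundles form a higher stack for the open topology. The iterated-cone and univalence conditions I would reduce to formal statements: cone descent from the derivative condition~\eqref{derivative} at a cone point together with $\Delta^p \cong \ov{\sC}(\Delta^{p-1})$, and univalence---following the appendix---from the absence of nontrivial idempotents among constructible bundles over the interval.

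The main obstacle is blow-up descent: one must show that the moduli of constructible bundles over $X$ is the homotopy limit of the moduli over $\unzip_{X_0}(X)$ and over $X_0$, taken along the moduli over $\Link_{X_0}(X)$. This is exactly the inductive shape of the definition of a constructible bundle---a weakly constructible bundle together with compatible constructible-bundle data on the link---so the reconstruction of a bundle over $X$ from its unzip, its restriction to $X_0$, and the gluing over the link is governed by the unzip construction. The real content is that this reconstruction is essentially unique, i.e. the space of choices is contractible; this is where resolution of singularities and the regularity along strata ensured by conical smoothness are indispensable, and it is the same nonformality that makes composition in $\Bun$ subtle.

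For $\cBun$ and $\Exit$ the conditions restrict cleanly. Properness (compactness of fibers) is preserved and reflected by unzip, by passage to links, and by each of the gluings above, so every descent diagram for $\Bun$ restricts to one for $\cBun$. A section is a fiberwise point, which glues along the very same open covers, blow-up squares, and cone diagrams as the bundle itself; thus $\Exit$ inherits all four limit conditions together with $\RR^1$-invariance.

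Finally, for the monomorphism $\cBun \to \Bun$ I would invoke the appendix's criterion: under the equivalence $\Stri \simeq \Cat_\infty$ of Theorem~\ref{strict}, it suffices to check that $\cBun(K) \to \Bun(K)$ is a monomorphism of spaces for each $K$, i.e. that $\cBun(K)$ is a union of connected components of $\Bun(K)$. Each path in $\Bun(K)$ is itself a constructible bundle over $K \times \Delta^\bullet_e$, and over the smooth connected factor $\Delta^p_e$ properness is constant---a constructible bundle over a single-stratum base is a fiber bundle, so its fibers are simultaneously proper or not. Hence properness is a clopen condition on the moduli, the inclusion picks out a union of components, and $\cBun \to \Bun$ is a monomorphism, as claimed.
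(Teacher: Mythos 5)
First, a caveat about the comparison: the paper does not prove this statement at all---it is imported wholesale from \cite{striat}---so the only basis for comparison is the proof narrative in the introduction and the supporting results recalled in \S 1. Against that narrative your outline is largely faithful: you verify the striation-sheaf axioms one at a time; you correctly isolate blow-up descent as the crux and correctly identify that its content is the essential uniqueness (contractibility of the space of choices) of reconstructing a constructible bundle from its unzip, its restriction to a closed stratum, and the link data; and your argument that $\cBun\to\Bun$ is a monomorphism---properness of a constructible bundle over $K\times\Delta^p_e$ is constant in the smooth simplex directions because the restriction over $\{k\}\times\Delta^p_e$ is a fiber bundle, so the proper locus is levelwise a union of components---is essentially the right one, and is consistent with the paper's warning that the inclusion is a monomorphism but not full.

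There is, however, one concrete error. You propose to deduce the univalence axiom for $\Bun$ ``from the absence of nontrivial idempotents among constructible bundles over the interval,'' i.e.\ from the criterion of Lemma~\ref{univ-criterion}. That criterion is inapplicable here, because $\Bun$ \emph{does} have non-identity idempotents. The constructible bundle $\ast\sqcup(0,1]\to\Delta^1$, with the isolated point sitting over $0\in\Delta^1$, classifies a non-identity endomorphism $e$ of $\ast$ in $\Bun$: it is the composite of the closed morphism $\ast\to\emptyset$ (the reversed cylinder of $\emptyset\hookrightarrow\ast$) with the embedding morphism $\emptyset\to\ast$, and one checks directly that $e\circ e\simeq e$, so that $\fB{\sf Idem}$ maps nontrivially to $\Bun$. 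This is precisely why the paper invokes Lemma~\ref{univ-criterion} only for the presheaves $\fC(M)$, never for $\Bun$ itself. Univalence of $\Bun$ must instead be checked directly---an invertible morphism of $\Bun$, i.e.\ a constructible bundle over $\Delta^1$ admitting a two-sided inverse, is forced to be a fiber bundle and hence to arise from an isomorphism of fibers---and that is how \cite{striat} proceeds. Beyond this, be aware that your reductions of the iterated-cone axiom to the derivative condition and of blow-up descent to ``the unzip construction'' name the right tools but leave all of the actual work (the contractibility statements, and with them the Segal condition for $\Bun$) undone; that work is the substance of \cite{striat} and cannot be reconstructed from the present paper alone.
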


\begin{remark}
The $\infty$-subcategory $\cBun \subset \Bun$ consists only of \emph{compact} stratified spaces, yet is \emph{not} full.
For instance, there is a unique morphism from $\emptyset$ to $S^1$ in $\Bun$; it is represented by the constructible bundle $S^1 \times ( \Delta^1\smallsetminus\{0\} ) \ra \Delta^1$.
Because this constructible bundle is not proper, it does not represent a morphism in $\cBun$.  
Furthermore, there is no morphism in $\cBun$ from $\emptyset$ to $S^1$.  
\end{remark}

Forgetting sections defines a functor $\Exit \to \Bun$. 
The next result in particular identifies the fibers of this functor.  
\begin{prop}[\cite{striat}]\label{prop.Exit}
For each functor $\exit(K)\xra{(X\xra{\pi} K)} \Bun$ classifying the indicated constructible bundle, the diagram among $\infty$-categories
\[
\xymatrix{
\exit(X)  \ar[rr]^-{(X\underset{K}\times X \underset\pr{\overset{\sf diag}\leftrightarrows} X)} \ar[d]_-{\exit(\pi)}
&&
\Exit \ar[d]
\\
\exit(K)\ar[rr]^-{(X\xra{\pi}K)}
&&
\Bun
}
\]
is a pullback diagram.

\end{prop}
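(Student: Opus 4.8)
The plan is to produce the comparison functor supplied by the square and then verify it is an equivalence by testing against stratified spaces, turning each corner into a moduli problem via the representability of Theorem~\ref{strict} and the full faithfulness of Theorem~\ref{thm.exit-facts}. The square commutes because both composites $\exit(X)\to\Bun$ classify the proper constructible bundle $X\underset{K}\times X\to X$ over the base $X$: along the lower-left this is the pullback $\pi^\ast X$, while along the upper-right it is the bundle underlying the pair $(X\underset{K}\times X\xrightarrow{\pr}X,\ \mathsf{diag})$. Hence the square determines a canonical functor $c\colon \exit(X)\to P$, where $P:=\exit(K)\underset{\Bun}\times\Exit$ is the pullback $\infty$-category, and it suffices to prove that $c$ is an equivalence. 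Since $\exit\colon\Strat\to\Cat_\infty$ is fully faithful, and a map of complete Segal spaces is an equivalence as soon as it is so on the spaces of $p$-simplices --- which are corepresented by the objects $\exit(\Delta^p)\simeq[p]$ --- for all $p\geq 0$, I would show that $\Map(\exit(T),c)$ is an equivalence for every stratified space $T$.

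Next I would unwind the right-hand side as a functor of $T$. Full faithfulness gives $\Map_{\Cat_\infty}(\exit(T),\exit(K))\simeq\Strat(T,K)$, while Theorem~\ref{strict}, applied to the striation sheaves $\Bun$ and $\Exit$, gives $\Map(\exit(T),\Bun)\simeq\Bun(T)$ and $\Map(\exit(T),\Exit)\simeq\Exit(T)$, the moduli spaces of proper constructible bundles over $T\times\Delta^\bullet_e$, respectively of such bundles equipped with a section. Under these identifications, naturality of $\Bun(-)$ in the base shows that the classifying functor $\exit(K)\to\Bun$ for $\pi$ corresponds to the pullback operation $f\mapsto f^\ast X$, and the forgetful functor $\Exit\to\Bun$ corresponds to discarding the section.

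It then remains to compute the homotopy pullback $\Strat(T,K)\underset{\Bun(T)}\times\Exit(T)$. A point is a pair $(f,\sigma)$ consisting of $f\colon T\to K$ and a section $\sigma$ of $f^\ast X\to T$. Because $f^\ast X=T\underset{K}\times X$, a section $\sigma$ is exactly a conically smooth lift $g\colon T\to X$ with $\pi g=f$, i.e. a point of $\Strat(T,X)\simeq\Map(\exit(T),\exit(X))$; fibered over $\Strat(T,K)$ by $g\mapsto\pi g$, these assemble to all of $\Strat(T,X)$. This identification is natural in $T$ and is precisely the map induced by $c$, so $\Map(\exit(T),c)$ is an equivalence for all $T$, and in particular for $T=\Delta^p$, whence $c$ is an equivalence.

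The crux, and the step I expect to be the main obstacle, is the homotopy pullback computation of the previous paragraph: one must upgrade the tautological bijection between sections of $f^\ast X$ and lifts of $f$ to an equivalence of moduli \emph{spaces}. Concretely, I must identify the homotopy fiber of $\Exit(T)\to\Bun(T)$ over the class $[f^\ast X]$ with the section space $\Gamma(T,f^\ast X)$, checking that passing from bundles to bundles-with-section contributes no stray automorphisms and is compatible with the $\Delta^\bullet_e$-enrichment and the geometric realization defining $\Bun$ and $\Exit$. This is exactly where the striation-sheaf structure of $\Bun$ and $\Exit$ from \cite{striat}, together with the explicit description of exit-path mapping spaces in Lemma~\ref{exit-explicit}, does the essential work; granting that fiber identification, naturality in $T$ and the detection of equivalences by the simplices $\exit(\Delta^p)$ complete the argument.
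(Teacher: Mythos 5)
The paper does not prove this proposition: it is recalled verbatim from \cite{striat}, so there is no in-paper argument to compare yours against. Judged on its own terms, your formal framework is the natural one and is sound as far as it goes: the square commutes because both composites classify $X\underset{K}\times X\to X$; equivalences of complete Segal spaces are detected by mapping in $\exit(\Delta^p)\simeq[p]$; and Theorem~\ref{strict} converts each corner into a moduli space, so the proposition becomes the assertion that
$\Strat(T,X)\to \Strat(T,K)\underset{\Bun(T)}\times \Exit(T)$
is an equivalence for every stratified space $T$.

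The gap is that the step you defer is not a technical verification appended to an otherwise complete argument --- it \emph{is} the proposition. Identifying the homotopy fiber of $\Exit(T)\to\Bun(T)$ over $[f^\ast X]$ with the section space $\Gamma(T,f^\ast X)$, naturally in $T$, is a fiberwise restatement of the very pullback square you are trying to establish; and on the other side you silently replace the homotopy fiber of $\Strat(T,X)\xra{\pi\circ-}\Strat(T,K)$ by its strict fiber (the set-level lifts of $f$), which requires knowing that this map of Kan complexes is a fibration --- not at all clear when $\pi$ is merely a constructible bundle. Both identifications require the actual geometric input: that the forgetful map of simplicial groupoids underlying $\Exit\to\Bun$ is a realization fibration compatibly with the $\Delta^\bullet_e$-smearing, which in \cite{striat} is extracted from the right-fibration (transversality-sheaf) structure of $\bun\to\strat$ and its pointed analogue. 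Within the present paper, the closest available route is different from yours and worth noting: Lemma~\ref{exit-fbn} exhibits $\Exit$ over $\Bun^{\sf ref,emb}\simeq \Strat^{\sf open}$ and over $\Bun^{\sf cls,cr}\simeq(\Strat^{\pcbl})^{\op}$ as unstraightenings of $\exit$, and combined with the closed--active factorization system (Lemma~\ref{closed-active}) this identifies the fibers morphism by morphism rather than by testing against all $T$ at once. Either way, some version of that fibration-theoretic input must be supplied; as written, your argument reduces the claim to an equivalent unproven claim.
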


The reader might notice that the above description of $\Bun$, as well as each variant, is derived from more primitive data.
We explain this, for we make use of such a maneuver in~\S\ref{sec.iterated-bdls} of this article.  
\begin{definition}[\S6 of~\cite{striat}]
The ordinary category $\bun$ is that for which an object is a constructible bundle $X\to K$ and a morphism from $(X\to K)$ to $(X'\to K')$ is a pullback diagram among stratified spaces
\[
\xymatrix{
X  \ar[r] \ar[d]
&
X'  \ar[d]
\\
K \ar[r]
&
K'.
}
\]
Composition is given by concatenating such diagrams horizontally and composing horizontal arrows.  

\end{definition}

In~\S6 of~\cite{striat} we prove that the projection
\[
\bun \longrightarrow \strat~,\qquad (X\to K)\mapsto K
\]
is a right fibration, which we can straighten to a functor
\[
\strat^{\op} \longrightarrow {\sf Gpd}
\]
to the ordinary category of groupoids.
The presheaf $\Bun$ can be obtained as the functor
\[
\Bun\colon \strat^{\op} \longrightarrow \Fun(\bdelta^{\op},{\sf Gpd}) \xra{~|-|~} \Spaces~,\qquad X\mapsto |\bun(X\times \Delta^\bullet_e)|
\]
where here the arrow labeled as $|-|$ is a standard nerve functor.  
More generally, for $\fF\in \Fun(\strat^{\op},{\sf Gpd})$ a groupoid-valued functor, the expression $\cF\colon X\mapsto |\fF(X\times \Delta^\bullet_e)|$ defines a space-valued presheaf.  
This association $\fF\mapsto \cF$ is the \emph{topologizing diagram} functor, and is developed in~\S2 of~\cite{striat}.

\subsection{Classes of morphisms}\label{sec.classes}
Here we name several classes of morphisms in $\Bun$.  
To identify these morphisms, it is convenient to use that morphisms in the $\oo$-category $\Bun$ can be constructed as mapping cylinders of stratified maps in two different ways, as cylinders of open maps or as reversed cylinders of proper constructible maps. See \S6.6 of~\cite{striat} for the following.
\begin{theorem}\label{thm.class-maps}
There are functors 
\[
\Strat^{\opn}  \overset{\cylo}\longrightarrow  \Bun  \overset{\cylr}\longleftarrow (\Strat^{\pcbl})^{\op}
\]
each of which is a monomorphism.  
\end{theorem}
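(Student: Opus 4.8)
The plan is to construct both functors by the two mapping-cylinder constructions alluded to just before the statement, and then to verify the monomorphism property using the link description of morphisms in $\Bun$ supplied by Lemma~\ref{exit-explicit}, together with the elementary facts about cospans recorded in the appendix. First I would build $\Strat^{\opn}\to\Bun$. Since $\Delta^p\cong\ov{\sC}(\Delta^{p-1})$, a point of $\Bun(\Delta^p)$ — i.e.\ a $p$-simplex of $\Bun$ as a complete Segal space — is exactly a proper constructible bundle over $\Delta^p\times\Delta^\bullet_e$. Given a $p$-simplex of $\Strat^{\opn}$, a coherently composable sequence of open maps $X_0\xra{f_1}\cdots\xra{f_p}X_p$ over $\Delta^\bullet_e$, I form its iterated open mapping cylinder and observe that it is naturally such a bundle, stratified so that the fiber over the $i$-th vertex is $X_i$. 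The key point making this a functor is that restriction of the cylinder along a face $\Delta^{p'}\hookrightarrow\Delta^p$ recovers the iterated cylinder of the corresponding subsequence (inner faces implementing composition of open maps, outer faces dropping terms) while degeneracies insert identities; hence the cylinders assemble into a map of simplicial spaces. Because these are genuine bundles over simplices rather than formal composites, compatibility with the Segal structure of $\Bun$ is automatic and no resolution of singularities is needed merely to define the functor.

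The functor $(\Strat^{\pcbl})^{\op}\to\Bun$ is built the same way, but with the reversed cylinder: a proper constructible map $X\to Y$ produces a constructible bundle over $\Delta^1$ whose total space carries the map as its specialization toward the closed-stratum fiber, in contrast with the open-map cylinder, where a collar of the generic fiber instead embeds openly into the special fiber. The opposite appears precisely because the resulting morphism of $\Bun$ runs out of the closed-stratum fiber, reversing the direction of the underlying map. Functoriality is checked by the same face/degeneracy analysis, using functoriality of the cylinder with respect to composition of proper constructible maps.

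It then remains to prove each functor is a monomorphism in the sense developed in the appendix, for which it suffices to check that the functor is injective on equivalence classes of objects and induces a monomorphism of spaces (an inclusion of path components) on every mapping space. Injectivity on objects is immediate, since both functors are the identity on underlying stratified spaces. For the mapping spaces I would show that the cylinder construction is an equivalence onto a union of path components of $\Map_{\Bun}(X,Y)$. Using Lemma~\ref{exit-explicit}, a morphism of $\Bun$ presented by a bundle $Z\to\Delta^1$ is reconstructed from its two fibers together with the link of the special fiber in $Z$; for a cylinder this link data is exactly the graph of the original (open, resp.\ proper constructible) map, so reading it off gives a natural homotopy left inverse to $\Strat^{\opn}(X,Y)\to\Map_{\Bun}(X,Y)$ on the relevant components. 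Identifying the image intrinsically — as those morphisms whose associated correspondence of fibers has one leg an isomorphism — and invoking the appendix's facts that such one-legged (degenerate-correspondence) inclusions are monomorphisms, then upgrades the $\pi_0$-retraction to the desired inclusion of components.

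The main obstacle I expect is exactly this last step: pinning down, intrinsically, the image of the cylinder functor inside $\Map_{\Bun}(X,Y)$ and proving the reconstruction is a genuine homotopy inverse onto those components, rather than merely a retraction on $\pi_0$. The difficulty is that $\Map_{\Bun}(X,Y)$ is a space of constructible bundles whose higher homotopy records isotopies, so one must verify that the space of maps of the given class and the corresponding component-subspace of cylinders have the same homotopy type \emph{in families} over $\Delta^\bullet_e$. The link formula of Lemma~\ref{exit-explicit}, applied fiberwise, is the tool controlling this, and verifying its hypotheses for iterated cylinders is where the conical-smoothness regularity of \cite{aft1} and the blow-up and cone descent of Theorem~\ref{thm.exit-facts} enter.
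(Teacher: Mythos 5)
Your proposal follows the route the paper itself indicates: Theorem~\ref{thm.class-maps} is recalled here from \cite{striat} without proof, and your construction via iterated open mapping cylinders and reversed cylinders of proper constructible maps, followed by identifying the image inside $\Map_{\Bun}(X_0,X_1)$ as those morphisms whose link diagram has one leg an isomorphism, is exactly the intended argument. One correction: the tool for reading a morphism of $\Bun$ off its link data is not Lemma~\ref{exit-explicit} (which describes mapping spaces in $\exit(X)$ for a single stratified space $X$) but the decomposition of a constructible bundle over $\Delta^1$ into the double mapping cylinder of its link system $X_0 \la \Link_{X_0}(X) \ra X_1$, established in \S 6 of \cite{striat} and invoked in the discussion just after Definition~\ref{def.classes}; also note that these cylinders are constructible bundles but not in general \emph{proper} ones, so the functors land in $\Bun$ rather than $\cBun$.
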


This allows for the following definition.

\begin{definition}\label{def.classes}
\begin{itemize}
\item[~]
\item The $\oo$-subcategory $\Bun^{\cls}\subset \Bun$ of {\it closed} morphisms is the image of $(\Strat^{{\pcbl}, {\sf inj}})^{\op}$. 
\item The $\oo$-subcategory $\Bun^{\sf cr}\subset \Bun$ of {\it creation} morphisms is the image of $(\Strat^{\pcbl,{\sf surj}})^{\op}$.

\item The $\infty$-subcategory $\Bun^{\sf ref}\subset \Bun$ of \emph{refinement} morphisms is the image of $\Strat^{\sf ref}$.

\item The $\infty$-subcategory $\Bun^{\sf emb}\subset \Bun$ of \emph{open embedding} morphisms is the image of $\Strat^{\emb}$.  

\item The $\infty$-subcategory $\Bun^{\sf act}\subset \Bun$ of \emph{active} morphisms is the smallest containing the creation and refinement and embedding morphisms.  

\end{itemize}

\end{definition}

\begin{remark}
So, an object in $\cBun$ is given by a compact stratified space.
A morphism in $\cBun$ is given by a compact stratified space equipped with a constructible bundle to the standardly stratified $\Delta^1$.  
Figure~\ref{fig.99} depicts a refinement morphism in $\cBun$; Figure~\ref{fig.100} depicts a creation morphism in $\cBun$.  

\end{remark}

\begin{figure}
\begin{tikzpicture}

\draw[fill=lightgray] (0,0) to (4,0) to (4,2) to (0,2) to (0,0);

\draw[line width=2] (-0.03,0) to (4.03,0);
\draw (0,0) to (0,2);
\draw[line width=2] (-0.03,2) to (4.03,2);
\draw (4,0) to (4,2);

\draw[fill] (0,1) circle [radius=0.07];

\draw[->] (2,-0.5) to (2,-1);

\draw[fill] (0,-1.5) circle [radius=0.07];
\draw[fill] (0,0) circle [radius=0.07];
\draw[fill] (0,2) circle [radius=0.07];
\draw (0, -1.5) to (4,-1.5);
\end{tikzpicture}
\caption{A refinement morphism in $\cBun$ between two stratifications of the closed interval.}  
\label{fig.99}
\end{figure}
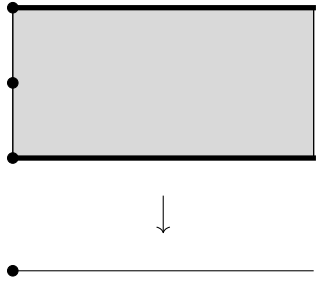

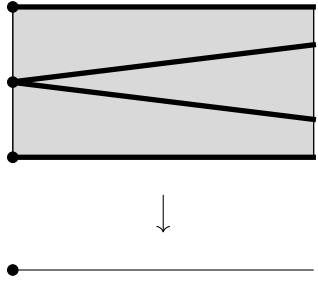
\begin{figure}
\begin{tikzpicture}

\draw[fill=lightgray] (0,0) to (4,0) to (4,2) to (0,2) to (0,0);

\draw[line width=2] (-0.03,0) to (4.03,0);
\draw (0,0) to (0,2);
\draw[line width=2] (-0.03,2) to (4.03,2);
\draw (4,0) to (4,2);

\draw[fill] (0,1) circle [radius=0.07];

\draw[line width=2] (-0.03,1) to (4.03,0.5);
\draw[line width=2] (-0.03,1) to (4.03,1.5);

\draw[->] (2,-0.5) to (2,-1);

\draw[fill] (0,-1.5) circle [radius=0.07];
\draw[fill] (0,0) circle [radius=0.07];
\draw[fill] (0,2) circle [radius=0.07];
\draw (0, -1.5) to (4,-1.5);
\end{tikzpicture}
\caption{A creation morphism in $\cBun$ between two stratifications of the closed interval.}
\label{fig.100}
\end{figure}

\begin{remark}
The intersection of the $\infty$-subcategories $\cBun\cap \Bun^{\sf emb} \simeq \cBun^\sim$ is the maximal $\infty$-subgroupoid of $\Bun$ consisting of the compact stratified spaces.

\end{remark}

We note these classes of morphisms have an equivalent definition in terms properties of {\it links}. That is, consider a morphism $X_0\ra X_1$ in $\Bun$ represented by a constructible bundle $X\ra \Delta^1$. The morphism is closed (respectively, a creation) if the natural map $\Link_{X_0}(X)\to X_0$ is an embedding (respectively, is surjective) and the open conically smooth map
\[\Link_{X_0}(X)\times[0,1)\cong \unzip_{X_0}(X)_{|\Link_{\{0\}}(\Delta^1)\times[0,1)}\] can be chosen to be an isomorphism.
This is the geometry behind the following result.

\begin{lemma}[\cite{striat}]\label{closed-active}
The pair of $\infty$-subcategories $(\Bun^{\sf cls},\Bun^{\sf act})$ forms a factorization system on $\Bun$.  

\end{lemma}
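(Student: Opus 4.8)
The plan is to apply the standard criterion for orthogonal factorization systems (after Lurie, HTT \S5.2.8): since $\Bun^{\sf cls}$ and $\Bun^{\sf act}$ are by construction $\infty$-subcategories of $\Bun$ — in particular each closed under composition and containing the equivalences — it suffices to establish (i) that every morphism of $\Bun$ factors as a closed morphism followed by an active one, and (ii) that every closed morphism is left orthogonal to every active morphism. Granting these, the pair is automatically a factorization system, and each class is moreover the full left/right orthogonal complement of the other.

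For the factorization (i), I would first observe that morphisms lying in the image of $\Strat^{\opn}$ are built from refinements and embeddings, hence are already active, so the general case reduces to a morphism $f\colon X_0\to X_1$ arising from a proper constructible map, the remaining open-type data being absorbed into the active factor. Such an $f$ is represented by a proper constructible bundle $W\to\Delta^1$ whose unzipping along the special fibre produces the link map
\[
\lambda\colon \Link_{X_0}(W)\longrightarrow X_0,
\qquad \Link_{X_0}(W)\cong X_1.
\]
The conically smooth theory of \cite{aft1} supplies a canonical factorization of $\lambda$ through its stratified image $X':=\lambda\bigl(\Link_{X_0}(W)\bigr)\subseteq X_0$, as a surjective proper constructible map $\Link_{X_0}(W)\to X'$ followed by a proper constructible closed embedding $X'\hookrightarrow X_0$. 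Transporting this along the arrow-reversing monomorphism $(\Strat^{\pcbl})^{\op}\to\Bun$ of Theorem~\ref{thm.class-maps}, and using the link characterization stated just above (closed $\Leftrightarrow$ embedding link map, creation $\Leftrightarrow$ surjective link map), exhibits $f$ as the composite of the closed morphism $X_0\to X'$ with the creation morphism $X'\to X_1$; since creations are active, this is the desired factorization.

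For orthogonality (ii), I would reduce to the generators of $\Bun^{\sf act}$. Orthogonality of closed morphisms against creation morphisms is precisely the reverse of the assertion that proper constructible surjections are left orthogonal to proper constructible injections — the orthogonality half of the very image factorization used in (i). Orthogonality of closed morphisms against refinements and embeddings expresses that the closed substratum collapsed by a closed morphism does not interact with the open-stratum surgery performed by an open-type morphism; here I would verify that the relevant square of mapping spaces is a pullback by appealing to the explicit link descriptions of arrows in $\Bun$, namely Lemma~\ref{exit-explicit} together with Proposition~\ref{prop.Exit}, which identify these mapping spaces with spaces of stratified links and resolutions that are manifestly compatible.

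The hard part will be the geometric input underlying both steps: that a proper constructible map factors conically smoothly through a \emph{genuine} conically smooth stratified image, coherently in families, so that the factorization is functorial and the space of factorizations is contractible, and that the resulting orthogonality is a bona fide pullback of spaces rather than merely a bijection on path components. This is exactly where the regularity guaranteed by conical smoothness — the absence of shearing, the isotopy-extension theorem, and the unzip technology of \cite{aft1} — does the real work; the $\infty$-categorical formalism surrounding it is otherwise routine.
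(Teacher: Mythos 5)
You should first be aware that the paper does not prove this lemma: it is stated with the attribution \cite{striat} and is imported wholesale from the antecedent work on striation sheaves. The only thing the present paper supplies is the heuristic in the paragraph immediately preceding the statement, namely the characterization of closed and creation morphisms in terms of the link map $\Link_{X_0}(X)\to X_0$ being an embedding, respectively surjective, with the collar $\Link_{X_0}(X)\times[0,1)\cong \unzip_{X_0}(X)_{|\cdots}$ an isomorphism. So there is no in-paper proof to compare against; your proposal is a reconstruction of the argument in \cite{striat}. As a reconstruction of the existence of factorizations it is the right shape: the factorization of a general morphism is exactly $X_0\xra{\cls}\ov{{\sf im}(\pi)}\xra{\sf cr}\Link_{X_0}(W)\xra{\sf open}X_1$, obtained from the image factorization of the proper constructible link projection followed by the open collar map.

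That said, two steps in your outline are thinner than you acknowledge. First, your opening reduction in (i) quietly assumes that every morphism of $\Bun$ decomposes, coherently and essentially uniquely, as a morphism in the image of $(\Strat^{\pcbl})^{\op}$ followed by one in the image of $\Strat^{\opn}$; this is not an observation but is itself the link/unzip decomposition of a constructible bundle over $\Delta^1$, one of the main geometric outputs of \cite{striat} (and the reason Theorem~\ref{thm.class-maps} holds at all). Second, and more seriously, the orthogonality step (ii) does not reduce to ``proper constructible surjections are left orthogonal to proper constructible injections, reversed'': a lifting square witnessing $\Bun^{\cls}\perp\Bun^{\sf cr}$ has \emph{arbitrary} morphisms of $\Bun$ on its horizontal edges, mixing open-type and proper-constructible-type data, so the square does not live inside $(\Strat^{\pcbl})^{\op}$ and the image-factorization orthogonality there does not directly apply. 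Likewise the appeal to Lemma~\ref{exit-explicit} and Proposition~\ref{prop.Exit} for the refinement and embedding cases is a pointer rather than an argument. The efficient route (and the one compatible with Lurie's criterion you cite) is to prove directly that the space of closed--active factorizations of each morphism is contractible, using uniqueness of the closed constructible image and the contractibility of the space of collarings; this is exactly where the conical-smoothness technology you defer to does its work, so your ``hard part'' is in fact the entire proof.
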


Recall the unstraightening construction from \S3.2 of~\cite{HA}, which constructs a monomorphism
\[
\Fun(\cD^{\op},\Cat_{\oo})\longrightarrow \Cat_{\oo/\cD}~.
\]
The essential image consists of {\it Cartesian fibrations} over $\cD$.

\begin{lemma}\label{exit-fbn}
Each solid diagram among $\infty$-categories
\[
\xymatrix{
\{0\}  \ar[r]  \ar[d]
&
\Exit  \ar[d]
\\
[1]  \ar[r]^-o  \ar@{-->}[ur]
&
\Bun
}
\]
in which $o$ classifies either a refinement morphism or an embedding morphism of $\Bun$ admits a filler that classifies a coCartesian morphism over $\Bun$.  
Each solid diagram among $\infty$-categories
\[
\xymatrix{
\{1\}  \ar[r]  \ar[d]
&
\Exit  \ar[d]
\\
[1]  \ar[r]^-c  \ar@{-->}[ur]
&
\Bun
}
\]
in which $c$ classifies either a closed morphism or a creation morphism of $\Bun$ admits a filler that classifies a Cartesian morphism over $\Bun$.  
\end{lemma}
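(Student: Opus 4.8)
The plan is to reduce both statements to geometric assertions about exit paths in total spaces of constructible bundles, via Proposition~\ref{prop.Exit}, and then to settle those assertions using the link description of morphism spaces from Lemma~\ref{exit-explicit}. Recall first that a functor $\exit(K)\to \Bun$ is precisely the datum of a constructible bundle over $K$; in particular an $n$-simplex of $\Bun$ is classified by a constructible bundle over the stratified simplex $\Delta^n$. The edge $o$ (respectively $c$) is thus classified by a constructible bundle $\pi\colon \cX\to \Delta^1$ whose fiber over the cone-point $0$ is $X_0$ and whose generic fiber is $X_1$, and Proposition~\ref{prop.Exit} identifies the base change of $\Exit\to \Bun$ along $o$ with $\exit(\pi)\colon \exit(\cX)\to [1]$. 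The prescribed object over $\{0\}$ (respectively $\{1\}$) is then a point $x_0\in X_0$ (respectively $x_1\in X_1$), and the candidate filler is a distinguished exit path in $\cX$ emanating from $x_0$ into the generic stratum (respectively terminating at $x_1$), read off from the mapping-cylinder presentation of Theorem~\ref{thm.class-maps}.

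The delicate point is that the $(\Exit\to\Bun)$-(co)Cartesian condition must be tested against all of $\Exit$, not merely against the fibers over $[1]$; base change alone neither detects nor implies it. I handle this by testing the defining equivalence
\[
\Map_{\Exit}(e_1,e_2)\longrightarrow \Map_{\Exit}(e_0,e_2)\underset{\Map_{\Bun}(X_0,X_2)}\times\Map_{\Bun}(X_1,X_2)
\]
componentwise over each morphism $\gamma\colon X_1\to X_2$ of $\Bun$, which together with $o$ furnishes a $2$-simplex $\sigma$ of $\Bun$. Since $\sigma$ is itself classified by a constructible bundle $\cY\to \Delta^2$, a second application of Proposition~\ref{prop.Exit} rewrites the condition as the assertion that the distinguished exit path $\phi\in \exit(\cY)$ over $0\to 1$ induces an equivalence upon precomposition, tested against points of the top fiber $\cY_{|2}$; dually, for the Cartesian statement one tests postcomposition against points of the bottom fiber $\cY_{|0}$. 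This localizes both claims to a purely geometric question about exit paths inside the single total space $\cY$.

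It remains to settle this geometry using Lemma~\ref{exit-explicit}, which identifies the space of morphisms in $\exit(\cY)$ between two strata with the corresponding stratum of a link, together with its evaluation maps to source and target. For the coCartesian statement $o$ is open: for a refinement one checks that the link projection $\Link_{X_0}(\cX)\to X_0$ is an isomorphism, and for an open embedding that $X_0$ includes as an open piece of the nearby generic fiber; in either case each $x_0$ carries a canonical outgoing exit path, and the link identification shows that precomposition with it matches morphisms out of $x_0$ with morphisms out of its image. For the Cartesian statement $c$ is proper constructible, and here I can invoke directly the link characterization of closed and creation morphisms recorded in \S\ref{sec.classes}: the projection $\Link_{X_0}(\cX)\to X_0$ is an embedding, respectively a surjection, and the unzip along $X_0$ carries a product structure $\Link_{X_0}(\cX)\times[0,1)$; these two facts force the induced incoming exit path into $x_1$ to be universal.

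The main obstacle is precisely this geometric verification, carried out inside the $2$-simplex total space $\cY$ rather than the edge total space $\cX$: one must control the link of the cone-point fiber of $\cY$ and its interaction with the composite edge $0\to 2$, which in general demands resolving singularities via the unzip construction and comparing links in families. Matching the two mapping-cylinder presentations of Theorem~\ref{thm.class-maps} against these link computations, and arranging that the product trivialization of the unzip can be chosen compatibly across the whole $2$-simplex, is where the substance lies; the closed/creation and refinement/embedding cases are otherwise entirely parallel.
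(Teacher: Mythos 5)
Your reduction is coherent as far as it goes---unwinding the coCartesian condition against $2$-simplices of $\Bun$ and transporting it, via Proposition~\ref{prop.Exit}, to a statement about composition in $\exit(\cY)$ for a constructible bundle $\cY\to\Delta^2$ is a correct reformulation---but the proposal stops exactly where the proof would have to begin. Your final paragraph concedes that controlling the link of the cone-point fiber of $\cY$ against the composite edge $\{0<2\}$, choosing unzip trivializations compatibly over the whole $2$-simplex, and matching these against the mapping-cylinder presentations is ``where the substance lies''; none of that is carried out. The link characterizations you invoke from \S\ref{sec.classes} describe a single morphism over $\Delta^1$, not how such morphisms compose over $\Delta^2$, and that composition is genuinely delicate: it is essentially the content of Lemma~\ref{exit-exp} in the appendix, which requires the refinement from the iterated link to the link constructed in~\cite{striat}. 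As written, this is a plan with the central step deferred rather than a proof.

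The paper takes a different and much shorter route that avoids the geometry entirely. By Theorem~\ref{thm.class-maps}, the subcategories $\Bun^{\sf ref,emb}$ and $\Bun^{\sf cls,cr}$ are identified (monomorphically) with $\Strat^{\sf open}$ and $(\Strat^{\pcbl})^{\op}$, and the restrictions $\Exit_{|\Bun^{\sf ref,emb}}\to\Strat^{\sf open}$ and $\Exit_{|\Bun^{\sf cls,cr}}\to(\Strat^{\pcbl})^{\op}$ are the unstraightenings of the exit-path functor $\exit$ on these categories. The unstraightening of a $\Cat_\infty$-valued functor is a coCartesian fibration, so every refinement/embedding morphism admits a coCartesian lift with prescribed source, and dually every closed/creation morphism admits a Cartesian lift with prescribed target; no link computation is needed. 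If you want to keep your framework, the efficient move is to prove or cite this unstraightening identification rather than verify the lifting property simplex by simplex.
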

\begin{proof}
The first statement follows because 
$\Exit_{|\Bun^{\sf ref,emb}} \to \Bun^{\sf ref,emb}\simeq \Strat^{\sf open}$ is the unstraightening of the functor 
\[
\exit\colon \Strat^{\sf open} \longrightarrow \Cat_\infty~.
\]
The second statement follows because 
$\Exit_{|\Bun^{\sf cls,cr}} \to \Bun^{\sf cls,cr}\simeq (\Strat^{\pcbl})^{\op}$ is the unstraightening of the functor 
\[
\exit\colon ((\Strat^{\pcbl})^{\op})^{\op} \longrightarrow \Cat_\infty~.
\]
\end{proof}

We mirror Definition~\ref{def.classes} with classes of morphisms in $\Exit$.
We do this in a way that reflects the natural handedness of the various restrictions $\Exit_{|\Bun^{\psi}} \to \Bun^{\psi}$.  
\begin{definition}\label{def.Exit-classes}
\begin{itemize}
\item[~]
\item The $\oo$-subcategory $\Exit^{\cls}\subset \Exit$ of {\it closed} morphisms consists of those morphisms in $\Exit$ that are Cartesian over $\Bun$ and whose image in $\Bun$ is a closed morphism.

\item The $\oo$-subcategory $\Exit^{\sf cr}\subset \Exit$ of {\it creation} morphisms consists of those morphisms in $\Exit$ that are Cartesian over $\Bun$ and whose image in $\Bun$ is a creation morphism.

\item The $\oo$-subcategory $\Exit^{\sf ref}\subset \Exit$ of {\it refinement} morphisms consists of those morphisms in $\Exit$ that are coCartesian over $\Bun$ and whose image in $\Bun$ is a refinement morphism.

\item The $\oo$-subcategory $\Exit^{\sf emb}\subset \Exit$ of {\it embedding} morphisms consists of those morphisms in $\Exit$ that are coCartesian over $\Bun$ and whose image in $\Bun$ is an embedding morphism.  

\end{itemize}

\end{definition}

\section{Tangential structures}

The $\infty$-category of constructible bundles is vast.  
We are primarily interested in the $\infty$-subcategory of $\Bun$ classifying constructible bundles whose fibers are stratified $n$-manifolds, as well as variations which account for infinitesimal structures thereon.  
We recognize these entities as $\Bun^\cB$ for appropriately chosen tangential structures $\cB$.  
Forgetting structure defines a functor $\Bun^\cB \to \Bun$; this functor will have partial fibration properties, mirroring the definition of $\infty$-operads as functors $\cO \to \Fin_\ast$ as developed in~\cite{HA} (Definition~2.1.1.10).  
In this section, we give a general framework for such structures; manipulations among them will be key for the main results of this article.  
We begin by generalizing the standard tangent bundle of smooth manifolds.

\subsection{Constructible tangent bundle}\label{sec.tgt-shf}
Here we define, for each stratified space $X$, its \emph{constructible} tangent bundle: 
$\sT X \to X$.
For a treatment in the context of Whitney stratified spaces, see \S2.1 of~\cite{pflaum}.
Our definition is premised on the principle that a constructible vector bundle $E\to X$ is a constructible system of vector spaces on $X$.  
Embracing this perspective, Theorem~A.9.3 of~\cite{HA} then characterizes such a constructible system on $X$ as a functor from $\exit(X)$ to an $\infty$-category of vector spaces.  
This abstracts a parallel transport system. 
The constructible tangent bundle of a stratified space $X$ will thusly take such a form: it is a functor $\sT_X\colon \exit(X) \to \Vect$ to an $\infty$-category of vector spaces.
We construct this functor $\sT_X$, for each stratified space $X$, so as to accommodate a number of functorialities in the argument $X$.  
Such functorialities are succinctly implemented by constructing the \emph{fiberwise} constructible tangent bundle of a constructible bundle $X\xra{\pi} K$; this takes the form of a functor $\sT^{\sf fib}_\pi\colon \exit(X) \to \Vect$.  
Stipulating such functorialities completely characterizes this (fiberwise) constructible tangent bundle of each stratified space.   
In order to succinctly manage homotopy coherence issues that are abundant, our treatment of (fiberwise) constructible tangent bundles is through such a characterization.  
The main outcome of this section is Definition~\ref{def.tangent}, the definition of the \emph{
fiberwise
constructible tangent bundle}
\[
\sT^{\sf fib}\colon \Exit \longrightarrow \Vect^{\sf inj}~.
\]

\subsubsection{\bf Synopsis}
In essence, we define the constructible tangent bundle of a stratified space $X$ by naming a sheaf of vector spaces on $X$, the stalk-dimensions of which are finite and vary lower semi-continuously.
In terms of parallel transport systems, such data is classified by a functor
\[
\sT_X \colon \exit(X) \to \Vect^{\sf inj}
\] 
from the exit-path $\infty$-category of $X$ to an $\infty$-category of vector spaces and injections among them.
In the case that $X=M$ is an ordinary smooth manifold, the functor $\exit(M)\to\Vect^{\sf inj}$ is the classifying map of its tangent bundle $M\to \underset{i\geq 0}\amalg \BO(i)$.

We thus seek an assignment of each stratified space $X$ to a functor $\exit(X) \to \Vect^{\sf inj}$.
We do this in such as way as to reveal an assortment of functorialities of this assignment in the argument $X$.
To learn what functorialities we might expect, we can consider the case of ordinary smooth manifolds.
Namely, a smooth map $f\colon M\to N$ between ordinary smooth manifolds determines a map of vector bundles $\sD f\colon \sT M \to f^\ast \sT N$ on $M$.  
This map is an isomorphism whenever $f$ is an open embedding, but otherwise it typically will not be an isomorphism.
Therefore we expect functoriality for the constructible tangent bundle with respect to open embeddings among stratified spaces.
Thinking of vector fields as infinitesimal automorphisms of a manifold, we also expect that the constructible tangent bundle of a stratified space restricts along strata: for $g\colon X_0\hookrightarrow X$ the inclusion of a stratum, we expect an isomorphism of vector bundles $\sD g\colon \sT X_0 \cong g^\ast \sT X$ on $X_0$.  
This latter expected functoriality does not appear in the case of ordinary smooth manifolds.  
(Note the distinction between this indication of \emph{tangent} and that supplied by algebraic geometry whose sections cannot be integrated when $X$ is not smooth, even in characteristic zero.)

In our Definition~\ref{def.tangent}, the 
fiberwise
constructible tangent bundle will be a functor between $\infty$-categories
\[
\sT^{\sf fib}\colon \Exit \longrightarrow \Vect^{\sf inj}~.  
\]
For each stratified space $X$, there is a natural functor $\exit(X) \to \Exit$, and this functor $\sT^{\sf fib}$ will then specialize to the expected functor $\sT_X\colon \exit(X) \to \Vect^{\sf inj}$.  
Furthermore, the functor $\sT^{\sf fib}$ specializes along the monomorphisms 
$(\Strat^{\ast/})^{\emb} \hookrightarrow \Exit \hookleftarrow \bigl((\Strat^{\ast/})^{\sf p.cls.inj}\bigr)^{\op}$
as the above expected functorialities.
In this way, our definition of the 
fiberwise
constructible tangent bundle as the functor $\sT^{\sf fib}$ accommodates our intuition.

It is awkward to define this (fiberwise) constructible tangent bundle directly. 
Simply requiring it to possess the expected functorialities and to restrict to the familiar notion for smooth manifolds completely characterizes it.  
As such, we articulate a sense in which structures on stratified spaces that satisfy suitable descent are completely characterized by their values on Euclidean spaces.

\begin{remark}\label{parallel-transport}
Our Definition~\ref{def.tangent} of the
fiberwise
constructible tangent bundle as a functor between $\infty$-categories $\Exit\xra{\sT^{\sf fib}} \Vect^{\sf inj}$ manifestly carries isotopies among stratified spaces, such as manifolds, to equivalences of vector bundle data.
This phrasing greatly consolidates tangential data, but it also loses a considerable amount of infinitesimal geometric information.  
For instance, for $M$ a smooth manifold, as a $\Vect^{\sf inj}$-valued sheaf, $\sT_M$ is simply a kind of local system of vector spaces on $M$, which one can regard as a continuous parallel transport system of vector spaces on $M$.  
Of course, such parallel transport on the tangent bundle of $M$ is only defined upon a choice of connection, which has rich infinitesimal geometry.  Because the space of connections is contractible (for it is convex), the phrasing in terms of the $\infty$-category $\Vect^{\sf inj}$ collapses this choice.

\end{remark}

\subsubsection{\bf Vector spaces}

Here we define a constructible sheaf $\Vect$ on stratified spaces that classifies stratum-wise vector bundles, in which the dimensions of the fibers may vary across strata.
We first define conically smooth vector bundles, in which the dimensions of the fibers are locally constant across strata.
(All vector spaces are understood to be finite dimensional real vector spaces.)

\begin{definition}
A \emph{conically smooth vector bundle} $V\to K$ is a conically smooth map of stratified spaces $V\to K$ together with 
\begin{itemize}
\item a conically smooth section $K\xra{0} V$;

\item  a conically smooth map $V\underset{K}\times V \xra{~+~}V$ over $K$;

\item a conically smooth map $\RR\times V\to V$ over $K$.

\end{itemize}
These data satisfy the following local triviality condition.
\begin{itemize}

\item There is an open cover $\cU$ of $K$ for which, for each $U\in \cU$, there is a vector space $V_U=(0\in V_U,+,\cdot)$ and an isomorphism from the restricted data over $U$:
\[
(U\xra{0_{|U}} V_{|U}~,~+_{|U}~,~\cdot_{|U})~\cong~ (U\xra{{\sf id}_U\times \{0\}}U\times V_U~,~{\sf id}_U\times +~,~{\sf id}_U\times \cdot)~.  
\]

\end{itemize}
A \emph{map of vector bundles} from $(V\to K)$ to $(W\to L)$ is a commutative square of conically smooth maps
\[
\xymatrix{
V  \ar[r]^-{F}  \ar[d]
&
W  \ar[d]
\\
K \ar[r]^-{f}
&
L
}
\]
for which, for each $x\in K$, the map of fibers $F_{|x}\colon V_{|x} \to W_{|f(x)}$ is a linear map between vector spaces.  
Such a map of vector bundles has \emph{locally constant rank} if ${\sf Ker}(F) \subset V$ is a sub-vector bundle over $K$. 

\end{definition}
\noindent
Note that conically smooth vector bundles, and maps among them, form a category in which composition is given by concatenating such squares horizontally and composing horizontal arrows.

\begin{remark}
We point out that each conically smooth vector bundle $V\to K$ forgets to a vector bundle on the underlying topological space of $K$.

\end{remark}

In the next definition, we make use of the standard nerve functor $\Fun(\bdelta^{\op},{\sf Gpd}) \xra{|-|} \Spaces$ from simplicial groupoids to the $\infty$-category of spaces.  
\begin{definition}[$\Vect$]\label{def.Vect}
The simplicial space $\Vect\colon \bdelta^{\op}\to \Spaces$ is given by
\[
[p]~\mapsto~\Bigl|\Bigl\{V_0  \xra{~f_1~} V_1\xra{~f_2~} \dots\xra{~f_p~} V_p\text{ over }\Delta^\bullet_e\Bigr\}\Bigr|~,
\]
the space of composable sequences of morphisms of (finite rank) vector bundles over the cosimplicial smooth manifold $\Delta^\bullet_e$ 
for which, for each $0\leq i \leq j\leq p$, the composition $f_j\circ \dots \circ f_i$ has locally constant rank.  
The simplicial subspaces
\[
\Vect^\sim~\subset~\Vect^{\sf inj}~,~\Vect^{\sf surj}~\subset~\Vect
\]
consist, for each $p\geq 0$, of those $p$-simplices $V_0\xra{f_1}\dots \xra{f_p} V_p$ for which each $f_i$ is a fiberwise isomorphism, a fiberwise injection, and a fiberwise surjection, respectively.   

\end{definition}

\begin{observation}
The simplicial spaces $\Vect^{\sf inj}$ and $\Vect^{\sf surj}$ are complete Segal spaces, and therefore present $\infty$-categories.  
The simplicial space $\Vect^\sim$ is a constant simplicial space, and therefore presents an $\infty$-groupoid. 
This $\infty$-groupoid $\Vect^\sim$ is identified as the maximal $\infty$-subgroupoid of both $\Vect^{\sf inj}$ and $\Vect^{\sf surj}$.  
The simplicial space $\Vect$ does \emph{not} satisfy the Segal condition; therefore we do not regard it as an $\infty$-category.  

\end{observation}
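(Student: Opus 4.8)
The plan is to recognize $\Vect^{\sf inj}$ and $\Vect^{\sf surj}$ as nerves of (groupoid-enriched) categories of vector bundles, and to locate the single place where the locally constant rank clause of Definition~\ref{def.Vect} does real work---which is precisely what separates them from the full $\Vect$. The key input is a closure lemma: a composite of fiberwise injections (resp.\ surjections) of conically smooth vector bundles over a smooth base is again a fiberwise injection (resp.\ surjection), and \emph{every} fiberwise injection and surjection automatically has locally constant rank. Indeed, over each $\Delta^q_e\cong\RR^q$ the bundles have constant fiber dimension, so an injection has ${\sf Ker}=0$ while a surjection $F\colon V\to W$ has ${\sf Ker}(F)$ of constant fiber dimension $\dim V-\dim W$, hence a sub-bundle in each case. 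Consequently the defining constraint of $\Vect$, that every composite $f_j\circ\cdots\circ f_i$ have locally constant rank, is vacuous on $\Vect^{\sf inj}$ and on $\Vect^{\sf surj}$, so each is the topologizing diagram (\S2 of~\cite{striat}) of the category of conically smooth vector bundles with fiberwise injections (resp.\ surjections), with honest composition. Being nerves of category objects, both satisfy the Segal condition: the comparison map
\[
\Vect^{\sf inj}_p ~\longrightarrow~ \Vect^{\sf inj}_1\times_{\Vect^{\sf inj}_0}\cdots\times_{\Vect^{\sf inj}_0}\Vect^{\sf inj}_1
\]
is an equivalence because a length-$p$ chain of injections is exactly a composable string of $p$ single injections glued along their shared bundles, and likewise for surjections.

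For completeness I would first show $\Vect^\sim$ is constant: the groupoid of length-$p$ chains of fiberwise isomorphisms over $\Delta^q_e$ is equivalent to the groupoid of single bundles---send a chain to its initial bundle $V_0$, and a bundle $V$ to the constant chain of identities---so after applying $|-|$ every face and degeneracy map of $\Vect^\sim$ is an equivalence. Since the invertible $1$-morphisms of $\Vect^{\sf inj}$ (resp.\ $\Vect^{\sf surj}$) are precisely the fiberwise isomorphisms, the subspace of homotopy equivalences in $\Vect^{\sf inj}_1$ equals $\Vect^\sim_1$, and the completeness map is the degeneracy $s_0\colon\Vect^\sim_0\to\Vect^\sim_1$, an equivalence by constancy; the identical argument covers $\Vect^{\sf surj}$. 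This establishes that both are complete Segal spaces. The maximal $\infty$-subgroupoid of a complete Segal space is its sub-object on invertible morphisms, which in both cases is $\Vect^\sim$; as $\Vect^\sim_0$ is the common space of objects of $\Vect^{\sf inj}$ and $\Vect^{\sf surj}$, this identifies $\Vect^\sim$ as the maximal subgroupoid of each.

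Finally I would exhibit a family witnessing that $\Vect_2\to\Vect_1\times_{\Vect_0}\Vect_1$ fails to be an equivalence. Over the base $\Delta^1_e\cong\RR$ with coordinate $t$, take $V_0=V_2=\RR$ and $V_1=\RR^2$ (trivial bundles) with $f_1(v)=(v,0)$ and $f_2(a,b)=ta+b$. Then $f_1$ is a fiberwise injection and $f_2$ a fiberwise surjection, so both have locally constant rank and the pair is a family of points of $\Vect_1\times_{\Vect_0}\Vect_1$; but the composite $f_2\circ f_1$ is multiplication by $t$, whose rank drops at $t=0$, so this family admits no lift to $\Vect_2$. Comparing with the fiber over a point, where the rank clause is vacuous and the Segal map is an isomorphism, shows the obstruction is a genuine family-level phenomenon and is not absorbed by the realization $|-|$; hence $\Vect$ fails the Segal condition. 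The crux of the whole argument is this dichotomy built into the closure lemma: the locally constant rank condition must be simultaneously automatic for injections and surjections, so that $\Vect^{\sf inj}$ and $\Vect^{\sf surj}$ are bona fide nerves, yet genuinely obstructive for general bundle maps, so that $\Vect$ is not. Verifying that automaticity---and that the counterexample's failure survives passage to $|-|$---is where the care is needed; completeness and the identification of the maximal subgroupoid are then formal consequences of the constancy of $\Vect^\sim$.
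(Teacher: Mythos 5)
The paper states this as an unproved Observation, so there is no in-text argument to compare against; your write-up supplies the missing verification and is correct. You correctly isolate the crux: the locally-constant-rank clause of Definition~\ref{def.Vect} is automatic for chains of fiberwise injections and for chains of fiberwise surjections (kernel zero, respectively kernel of constant corank, over each connected $\Delta^q_e\cong\RR^q$), so $\Vect^{\sf inj}$ and $\Vect^{\sf surj}$ are honest nerves and Segal, while it is genuinely obstructive for mixed composites, and your family $f_1=(v\mapsto(v,0))$, $f_2=((a,b)\mapsto ta+b)$ over $\RR$ is exactly the standard witness. The one step I would tighten is the claim that the obstruction ``is not absorbed by the realization $|-|$'': the clean justification is that the rank of the total composite $f_p\circ\cdots\circ f_1$ is constant along any $1$-simplex of the simplicial groupoid underlying $\Vect[2]$ (and is preserved by isomorphisms of chains), hence descends to a locally constant invariant on $|\Vect[2]|$, whereas your path in the Segal target joins the images of two $2$-chains on which this invariant takes the values $1$ and $0$; so the Segal map fails to be injective on $\pi_0$. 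With that sentence added, the argument is complete; the completeness and maximal-subgroupoid assertions follow formally from the constancy of $\Vect^\sim$, as you say.
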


\begin{observation}
There is a natural identification of $\infty$-groupoids
\[
\underset{n\geq 0} \coprod \BO(n)~\simeq ~\Vect^\sim~.
\]
For $i$ and $j$ dimensions, there is an identification of the space of $1$-simplices from $\RR^i$ to $\RR^j$
\[
\Vect(\RR^i,\RR^j)~\simeq~\underset{0\leq d \leq i,j} \coprod    \bigl(\sO(i)/\sO(i-d)\bigr)\underset{\sO(d)}\times \bigl(\sO(j)/\sO(j-d)\bigr)~.
\]
In particular, there are natural identifications of the spaces of morphisms 
\[
\Vect^{\sf inj}(\RR^i,\RR^j)~\simeq~ \sO(j)/\sO(j-i)
\qquad  \text{ and }\qquad
\Vect^{\sf surj}(\RR^i,\RR^j)~\simeq~ \sO(i)/\sO(i-j)~.  
\]
Also, the zero vector space is initial in the $\infty$-category $\Vect^{\sf inj}$, final in the $\infty$-category $\Vect^{\sf surj}$.
\end{observation}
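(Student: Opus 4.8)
The plan is to reduce all four assertions to direct computations of homotopy types of spaces of linear maps, organized through the topologizing-diagram description of $\Vect$ recalled in Section 1. First I would identify the $0$-simplices: a $0$-simplex is a conically smooth vector bundle over $\Delta^\bullet_e$, and since each $\Delta^q_e\cong\RR^q$ is contractible every such bundle is trivializable with a well-defined fiber dimension. Thus $\Vect^\sim$ is the nerve of the topological groupoid of finite-dimensional real vector spaces and linear isomorphisms; its automorphism groups $\mathrm{GL}_n(\RR)\simeq\sO(n)$ give $\Vect^\sim\simeq\coprod_{n\ge 0}\BO(n)$. The constancy of the simplicial space $\Vect^\sim$ noted in the preceding observation is exactly the statement that a composable string of isomorphisms deformation retracts onto its initial vertex, so this is all that the first identification requires.

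Next I would compute morphism spaces. In the complete Segal space $\Vect^{\sf inj}$ (and its variants) the space of maps from $\RR^i$ to $\RR^j$ is the homotopy fiber of the face maps $\Vect([1])\to\Vect([0])\times\Vect([0])$ over $(\RR^i,\RR^j)$. Over a single point the locally-constant-rank condition merely fixes a rank $d$, so a $1$-simplex is a linear map $f\colon\RR^i\to\RR^j$ of some rank $d$; the part of $\Vect([1])$ with endpoint dimensions $(i,j)$ and rank $d$ is the homotopy quotient of the space $\Hom^{d}(\RR^i,\RR^j)$ of rank-$d$ maps by $\mathrm{GL}_i(\RR)\times\mathrm{GL}_j(\RR)$. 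This homotopy quotient fibers over $B\mathrm{GL}_i(\RR)\times B\mathrm{GL}_j(\RR)$ with fiber $\Hom^{d}(\RR^i,\RR^j)$, so rigidifying the two endpoints, i.e.\ passing to the homotopy fiber, recovers exactly $\coprod_{0\le d\le i,j}\Hom^{d}(\RR^i,\RR^j)$.

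The geometric heart is to identify $\Hom^{d}(\RR^i,\RR^j)$ up to homotopy. By the singular-value decomposition, scaling the $d$ nonzero singular values to $1$ gives a deformation retraction of the rank-$d$ maps onto the rank-$d$ partial isometries, the rank being preserved throughout. A rank-$d$ partial isometry is precisely a $d$-dimensional subspace of $\RR^i$, a $d$-dimensional subspace of $\RR^j$, and an isometry between them; recording orthonormal frames of the two subspaces and remembering them only up to simultaneous rotation identifies this space with $\bigl(\sO(i)/\sO(i-d)\bigr)\underset{\sO(d)}{\times}\bigl(\sO(j)/\sO(j-d)\bigr)$. Summing over $d$ gives the second identification.

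The remaining statements are specializations. Fiberwise injectivity forces the full rank $d=i$, and since $\sO(i)/\sO(i-i)=\sO(i)$ the balanced product collapses to $\sO(j)/\sO(j-i)$; dually, surjectivity forces $d=j$ and yields $\sO(i)/\sO(i-j)$. Putting $i=0$ gives $\Vect^{\sf inj}(\RR^0,\RR^j)\simeq\sO(j)/\sO(j)\simeq\ast$, so $0$ is initial in $\Vect^{\sf inj}$, while putting $j=0$ gives $\Vect^{\sf surj}(\RR^i,\RR^0)\simeq\ast$, so $0$ is final in $\Vect^{\sf surj}$. I expect the only genuine friction to be bookkeeping rather than ideas: confirming that the topologizing diagram over $\Delta^\bullet_e$ reproduces the standard topology on these moduli, so that $\mathrm{GL}\simeq\sO$ and the Stiefel manifolds $\sO(k)/\sO(k-d)$ appear with their usual homotopy type, and checking that the singular-value retraction is compatible with the group actions used to form the balanced product.
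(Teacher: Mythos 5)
Your proof is correct. The paper states this as an unproved Observation, and your argument—trivializing bundles over the contractible $\Delta^q_e$, identifying mapping spaces as homotopy fibers of $(\ev_s,\ev_t)$ on $1$-simplices, and deformation-retracting the space of rank-$d$ linear maps onto rank-$d$ partial isometries $\bigl(\sO(i)/\sO(i-d)\bigr)\underset{\sO(d)}\times\bigl(\sO(j)/\sO(j-d)\bigr)$ via the polar/singular-value decomposition—is exactly the standard verification the authors leave implicit.
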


\begin{remark}
We give a second description of the $\infty$-category $\Vect^{\sf inj}$.
Direct sum of vector spaces gives the $\infty$-groupoid
\[
\underset{n\geq 0}\coprod \BO(n)
\]
a monoidal structure.  
This monoidal $\infty$-groupoid is specified by the $\infty$-category
\[
\fB\Bigl( \underset{n\geq 0}\coprod \BO(n) \Bigr)~,
\]
which is its deloop -- as a univalent Segal space, it evaluates as
\[
\fB\Bigl( \underset{n\geq 0}\coprod \BO(n) \Bigr)
\colon \bdelta^{\op} \ni [p]
~\mapsto ~
\Bigl( \underset{n\geq 0}\coprod \BO(n) \Bigr)^{\times p}
\in \Spaces~.
\]
In particular, there is a unique object $\ast \in \fB\Bigl( \underset{n\geq 0}\coprod \BO(n) \Bigr)$.
There is a canonical identification between $\infty$-categories
\[
\Vect^{\sf inj}
{~\simeq~}
\fB\Bigl( \underset{n\geq 0}\coprod \BO(n) \Bigr)^{\ast/}
\]
involving the $\infty$-undercategory.  

\end{remark}

\begin{remark}\label{vect-dual}
Taking fiberwise linear duals implements an equivalence $(-)^\vee\colon \Vect^{\op}\simeq \Vect$.  
This equivalence restricts to $\Vect^\sim$ as the identity, and it restricts as an equivalence $(\Vect^{\sf inj})^{\op}\simeq \Vect^{\sf surj}$.
\end{remark}

\begin{remark}
We find it convenient to sometimes work with the simplicial space $\Vect$ even though it does not present an $\infty$-category. 
For instance, as in~\S1 of~\cite{HTT}, the notion of a limit diagram, and of a zero-object, in a simplicial space can be defined as usual once the notion of a final object in a simplicial space is defined, which we do as follows:
\begin{itemize}
\item[~]
Let $\cV$ be a simplicial space.
Let $[0]\xra{V}\cV$ be an object. 
The simplicial space $\cV_{/V}$ is that for which $\Map([p],\cV_{/V}) :\simeq \Map\bigl([p+1],\{p+1\}),(\cV,V)\bigr)$ for each $p\geq 0$.  
The object $V\in \cV$ is \emph{final} if the projection $\cV_{/V} \to \cV$ is an equivalence of simplicial spaces.
\end{itemize}

\end{remark}

\begin{observation}
The zero vector space is a zero-object of the simplicial space $\Vect$.
Furthermore, each map $[1]\to \Vect$, which classifies a linear map $V\xra{f} W$ between vector spaces, canonically extends to a limit diagram $[1]\times[1]\to \Vect$:
\[
\xymatrix{
{\sf Ker}(f) \ar[r]^-i  \ar[d]
&
V  \ar[d]^-f
\\
0  \ar[r]
&
W
}
\]
in which $i$ is an injection.
Using Remark~\ref{vect-dual}, if $f$ is surjective then the above square diagram in $\Vect$ is also a pushout, and there is a natural equivalence ${\sf Ker}(i)^{\vee} \simeq W$. 

\end{observation}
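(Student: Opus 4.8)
The plan is to prove the three assertions in turn, working throughout with the slice-theoretic notion of finality supplied by the preceding remark, together with the duality equivalence $(-)^\vee\colon\Vect^{\op}\simeq\Vect$ of Remark~\ref{vect-dual}. I first treat the zero-object claim. To see that $0$ is final, I check directly that the projection $\Vect_{/0}\to\Vect$ is an equivalence of simplicial spaces: at level $p$ its fiber over a sequence $V_0\to\dots\to V_p$ is the space of extensions by a map $V_p\to 0$, of which there is exactly one, the zero map, and every new composite $V_i\to\dots\to V_p\to 0$ has rank zero and hence locally constant rank, so the extended sequence is automatically a legitimate $(p+1)$-simplex of $\Vect$. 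Each fiber is therefore contractible and the projection is an equivalence. Initiality of $0$ follows by the symmetric argument on the coslice $\Vect_{0/}\to\Vect$, whose fibers are the (again contractible) spaces of extensions $0\to V_0$; alternatively it is obtained formally by transporting finality across $(-)^\vee$, using $0^\vee\cong 0$. Thus $0$ is a zero-object.

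Next I verify that the displayed square is a limit diagram, i.e. that the cone with apex $\mathsf{Ker}(f)$ is final among cones over the cospan $V\xra{f}W\leftarrow 0$. A cone with apex $T$ is precisely a morphism $a\colon T\to V$ of $\Vect$ with $fa=0$, the leg $T\to 0$ being forced and the two composites into $W$ agreeing automatically. Because a simplex of $\Vect$ has locally constant rank, $\mathsf{Ker}(f)\subset V$ is a sub-bundle and $i$ is a fiberwise injection onto it; hence any such $a$ factors as $a=i\circ\tilde a$ for a unique $\tilde a\colon T\to\mathsf{Ker}(f)$, and $\tilde a$ has locally constant rank since $i$ is injective, so $\tilde a$ is itself a morphism of $\Vect$. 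The space of cone maps $(T,a)\to(\mathsf{Ker}(f),i)$ is therefore contractible, which is exactly the asserted finality. The identical computation runs over the families $\Delta^\bullet_e$ and on every $[p]$-level, since kernels of locally-constant-rank bundle maps are sub-bundles by definition.

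For the pushout claim I dualize. A square is a pushout in $\Vect$ if and only if, after applying $(-)^\vee$ termwise and reversing the arrows, the resulting square is a pullback in $\Vect$; this uses only that $(-)^\vee$ is an equivalence of simplicial spaces and that colimits are computed as limits in the opposite. The dualized square is the cospan $V^\vee\xra{i^\vee}\mathsf{Ker}(f)^\vee\leftarrow 0$ equipped with $f^\vee\colon W^\vee\to V^\vee$, so by the limit claim already proved (applied to $i^\vee$) its pullback is $\mathsf{Ker}(i^\vee)$, and it remains to identify $f^\vee$ as an isomorphism $W^\vee\cong\mathsf{Ker}(i^\vee)$. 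One always has $\mathsf{Im}(f^\vee)\subseteq\mathsf{Ann}(\mathsf{Ker} f)=\mathsf{Ker}(i^\vee)$; surjectivity of $f$ is precisely what makes $f^\vee$ injective with $\dim W^\vee=\dim V-\dim\mathsf{Ker} f=\dim\mathsf{Ker}(i^\vee)$, forcing equality fiberwise, hence a bundle isomorphism. This both shows the square is a pushout and yields the natural equivalence $\mathsf{Ker}(i^\vee)^\vee\simeq W$ recorded in the statement (the notation $\mathsf{Ker}(i)^\vee$ there being this same identification read through the duality).

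The main obstacle I anticipate is not the linear algebra, which is routine, but pinning down and then consistently applying the slice/coslice definitions of final object, limit, and colimit in $\Vect$, a simplicial space that fails the Segal condition, and checking that each apex produced (a kernel or an annihilator) genuinely lies in $\Vect$ and that the universal properties hold uniformly at every simplicial level rather than merely fiberwise. The single substantive geometric input, which the definition of locally constant rank hands us directly, is that kernels, images, and cokernels of locally-constant-rank maps of conically smooth vector bundles are again such bundles.
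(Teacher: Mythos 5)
The paper states this as an Observation without proof; your argument supplies exactly the routine verification that is intended, correctly using the slice-theoretic notions of final object and limit in the non-Segal simplicial space $\Vect$ together with the fact that the locally-constant-rank condition makes kernels (and, via duality, annihilators) into sub-bundles at every simplicial level. Your reading of the slightly abusive notation $\mathsf{Ker}(i)^\vee$ as $\mathsf{Ker}(i^\vee)^\vee \simeq \mathsf{Coker}(i) \simeq W$ is the right one, and the dualization argument for the pushout is sound.
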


Fiberwise direct sum of vector spaces defines a symmetric monoidal structure on $\Vect$,
\[
\Vect~\in~\CAlg\bigl(\Psh(\bdelta)^\times\bigr)~,
\]
as we now explain.
First note that the simplicial space $\Vect$ is defined as the nerve of a simplicial object in simplicial groupoids.  
Fiberwise direct sum of vector bundles lifts this simplicial object in simplicial groupoids to a simplicial object in symmetric monoidal simplicial groupoids:
\[
\bigoplus \colon \bigl((V\to \Delta^\bullet_e),(W\to \Delta^\bullet_e)\bigr)~\mapsto~ \bigl(V\underset{\Delta^\bullet_e}\times W\to \Delta^\bullet_e\bigr)~.
\]
The existence of the proposed symmetric monoidal structure follows because geometric realization commutes with products.

\begin{observation}
The symmetric monoidal structure on $\Vect$ restricts as symmetric monoidal structures on each of the $\infty$-categories $\Vect^{\sf inj}$, $\Vect^{\sf surj}$, and $\Vect^\sim$.

\end{observation}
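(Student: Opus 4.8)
The plan is to exploit that a symmetric monoidal structure on a simplicial space, presented as a commutative algebra in the cartesian symmetric monoidal category $(\Psh(\bdelta)^\times)$, restricts to a subobject exactly when that subobject contains the unit and is closed under the multiplication; since monomorphisms of simplicial spaces are detected levelwise and the multiplication here is fiberwise direct sum $\oplus$, this reduces the whole claim to a levelwise, fiberwise, linear-algebraic check. First I would record that the inclusions $\Vect^\sim \hookrightarrow \Vect^{\sf inj}, \Vect^{\sf surj}\hookrightarrow \Vect$ are monomorphisms of simplicial spaces, cut out at each level $[p]$ by the condition that the structure maps $f_1,\dots,f_p$ of a $p$-simplex are fiberwise isomorphisms (respectively, injections, respectively, surjections). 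Because these inclusions are monomorphisms, ``the multiplication restricts'' is a \emph{property} rather than additional data: it suffices to verify that $\oplus$ carries a pair of simplices satisfying the defining condition to a simplex again satisfying it.

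The core input is fiberwise linear algebra. For linear maps of vector spaces $f\colon V\to V'$ and $g\colon W\to W'$ there are canonical identifications ${\sf Ker}(f\oplus g)\cong {\sf Ker}(f)\oplus{\sf Ker}(g)$ and ${\sf Coker}(f\oplus g)\cong {\sf Coker}(f)\oplus{\sf Coker}(g)$. Hence $f\oplus g$ is a fiberwise injection (respectively, surjection, respectively, isomorphism) if and only if both $f$ and $g$ are, and in the injective and surjective cases the kernel of $f\oplus g$ remains a sub-vector bundle, so the locally-constant-rank condition built into the definition of $\Vect$ is preserved. I would also observe here that every fiberwise injection, surjection, or isomorphism automatically has locally constant rank, its kernel being $0$ or a subbundle, so there is no further compatibility to check against the simplicial structure. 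Finally, the unit of the monoidal structure is the zero vector bundle, which is an isomorphism over any base and so lies in $\Vect^\sim$, hence in all three subobjects.

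It then remains to package this at the presheaf-theoretic level. Recall that $\Vect$ was obtained by realizing a simplicial object in symmetric monoidal simplicial groupoids, the monoidal structure being fiberwise direct sum. Since each of $\Vect^\sim, \Vect^{\sf inj}, \Vect^{\sf surj}$ is defined levelwise by the fiberwise conditions just discussed, the closure statement shows that this simplicial object in symmetric monoidal simplicial groupoids restricts to each subobject; applying the nerve, which commutes with products, produces commutative algebras $\Vect^\sim, \Vect^{\sf inj}, \Vect^{\sf surj}\in \Alg_{\sf Com}(\Psh(\bdelta)^\times)$ together with symmetric monoidal inclusions into $\Vect$. As $\Vect^{\sf inj}$ and $\Vect^{\sf surj}$ are complete Segal spaces and $\Vect^\sim$ is an $\infty$-groupoid, a commutative-algebra structure in $(\Psh(\bdelta)^\times)$ on any of them is precisely a symmetric monoidal structure on the corresponding $\infty$-category, with the inclusion symmetric monoidal; this delivers the assertion.

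The only mild obstacle is conceptual bookkeeping rather than mathematical difficulty: one must be sure that restricting a symmetric monoidal structure to these \emph{non-full} subcategories is legitimate, i.e. that all higher coherence data (the associativity and symmetry constraints) automatically descend. This is guaranteed because the subobjects are monomorphisms on which the multiplication and unit restrict: each coherence cell lives in $\Vect$ and, being uniquely determined by its boundary under the monomorphism, descends uniquely to the subobject. I would therefore spend essentially no effort on coherence and concentrate the written proof on the one-line closure statement for $\oplus$ together with the identification of commutative algebras in $(\Psh(\bdelta)^\times)$ supported on complete Segal spaces with symmetric monoidal $\infty$-categories.
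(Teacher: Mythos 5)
Your proposal is correct and matches the argument the paper intends (the observation is stated without proof precisely because it follows from the same construction of the symmetric monoidal structure on $\Vect$ via fiberwise direct sum of the underlying simplicial object in symmetric monoidal simplicial groupoids): the key points are that $f\oplus g$ is a fiberwise injection/surjection/isomorphism whenever $f$ and $g$ both are, that the zero bundle unit lies in $\Vect^\sim$, and that since the levelwise inclusions are monomorphisms of spaces the restriction of the commutative algebra structure is a property rather than extra data. Your attention to the locally-constant-rank condition and to why the coherence data descends along the monomorphisms is exactly the right bookkeeping.
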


\subsubsection{\bf The constructible tangent bundle}

In~\S2 of~\cite{striat} we considered the \emph{topologizing diagram functor}
\begin{equation}\label{def.td}
{\sf Trans} \xra{~ \fF \mapsto |\fF(-\times\Delta^\bullet_e)|~} \Stri~\underset{\text{\cite{striat}}}\simeq~\Cat_\infty
\end{equation}
from transversality sheaves, which are certain right fibrations $\fF\to \strat$ between ordinary categories, to striation sheaves.

\begin{observation}\label{td-prods}
Because geometric realization commutes with finite products, the topologizing diagram functor~(\ref{def.td}) preserves finite products.

\end{observation}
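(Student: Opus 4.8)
The plan is to reduce the claim to the single homotopical input named in the statement, that geometric realization preserves finite products, with everything else being objectwise bookkeeping.

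First I would record that finite products in the source ${\sf Trans} \subset \Fun(\strat^{\op}, {\sf Gpd})$ and in the target $\Stri \subset \Psh(\strat)$ are each computed objectwise in $X \in \strat$. In both cases the ambient presheaf $\infty$-category has objectwise products, and the defining conditions are stable under products since limits commute with limits: for ${\sf Trans}$ these are the transversality/descent conditions, and for $\Stri$ they are $\RR^1$-invariance together with the requirement of carrying the distinguished colimit diagrams of Theorem~\ref{thm.exit-facts} to limits. Hence it suffices to fix a stratified space $X$ and show that $\fF \mapsto |\fF(X \times \Delta^\bullet_e)|$ carries an objectwise product $\fF \times \fG$ to a product of values.

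Next, because products are objectwise we have $(\fF \times \fG)(X \times \Delta^p_e) \simeq \fF(X \times \Delta^p_e) \times \fG(X \times \Delta^p_e)$ for every $[p]$, so the simplicial groupoid $[p] \mapsto (\fF \times \fG)(X \times \Delta^p_e)$ is the levelwise product of the simplicial groupoids attached to $\fF$ and to $\fG$. It therefore remains only to check that the nerve functor $|-|\colon \Fun(\bdelta^{\op}, {\sf Gpd}) \to \Spaces$ preserves finite products. I would factor this functor as the levelwise classifying-space functor ${\sf Gpd} \to \Spaces$, which preserves products as it is a right adjoint, followed by geometric realization $\Fun(\bdelta^{\op}, \Spaces) \to \Spaces$.

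The only step carrying homotopical content, and thus the sole obstacle, is this last one: geometric realization is the colimit along $\bdelta^{\op}$, and $\bdelta^{\op}$ is sifted, so $\bdelta^{\op}$-indexed colimits in $\Spaces$ commute with finite products (cf.~\cite{HTT}). Composing the two product-preservation statements yields $|(\fF \times \fG)(X \times \Delta^\bullet_e)| \simeq |\fF(X \times \Delta^\bullet_e)| \times |\fG(X \times \Delta^\bullet_e)|$, naturally in $X$, which is the assertion.
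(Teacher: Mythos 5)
Your proposal is correct and matches the paper's reasoning: the Observation is justified in the text by exactly the fact you isolate, that geometric realization (a $\bdelta^{\op}$-indexed, hence sifted, colimit in $\Spaces$) commutes with finite products, with the remaining steps being objectwise bookkeeping as you describe. The paper leaves those routine reductions implicit, so your write-up is simply a fleshed-out version of the same argument.
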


\begin{cor}\label{td-sym}
The topologizing diagram functor~(\ref{def.td}) carries commutative algebras in ${\sf Trans}$ to symmetric monoidal $\infty$-categories.  
In particular, the following $\infty$-categories have natural symmetric monoidal structures:
\begin{eqnarray}
\nonumber
\Bun
&
\text{\rm with }
&
\bigl((X\to K),(Y\to K)\bigr)\mapsto \bigl(X\underset{K}\times Y\to K\bigr)~;
\\
\nonumber
\cBun
&
\text{\rm with }
&
\bigl((X\to K),(Y\to K)\bigr)\mapsto \bigl(X\underset{K}\times Y\to K\bigr)~;
\\
\nonumber
\Exit
&
\text{\rm with }
&
\bigl((X\overset{\sigma}{\leftrightarrows} K),(Y\overset{\tau}{\leftrightarrows} K)\bigr)\mapsto \bigl(X\underset{K}\times Y\overset{\sigma\times \tau}{\leftrightarrows} K\bigr)~.
\end{eqnarray}

\end{cor}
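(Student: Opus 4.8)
The plan is to reduce the statement to the single input that the topologizing diagram functor of~(\ref{def.td}) preserves finite products, which is Observation~\ref{td-prods}, together with the observation that each of the monoidal structures in question is \emph{fiberwise Cartesian} and so is already visible at the level of transversality sheaves.

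First I would record the abstract mechanism. A symmetric monoidal $\infty$-category is, by definition, a commutative algebra object in $(\Cat_\infty,\times)$, and more generally a commutative algebra in any Cartesian symmetric monoidal ($\infty$-)category $\cV$ is encoded by a functor out of $\Fin_\ast$ satisfying the Segal conditions, which are articulated entirely through finite products. Consequently, if $F\colon \cV\to \cW$ preserves finite products, then post-composition with $F$ carries the diagrams witnessing a commutative algebra $A\in \Alg_{\sf Com}(\cV^\times)$ to the diagrams witnessing $F(A)\in \Alg_{\sf Com}(\cW^\times)$; in other words $F$ induces a functor $\Alg_{\sf Com}(\cV^\times)\to \Alg_{\sf Com}(\cW^\times)$. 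Taking $F$ to be the topologizing diagram functor and invoking Observation~\ref{td-prods} proves the first assertion of the corollary: commutative algebras in ${\sf Trans}$ are carried to commutative algebras in $\Cat_\infty^\times$, i.e.\ to symmetric monoidal $\infty$-categories.

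Second I would exhibit the three transversality sheaves presenting $\Bun$, $\cBun$, and $\Exit$ as commutative algebras in ${\sf Trans}$. For each stratified space $K$, the fiberwise product $\bigl((X\to K),(Y\to K)\bigr)\mapsto (X\times_K Y\to K)$ is precisely the categorical product in the slice category $\strat_{/K}$, with monoidal unit the terminal object ${\sf id}\colon K\to K$. It therefore equips each fiber groupoid with a Cartesian symmetric monoidal structure that is strictly natural in $K$ under pullback---which is exactly the data of a commutative algebra object of ${\sf Trans}$. The content to be checked is geometric: that the subgroupoid of $\strat_{/K}$ cut out by each of the three classes (constructible bundles, proper constructible bundles, and constructible bundles equipped with a section) is closed under fiber product and contains the unit. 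Closure of constructible and of proper constructible bundles under fiber product, and the fact that ${\sf id}_K$ lies in each class, follow from the definitions recalled in \S1 and from \cite{striat}, \cite{aft1}; for $\Exit$ one checks in addition that the product section $\sigma\times_K\tau$ is a section of $X\times_K Y$, which is formal.

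Combining the two steps, each of these three transversality sheaves lies in $\Alg_{\sf Com}({\sf Trans}^\times)$, hence is carried by the topologizing diagram functor to a symmetric monoidal $\infty$-category; unwinding the realization $\fF\mapsto |\fF(-\times\Delta^\bullet_e)|$ identifies the induced tensor product with the stated fiberwise product on $\Bun$, $\cBun$, and $\Exit$. The main---indeed only substantive---obstacle is the geometric closure of these classes of bundles under fiber product; once that is in hand there is no higher coherence to construct by hand, since the monoidal structure is Cartesian already in the ordinary $1$-categorical slices $\strat_{/K}$, and all passage to $\infty$-categories is absorbed by the product-preservation of Observation~\ref{td-prods}.
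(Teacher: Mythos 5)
Your proposal is correct and follows essentially the same route the paper intends: the first assertion is exactly the formal consequence of Observation~\ref{td-prods} (a finite-product-preserving functor induces a functor on commutative algebra objects for the Cartesian symmetric monoidal structures), and the examples are obtained by exhibiting the relevant transversality sheaves as commutative algebras via the fiberwise product in $\strat_{/K}$. You also correctly isolate the only substantive input---closure of constructible bundles, proper constructible bundles, and sectioned constructible bundles under fiber product over $K$---which the paper likewise defers to the antecedent works.
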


The poset $\ZZ_{\geq 0}$ of non-negative integers carries a natural symmetric monoidal structure given by addition.  
Notice the functor
\[
{\sf dim}\colon \Exit \longrightarrow \ZZ_{\geq 0}~,\qquad \Bigl(\exit(K)\xra{(X\overset{\sigma}{\underset{\pi}{\leftrightarrows}} K)}\Exit\Bigr)~\mapsto~ \Bigl(\exit(K)\xra{k \mapsto {\sf dim}_{\sigma(k)}((\pi^{-1}k)_p)}\ZZ_{\geq 0}\Bigr)
\]
given by assigning to each such indicated $K$-point of $\Exit$ the functor $\exit(K) \to \ZZ_{\geq 0}$ whose value on $k\in K$ is the local dimension of the stratum of the fiber in which $\sigma(k)$ lies.
Explicitly, in the case that $K=\ast$ is a point, this functor assigns to each pointed stratified space $(x\in X)$ the local dimension ${\sf dim}_x(X_p)$ of the stratum $X_p\subset X$ in which $x$ lies.  
This functor is naturally a symmetric monoidal functor.

Notice the functor
\begin{equation}\label{dim.functor}
{\sf dim}\colon \Vect^{\sf inj} \longrightarrow \ZZ_{\geq 0}
\end{equation}
that carries each vector space to its dimension; this functor is naturally a symmetric monoidal functor.  
Notice the functor between $\infty$-categories
\begin{equation}\label{vect-to-exit}
\Vect^\sim\longrightarrow \Exit~,\qquad |(V\to \Delta^p\times \Delta^\bullet_e)|\mapsto |(V\overset{\sf zero}{\leftrightarrows} \Delta^p\times \Delta^\bullet_e)|~,
\end{equation}
which is expressed here as a map of simplicial spaces,
Because the underlying stratified space of the direct sum of two vector spaces agrees with the product of their underlying stratified spaces, this last functor is naturally a symmetric monoidal functor.  
Because vector space dimension equals topological dimension, this symmetric monoidal functor naturally lies over the symmetric monoidal $\infty$-category $\ZZ_{\geq 0}$.

We are now prepared to present our definition of the (fiberwise) constructible tangent bundle.  
The definition relies on a characterization, which is Proposition~\ref{unique-action} below, which we prove in~\S\ref{sec.char}.  
\begin{definition}\label{def.tangent}
The \emph{
fiberwise
constructible tangent bundle} is the unique symmetric monoidal functor under $\Vect^\sim$,
\[
\sT^{\sf fib} \colon \Exit \longrightarrow \Vect^{\sf inj}~,
\]
that carries both closed morphisms and embedding morphisms to equivalences.  

\end{definition}

\begin{prop}\label{unique-action}
The fiberwise constructible tangent bundle exists and is uniquely characterized by the conditions of Definition~\ref{def.tangent}.  

\end{prop}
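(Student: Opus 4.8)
The plan is to prove existence and uniqueness together by a d\'evissage on the depth of singularity type, reducing in each case to a local, rigid computation on basics. The guiding principle is that a symmetric monoidal functor $\sT\colon \Exit \to \Vect^{\sf inj}$ under $\Vect^\sim$ which inverts closed and embedding morphisms is \emph{rigid}: its value on any object, and on the generating active morphisms, is forced once one knows its restriction to Euclidean basics. Concretely, every object $(x\in X)$ of $\Exit$ is, after choosing a basic chart $\RR^i\times \sC(Z)$ around $x$, connected to objects in the image of the functor $\Vect^\sim \to \Exit$ of~(\ref{vect-to-exit}) through an embedding morphism (the chart) and a closed morphism (inclusion of the cone-locus stratum $\RR^i$, which is proper constructible injective). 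Since $\sT$ inverts both classes and is prescribed on $\Vect^\sim$ by the under-$\Vect^\sim$ structure, the value $\sT(x\in X)\simeq \RR^{{\sf dim}_x}$ is determined and lies over the symmetric monoidal dimension functor ${\sf dim}\colon \Exit \to \ZZ_{\geq 0}$.

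For morphisms, every morphism of $\Exit$ is automatically carried to an injection, since that is all $\Vect^{\sf inj}$ contains; the content is \emph{which} injection. I would invoke the factorization system $(\Bun^{\sf cls},\Bun^{\sf act})$ of Lemma~\ref{closed-active} together with the generation of active morphisms by creation, refinement, and embedding morphisms, so as to reduce to these generators. Embedding and closed morphisms go to equivalences by hypothesis, hence are determined. On a refinement morphism the marked point passes from a finer stratum of dimension $d$ to a coarser one of dimension $d'\geq d$, and $\sT$ must record the induced inclusion of tangent spaces $\RR^d\hookrightarrow \RR^{d'}$; creation morphisms are treated the same way locally. Using the identification $\Vect^{\sf inj}(\RR^d,\RR^{d'})\simeq \sO(d')/\sO(d'-d)$ together with compatibility with ${\sf dim}$ and with the under-$\Vect^\sim$ structure, I would show the value of $\sT$ on each such local generator is pinned down to the canonical inclusion, up to the contractible ambiguity absorbed by $\Vect^\sim$. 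The base case of depth $0$ is exactly the classical statement that the tangent classifier $M\mapsto (M\to \coprod_i\BO(i))$ is the unique symmetric monoidal functor under $\Vect^\sim$ inverting open embeddings among smooth manifolds, equivalent to the homotopy equivalence $\Diff(\RR^n)\simeq \fr(\RR^n)$.

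For existence, and to propagate the local computation across all of $\Exit$, I would induct on depth. Define $\sT$ on depth-$0$ objects as the usual tangent classifier, and on a depth-$k$ basic $\RR^i\times \sC(Z)$ by gluing the forced value $\RR^i$ on the cone locus to the value already constructed on the link $Z$, which has depth $<k$. The compatibility required to assemble these pieces into one functor is supplied by the colimit diagrams of Theorem~\ref{thm.exit-facts}: open hypercoverings pass from basics to general objects, while the blow-up and iterated-cone diagrams govern the interaction of a top-depth stratum with its link. Because $\sT$ is required to invert closed and embedding morphisms, one checks that the reconstructed assignment descends along exactly these diagrams and, via the striation-sheaf structure of $\Exit$, organizes into a well-defined functor; its symmetric monoidal structure is inherited from Corollary~\ref{td-sym}, using that fiberwise direct sum of vector spaces realizes the product of underlying stratified spaces. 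Uniqueness then follows globally, since any two functors satisfying the hypotheses agree on basics by the rigidity above, hence agree everywhere by the same descent.

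I expect the main obstacle to be the inductive gluing step. The difficulty is that an arbitrary functor out of $\Exit$ need not preserve the colimit diagrams of Theorem~\ref{thm.exit-facts}, so one cannot merely left Kan extend from basics. The resolution is that the two conditions --- inverting closed morphisms and inverting embedding morphisms --- are precisely what trivializes the link contributions and enforces descent along blow-ups and iterated cones; making this interaction compatible with the symmetric monoidal structure, and verifying that the glued functor genuinely inverts closed and embedding morphisms at depth $k$ (and not merely at depth $<k$), is the delicate point that the argument of \S\ref{sec.char} must carry out.
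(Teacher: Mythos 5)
Your local rigidity mechanism is exactly the one the paper uses: the cospan $(x\in X)\xra{~\sf cls~}(x\in \ov{X}_p)\xla{~\emb~}(0\in T_xX_p)$ is the geometric heart of Lemma~\ref{0-final}, and it does force the value of $\sT$ on objects. But your proof architecture has a genuine gap where coherence is concerned, and the mechanism you propose to close it is not the one that works. Knowing the value of $\sT$ on each object, and knowing that each generating morphism must go to ``the canonical inclusion,'' does not produce, nor uniquely characterize, a functor between $\infty$-categories: one must show the entire \emph{space} of symmetric monoidal functors under $\Vect^\sim$ satisfying the two localities is contractible, which requires pinning down the values on all composable sequences of morphisms coherently. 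Your proposal defers this to ``descent along blow-ups and iterated cones,'' but the colimit diagrams of Theorem~\ref{thm.exit-facts} decompose a single exit-path category $\exit(X)$; they say nothing about how the various $\sT_X$ assemble into one functor out of $\Exit$, which is precisely the coherence the authors report finding impractical to manage directly (see the remark closing \S\ref{tangent-explicit}). Likewise, agreeing on basics ``up to the contractible ambiguity absorbed by $\Vect^\sim$'' is strictly weaker than an equivalence of functors, so your uniqueness argument does not close either.

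The paper's induction is not on depth of singularity type but on \emph{simplicial degree}: one seeks a map of simplicial symmetric monoidal objects $\w{\sT}\colon \cSpan(\Exit^{[\bullet]})^{\sf cls\text{-}emb}\to \Map([\bullet],\Vect^{\sf inj})$, where passing to cospans implements the two localities for free because the target is an $\infty$-groupoid in each degree. The base case $p=0$ is your rigidity statement made precise, namely the finality (via Quillen's Theorem~A) of $\Vect^\sim\to\cSpan(\Exit)^{\sf cls\text{-}emb}$. The inductive step is the ingredient your sketch lacks: because the zero vector space is initial in $\Vect^{\sf inj}$, the restriction $\Map_0([q],\Vect^{\sf inj})\to\Map([q-1],\Vect^{\sf inj})$ is an equivalence, and Lemma~\ref{0-final} applied to $Z=\Delta^{q-1}$ shows that $\Vect^\sim\times \cSpan(\Exit^{[q]})_0^{\sf cls\text{-}emb}\to\cSpan(\Exit^{[q]})^{\sf cls\text{-}emb}$ is final; together these force degree $q$ from degree $q-1$. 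Without a substitute for this degree-lowering argument, both existence and uniqueness remain open in your approach.
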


\subsubsection{\bf Elucidating $\sT^{\sf fib}$}\label{tangent-explicit}
We postpone the construction of the 
fiberwise
constructible tangent bundle $\sT^{\sf fib}$ to~\S\ref{sec.char}.
Here we provide a simple description of $\sT^{\sf fib}$ value-wise, without taking care to manage its coherent functoriality.
Let $X= (X\to P)$ be a stratified space.
Lemma~3.3.5 of~\cite{striat} gives that the underlying $\infty$-groupoid of $\exit(X)$ is the coproduct of spaces $\underset{p\in P} \coprod X_p$; each cofactor $X_p$ is the underlying space of a smooth manifold.  
Lemma~3.3.5 of~\cite{striat} also gives that the space of morphisms in $\exit(X)$ from the $X_p$ component to the $X_q$ component is the space $L_{pq}$, which is the $q$-stratum of the link of $X_p\subset X$; it is the underlying space of a smooth manifold.  
As established in~\S7.3 of~\cite{aft1}, this smooth manifold is equipped as a smooth proper fiber bundle $X_p\xla{\pi_{pq}} L_{pq}$ as well as a smooth open embedding $L_{pq}\times(0,1) \xra{\gamma_{pq}} X_q$. 
This $\gamma_{pq}$ is determined up to a choice of a collaring of the link of $X_p$ in $X$ within the blow-up, or unzip, of $X$ along $X_p$, which is a contractible choice. 
The functor $\sT_X\colon \exit(X) \to \Vect^{\sf inj}$ can be described on objects and morphisms as follows.
\begin{itemize}
\item {\bf Objects:} $\sT_X$ restricts to the component $X_p$ of the underlying $\infty$-groupoid as the local system 
\[
\sT X_p \colon X_p \longrightarrow \Vect^{\sf inj}
\]
classifying the tangent bundle of the smooth manifold $X_p$.

\item {\bf Morphisms:} $\sT_X$ restricts to the component $L_{pq}$ of the mapping space as the diagram of local systems
\[
\xymatrix{
L_{pq} \ar[d]_-{(\pi_{pq},{\gamma_{pq}}_|)}     \ar[rr]
&&
\Ar(\Vect^{\sf inj})  \ar[d]^-{(\ev_s,\ev_t)}
\\
X_p\times X_q  \ar[rr]^-{\sT X_p\times \sT X_q}
&&
\Vect^{\sf inj}\times \Vect^{\sf inj}
}
\]
in which the top horizontal functor
$L_{pq}\to \Ar(\Vect^{\sf inj})$
classifies the composite morphism between vector bundles over $L_{pq}$,
\[
\pi_{pq}^\ast \sT X_p     
\overset{~\sD\pi_{pq}^\vee~}\hookrightarrow
\sT L_{pq}   
\overset{~\rm zero~}\hookrightarrow
\sT L_{pq}\oplus \epsilon^1    
\overset{~\sD\gamma_{pq}~} \hookrightarrow
\gamma_{pq}^\ast \sT X_q     ~,
\]
with $\sD\pi_{pq}^\vee$ the dual of the derivative of $\pi_{pq}$, and with the middle map the inclusion as zero in the second coordinate.

\end{itemize}
\begin{remark}
In the above we made use of an identification $V\simeq V^\vee$ for each object in $\Vect$.
Such an identification is tantamount to a choice of an inner product on $V$, the space of which is contractible.  
It is because of these contractible choices that we opted for a hands-off construction of the constructible tangent bundle, which occupies~\S\ref{sec.tgt-char}.
Indeed, we found it impractical to directly manage these contractible choices in order to define a functor between $\infty$-categories $\Exit \xra{\sT^{\sf fib}}\Vect^{\sf inj}$.  

\end{remark}

\subsection{Framings: one stratified space at a time}\label{sec.framings}
Here, we define \emph{tangential structures} and give some examples thereof.
We do this through the heuristic description of the constructible tangent bundle given in the previous section~\S\ref{tangent-explicit}, still postponing the rigorous construction thereof to the coming section~\S\ref{sec.tgt-char}.
Our choice to postpone this construction is a consequence of our choice to define the constructible tangent bundle as it is characterized by its functoriality in $\Exit$.
Indeed, this functoriality requires the \emph{fiberwise} constructible tangent bundle of a constructible bundle---we postpone this degree of generalization so as not to complicate the discussion of a framings on a single stratified space.

\begin{definition}\label{def.tangential-structure}
A \emph{tangential structure} is an $\infty$-category $\cB$ equipped with a functor $\cB \to \Vect^{\sf inj}$.
The \emph{$\infty$-category of tangential structures} is the $\infty$-category $\Cat_{\infty/\Vect^{\sf inj}}$.
\end{definition}

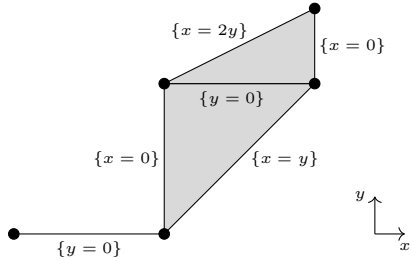
\begin{figure}[ht]
\centering

\begin{tikzpicture}
\tikzstyle{every node}=[font=\tiny]
	
\draw (0,0) to (2,0);
\draw[fill=lightgray] (2,0) to (2,2) to (4,2) to (2,0);
\draw[fill=lightgray] (2,2) to (4,2) to (4,3) to (2,2);

\draw[fill] (0,0) circle [radius=0.07];
\draw[fill] (2,0) circle [radius=0.07];
\draw[fill] (2,2) circle [radius=0.07];
\draw[fill] (4,2) circle [radius=0.07];
\draw[fill] (4,3) circle [radius=0.07];

\node at (1,-0.2) {$\{y=0\}$};
\node at (3.6,1) {$\{x=y\}$};
\node at (2.6,2.7) {$\{x=2y\}$};
\node at (1.5,1) {$\{x=0\}$};
\node at (4.5,2.5) {$\{x=0\}$};
\node at (2.9,1.8) {$\{y=0\}$};

\draw[->] (4.8,0) to (5.2,0) node[below]{$x$};
\draw[->] (4.8,0) to (4.8,0.5) node[left]{$y$};

\end{tikzpicture}

\caption{A framed stratified $2$-manifold with its constructible tangent bundle; the 1-dimensional strata are labelled by the embedding of the tangent bundle.}

\end{figure}

\begin{notation}
We often denote a tangential structure $\cB \to \Vect^{\sf inj}$ simply as its domain $\cB$, with the functor to $\Vect^{\sf inj}$ understood.
\end{notation}

\begin{definition}\label{def.B.framings}
Let $\cB\to \Vect^{\sf inj}$ be a tangential structure.  
Let $X$ be a stratified space.
A \emph{$\cB$-framing on $X$} is a lift $\varphi$ as in the commutative diagram among $\infty$-categories:
\[
\xymatrix{
&&
\cB  \ar[d]
\\
\sT_X\colon \exit(X)  \ar[r]  \ar@{-->}[urr]^-\varphi
&
\Exit  \ar[r]^-{\sT^{\sf fib}}
&
\Vect^{\sf inj}  .
}
\]
A \emph{$\cB$-framed stratified space} is a pair $(X,\varphi)$ consisting of a stratified space $X$ together with a $\cB$-framing $\varphi$ on $X$.

\end{definition}

\begin{remark}
If the base change $\cB_{|X} \to \exit(X)$ is the unstraightening of a functor $\exit(X) \xra{\cB_{|X}} \Spaces$, such a lift is the datum of a point in the limit $\varphi \in {\sf lim}\bigl(\exit(X) \xra{\cB_{|X}} \spaces\bigr)$.
Through Theorem~A.9.3 of~\cite{HA}, we interpret such a $\varphi$ as a global section of a constructible sheaf on the stratified space $X$.  
In this way, we think of the notation of a tangential structure as an expansion of that of a constructible sheaf on the site of stratified spaces and open embeddings among them.  

\end{remark}

\begin{observation}\label{taus-limits}
The $\infty$-category $\Cat_{\infty/\Vect^{\sf inj}}$ of tangential structures is presentable.
In particular, it admits products, which are fiber products of $\infty$-categories over $\Vect^{\sf inj}$.  
\end{observation}

\begin{construction}[$\cB = \un{\cS}$]\label{def.tau-A}
Taking products with $\Vect^{\sf inj}$ defines a functor
\[
\Cat_\infty \ni \cS~\mapsto~ \un{\cS}:= \bigl(\cS\times \Vect^{\sf inj} \xra\pr \Vect^{\sf inj} \bigr)
\in \Cat_{\infty/\Vect^{\sf inj}} 
\]
from the $\infty$-category $\Cat_\infty$ of $\infty$-categories to the $\infty$-category of tangential structures. 

\end{construction}

\begin{example}
Let $\cS$ be an $\infty$-category.
An $\un{\cS}$-framing on a stratified space $X$ is simply a functor $\exit(X) \to \cS$.  
\end{example}

\begin{example}\label{ex.n}
Let $n\geq 0$.
Consider the full $\infty$-subcategory $\Vect^{\sf inj}_{\leq n}\subset \Vect^{\sf inj}$ consisting of those vector spaces whose dimension is at most $n$.  
The fully faithful functor $\Vect^{\sf inj}_{\leq n} \hookrightarrow \Vect^{\sf inj}$ is a tangential structure.
A $\Vect^{\sf inj}_{\leq n}$-framing on a stratified space $X$ is unique if it exists, and it exists if and only if the dimension of $X$ is at most $n$.
\end{example}

\begin{notation}\label{ex.99}
Let $n\geq 0$.
Let $\cB\to \Vect^{\sf inj}$ be a tangential structure.
We denote the tangential structure
\[
\cB_{\leq n}:= \cB_{|\Vect^{\sf inj}_{\leq n}}\longrightarrow \Vect^{\sf inj}
\]
which is the product of $\cB\to \Vect^{\sf inj}$ and $\Vect^{\sf inj}_{\leq n}\to \Vect^{\sf inj}$ in the $\infty$-category of tangential structures.  

\end{notation}

We now introduce the essential geometric notion new to this article: \emph{vari-framings}.  
Recall that a framing on a smooth $n$-manifold $M$ is an identification $\epsilon^n_M\simeq \sT_M$ of vector bundles over $M$ between the trivial rank $n$-bundle and its tangent bundle.  
A vari-framing is a direct imitation of this classical notion.  
\begin{definition}\label{def.epsilon}
The \emph{vari-framing} tangential structure is the functor
\[
{\sf vfr}:=\Bigl(\ZZ_{\geq 0} \xra{~\epsilon^\bullet~} \Vect^{\sf inj}~,\qquad (i\leq j)\mapsto (\RR^i\xra{\sf inc}\RR^j)~\Bigr)~,
\]
taking values at Euclidean vector spaces and standard inclusions among them as the first coordinates. 

\end{definition}

\begin{remark}
Let $X$ be a space, which we identify as an $\infty$-groupoid.  
A functor $X\to \Vect^{\sf inj}$ classifies a vector bundle over $X$.
A lift of this functor along $\ZZ_{\geq 0} \xra{\epsilon^\bullet} \Vect^\sim$ is a trivialization of this vector bundle.  

\end{remark}

\begin{notation}
For each dimension $n$, the \emph{$n$-vari-framing} tangential structure is the composite functor
\[
{\sf vfr}_{\leq n} := {\sf vfr}_{n}~:=~\bigl( [n]\xra{~i\mapsto i~}\ZZ_{\geq 0} \longrightarrow \Vect^{\sf inj}  \bigr) ~.
\]

\end{notation}

\begin{remark}\label{remark.1}
Let $X$ be a stratified space.
A \emph{vari-framing on $X$} is a lift
\[
\xymatrix{
&&&
\ZZ_{\geq 0}  \ar[d]^-{\sf \epsilon^\bullet}
\\
\exit(X)  \ar[rrr]_-{\sT_X}  \ar@{-->}[urrr]^-{\varphi}
&&&
\Vect^{\sf inj} .
}
\]
A \emph{$n$-vari-framing (on $X$)} is exactly the same as a vari-framing on $X$ provided the dimension of $X$ is at most $n$. If the dimension of $X$ is greater than $n$, then there does not exist an $n$-vari-framing on $X$.
\end{remark}

\begin{notation}\label{def.vfr.X}
For $X$ a stratified space, the \emph{space of vari-framings (on $X$)} is 
\[
{\sf vfr}(X)~:=~ \Map_{/\Vect^{\sf inj}}\bigl(\exit(X) , \ZZ_{\geq 0}\bigr)~,
\]
the space of such lifts as in Remark~\ref{remark.1}.

\end{notation}

\begin{remark}\label{epsilon.section}
The functor $\epsilon^\bullet \colon \ZZ_{\geq 0} \to \Vect^{\sf inj}$ of Definition~\ref{def.epsilon} is a section of the functor ${\sf dim}\colon \Vect^{\sf inj} \to \ZZ_{\geq 0}$ from~(\ref{dim.functor}).

\end{remark}

\begin{observation}\label{99}
Let $X$ be a stratified space.
In light of Remark~\ref{epsilon.section}, for $\varphi$ a vari-framing on $X$, the functor $\varphi\colon \exit(X) \to \ZZ_{\geq 0}$ necessarily agrees with the dimension functor ${\sf dim}\colon \exit(X) \to \ZZ_{\geq 0}$.  
Consequently, a  vari-framing on $X$ is precisely an equivalence between functors
\[
\epsilon^{\sf dim}_X~\underset{\varphi}\simeq~\sT_X~\colon \exit(X) \longrightarrow \Vect^{\sf inj}~.
\]
\end{observation}

\begin{remark}
The term `vari-framing' is short for `variform framing' which reflects that the framing varies over strata.  
See Remark~\ref{vari-framing-explicit}.  

\end{remark}

\begin{example}\label{pre-hemi}
For each dimension $n$ we give an example of a vari-framed stratified $n$-manifold: $\DD^n$ (see Figures~\ref{fig.11} and~\ref{fig.22}).
We will reexamine this in~\S\ref{sec.disks} (see Definition~\ref{hemi-disks}).
The underlying space of $\DD^n$ is the unit disk in $\RR^n$.  
Its stratifying poset $P^{\tr}$ is the right-cone on the poset whose underlying set is $[n-1]\times\{\pm\}$ with partial order $(i,\sigma)\leq (i',\sigma')$ meaning $i<i'$ if $i\neq i'$ and otherwise $\sigma=\sigma'$.  
The stratification $\DD^n \to P^{\tr}$ assigns to a vector $x=(x_1,\dots,x_n)$ the cone-point if $\lVert x\rVert <1$ and otherwise $(i,{\sf sign}(x_{i}))$ where $i:={\sf Max}\{j\mid x_j\neq 0\}$.  
The conically smooth structure on this stratified topological space is inherited from the smooth structure of the closed $n$-disk.  
\\
The dimension constructible bundle $\epsilon^{\sf dim}_{\DD^n}\colon \exit(\DD^n)\to \Vect^{\sf inj}$ is, by definition, the composition
\[
\epsilon^{\sf dim}_{\DD^n}\colon \exit(\DD^n)\longrightarrow P^{\tr} \longrightarrow [n-1]^{\tr} = [n] \xra{i\mapsto \RR^{i}} \Vect^{\sf inj}
\]
given as follows:
the first arrow is the functor to the stratifying poset; the second arrow is the right-cone on the projection $[n-1]\times\{\pm\}\xra{\pr}[n-1]$; and the third arrow is as indicated which carries each relation to the standard inclusions among Euclidean spaces.
\\
The constructible tangent bundle $\sT_{\DD^n} \colon \exit(\DD^n) \to \Vect^{\sf inj}$ is given on objects and morphisms as follows.  
Let $x\in \exit(\DD^n)$ be an object.
The value $\sT_{\DD^n}(x):=\RR^n$ if $\lVert x\rVert <1$; if $\lVert x\rVert =1$ with $i:={\sf Max}\{j\mid x_j\neq 0\}$, this value $\sT_{\DD^n}(x):=x^{\perp_{i}}$ is the orthogonal complement of the vector $x\in \RR^{i}$.
We now describe the value of the functor $\sT_{\DD^n}$ on morphisms in $\exit(\DD^n)$.  
Because the conically smooth structure on $\DD^n$ is inherited from the smooth structure on $\RR^n$, each exit-path $[0,1]\to \DD^n$ is, in particular, a smooth map $[0,1]\to \RR^n$.  
The value of $\sT_{\DD^n}$ on each morphism in $\exit(\DD^n)$, which is an exit-path in $\DD^n$, is implemented by parallel transport with respect to the Levi-Civita connection of the standard Riemannian metric on $\RR^n$, which is flat.  
\\
We now define a vari-framing $\epsilon^{\sf dim}_{\DD^n}\simeq \sT_{\DD^n}$ on $\DD^n$.  
The canonical functor $\exit(\DD^n) \to P^{\tr}$ is an equivalence between $\infty$-categories.
Select a section $P^{\tr} \to \exit(\DD^n)$ of this projection whose value on $(i,\sigma)$ is $\sigma \cdot e_i$, where $e_i$ is the $i^{\rm th}$ standard basis vector in $\RR^n$.  
By direct inspection, the restriction $(\sT_{\DD^n})_{|P^{\tr}}$ of the constructible tangent bundle of $\DD^n$ along this section is identical to the composition $P^{\tr} \to [n] \xra{i\mapsto \RR^i} \Vect^{\sf inj}$ appearing above.  
Therefore, the further restriction $((\sT_{\DD^n})_{|P^{\tr}})_{|\exit(\DD^n)} \simeq \epsilon^{\sf dim}_{\DD^n}$ is identical to the dimension constructible bundle on $\DD^n$.  
The sought vari-framing $\epsilon^{\sf dim}_{\DD^n}\simeq \sT_{\DD^n}$ is then implemented through the equivalence between the composite functor $\exit(\DD^n) \to P^{\tr} \to \exit(\DD^n)$ and the identity functor on $\exit(\DD^n)$.

\end{example}

%
%

\begin{figure}[ht]
\begin{tikzpicture}

\draw[fill] (0,0) circle [radius=0.07];
\draw[fill] (3,0) circle [radius=0.07];
\draw[midarrow=0.5] (0,0) to (3,0);

\node at (0,0.4) {$(0,-)$};
\node at (3,0.4) {$(0,+)$};
\node at (1.5,0.4) {$1$};

\end{tikzpicture}
\caption{A vari-framed hemispherically stratified $1$-disk with strata labelled by the poset.}
\label{fig.11}
\end{figure}

%
%
%
%
%
%
%
%

\begin{figure}[ht]
\centering

\begin{tikzpicture}
\tikzstyle{every node}=[font=\tiny]

\draw[fill=lightgray] (0,1) [out=75, in=180] to node [opacity=0] (TOP) {} (1.5,2) [out=0, in=105] to (3,1) [out=-105, in=0] to node [opacity=0] (BOTTOM) {} (1.5,0) [out=-180, in=-75] to (0,1);

\draw[fill] (0,1) circle [radius=0.07];
\draw[fill] (3,1) circle [radius=0.07];

\draw[->] (0,1) [out=75, in=180] to (1.5,2);
\draw (1.5,2) [out=0, in=105] to (3,1);
\draw[->] (0,1) [out=-75,in=-180] to (1.5,0);
\draw (1.5,0) [out=0, in=-105] to (3,1);

\node at (-0.3,1.25) {$(0,-)$};
\node at (3.3,1.25) {$(0,+)$};
\node at (1.5,2.3) {$(1,+)$};
\node at (1.5,-0.3) {$(1,-)$};
\node at (1.5,1) {$2$};

\draw[->] (3.8,0) to (4.2,0) node[below]{1};
\draw[->] (3.8,0) to (3.8,0.5) node[left]{2};

\end{tikzpicture}
\caption{A vari-framed hemispherically stratified $2$-disk with stata labelled by the poset.}
\label{fig.22}
\end{figure}
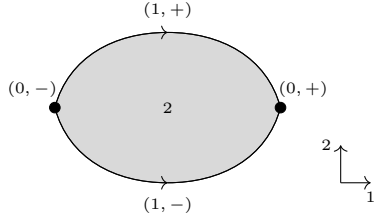

\subsubsection{\bf Solid framings}
Some tangential structures have additional functorialities.
We abstract these as \emph{solid} tangential structures.
Generally, these are less sensitive to stratifications than other tangential structures. 
This is articulated as Proposition~\ref{solid.refs}, to come.  

\begin{definition}\label{def.solid}
A \emph{solid tangential structure} is a right fibration $\cB\to \Vect^{\sf inj}$.

\end{definition}

Any tangential structure determines a solid tangential structure via right fibration-replacement.  
We describe in the case of $\infty$-groupoids. 
For $B\to \Vect^{\sim}$ a functor between $\infty$-groupoids, we denote the pullback,
\[
\xymatrix{
\Ar(\Vect^{\sf inj})_{|B}  \ar[rr]  \ar[d]
&&
\Ar(\Vect^{\sf inj})  \ar[d]^-{{\sf ev}_t}
\\
B    \ar[r]
&
\Vect^{\sim}  \ar[r]
&
\Vect^{\sf inj}  .
}
\]
Evaluation at the source defines a functor 
\[
{\sf ev}_s\colon \Ar(\Vect^{\sf inj})_{|B} 
\longrightarrow \Vect^{\sf inj}~.
\]
\begin{definition}[solid $B$-framings]\label{d2}
Let $n\geq 0$ and $B \to \BO(n)$ be a map between spaces.  
The \emph{solid $B$-framing} tangential structure is the functor
\[
{\sf s} B~:=~\Bigl(\Ar(\Vect^{\sf inj})_{|B} 
\xra{~\ev_s~}
\Vect^{\sf inj}\Bigr)~.
\]
The \emph{$n$-manifold} tangential structure is the solid $B$-framing tangential structure for the case that $B \xra{\simeq} \BO(n)$.
The \emph{solid $n$-framing} tangential structure is the solid $B$-framing tangential structure for the case that $B\simeq \ast \xra{\{\RR^n\}} \BO(n)$ -- this is the tangential structure ${\sf s}\ast = (\Vect^{\sf inj}_{/\RR^n} \to \Vect^{\sf inj})$.

\end{definition}

\subsubsection{\bf Explicating framings}
We explicate these notions of framings. 
We begin with the weakest of the structures above: stratified $n$-manifold structures.

\begin{remark}
Let $X$ be a stratified space.
A \emph{solid $n$-framing} of $X$ is a lift as in the diagram among $\infty$-categories,
\[
\xymatrix{
&&
\Vect^{\sf inj}_{/\RR^n} \ar[d]_-{\ev_s}  
\\
\exit(X) \ar[rr]^-{\sT_X}  \ar@{-->}[urr]^-{\varphi}
&&
\Vect^{\sf inj}
.
}
\]
The \emph{space of solid $n$-framings of $X$} is 
\[
{\sf sfr}_n(X)~:=~\Map_{/\Vect^{\sf inj}}\bigl(\exit(X) , \Vect^{\sf inj}_{/\RR^n}\bigr)~,
\]
the space of functors over $\Vect^{\sf inj}$.

\end{remark}

\begin{remark}
Let $X$ be a stratified space.
An \emph{$n$-manifold structure on $X$} is a functor 
\[
\xymatrix{
&&
\Ar(\Vect^{\sf inj})_{|\BO(n)}  \ar[d]_-{\ev_s}  
\\
\exit(X) \ar[rr]^-{\sT_X}  \ar@{-->}[urr]^-{\varphi}
&&
\Vect^{\sf inj}
.
}
\]

\end{remark}

\begin{remark}
Explicitly, a stratified $n$-manifold is a stratified space $X$ together with a rank $n$ vector bundle $\eta$ on $X$, as well as an injection of constructible vector bundles $\sT_X \hookrightarrow \eta$.  
By way of a zero-section, such data determines an embedding of $X$ into the total space of the cokernel vector bundle: $X\hookrightarrow \eta/\sT_X$.  
The domain of this embedding is a topological manifold of dimension $n$ (with regularity that we will not articulate).  
In this way, we regard $\sT_X\hookrightarrow \eta$ as an infinitesimal thickening of $X$ as a smooth $n$-manifold; in particular, the topological dimension of $X$ is bounded above by $n$.
If $X$ is a smooth $n$-manifold, then the injection $\sT_X\hookrightarrow \eta$ is an isomorphism, and so $\eta$ is no more data than that of $X$ alone.

\end{remark}

\begin{remark}\label{rem.1}
A $B$-structure on a stratified $n$-manifold $(X,\sT_X\hookrightarrow \eta)$ is a lift of the classifying map $X\xra{\eta} \BO(n)$ to $B$.
In particular, a solid $n$-framing on such a stratified $n$-manifold is an isomorphism of vector bundles $\eta \cong \epsilon_X^n$ with the trivial rank $n$ vector bundle over $X$.  
In particular, the space of solid $n$-framings on such a stratified $n$-manifold is a torsor for $\Map\bigl(X,\sO(n)\bigr)$, provided a solid $n$-framing exists; this depends only on the underlying space of $X$. 

\end{remark}

\begin{remark}\label{vari-framing-explicit}
A vari-framing on a stratified $n$-manifold $(X,\sT_X\hookrightarrow \eta)$ is a \emph{trivialization} of $(\sT_X\hookrightarrow \eta)$, by which we mean an equivalence of functors $\exit(X) \to \Ar(\Vect^{\sf inj})$
\[
\bigl(\sT_{X}\hookrightarrow \eta\bigr)~\underset{\varphi}\simeq~ \bigl(\epsilon^{{\sf dim}}_X\hookrightarrow \epsilon^n_X\bigr)~.
\]
Here, $\sT_X\colon \exit(X) \to \Vect^{\sf inj}$ is the functor defined in~\S\ref{tangent-explicit} while
$\epsilon^{\sf dim}_X \colon \exit(X) \to \Vect^{\sf inj}$ is the functor whose restriction to the $i$-dimensional stratum $X_i$ is the constant functor at $\RR^i$, and whose value on the space of morphisms from $X_i$ to $X_j$ is the inclusion $\RR^i\hookrightarrow \RR^j$ as the first coordinates.  
In particular, a vari-framing on a stratified space $X$ determines, for each dimension $i$, an equivalence of vector bundles on  $X_i$
\[
\sT_{X_i} ~\underset{\varphi_i}\simeq~ \epsilon^i_{X_i}~.
\]
Also, a vari-framing on $X$ determines, for each pair of dimensions $i\leq j$ with link system $X_i \xla{\pi_{ij}} L_{ij}\xra{\gamma_{ij}}X_j$, an identification of the projection
\[
\epsilon^i_{L_{ij}}
\underset{\varphi_i}\simeq
\pi_{ij}^\ast \sT_{X_i}
\xla{~D\pi_{ij}~}  
\sT_{L_{ij}}  
\xla{~\pr~}
\sT_{L_{ij}}\oplus \epsilon^1_{L_{ij}} 
\simeq 
(\sT_{X_j})_{|L_{ij}} 
\underset{{\varphi_j}_{|L_{ij}}} \simeq
\epsilon^j_{L_{ij}}
\]
with the standard projection off of the final coordinates. 
There is a similar description for finite sequences of dimensions $i_0\leq \dots \leq i_p$.  

\end{remark}

\begin{remark}
From the description of link data in Remark~\ref{vari-framing-explicit}, a vari-framing on a stratified space is not solely a framing of each stratum, but also coherent compatibility of stratum-by-stratum framings along links between strata.  
\end{remark}

\begin{remark}\label{r1}
Let $X$ be a stratified space with dimension at most $n$.
Let $\ov{\varphi}\colon \sT_X \hookrightarrow \epsilon^n$ be a solid $n$-framing on $X$.  
A \emph{compatible vari-framing on $(X,\ov{\varphi})$} is a factorization 
\[
\xymatrix{
&&
\epsilon^{\sf dim}_X  \ar[d]^-{\rm inclusion}
\\
\sT_X  \ar[rr]^-{\ov{\varphi}}  \ar@{-->}[urr]^-{\varphi}
&&
\epsilon^n_X    
}
\]
in the $\infty$-category $\Fun\bigl( \exit(X) , \Vect^{\sf inj}\bigr)$.
Informally, such a compatible vari-framing is a system, for each stratum $X_p\subset X$, of factorizations between vector bundles over $X_p$:
\[
\xymatrix{
\sT_{X_p}  \ar[dr]_-{\ov{\varphi}}  \ar@{--}[rr]^-{\varphi}  
&&
\epsilon^{{\sf dim}(X_p)}_{X_p}  \ar[dl]_-{\rm inclusion}
\\
&
\epsilon^n_{X_p}
&
,
}
\]
which is coherently compatible across links between strata.  
In particular, such a compatible vari-framing determines a splitting of the solid $n$-framing alone each stratum, and in particular a framing of each stratum.  
\end{remark}

\begin{remark}
A solid $n$-framing on a stratified space $X$ does not generally determine a vari-framing on $X$.  
Figures~\ref{fig.object} and~\ref{fig.object'} illustrate that there can be a multitude of vari-framings that are compatible with a given solid $2$-framing. Here, the solid $2$-framing is understood to be that inherited from the ambient plane in which these pictures are drawn.  
Examining the flow of the 1st coordinate vector field for each of these two vari-framings reveals that these two compatible vari-framings are indeed distinct.
Figure~\ref{fig.33} depicts an object that admits a solid $2$-framing but not a vari-framing, as we now explain.
After Remark~\ref{r1}, a vari-framing on this stratified space would determine a splitting of the ambient solid 2-framing along the circle-stratum.  
No such splitting exists because there is no non-vanishing normal vector field to the circle-stratum that extends as a non-vanishing vector field on the disk it encloses.

\end{remark}

\begin{figure}[ht]
\centering

\begin{tikzpicture}
\tikzstyle{every node}=[font=\tiny]

\draw[fill] (2,0.5) circle [radius=0.07];
\draw[fill] (1,1) circle [radius=0.07];
\draw[->-] (1,1) to (2,0.5);
\draw[fill] (1,3) circle [radius=0.07];
\draw[->-] (1,1) to (2,2);
\draw[->-] (1,3) to (2,2);

\draw[fill=lightgray] (2,2) [out=75, in=180] to node [opacity=0] (TOP) {} (3.5,3) [out=0, in=105] to (5,2) [out=-105, in=0] to node [opacity=0] (BOTTOM) {} (3.5,1) [out=-180, in=-75] to (2,2);

\draw[fill] (2,2) circle [radius=0.07];
\draw[fill] (5,2) circle [radius=0.07];

\draw[midarrow=0.25] (2,2) [out=75, in=180] to (3.5,3);
\draw[midarrow=0.25] (3.5,3) [out=0, in=105] to (5,2);
\draw[midarrow=0.75] (2,2) [out=-75, in=-180] to (3.5,1);
\draw[midarrow=0.75] (3.5,1) [out=0, in=-105] to (5,2);

\draw[fill] (TOP) circle [radius=0.07];
\draw[fill] (BOTTOM) circle [radius=0.07];
\draw[->-] (TOP.center) to (BOTTOM.center);

\draw[fill] (6,2) circle [radius=0.07];
\draw[->-] (5,2) to (6,2);
\draw[fill] (7,2) circle [radius=0.07];

\draw[->-] (6,2) [out=75, in=105] to (7,2);
\draw[->-] (6,2) [out=-75, in=-105] to (7,2);

\draw[->] (5.3,0.8) to (5.8,0.8) node[below]{1};
\draw[->] (5.3,0.8) to (5.3,1.3) node[left]{2};

\end{tikzpicture}
\caption{A 2-vari-framed stratified space.}
\label{fig.object}
\end{figure}
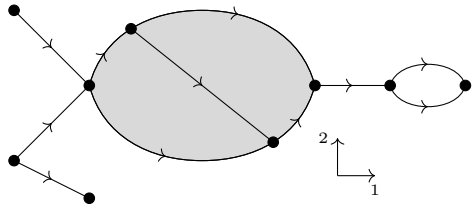

\begin{figure}[ht]
\centering

\begin{tikzpicture}
\tikzstyle{every node}=[font=\tiny]

\draw[fill] (0,1) circle [radius=0.07];
\draw[fill] (1,1) circle [radius=0.07];
\draw[->-] (0,1) to (1,1);
\draw[fill] (1,3) circle [radius=0.07];
\draw[->-] (1,1) to (2,2);
\draw[->-] (1,3) to (2,2);

\draw[fill=lightgray] (2,2) [out=75, in=180] to  (3.5,3) [out=0, in=105] to node [opacity=0] (TOP) {} (5,2) [out=-105, in=0] to  (3.5,1) [out=-180, in=-75] to node [opacity=0] (BOTTOM) {} (2,2);

\draw[fill] (2,2) circle [radius=0.07];
\draw[fill] (5,2) circle [radius=0.07];

\draw[midarrow=0.75] (2,2) [out=75, in=180] to (3.5,3);
\draw[midarrow=0.75] (3.5,3) [out=0, in=105] to (5,2);
\draw[midarrow=0.25] (2,2) [out=-75, in=-180] to (3.5,1);
\draw[midarrow=0.25] (3.5,1) [out=0, in=-105] to (5,2);

\draw[fill] (TOP) circle [radius=0.07];
\draw[fill] (BOTTOM) circle [radius=0.07];
\draw[->-] (BOTTOM.center) to (TOP.center);

\draw[fill] (6,2) circle [radius=0.07];
\draw[fill] (7,2) circle [radius=0.07];
\draw[->-] (5,2) to (6,2);

\draw[->-] (6,2) [out=75, in=105] to (7,2);
\draw[->-] (6,2) [out=-75, in=-105] to (7,2);

\draw[->] (5.3,0.8) to (5.8,0.8) node[below]{1};
\draw[->] (5.3,0.8) to (5.3,1.3) node[left]{2};

\end{tikzpicture}
\caption{A stratified 2-vari-framed space whose underlying solid $2$-framed stratified space is the same as the underlying solid $2$-framed space of Figure~\ref{fig.object}.}
\label{fig.object'}
\end{figure}
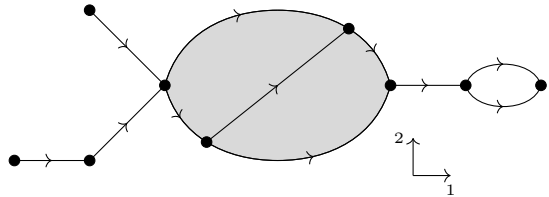

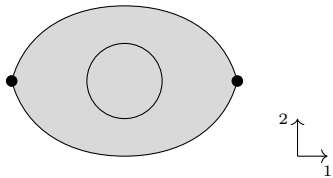
\begin{figure}[ht]
\begin{tikzpicture}

\draw[fill=lightgray] (0,1) [out=75, in=180] to node [opacity=0] (TOP) {} (1.5,2) [out=0, in=105] to (3,1) [out=-105, in=0] to node [opacity=0] (BOTTOM) {} (1.5,0) [out=-180, in=-75] to (0,1);

\draw[fill] (0,1) circle [radius=0.07];
\draw[fill] (3,1) circle [radius=0.07];

\draw (1.5, 1) circle [radius=0.5];

%

\tikzstyle{every node}=[font=\tiny]

\draw[->] (3.8,0) to (4.2,0) node[below]{1};
\draw[->] (3.8,0) to (3.8,0.5) node[left]{2};

\end{tikzpicture}
\caption{A stratified space that admits a solid $2$-framing but not a vari-framing.}
\label{fig.33}
\end{figure}

For $M$ a smooth $n$-manifold that admits a framing, Remark~\ref{rem.1} states that its space of solid $n$-framings 
\[
\sfr_n(M)~\simeq ~\Map(M,\sO(n))
\]
is a torsor for the space of maps to the group of orthogonal transformations of $\RR^n$.
We will give an analogous description of the space of vari-framings on a stratified space.
We first introduce the stratified version of the orthogonal group $\sO(n)$.  
\begin{construction}[$\un{\sO}$]\label{def.O}
We define the $\infty$-category $\un{\sO}$ over the poset $\ZZ_{\geq 0}$ of non-negative integers, as the following complete Segal space.
Space of functors $[p]\to \un{\sO}$ over a functor $[p]\xra{i_\bullet} \ZZ_{\geq 0}$ classifying $i_0\leq \dots \leq i_p$ is 
\[
\sO(\RR^{i_0}\subset \dots \subset \RR^{i_p})~\subset~\sO(\RR^{i_p})~,
\]
the underlying space of the smooth submanifold consisting of those orthogonal linear maps that preserve each standardly embedded $\RR^{i_u}\subset \RR^{i_p}$.  
A morphism $[p]\xra{\rho}[q]\xra{i_\bullet} \ZZ_{\geq 0}$ in $\bdelta_{/\ZZ_{\geq 0}}$ determines the smooth homomorphism $\sO(\RR^{i_0} \subset \cdots \subset \RR^{i_q}) \to \sO(\RR^{i_{\rho(0)}} \subset \cdots \subset \RR^{i_{\rho(p)}})$ given by restricting transformations from $\RR^{i_q}$ to $\RR^{i_{\rho(p)}}\subset \RR^{i_q}$ and relaxing which subspaces are preserved.
It follows that $\un{\sO}$ is indeed a simplicial space over $\ZZ_{\geq 0}$.  
For each $[p]\xra{i_\bullet}\ZZ_{\geq 0}$, taking orthogonal complements defines a diffeomorphism
\[
\sO(\RR^{i_0}\subset \cdots \subset \RR^{i_p})~\cong~ \sO(\RR^{i_0})\times \underset{0<k\leq p}\prod \sO(\RR^{i_{k}-i_{k-1}})~.
\]
This implies the simplicial space $\un{\sO}$ satisfies the Segal condition.  
By inspection, the map of simplicial spaces $\un{\sO}\to \ZZ_{\geq 0}$ has the property that only degenerate simplices are carried to degenerate simplices.
Because the only equivalences of $\ZZ_{\geq 0}$ are identities, the Segal space $\un{\sO}$ satisfies the completeness condition.
Therefore, $\un{\sO}$ presents an $\infty$-category over $\ZZ_{\geq 0}$.  
Finally, because it is the case value-wise as a simplicial space, this $\infty$-category is naturally a group object among $\infty$-categories over $\ZZ_{\geq 0}$.

\end{construction}

\begin{observation}\label{framings-explicit}
For each stratified space $X$ that admits a vari-framing, such a choice of vari-framing on $X$ determines an identification of the space of vari-framings on $X$
\[
\vfr(X)~\simeq~\Map_{/\ZZ_{\geq 0}}\bigl(\exit(X),\un{\sO}\bigr)
\]
with the space of functors over the poset $\ZZ_{\geq 0}$ from $\exit(X)\xra{\sf dim}\ZZ_{\geq 0}$ to the $\infty$-category $\un{\sO}$ of Construction~\ref{def.O}.  

\end{observation}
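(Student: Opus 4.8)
The plan is to recognize $\vfr(X)$ as a torsor for the gauge group of automorphisms of the dimension bundle, and then to identify that gauge group with the mapping space into $\un{\sO}$, in direct parallel with the classical identity $\sfr_n(M)\simeq \Map(M,\sO(n))$ recalled above.

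First I would unwind the defining lifting diagram of Definition~\ref{def.vari-framing-1}. The space of lifts of $(\epsilon^{\sf dim}_X,\sT_X)$ along the diagonal $\Vect^{\sf inj}\xra{{\sf diag}}\Vect^{\sf inj}\times\Vect^{\sf inj}$ is by definition $\Map_{/\Vect^{\sf inj}\times\Vect^{\sf inj}}(\exit(X),\Vect^{\sf inj})$, the space of pairs consisting of a functor $\varphi$ together with an equivalence $({\sf diag}\circ\varphi)\simeq(\epsilon^{\sf dim}_X,\sT_X)$ in $\Fun(\exit(X),\Vect^{\sf inj})^{\times 2}$. Contracting along the component $\varphi\simeq\epsilon^{\sf dim}_X$ identifies this with the space of natural equivalences $\epsilon^{\sf dim}_X\simeq\sT_X$ inside $\Fun(\exit(X),\Vect^{\sf inj})$, exactly as recorded just below Definition~\ref{def.vari-framing-1}. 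Consequently $\vfr(X)$ is empty or a torsor for the space $\Aut(\epsilon^{\sf dim}_X)$ of self-equivalences of $\epsilon^{\sf dim}_X$ in the functor $\infty$-category: a chosen vari-framing $\varphi_0$ makes post-composition $g\mapsto \varphi_0\circ g$ into an equivalence $\Aut(\epsilon^{\sf dim}_X)\xra{\simeq}\vfr(X)$, with inverse given by composing with $\varphi_0^{-1}$. This reduces the assertion to a natural identification $\Aut(\epsilon^{\sf dim}_X)\simeq \Map_{/\ZZ_{\geq 0}}(\exit(X),\un{\sO})$.

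For that identification I would use the factorization $\epsilon^{\sf dim}_X=\epsilon^\bullet\circ{\sf dim}$, with ${\sf dim}\colon \exit(X)\to\ZZ_{\geq 0}$ and $\epsilon^\bullet\colon\ZZ_{\geq 0}\to\Vect^{\sf inj}$ sending $(i\leq j)$ to $(\RR^i\hookrightarrow\RR^j)$, and argue that $\un{\sO}\to\ZZ_{\geq 0}$ of Construction~\ref{def.O} is precisely the relative automorphism group object of $\epsilon^\bullet$. Unwinding, a functor $\exit(X)\to\un{\sO}$ over $\ZZ_{\geq 0}$ assigns to an object $x$ with ${\sf dim}\,x=i$ a point $\alpha_x\in\sO(i)$ --- which is exactly $\Aut_{\Vect^{\sf inj}}(\RR^i)\simeq\sO(i)$, using $\Vect^\sim\simeq\coprod_n\BO(n)$ --- and to a morphism $x\to y$ over $i\leq j$ an element $g\in\sO(\RR^i\subset\RR^j)$ whose two faces recover $\alpha_x=g_{|\RR^i}$ and $\alpha_y=g$. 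Since $g$ preserves $\RR^i$ and restricts there to $\alpha_x$, this $g$ is exactly a strict filler of the naturality square $\alpha_y\circ\iota=\iota\circ\alpha_x$ for the edge $\iota=\epsilon^{\sf dim}_X(x\to y)\colon\RR^i\hookrightarrow\RR^j$. Thus a map into $\un{\sO}$ is the same objectwise and morphismwise data as a natural automorphism of $\epsilon^{\sf dim}_X$, and conversely.

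The routine but delicate part --- which I expect to be the main obstacle --- is promoting this low-simplicial matching to an equivalence of full mapping spaces encoding all higher coherences. Concretely, I would compare simplices degreewise: a $p$-simplex of $\Map_{/\ZZ_{\geq 0}}(\exit(X),\un{\sO})$ over a chain $i_0\leq\dots\leq i_p$ is an element of $\sO(\RR^{i_0}\subset\dots\subset\RR^{i_p})$, and I must verify this equals the space of $p$-fold coherent naturality data for $\epsilon^{\sf dim}_X$ along a composable chain in $\exit(X)$. For this I would invoke the two inputs already available: the Segal decomposition $\sO(\RR^{i_0}\subset\dots\subset\RR^{i_p})\cong\sO(\RR^{i_0})\times\prod_{0<k\leq p}\sO(\RR^{i_k-i_{k-1}})$ of Construction~\ref{def.O}, and the computation $\Vect^{\sf inj}(\RR^i,\RR^j)\simeq\sO(j)/\sO(j-i)$ together with $\Aut_{\Vect^{\sf inj}}(\RR^i)\simeq\sO(i)$. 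Matching these exhibits the space of coherences filling a chain of naturality squares for $\epsilon^{\sf dim}_X$ as precisely $\sO(\RR^{i_0}\subset\dots\subset\RR^{i_p})$, so the two complete Segal spaces agree in each simplicial degree compatibly with faces and degeneracies, hence are equivalent over $\ZZ_{\geq 0}$. Combining the three steps yields the identification $\vfr(X)\simeq\Map_{/\ZZ_{\geq 0}}(\exit(X),\un{\sO})$ determined by the chosen vari-framing.
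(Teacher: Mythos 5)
The paper records this statement as an Observation with no proof, so there is nothing to compare against; your proposal supplies the missing justification, and it is correct. Your first reduction --- unwinding the lifting diagram of Definition~\ref{def.vari-framing-1} to the space of natural equivalences $\epsilon^{\sf dim}_X\simeq \sT_X$, and then using the chosen vari-framing $\varphi_0$ to identify this torsor with $\Aut(\epsilon^{\sf dim}_X)$ --- matches the paper's own gloss immediately following that definition, and your reading of $\un{\sO}$ as the relative automorphism object of $\epsilon^\bullet\colon \ZZ_{\geq 0}\to \Vect^{\sf inj}$ is exactly the intended content of Construction~\ref{def.O}. The only place where I would ask you to be more explicit is the final degreewise comparison: for \emph{fixed} endpoints $\alpha_x\in\sO(i)$, $\alpha_y\in \sO(j)$, the set of strict fillers of the naturality square (empty or a point) is not equivalent to the space of coherent fillers (a path space in $\sO(j)/\sO(j-i)\simeq \Vect^{\sf inj}(\RR^i,\RR^j)$), so "strict filler equals naturality datum" is only true after totalizing over the endpoints. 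What makes the matching go through is that the restriction map $\sO(j)\to \sO(j)/\sO(j-i)$ is a fibration, so the strict iterated pullbacks computing $\sO(\RR^{i_0}\subset\cdots\subset\RR^{i_p})$ via the Segal decomposition coincide with the homotopy pullbacks computing the space of coherent naturality data over a $p$-simplex of $\exit(X)$. With that one sentence added, your argument is complete.
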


In the next result we denote the restriction $\un{\sO}_{|\leq n}:=\{0<\dots<n\}\underset{\ZZ_{\geq 0}}\times  \un{\sO}$, which is again a group object among $\infty$-categories.  
\begin{observation}\label{un-O-to-O}
For each dimension $n$, consider the pullback $\infty$-category
\[
\xymatrix{
\Ar(\un{\sO}_{|\leq n})_{|n} \ar[rr] \ar[dd]
&&
\Ar(\un{\sO}_{|\leq n}) \ar[d]^-{\ev_{\sf t}}
\\
&&
\un{\sO}_{|\leq n} \ar[d]
\\
\{n\} \ar[rr]
&&
[n]
;
}
\]
it consists of those arrows whose target lies over $n\in [n]$.  
There is a canonical inclusion $\sO(n) \to \Ar(\un{\sO}_{|\leq n})_{|n}$.
Evaluation at the target is a left adjoint to this inclusion, and in particular witnesses an $\infty$-groupoid completion:
\[
\ev_{\sf t}\colon \sB\Ar(\un{\sO})_{|n}
\xra{~\simeq~}
\sO(n)
~.
\]

\end{observation}

\begin{remark}\label{refs.vfr.vs.sfr}
Let $\ov{X}$ be a smooth $n$-manifold with corners, which we regard as a stratified space in a standard manner: two points in $\ov{X}$ belong to the same stratum if they belong to the same connected component of a face of $\ov{X}$ with respect to its corner structure (see Example~3.5.7 of~\cite{aft1}).
In~\S4 of~\cite{aft1} it is verified that the canonical functor $\exit(\ov{X}) \to X$ witnesses $\infty$-groupoid completion: the resulting map $\sB\exit(X) \xra{\simeq} X$ is an equivalence between spaces.
Suppose $\ov{X}$ is connected, and has exactly one open $n$-dimensional stratum.
The identity morphism $\ov{X}\to X$ witnesses a refinement morphism, in which the codomain---denoted without an overscore---carries the trivial stratification. 
Through Proposition~\ref{solid.refs}, to come, the space of solid $n$-framings on $\sfr_n(\ov{X})$ is the space of framings of the interior $X$ as a smooth $n$-manifold. 
Provided there exists a vari-framing on $\ov{X}$, Observation~\ref{un-O-to-O} gives that
each vari-framing on $\ov{X}$ determines a solid $n$-framing on $X$:
\[
\vfr(\ov{X}) \simeq \Map_{/\ZZ_{\geq 0}}\bigl(\exit(\ov{X}),\un{\sO}\bigr) 
\longrightarrow 
\Map\bigl(X,\sO(n)\bigr)\simeq \sfr_n(X)~.
\]
\end{remark}

We now demonstrate the vast difference between these vari-framings and solid framings.

\begin{remark}
Consider the hemispherical disk $\DD^3$ of Example~\ref{pre-hemi}.
The space $\vfr(\DD^3)$ of vari-framings of $\DD^3$ is canonically the limit of the diagram of spaces:
\[
\xymatrix{
\sO(1)\times\sO(1)\times \sO(1) \ar[d]_-{{\sf id} \times \oplus} \ar[rr]^-{\oplus\times {\sf id}}
&&
\sO(2)\times \sO(1) \ar[d]_-{\oplus} 
&&
\sO(1)\times \sO(1)\times \sO(1) \ar[ll]_-{\oplus\times {\sf id}} \ar[d]_-{{\sf id} \times \oplus}
\\
\sO(1)\times\sO(2) \ar[rr]^-{\oplus} 
&&
\sO(3)
&&
\sO(1)\times \sO(2) \ar[ll]_-{\oplus}
\\
\sO(1)\times\sO(1)\times \sO(1) \ar[u]^-{{\sf id} \times \oplus} \ar[rr]^-{\oplus\times {\sf id}}
&&
\sO(2)\times \sO(1) \ar[u]^-{\oplus} 
&&
\sO(1)\times \sO(1)\times \sO(1) \ar[ll]_-{\oplus\times {\sf id}} \ar[u]^-{{\sf id} \times \oplus}
.
}
\]
In particular, $\Omega^3 \sO(3)$ maps to $\vfr(\DD^3)$ as a monomorphism (i.e., as an inclusion of a union of path components).  
Because $\Omega^3 \sO(3)\simeq \Omega^3 S^3$ has infinitely many path components, we conclude that $\vfr(\DD^3)$ has infinitely many path components.  
On the other hand, using that the underlying space of $\DD^3$ is contractible, the space $\sfr_3(\DD^3)\simeq \Map\bigl(\DD^3,\sO(3)\bigr) \simeq \sO(3)$ has two components.  
\end{remark}

\begin{remark}
Restriction along $\partial \DD^3 \hookrightarrow \DD^3$ defines a map 
$
\vfr(\DD^3) \to \vfr(\partial \DD^3)
$
between spaces of vari-framings.  
Example~\ref{pre-hemi} gives that $\vfr(\DD^3)$ is not empty.
We conclude that $\vfr(\partial \DD^3)$ is not empty.
On the other hand, via Remark~\ref{refs.vfr.vs.sfr}, the refinement $\partial \DD^3 \to S^2$ determines an equivalence $\emptyset = \sfr_2(S^2) \xra{\simeq} \sfr_2(\partial \DD^3)$.  
This demonstrates that, while a smooth $n$-manifold might not admit a solid $n$-framing, it might have a conically smooth refinement that admits a vari-framing.  

\end{remark}

For each stratified space $X$, the assignment 
\[
X\underset{\rm open}\supset U~ \mapsto~ \vfr(U)~\in \Spaces
\]
defines a constructible sheaf of spaces on $X$.
In contrast with ordinary differential topology, where `constructible' and `locally constant' are the same notion, the existence of a vari-framing on a stratified space has strong global implications.  
We demonstrate this as the next remark.

\begin{remark}
Let $\ov{M}$ be a smooth $n$-manifold with boundary, whose boundary is connected.
Regard $\ov{M}$ as a stratified space whose underlying topological space is that of $\ov{M}$, whose stratification $\ov{M}\to \{n-1<n\}$ is such that the $(n-1)$-stratum is precisely the boundary, and whose conically smooth structure is inherited from the smooth structure of $\ov{M}$.  
A choice of a collar-neighborhood of $\partial \ov{M} \subset \ov{M}$ determines a smooth open embedding $\partial \ov{M} \times \RR \hookrightarrow M$ into the interior.  
The space ${\sf vfr}(\ov{M})$ of vari-framings on $\ov{M}$ is the pullback in the diagram,
\[
\xymatrix{
{\sf vfr}(\ov{M})   \ar[rr]  \ar[d]
&&
{\sf fr}(M)  \ar[d]
\\
{\sf fr}(\partial \ov{M})\times {\sf fr}(\RR)    \ar[rr]^-{\times}
&&
{\sf fr}(\partial \ov{M} \times \RR)  ,
}
\]
in which ${\sf fr}(N)$ is the space of framings of a smooth manifold $N$.  
In particular, the space of vari-framings of $\ov{M}$ extending a given framing of its interior is either empty or a torsor for $\Map(\partial \ov{M} , \Omega \RR\PP^{n-1})$.  
Similarly, the space of vari-framings of $\ov{M}$ extending a given framing of its boundary is either empty or is equivalent to the space of based maps $\Map_\ast\bigl( \ov{M}{/\partial \ov{M}} , \sO(n)\bigr)$.
\end{remark}

\begin{remark}
Vari-framings are but one example of a local structure on stratified spaces.
One could invent others: stipulate a structure stratum-wise, each of which could be quite different in nature, in addition to specified interactions among them across links between strata.
For a given such stratified structure $\cB$, the existence of a $\cB$-structure on $X$ will then restrict the global topology of $X$.
In particular, the general obstruction to the existence of a $\cB$-structure need not be measured by the cohomology of the underlying space of $X$, unless $\cB$ is locally constant. 

\end{remark}

\subsection{Characterization of the constructible tangent bundle}\label{sec.tgt-char}
Here we prove Proposition~\ref{unique-action}, which characterizes the (fiberwise) constructible tangent bundle.

\subsubsection{\bf Outline}
We outline the logic of the proof of Proposition~\ref{unique-action}.   
We seek a symmetric monoidal functor $\Exit \to \Vect^{\sf inj}$ under $\Vect^\sim$ that is unique among all such that carry both closed and embedding morphisms to equivalences.
Such a functor is equivalent to the data of a morphism $\Map([\bullet],\Exit) \to \Map([\bullet],\Vect^{\sf inj})$ between simplicial $\cE_\infty$-spaces under $\Map([\bullet],\Vect^\sim)\simeq \Vect^\sim$ that carries embedding and closed simplices 
to degenerate simplices.
Here, $[\bullet]$ is a variable object of $\bdelta$, and $\Map([\bullet],\Exit)$ defines the simplicial space with values $[p]\mapsto \Map([p],\Exit)$.
We can accommodate this localization using cospans.
Namely, the morphism we seek is equivalent to a morphism of simplicial symmetric monoidal functors under $\Vect^\sim$, 
\[
\w{\sT^{\sf fib}}\colon \cSpan(\Exit^{[\bullet]})^{\sf cls\text{-}emb} \to \Map([\bullet],\Vect^{\sf inj})~.
\]
This is because the target of this arrow lies in simplicial symmetric monoidal $\infty$-groupoids.
Consequently, such an arrow factors through the value-wise classifying space with respect to the closed and embedding morphisms, thereby implementing the desired localization.  
We construct $\w{\sT^{\sf fib}}$, and verify it is unique, by induction on $\bullet$.  
The inductive step exploits the following parallel facts.
\begin{enumerate}
\item As a $\Vect^\sim$-module in spaces, $\Map([p],\Vect^{\sf inj})$ is free on $\Map([p-1],\Vect^{\sf inj})$.
This is tantamount to our vector spaces being finite dimensional, and the zero vector space being initial in $\Vect^{\sf inj}$.

\item As a $\Vect^\sim$-module in $\infty$-categories, $\cSpan(\Exit^{[p]})^{\sf cls\text{-}emb}$ receives a final functor from the free $\Vect^\sim$-module on a certain $\infty$-subcategory of $\cSpan(\Exit^{[p]})^{\sf cls\text{-}emb}$.
By virtue of $\w{\sT^{\sf fib}}$ being a morphism of $\Vect^\sim$-modules over $\ZZ_{\geq 0}$, this $\infty$-subcategory must map to the generating space $\Map([p-1],\Vect^{\sf inj})$ of the above fact (1).
Therefore, the restriction of $\w{\sT^{\sf fib}}$ to this $\infty$-subcategory must factor through a standard face map to $\cSpan(\Exit^{[p-1]})^{\sf cls\text{-}emb}$.

\end{enumerate}
The base case of this induction reveals the entirety of the construction of the fiberwise constructible tangent bundle.  
Namely, for $[p]=[0]$ the map $\w{\sT^{\sf fib}}$ is a symmetric monoidal functor under $\Vect^\sim$
\[
 \cSpan(\Exit)^{\sf cls\text{-}emb} \longrightarrow \Vect^\sim~.
\]  
Because the target of this arrow is a symmetric monoidal $\infty$-groupoid, the existence and uniqueness of this arrow is equivalent to the functor $\Vect^\sim \to  \cSpan(\Exit)^{\sf cls\text{-}emb}$ being a final functor.  
Because $\Vect^\sim$ is an $\infty$-groupoid, this finality is the simple observation that the cospan of pointed stratified spaces
\[
(x\in X) \xra{~\sf cls~} (x\in \ov{X}_p)\xla{~\emb~} (0\in \sT_x X_p)
\]
represents a final object in cospans from $(x\in X)$ to vector spaces. Here, the first arrow is opposite of the inclusion $\ov{X}_p\subset X$ of the closure of the stratum $X_p\subset X$ in which $x$ lies; the second arrow is an exponential map from the tangent space at $x$ of the smooth manifold $X_p$.

\subsubsection{\bf Cospans in $\Exit$}

Recall from Definition~\ref{def.Exit-classes} the $\infty$-subcategories
\[
\Exit^{\sf cls}~\subset ~\Exit ~ \supset ~ \Exit^{\sf emb}~,
\]
each of which contains the equivalences.  
For each $\infty$-category $\cK$, consider the $\infty$-category $\Exit^\cK:=\Fun(\cK,\Exit)$ of functors.  
This $\infty$-category is equipped with a pair of $\infty$-subcategories
\[
\Fun^{\sf cls}(\cK,\Exit)~\subset~\Exit^\cK~\supset~\Fun^{\sf emb}(\cK,\Exit)
\]
consisting of the same objects, which are functors $\cK \to \Exit$, and those natural transformations through closed/embedding morphisms.  
Using that constructible bundles pull back (see~\S6 of~\cite{striat}), and that closed morphisms in $\Bun$ are opposites of proper constructible embeddings, this pair of $\infty$-subcategories satisfies Criterion~\ref{cspan-rep} of~\S\ref{sec.cspan}.
Through the conclusions of~\S\ref{sec.cspan}, cospans in $\Exit^\cK$ by closed and embedding morphisms organize as an $\infty$-category 
\[
\cSpan(\Exit^\cK)^{\sf cls\text{-}emb}~.
\]
Because finite pullbacks preserve colimits in $\strat$, through Observation~\ref{cspan-ot-rep} we see that this $\infty$-category inherits a symmetric monoidal structure from that of $\Exit$.  
Furthermore, there is a sequence of composable symmetric monoidal functors 
\begin{equation}\label{seq}
\Vect^\sim \xra{~\sf diag~}\Map(\cK,\Vect^\sim)\longrightarrow \Map(\cK,\Exit)   \underset{\rm Obs~\ref{from-gpd}}\longrightarrow   \cSpan(\Exit^\cK)^{\sf cls\text{-}emb}~.
\end{equation}
The symmetric monoidal $\infty$-category $\cSpan(\Exit^\cK)^{\sf cls\text{-}emb}$ under $\Vect^\sim$ is contravariantly functorial in the argument $\cK$, by construction.  
In particular, we obtain a simplicial symmetric monoidal $\infty$-category $\cSpan(\Exit^{[\bullet]})^{\sf cls\text{-}emb}$ under $\Vect^\sim$.

For $\cK$ an $\infty$-category, consider the full symmetric monoidal $\infty$-subcategory
\[
\cSpan\bigl(\Exit^{\cK^{\tl}}\bigr)_0^{\sf cls\text{-}emb} ~\subset~\cSpan\bigl(\Exit^{\cK^{\tl}}\bigr)^{\sf cls\text{-}emb} 
\]
consisting of those functors $\cK^{\tl} \to \Exit$ whose value on the cone-point $\ast \to \Exit$ classifies a pointed stratified space $(x\in X)$ that is isomorphic to an open cone on some compact stratified space $(\ast\in \sC(L))$ equipped with its cone-point.

The $\oo$-category $\cSpan(\Exit)^{\sf cls\text{-}emb}$ is designed for the following technical result.
This result articulates a sense in which the symmetric monoidal $\infty$-groupoid $\Vect^\sim$, which classifies vector bundles, approximates $\Exit$.
The basic geometric idea is that each pointed stratified space $(x\in X)$ admits a canonical cospan in $\Exit$ to $\sT_x X_p$, the tangent space at $x$ of the stratum $X_p\subset X$ in which $x$ lies.  
\begin{lemma}\label{0-final}
For each compact stratified space $Z$, the restriction of the symmetric monoidal structure
\[
\Vect^\sim  \times \cSpan\bigl(\Exit^{\exit(\ov{\sC}(Z))}\bigr)_0^{\sf cls\text{-}emb} 
\xra{~{}~\times~{}~}
\cSpan\bigl(\Exit^{\exit(\ov{\sC}(Z))}\bigr)^{\sf cls\text{-}emb}
\]
defines a final functor.
In the case $Z=\emptyset$, the functor
\[
\Vect^\sim
\longrightarrow
\cSpan(\Exit)^{\sf cls\text{-}emb}~.
\]
is a final functor.  

\end{lemma}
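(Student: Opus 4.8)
The plan is to verify finality through the standard cofinality criterion (\cite{HTT}, Theorem~4.1.3.1): a functor into an $\infty$-category is final precisely when, for each object $W$ of the target, the comma $\infty$-category of objects under $W$ lying in the image is weakly contractible. Since the source $\Vect^\sim$ is an $\infty$-groupoid, every morphism it contributes is an equivalence, so each such comma $\infty$-category is itself an $\infty$-groupoid; hence it suffices to exhibit a terminal object, which for a space is the assertion of contractibility. I will treat the case $Z=\emptyset$ first, as it isolates the geometric heart of the statement, and then bootstrap to general $Z$.

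Fix an object $(x\in X)$ of $\cSpan(\Exit)^{\sf cls\text{-}emb}$, and write $X_p\subset X$ for the stratum containing $x$. Unwinding Definition~\ref{def.Exit-classes} through Theorem~\ref{thm.class-maps}, the comma $\infty$-groupoid in question is the space of cospans
\[
(x\in X)\xra{\sf cls}(y\in Y)\xla{\sf emb}(0\in V)
\]
with $V$ a vector space. By the identification of closed morphisms with opposites of proper constructible injections, the left leg is the datum of a proper constructible subspace $Y\hookrightarrow X$ through $x$; by the identification of embedding morphisms with open embeddings, the right leg is an open embedding of $V$ onto a neighborhood of $y=x$ in $Y$ with $0\mapsto x$. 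The first thing to observe is that, because $V$ is a single smooth stratum and $Y\hookrightarrow X$ carries strata into strata, the composite open embedding $V\hookrightarrow Y\hookrightarrow X$ factors through the single stratum $X_p$; consequently $\dim V=\dim_x X_p$ and the right leg is canonically a chart of the smooth manifold $X_p$ at $x$. In particular $Y$ is smooth at $x$, which via the link-embedding characterization of closed morphisms (the discussion preceding Lemma~\ref{closed-active}) forces $Y$, by properness together with that link condition, to be the closure $\ov{X}_p$.

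I then claim the cospan
\[
(x\in X)\xra{\sf cls}(x\in \ov{X}_p)\xla{\sf emb}(0\in T_x X_p)
\]
is terminal in this comma $\infty$-groupoid. Indeed, given an arbitrary cospan as above, differentiating the chart $V\hookrightarrow X_p$ at $x$ produces a canonical comparison $V\xra{\sim}T_x X_p$; the space of such charts retracts onto this derivative by the rescaling $c\mapsto \bigl(v\mapsto c(tv)/t\bigr)$ as $t\to 0$, which is the local form of the contractibility $\Diff(\RR^d)\simeq \fr(\RR^d)$ recalled in the introduction, so the space of right legs is contractible with canonical point $T_x X_p$. Together with the identification of the apex with $\ov{X}_p$ forced above, this produces the required essentially unique morphism from any cospan to the displayed one. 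Hence the comma $\infty$-groupoid is contractible, and the functor $\Vect^\sim\to \cSpan(\Exit)^{\sf cls\text{-}emb}$ is final.

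For general compact $Z$ the argument is the family version of the above over $\exit(\ov{\sC}(Z))$. The restriction to the subcategory $\cSpan(\Exit^{\exit(\ov{\sC}(Z))})_0^{\sf cls\text{-}emb}$ guarantees that the value at the cone-point is itself an open cone $\sC(L)$, so that near the distinguished point the ambient family is a basic $\RR^i\times \sC(L)$; the factor $\Vect^\sim$ then supplies the smooth tangent directions $\RR^i$ while the cone subcategory absorbs the remaining conical directions. Running the terminal-object construction fiberwise, the contractibility of the relevant spaces of conical charts and the coherence of the universal cospan across $\exit(\ov{\sC}(Z))$ are underwritten by the conical smoothness machinery of \cite{aft1}, in particular the equivalence $\Aut(\sC(L))\simeq \Aut(L)$ obtained by differentiating at the cone-point and the isotopy extension theorem. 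The main obstacle is precisely this last coherence: pinning the apex down to the closure of the relevant stratum and trivializing the chart data simultaneously and naturally over the whole parametrizing category $\exit(\ov{\sC}(Z))$, rather than merely pointwise, which is where the structural results on basics and their automorphisms from \cite{aft1} do the essential work.
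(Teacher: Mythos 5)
Your overall strategy---Quillen's Theorem~A, with the cospan $(x\in X)\xra{\cls}(x\in \ov{X}_p)\xla{\emb}(0\in T_xX_p)$ as the candidate terminal object---is the same as the paper's, but there are two genuine gaps. First, in the $Z=\emptyset$ case your comparison morphism from an arbitrary cospan $(x\in X)\xra{\cls}(y\in Y)\xla{\emb}(0\in V)$ rests on the claim that the apex $Y$ is \emph{forced} to equal $\ov{X}_p$. That is false: $Y$ may be any proper constructible subspace of $X$ containing $x$ in a locally top-dimensional stratum---for instance $Y=X$ itself whenever $X_p$ is open in $X$, or $\ov{X}_p$ together with further closed constructible pieces away from $x$. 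What is true, and what the paper proves, is that $\ov{X}_p$ is \emph{terminal} among such apexes: every $Y$ admits a further canonical closed morphism $Y\to\ov{X}_p$ under $X$, corresponding to the inclusion $\ov{X}_p\subset Y$, and one needs the closed--embedding factorization system on $\cSpan(\Exit)^{\sf cls\text{-}emb}$ to split the comparison into two steps---first contract the closed legs onto $\ov{X}$, then contract the embedding legs into it. Without that step you have no morphism at all from a cospan with apex $Y\neq\ov{X}_p$ to your candidate, so terminality is not established.

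Second, the general-$Z$ clause is where the real work lies, and your proposal defers it: you correctly identify ``pinning the apex down and trivializing the chart data simultaneously and naturally over $\exit(\ov{\sC}(Z))$'' as the main obstacle, but then attribute its resolution to unspecified conical-smoothness machinery. The paper resolves exactly this point with a dedicated technical result, Lemma~\ref{emb-init}: evaluation at the cone-point, from the over $\infty$-category $\bigl(\Vect^\sim\times\Fun^{\sf emb}_0(\exit(\ov{\sC}(Z)),\Exit)\bigr)_{/\ov{X}}$ to pointed embeddings into the cone-point fiber, is a \emph{right adjoint}; its left adjoint $\gamma_\ast\pi^\ast$ is built from a collaring of $\Link_{X_{|\ast}}(X)$, and the contractibility of the counit's fibers is a compactness argument extending embeddings over cones $\sC(K)$ of parameter spaces. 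That adjunction is what reduces the family statement to the pointwise statement about basic neighborhoods from \cite{aft1}; neither $\Aut(\sC(L))\simeq\Aut(L)$ nor the isotopy extension theorem, which you cite instead, substitutes for it. As written, neither clause of the lemma is fully established.
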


\begin{proof}
Using Quillen's Theorem~A, we show that for each object $\bigl(X\overset{\sigma}{\leftrightarrows} \ov{\sC}(Z)\bigr)$ of $\cSpan\bigl(\Exit^{\exit(\ov{\sC}(Z))}\bigr)_0^{\sf cls\text{-}emb} $,  the classifying space of the under $\infty$-category
\[
\sB\Bigl(\Vect^\sim  \times \cSpan\bigl(\Exit^{\exit(\ov{\sC}(Z))}\bigr)_0^{\sf cls\text{-}emb} \Bigr)^{(X\overset{\sigma}{\leftrightarrows} \ov{\sC}(Z))/}~\simeq~\ast
\]
is terminal. 
We do this by demonstrating a terminal object in this under $\infty$-category.  
Manifestly, the $\infty$-category $\cSpan\bigl(\Exit^{\exit(\ov{\sC}(Z))}\bigr)^{\sf cls\text{-}emb}$ has a factorization system whose left factor is \emph{closed} morphisms and whose right factor is \emph{embedding} morphisms.  
Therefore, it is sufficient to use the following logic.
\begin{enumerate}
\item 
Demonstrate a terminal object 
\[
\bigl(X \overset{\sigma}{\leftrightarrows} \ov{\sC}(Z)\bigr)
\xra{~\sf cls~}
\bigl(\ov{X} \overset{\ov{\sigma}}{\leftrightarrows} \ov{\sC}(Z)\bigr)
\]
of the under $\infty$-category $\Fun^{\sf cls}\Bigl(\exit(\ov{\sC}(Z)),\Exit\bigr)^{(X\overset{\sigma}{\leftrightarrows} \ov{\sC}(Z))/}$.

\item 
Demonstrate an initial object
\[
\bigl(V\times R \overset{0\times \tau}{\leftrightarrows} \ov{\sC}(Z)\bigr)
\xra{~\sf emb~}
\bigl(\ov{X} \overset{\ov{\sigma}}{\leftrightarrows} \ov{\sC}(Z)\bigr)
\]
of the over $\infty$-category $\Bigl(\Vect^\sim \times \Fun^{\sf emb}_0\bigl(\exit(\ov{\sC}(Z)),\Exit\bigr)\Bigr)_{/(\ov{X}\overset{\ov{\sigma}}{\leftrightarrows} \ov{\sC}(Z))}$.
Here the subscript $0$ indicates the full $\infty$-subcategory of those functors that carry the cone-point to a pointed stratified space $(\sigma(\ast)\in R_{|\ast})$ for which $\sigma(\ast)$ is the unique $0$-dimensional stratum.

\end{enumerate}

The first terminal object $\bigl(\ov{X} \overset{\ov{\sigma}}{\leftrightarrows} \ov{\sC}(Z)\bigr)$ can be described as follows.  
As a properly embedded constructible subspace, $\ov{X}\subset X$ is the intersection of all such that contain the image of the section $\sigma$.  
The projection $\ov{X} \to \ov{\sC}(Z)$ is a constructible bundle; this constructible bundle is equipped with a section over the section $\sigma$, by construction.  
Also by construction, there is a canonical closed morphism $\bigl(X \overset{\sigma}{\leftrightarrows} \ov{\sC}(Z)\bigr) \to \bigl(\ov{X} \overset{\ov{\sigma}}{\leftrightarrows} \ov{\sC}(Z)\bigr)$ in the functor $\infty$-category $\Fun\bigl(\exit(\ov{\sC}(Z)),\Exit\bigr)$.  
The construction of $\ov{X}$ makes this closed morphism manifestly terminal among all such. 

Consequently, we can assume that the canonical closed morphism $X\to \ov{X}$ is an equivalence.
We now face the problem of showing there is an initial embedding morphism $\bigl(V\times R \overset{\tau}{\leftrightarrows} \ov{\sC}(Z)\bigr)
\xra{~\sf emb~}
\bigl(X\overset{\ov{\sigma}}{\leftrightarrows} \ov{\sC}(Z)\bigr)$
between functors $\exit(\ov{\sC}(Z))\to \Exit$.  
Using Lemma~\ref{emb-init}, it is sufficient to argue the existence of an initial object of the over $\infty$-category 
${(\Strat^{\ast/})^{\emb}}_{/(x\in X)}$.
From the very definition of a stratified space in the sense of~\S3 of~\cite{aft1}, there is a basic neighborhood $\bigl((0,\ast)\in \RR^i\times \sC(L)\bigr)\hookrightarrow (x\in X)$, and such basic neighborhoods form a basis for the topology about $x\in X$.  
This implies the full $\infty$-subcategory consisting of basic neighborhoods is initial.  
In~\S4 of~\cite{aft1} it is shown that this $\infty$-subcategory is in fact a contractible $\infty$-groupoid.  
This verifies the desired initiality.

We now verify the second clause of the lemma.
We continue with the same notation above.  
In the case $Z=\emptyset$, the object $\bigl(X\overset{\sigma}{\leftrightarrows} \ov{\sC}(Z)\bigr)$ is simply a pointed stratified space $(x\in X)$.  
As such, $\ov{X}$ is simply the closure $\ov{X}_p\subset X$ of the stratum $x\in X_p\subset X$ in which $x$ lies.
In particular, $x$ lies in the top-dimensional stratum of the stratified space $\ov{X}_p$.  
Therefore, the object $R= \ast$ is a point.  
Thus, the under $\infty$-category $(\Vect^\sim)^{(x\in X)/}$ has a terminal object;
in particular its classifying space is terminal.
Through Quillen's Theorem~A, this proves that the functor $\Vect^\sim \to \cSpan(\Exit)^{\sf cls\text{-}emb}$ is final. 
\end{proof}

In the proof of Lemma~\ref{0-final} above, we made use of the following technical result.  
We denote by 
\[
(\Strat^{\ast /})^{\emb}
\]
the $\infty$-category of pointed stratified spaces and pointed open embeddings among them.
\begin{lemma}\label{emb-init}
Let $Z$ be a compact stratified space.
Let $\exit(\ov{\sC}(Z))\to  \Exit$ be a functor classifying a constructible vector bundle $X\overset{\sigma}{\leftrightarrows} \ov{\sC}(Z)$ equipped with a section.
Evaluation at the cone-point
\[
\Bigl(\Vect^\sim \times \Fun^{\sf emb}_0\bigl(\exit(\ov{\sC}(Z)),\Exit\bigr)\Bigr)_{/(X\overset{\sigma}{\leftrightarrows} \ov{\sC}(Z))}
\longrightarrow
{(\Strat^{\ast/})^{\emb}}_{/(\sigma(\ast)\in X_{|\ast})}
\]
is a right adjoint.  

\end{lemma}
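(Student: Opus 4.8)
The plan is to exhibit the left adjoint to the evaluation functor directly, the key structural input being that the cone-point is the \emph{initial} object of $\exit(\ov{\sC}(Z))$. Indeed, $\ast\in\ov{\sC}(Z)$ lies in the minimal stratum, indexed by the bottom element of the left-cone poset $P^{\tl}$, so the inclusion $\{\ast\}\hookrightarrow\exit(\ov{\sC}(Z))$ selects an initial object, and evaluation at the cone-point is restriction along this inclusion. For a target with enough colimits, restriction along the inclusion of an initial object is right adjoint to the constant-diagram functor: left Kan extension along $\{\ast\}\hookrightarrow\exit(\ov{\sC}(Z))$ computes, value-wise at $k$, a colimit over the contractible mapping space $\Map_{\exit(\ov{\sC}(Z))}(\ast,k)$, and so returns the constant diagram. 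Thus, stripped of its decorations, evaluation at the cone-point is already a right adjoint; the content of the lemma is to promote this to the decorated, over-categorical setting.

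With this picture, I would construct the left adjoint $F$ on the target over-category as follows. First note that evaluation sends a pair $(V,R)$ to the pointed basic $(0,\sigma(\ast))\in V_{|\ast}\times R_{|\ast}\cong\RR^{\dim V}\times\sC(L)$, so an object of the target is a pointed open embedding $(y_0\in Y)\overset{\sf emb}{\hookrightarrow}(\sigma(\ast)\in X_{|\ast})$. To such a datum I associate the family over $\ov{\sC}(Z)$ obtained by transporting $Y\hookrightarrow X_{|\ast}$ outward along the canonical exit-path morphisms out of the initial object: the functor $\exit(\ov{\sC}(Z))\xra{(X\overset{\sigma}{\leftrightarrows}\ov{\sC}(Z))}\Exit$ carries the unique morphism $\ast\to k$ to a transport $X_{|\ast}\to X_{|k}$, and the counit of the constant-diagram adjunction is precisely this transport. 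To satisfy condition $0$, I split $Y$ at $y_0$ using the canonical conically smooth decomposition of the pointed stratified space into the tangent directions $\sT_{y_0}Y_p$ of the stratum through $y_0$ and the transverse cone: the former is the factor $V\in\Vect^\sim$, while the latter, having $y_0$ as its unique $0$-dimensional stratum, furnishes the cone-fiber of $R$. Setting $F(y_0\in Y):=(V,R,\,V\times R\hookrightarrow X)$ then lands in the source over-category by construction, and the $\times$ of Corollary~\ref{td-sym} is respected because the splitting is a fiberwise direct-sum decomposition at the cone-point.

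I would then verify the adjunction via the pointwise criterion: for each $(y_0\in Y)$, the comma $\infty$-category of objects $(V',R',V'\times R'\hookrightarrow X)$ equipped with a compatible embedding $Y\hookrightarrow(V'\times R')_{|\ast}$ has $F(y_0\in Y)$ as an initial object. Over the cone this initiality is forced by the identification of the constant diagram with the left Kan extension along $\ast$; at the level of the cone-point fiber it is exactly the statement, established in \S4 of \cite{aft1}, that the basic neighborhoods of $y_0$ form a contractible $\infty$-groupoid that is initial in $(\Strat^{\ast/})^{\sf emb}_{/(y_0\in Y)}$. The same contractibility shows that the tangent/cone splitting is initial among pointed embeddings of basics, matching the $\Vect^\sim$-factor and condition $0$ simultaneously.

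The main obstacle is not the formal adjunction, which is the initial-object observation above, but coherence and compatibility with the \emph{embedding}-morphism decoration. I must upgrade the object-wise transport to an honest functor on the over-category, which requires the exit-path transport of the constructible bundle $X\to\ov{\sC}(Z)$ out of the cone-point to be organized naturally in families; this is where the contractibility of the relevant spaces of conically smooth structures from \cite{aft1} is indispensable, and is precisely what the $\Vect^{\sf inj}$-valued rather than strictly geometric formulation is designed to absorb. A related subtlety is confirming that the unit and counit are represented by \emph{embedding} morphisms, so that the adjunction restricts to $\Fun^{\sf emb}_0$ of Definition~\ref{def.Exit-classes}; this holds because the transport from the cone-fiber and the exponential comparison with basic neighborhoods are open embeddings by construction, but it must be checked to survive the tangent/cone splitting.
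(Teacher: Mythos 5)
Your formal observation is correct as far as it goes: the cone-point is initial in $\exit(\ov{\sC}(Z))$, so on the \emph{plain} functor category $\Fun\bigl(\exit(\ov{\sC}(Z)),\Exit\bigr)$ evaluation at $\ast$ is right adjoint to the constant-diagram functor. But the lemma concerns an over-category of the fixed object $(X\overset{\sigma}{\leftrightarrows}\ov{\sC}(Z))$ with morphisms constrained to be \emph{embedding} natural transformations, and there the constant diagram is not the left adjoint: given a pointed open embedding $(y_0\in Y)\hookrightarrow(\sigma(\ast)\in X_{|\ast})$, the constant family at $Y$ in general admits \emph{no} embedding morphism to $X$ over $\ov{\sC}(Z)$ extending the given one. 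Your proposed repair --- ``transporting $Y$ outward along the canonical exit-path morphisms $\ast\to k$'' --- presupposes that the classifying functor carries $\ast \to k$ to a map of stratified spaces $X_{|\ast}\to X_{|k}$ along which an open subset can be pushed forward. It does not: a morphism of $\Bun$ (or $\Exit$) is a constructible bundle over $\Delta^1$, and the passage from the special fiber to the generic fiber is mediated by the span $X_{|\ast}\xla{~\pi~}\Link_{X_{|\ast}}(X)\xra{~\gamma~}X\smallsetminus X_{|\ast}$, not by a transport map (nor is the morphism of $\Exit$ in question an embedding morphism). Already for $Z=\ast$ and $X={\sf Cylr}(\partial\DD^i\hookrightarrow\DD^i)\to\Delta^1$ the reversed cylinder of Example~\ref{stand-cr}, any pointed open $U\subset\DD^i=X_{|\ast}$ contains $i$-dimensional strata and so admits no open embedding into the generic fiber $\partial\DD^i$; the correct value of the left adjoint over the generic point is a collar on $\pi^{-1}(U)$, which is not a copy of $U$.

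The paper's proof instead constructs the left adjoint as $\gamma_\ast\pi^\ast$ --- pull back along the link projection $\pi$, push forward along a collaring $\gamma$, and glue with $U$ over the cone-point --- and the substantive work is then (i) using compactness of $Z$ to rescale the collaring so that the image of the section $\sigma$ lies in the image of $\gamma$, and (ii) proving that the space of extensions of a pointed embedding $f\colon U\hookrightarrow W_{|\ast}$ to an embedding $\gamma_\ast\pi^\ast(U)\hookrightarrow W$ over and under $\ov{\sC}(Z)$ is contractible, by extending any compact family of such extensions over a cone and thereby null-homotoping it. Neither step is supplied by the contractibility of basic neighborhoods from \cite{aft1}: that input belongs to the proof of Lemma~\ref{0-final}, where one further passes to the initial subcategory of basics, and your tangent/cone splitting $Y\simeq V\times R$ likewise only makes sense when $Y$ is a basic, whereas the left adjoint here must be defined on arbitrary pointed open embeddings into $X_{|\ast}$. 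So the adjunction is genuinely geometric, and the initial-object formality does not deliver it.
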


\begin{proof}
In~\S6 of~\cite{striat} we construct a colimit diagram among stratified spaces over $\ov{\sC}(Z)$:
\[
\xymatrix{
&&
\Link_{X_{|\ast|}}(X)\times Z\times (\Delta^1\smallsetminus \Delta^{\{0\}}) \ar[rr]^-{\gamma}  \ar[d]
&&
X_{|Z}\times (\Delta^1\smallsetminus \Delta^{\{0\}})    \ar[dd]
\\
\Link_{X_{|\ast|}}(X)  \ar[d]_-{\pi}  \ar[rr]^-{\{0\}}
&&
\Link_{X_{|\ast|}}(X)\times Z\times \Delta^1  \ar[drr]
&&
\\
X_{|\ast}  \ar[rrrr]
&&
&&
X
}
\]
in which the map $\pi$ is proper and constructible and the map $\gamma$ is open.  
Such a diagram is determined upon a choice of collaring of the above link over the standard collaring $Z\times \Delta^1 \to \ov{\sC}(Z)$.  
Because such collarings form a basis for the topology about $X_{|\ast}\subset X$, the compactness of $Z$ grants the existence of $0<\epsilon\leq 1$ for which the image of the restricted section lies in the image of $\gamma$:
\[
{\sf Image}\bigl(\sigma_{|Z\times (0,\epsilon)}\bigr)~\subset ~{\sf Image}\bigl(\gamma_{|\Link_{X_{|\ast|}}(X)\times Z\times (0,\epsilon)}\bigr)~.
\]
(In the above expression we have made use of a standard identification $\Delta^1\cong [0,1]$ that carries $\Delta^{\{0\}}$ to $\{0\}$.)
Therefore, such collarings can be chosen to guarantee that $\epsilon$ can be taken to be $1$:
\[
{\sf Image}(\sigma_{|Z\times (\Delta^1\smallsetminus \Delta^{\{0\}})})~\subset~{\sf Image}(\gamma)~.
\]
We thusly obtain a functor
\[
\gamma_\ast\pi^\ast\colon {(\Strat^{\ast/})^{\emb}}_{/(\sigma(\ast)\in X_{|\ast})} \longrightarrow \Fun^{\sf emb}\bigl(\exit(\ov{\sC}(Z)),\Exit\bigr)_{/\bigl(X\overset{\sigma}{\leftrightarrows} \ov{\sC}(Z)\bigr)}
\]
given by pullback along $\pi$ and pushforward along $\gamma$; in terms of transversality sheaves this is the assignment
\[
\scriptstyle
X_{|\ast}\underset{\rm open}\supset U~{}~\mapsto~ {}~
\Bigl(
U\underset{\pi^{-1}(U)} \coprod \pi^{-1}(U)\times Z\times \Delta^1 \underset{\pi^{-1}(U)\times Z\times (\Delta^1\smallsetminus \Delta^{\{0\}})}\coprod \gamma\bigl(\pi^{-1}(U)\times Z\times (\Delta^1\smallsetminus \Delta^{\{0\}}\bigr) \Bigr)~\underset{\rm open}\subset~X~.
\]
We now argue that this functor $\gamma_\ast \pi^\ast$ is a left adjoint to evaluation at the cone-point.

By inspection, there is a canonical identification of the composite functor
\[
{(\Strat^{\ast/})^{\emb}}_{/(\sigma(\ast)\in X_{|\ast})} 
\xra{~\gamma_\ast \pi^\ast~}
\Fun^{\sf emb}\bigl(\exit(\ov{\sC}(Z)),\Exit\bigr)_{/\bigl(X\overset{\sigma}{\leftrightarrows} \ov{\sC}(Z)\bigr)}
\xra{~\ev_\ast~}
{(\Strat^{\ast/})^{\emb}}_{/(\sigma(\ast)\in X_{|\ast})}
\]
with the identity functor.  
It remains to construct a counit transformation $\gamma_\ast \pi^\ast \circ \ev_\ast \to {\sf id}$. 
Consider an object $\bigl(W\overset{\sigma}{\leftrightarrows} \ov{\sC}(Z)\bigr)\xra{~\sf emb~}\bigl(X\overset{\sigma}{\leftrightarrows} \ov{\sC}(Z)\bigr)$ 
of 
$\Fun^{\sf emb}\bigl(\exit(\ov{\sC}(Z)),\Exit\bigr)_{/\bigl(X\overset{\sigma}{\leftrightarrows} \ov{\sC}(Z)\bigr)}$.
Evaluation at the cone-point determines the map between morphism spaces
\[
\Map_{/(X\overset{\sigma}{\leftrightarrows} \ov{\sC}(Z))}\Bigl(\gamma_\ast \pi^\ast(\sigma(\ast)\in U), (W\overset{\sigma}{\leftrightarrows} \ov{\sC}(Z))\Bigr)
\longrightarrow
\Map_{/(\sigma(\ast)\in X_{|\ast})}\Bigl((\sigma(\ast)\in U), (x\in W_{|\ast})\Bigr)~.
\]
The fiber $F_f$ of this map over a pointed open embedding $f\colon U\hookrightarrow W_{|\ast}$ is the space of extensions of $f$ to open embeddings
\[
\gamma_\ast \pi^\ast(U) \longrightarrow W
\]
over and under $\ov{\sC}(Z)$.
We must show that this fiber space $F_f$ is contractible. First, note that it is clearly nonempty.

For $K$ a compact stratified space, consider a $K$-point $K\to F_f$. This consists of a conically smooth open embedding
\[
\w{f}\colon \gamma_\ast \pi^\ast(U)\times K~ \hookrightarrow~ W\times K
\]
over and under $\ov{\sC}(Z)\times K$, together with an identification of the restriction $\w{f}_{|U\times K} \simeq f\times {\sf id}_{K}$.  
First, recall that collar-neighborhoods of $X_{|\ast}\subset X$ over collar-neighborhoods of $\ast\in \ov{\sC}(Z)$ form a basis for the topology about $X_{|\ast}$. From this, together with compactness of $K$ and of $Z$, there exists a conically smooth map $\epsilon \colon X_{|\ast} \to (0,1]$. This map has the property that, for each compact subspace $C\subset X_{|\ast}$, the subspace $\gamma\bigl(\pi^{-1}(C)\times [0,\epsilon(x)]\bigr)\subset X$ lies in the image of $\w{f}$.
By the construction of $\gamma_\ast \pi^\ast$ in terms of collarings, we conclude that the map $\w{f}$ extends to an open embedding 
\[
\ov{\w{f}}\colon \gamma_\ast \pi^\ast(U)\times \sC(K)~ \hookrightarrow~ W\times \sC(K)
\]
over and under $\ov{\sC}(Z)\times \sC(K)$, together with an identification of the restriction $\ov{\w{f}}_{|U\times \sC(K)} \simeq f\times {\sf id}_{\sC(K)}$.
Therefore our $K$-point $K\ra F_f$ is null-homotopic. This verifies the contractibility of the space $F_f$.
\end{proof}

\subsubsection{\bf Proof of Proposition~\ref{unique-action}}\label{sec.char}
There is a sequence of fully faithful functors from the $\infty$-category of symmetric monoidal $\infty$-categories
\[
\Small
\CAlg(\Cat_\infty^{\times})\hookrightarrow
\CAlg\bigl(\Psh(\bdelta)^{\times}\bigr)\simeq 
\Fun\bigl(\bdelta^{\op},\CAlg(\Spaces^{\times})\bigr)\hookrightarrow
\Fun\bigl(\bdelta^{\op},\CAlg(\Cat_\infty^{\times})  \bigr)
\]
to the $\infty$-category of simplicial symmetric monoidal $\infty$-categories, as we now explain.  
The first functor is induced by the presentation $\Cat_\infty\subset \Psh(\bdelta)$ as complete Segal spaces, which is fully faithful and preserves finite products.  
The middle equivalence is an adjunction of variables, using that the Cartesian symmetric monoidal structure of presheaf $\infty$-categories is given value-wise.  
The last functor is induced from the fully faithful inclusion $\Spaces \hookrightarrow \Cat_\infty$ as $\infty$-groupoids, which preserves finite products.

By the fully faithfulness of this functor, we can establish the existence of $\sT^{\sf fib}$ by constructing a morphism between simplicial symmetric monoidal $\infty$-categories under $\Vect^\sim\simeq \Map([\bullet],\Vect^\sim)$,
\[
\sT^{\sf fib}\colon \Map\bigl([\bullet],\Exit\bigr) \longrightarrow \Map\bigl([\bullet],\Vect^{\sf inj}\bigr)~,
\]
and argue that it is the unique such whose values on the closed and embedding simplices factor through degenerate simplices.
The sequence~(\ref{seq}) gives a morphism of simplicial symmetric monoidal $\infty$-categories $\Map([\bullet],\Exit) \to \cSpan(\Exit^{[\bullet]})^{\sf cls\text{-}emb}$. 
Of course, for each $p\geq 0$ the symmetric monoidal $\infty$-category $\Map\bigl([p],\Vect^{\sf inj}\bigr)$ is actually a symmetric monoidal $\infty$-groupoid.  
Thus, in light of this assertion that $\sT^{\sf fib}$ carries the given collection of morphisms to equivalences, it is equivalent to argue the existence and uniqueness of a morphism of simplicial symmetric monoidal $\infty$-categories under $\Vect^\sim$,
\begin{equation}\label{w-theta}
\w{\sT^{\sf fib}}\colon \cSpan(\Exit^{[\bullet]})^{\sf cls\text{-}emb} \longrightarrow \Map([\bullet],\Vect^{\sf inj})~.
\end{equation}

For $p\geq 0$, consider the full subcategory $\bdelta_{\leq p}\subset \bdelta$ consisting of those $[q]$ for which $q\leq p$.  
The canonical functor $\underset{p\geq 0} \colim \bdelta_{\leq p} \to \bdelta$ is an equivalence.
It follows that the canonical functor $\Fun\bigl(\bdelta^{\op},\CAlg(\Cat_\infty^\times)\bigr)\to\underset{p\geq 0}{\sf lim} \Fun\bigl(\bdelta_{\leq p},\CAlg(\Cat_\infty^\times)\bigr)$ is again an equivalence.
Therefore, to argue the existence and uniqueness of the morphism of simplicial symmetric monoidal $\infty$-categories~(\ref{w-theta}) under $\Vect^\sim$ it is enough to argue the existence and uniqueness of a morphism of truncated simplicial symmetric monoidal $\infty$-categories under $\Vect^\sim$
\begin{equation}\label{w-theta-p}
\w{\sT^{\sf fib}}_{|\bdelta_{\leq p}}\colon \cSpan(\Exit^{[\bullet]})^{\sf cls\text{-}emb}_{|\bdelta_{\leq p}} \longrightarrow \Map([\bullet],\Vect^{\sf inj})_{|\bdelta_{\leq p}}
\end{equation}
for each $p\geq 0$, coherently compatibly.  
We do this by induction on $p\geq 0$.

The base case of $p=0$ is the assertion that there is a unique symmetric monoidal retraction $\w{\sT^{\sf fib}}_{\{[0\}} \colon \cSpan(\Exit)^{\sf cls\text{-}emb} \to \Vect^\sim$.  
Because $\Vect^\sim$ is in particular an $\infty$-groupoid, this is equivalent to the assertion that the functor $\Vect^\sim\to \cSpan(\Exit)^{\sf cls\text{-}emb}$ induces an equivalence on classifying spaces:
\[
 \Vect^\sim\xra{~\simeq~} \sB \bigl(\cSpan(\Exit)^{\sf cls\text{-}emb}\bigr)~.
\]
This is implied by the second clause of Lemma~\ref{0-final}, which states that the functor $\Vect^\sim \to  \cSpan(\Exit)^{\sf cls\text{-}emb}$ is final.

We proceed by induction and assume that the morphism~(\ref{w-theta-p}) has been defined, and been verified as being unique, for $0\leq p<q$, coherently compatibly with the simplicial morphisms in $\bdelta_{\leq p}$.  
We must argue that there is a unique symmetric monoidal functor under $\Vect^\sim$
\begin{equation}\label{w-theta-q}
\w{\sT^{\sf fib}}_{\{[q]\}} \colon \cSpan(\Exit^{[q]})^{\sf cls\text{-}emb} \longrightarrow \Map([q],\Vect^{\sf inj})
\end{equation}
for which, for each simplicial morphism $[p]\xra{\rho}[q]$ with $p<q$, the restriction $\rho^\ast\w{\sT^{\sf fib}}_{\{[q]\}}$ is coherently identified with $\w{\sT^{\sf fib}}_{\{[p]\}}$.  
For $\cK$ an $\infty$-category, consider the full symmetric monoidal $\infty$-subcategory
\[
\Map_0(\cK^{\tl},\Vect^{\sf inj})~\subset~\Map(\cK^{\tl},\Vect^{\sf inj})
\]
consisting of those functors whose value on the cone-point $\ast$ is the zero vector space.  
The requirement of an identification of the restriction $(\w{\sT^{\sf fib}}_{\{[q]\}})_{|\{[0]\}}$ with $\w{\sT^{\sf fib}}_{\{[0]\}}$ implies that $\w{\sT^{\sf fib}}_{\{[q]\}}$ restricts as a morphism between these symmetric monoidal $\infty$-subcategories under $\Vect^\sim$
\[
\cSpan(\Exit^{[q]})_0^{\sf cls\text{-}emb}\longrightarrow\Map_0([q],\Vect^{\sf inj})~.  
\]
The requirement that $\w{\sT^{\sf fib}}_{\{[q]\}}$ be symmetric monoidal under $\Vect^\sim$ in particular requires a commutative diagram among $\infty$-categories
\[
\xymatrix{
\Vect^\sim \times \cSpan(\Exit^{[q]})_0^{\sf cls\text{-}emb}  \ar[rr]^-{\times}_-{(\rm f)}  \ar[d]_-{{\sf id}\times \w{\sT^{\sf fib}}_{\{[q]\}}}  
&&
\cSpan(\Exit^{[q]})^{\sf cls\text{-}emb}  \ar[d]^-{\w{\sT^{\sf fib}}_{\{[q]\}}}
\\
\Vect^\sim \times  \Map_0([q],\Vect^{\sf inj})  \ar[rr]^-{\oplus}
&&
\Map([q],\Vect^{\sf inj}).
}
\]
Lemma~\ref{0-final}, applied to $Z=\Delta^{q-1}$, states that the functor labeled as (f) is final.
Therefore, since $\Map([q],\Vect^{\sf inj})$ is in particular an $\infty$-groupoid, the existence and uniqueness of $\w{\sT^{\sf fib}}_{\{[q]\}}$ is equivalent to the existence and uniqueness of its restriction $\cSpan(\Exit^{[q]})_0^{\sf cls\text{-}emb} \to \Map_0([q],\Vect^{\sf inj})$.

We have required that restriction along the standard simplicial morphism $[q-1]=\{1<\dots<q\}\xra{\rho} [q]$ determines a commutative diagram among $\infty$-categories
\[
\xymatrix{
\cSpan(\Exit^{[q]})_0^{\sf cls\text{-}emb}  \ar[rr]^-{\rho^\ast}  \ar[d]_-{\w{\sT^{\sf fib}}_{\{[q]\}}}  
&&
\cSpan(\Exit^{[q-1]})^{\sf cls\text{-}emb}  \ar[d]^-{\w{\sT^{\sf fib}}_{\{[q-1]\}}}
\\
\Map_0([q],\Vect^{\sf inj})  \ar[rr]^-{\rho^\ast}_-{(\rm z)}
&&
 \Map([q-1],\Vect^{\sf inj})~.
}
\]
Because the zero vector space is initial in the $\infty$-category $\Vect^{\sf inj}$, the bottom horizontal arrow~($\rm z$) is an equivalence of $\infty$-groupoids.  
Thus, the existence and uniqueness of the functor $\w{\sT^{\sf fib}}_{\{[q]\}}$, which is the left vertical arrow in the above diagram, is implied by the existence and uniqueness of $\w{\sT^{\sf fib}}_{\{[q-1]\}}$, which is the right vertical arrow in the above diagram.  
 Our induction hypothesis on $q$ ensures the existence and uniqueness of $\w{\sT^{\sf fib}}_{\{[q-1]\}}$, coherently compatibly with the simplicial morphisms in $\bdelta_{<q}$.  
Through this logic we conclude the existence and uniqueness of $\w{\sT^{\sf fib}}_{\{[q]\}}$ as in~(\ref{w-theta-q}).  
By construction, this symmetric monoidal functor is coherently compatible with the simplicial morphisms in $\bdelta_{\leq q}$, thereby completing the inductive step.

\subsection{Vertical framings}
In section~\S\ref{sec.framings} we introduced a variety of tangential structures on one stratified space at a time.  
In this section we introduce 
the notion of \emph{fiberwise} framings on a constructible bundle $X\to K$.
Like the case that $K=\ast$, these are functors from the exit-path $\infty$-category of $X$.

\begin{notation}\label{def.theta-sub}
Let $\eta \colon \Exit \to \Vect^{\sf inj}$ be a functor.  
For each constructible bundle $X\xra{\pi} K$, the functor
\[
\eta^{\sf fib}_\pi  \colon \exit(X) \longrightarrow \Vect^{\sf inj}
\]
is the restriction of $\eta$ along the functor $\exit(X)\to \Exit$ classifying $(X\underset{K}\times X \underset\pr{\overset{\sf diag}\leftrightarrows} X)$. (See Proposition~\ref{prop.Exit}.)
The superscript ${\sf fib}$ indicates \emph{fiberwise}.
For each stratified space $X$, the functor
\[
\eta_X\colon \exit(X) \longrightarrow \Vect^{\sf inj}
\]
is the instance of the functor $\eta^{\sf fib}_\pi$ applied to the constructible bundle $X\xra{\pi} \ast$.  

\end{notation}

\begin{definition}\label{def.fib.B.framings}
Let $\cB\to \Vect^{\sf inj}$ be a tangential structure.  
Let $X\xra{\pi} K$ be a constructible bundle between stratified spaces.
A \emph{fiberwise $\cB$-framing on $\pi$} is a lift $\varphi$ as in the commutative diagram among $\infty$-categories involving the fiberwise constructible tangent bundle of $\pi$:
\[
\xymatrix{
&&
\cB  \ar[d]
\\
\sT^{\sf fib}_\pi \colon \exit(X)  \ar[r]  \ar@{-->}[urr]^-\varphi
&
\Exit  \ar[r]^-{\sT^{\sf fib}}
&
\Vect^{\sf inj}  .
}
\]
A \emph{fiberwise $\cB$-framed constructible bundle} is a pair $(X\xra{\pi} K,\varphi)$ consisting of a constructible bundle together with a fiberwise $\cB$-framing on it.

\end{definition}

\begin{observation}\label{B.to.Vect}
Base change along the fiberwise constructible tangent bundle $\sT^{\sf fib}\colon \Exit \to \Vect^{\sf inj}$ defines a functor
\[
(\sT^{\sf fib})^\ast \colon \Cat_{\infty/\Vect^{\sf inj}}  
\longrightarrow
\Cat_{\infty/\Exit}
~,\qquad
(\cB \to \Vect^{\sf inj})\mapsto   \cB_{|\Vect^{\sf inj}}~.
\]
\end{observation}

\begin{notation}\label{B.to.Vect.convention}
In what follows, we implement Observation~\ref{B.to.Vect} implicitly to associate to each tangential structure an $\infty$-category over $\Vect^{\sf inj}$.  

\end{notation}

We temporarily denote the forgetful functor $\Exit \xra{\pi} \Bun$ given by forgetting section data.  
Lemma~\ref{exit-exp} of the appendix verifies that the base change functor 
\[
\pi^\ast \colon {\Cat_\infty}_{/\Bun} ~\rightleftarrows~ {\Cat_\infty}_{/\Exit}\colon \pi_\ast~,
\]
has a right adjoint, as depicted.  
\begin{definition}[$\Bun^{\cB}$]\label{def.bun-tau}
For $\cB \to \Vect^{\sf inj}$ a tangential structure, the $\infty$-category of \emph{$\cB$-framed stratified spaces} is the $\infty$-category
\[
\Bun^{\cB}~:=~ \pi_\ast (\cB_{|\Vect^{\sf inj}})
\]
over $\Bun$.
For each class $\psi$ of morphisms of $\Bun$, the $\infty$-subcategory
\[
\Bun^{\cB,\psi}~:=~\Bun^{\psi}\underset{\Bun}\times \Bun^{\cB}~\subset~ \Bun^{\cB}
\]
which is the pullback over $\Bun^{\psi}$.  

\end{definition}

\begin{prop}\label{partial.fibration}
For each tangential structure $\cB$, the canonical projection $\Bun^{\cB}\to \Bun$ is closed-coCartesian and embedding-Cartesian. 

\end{prop}

\begin{proof}
This is a consequence of the formal result, Proposition~\ref{t4}.  
Namely, first apply that result to where the sequence of functors $\cA \to \cE \to \cB$ is $\cB_{|\Exit}\to \Exit \to \Bun$.
That the functor $\Exit\to \Bun$ is closed-Cartesian is established in~\cite{striat}.
By construction of the fiberwise constructible tangent bundle $\sT^{\sf fib}\colon \Exit \to \Vect^{\sf inj}$, it carries morphisms in $\Exit$ that are closed-Cartesian over $\Bun$ to equivalences.  
In particular, $\cB_{|\Exit}\to \Exit$ is coCartesian over such morphisms in $\Exit$.  
It follows from Lemma~\ref{fib.lemma} that $\Bun^{\cB}\to \Bun$ is closed-coCartesian.

The same logic, though using the dual version of Proposition~\ref{t4}, applies to verify that $\Bun^{\cB}\to \Bun$ is closed-Cartesian.  
\end{proof}

The next result articulates the sense in which solid framings are less sensitive to stratifications than are general tangential structures.
\begin{prop}\label{solid.refs}
If $\cB\to \Vect^{\sf inj}$ is a solid tangential structure, then the canonical functor $\Bun^{\cB} \to \Bun$ is a refinement-Cartesian fibration, and the Cartesian monodromy is an equivalence.  

\end{prop}

\begin{proof}
The same logic as in the proof of the second part of Proposition~\ref{partial.fibration} applies to verify that $\Bun^{\cB}\to \Bun$ is refinement-Cartesian, provided $\cB\to \Vect^{\sf inj}$ is a Cartesian fibration.  
\end{proof}

Using Proposition~\ref{partial.fibration}, we have the following.

\begin{observation}\label{tau-closed-covers}
The fiberwise constructible tangent bundle $\sT^{\sf fib}\colon \Exit \to \Vect^{\sf inj}$ carries closed morphisms to equivalences. Consequently, for each tangential structure $\cB$, the projection $\Bun^{\cB}\to \Bun$ preserves and detects limit diagrams that factor through $\Bun^{\cB,\cls}$.  

\end{observation}

\begin{lemma}\label{fact-sys}
For each tangential structure $\cB$, the pair of $\infty$-subcategories $(\Bun^{\cB,\cls},\Bun^{\cB,\act})$ is a factorization system on $\Bun^{\cB}$.

\end{lemma}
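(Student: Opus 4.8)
The plan is to lift the factorization system $(\Bun^{\cls},\Bun^{\act})$ on $\Bun$ from Lemma~\ref{closed-active} along the forgetful functor $q\colon \Bun^\tau \to \Bun$, exploiting the pullback descriptions $\Bun^{\tau,\cls} = \Bun^{\cls}\times_\Bun \Bun^\tau$ and $\Bun^{\tau,\act} = \Bun^{\act}\times_\Bun \Bun^\tau$ of Definition~\ref{def.bun-tau}. Both classes manifestly contain the equivalences and are closed under composition, since the corresponding classes in $\Bun$ are and the defining squares are pullbacks. What remains is: that $\Bun^{\tau,\cls}\cap \Bun^{\tau,\act}$ is exactly the equivalences, the orthogonality of $\Bun^{\tau,\cls}$ against $\Bun^{\tau,\act}$, and essentially unique factorization.

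First I would record two structural features of $q$. (i) The fibers of $q$ are $\infty$-groupoids: a morphism of $\Bun^\tau$ lying over an identity of $\Bun$ corresponds to an isotopy-direction, which maps to equivalences in $\Exit$, and $\tau \to \Exit$ carries equivalences to equivalences; this is also visible from the Remark following Definition~\ref{def.bun-tau}, describing a $\tau$-structure as a point of a limit of a space-valued functor. (ii) Every closed morphism of $\Bun$ admits a $q$-coCartesian lift out of any chosen source $\tau$-structure, with invertible fiberwise transport. This is precisely the closed-coCartesian condition of Definition~\ref{def.tangential-structure} transported across the forgetful functor $\Exit \to \Bun$ — along which closed morphisms are Cartesian by Lemma~\ref{exit-fbn} — and is repackaged by Observation~\ref{tau-closed-covers}. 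From (i) and (ii) together it follows that any morphism of $\Bun^\tau$ over a closed morphism agrees, after postcomposition with a (necessarily invertible) vertical morphism, with the coCartesian lift; in particular, since closed morphisms contain the equivalences of $\Bun$ and $\Bun^{\cls}\cap\Bun^{\act}$ is exactly those equivalences (Lemma~\ref{closed-active}), a morphism over an equivalence of $\Bun$ is an equivalence of $\Bun^\tau$, giving $\Bun^{\tau,\cls}\cap\Bun^{\tau,\act} = (\Bun^\tau)^\sim$.

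With (i) and (ii) in hand the factorization is built over the one in $\Bun$. Given $\tilde f\colon \tilde X_0 \to \tilde X_1$ over $f$, take the essentially unique factorization $f = a\circ c$ with $c$ closed and $a$ active (Lemma~\ref{closed-active}), and choose the $q$-coCartesian lift $\tilde c\colon \tilde X_0 \to \tilde Y$ of $c$ out of $\tilde X_0$ from (ii); it lies in $\Bun^{\tau,\cls}$. The coCartesian universal property then produces a unique $\tilde a\colon \tilde Y \to \tilde X_1$ over $a$, necessarily in $\Bun^{\tau,\act}$, with $\tilde a\circ \tilde c \simeq \tilde f$. For essential uniqueness I would compare the space of factorizations in $\Bun^\tau$ to the contractible space of factorizations of $f$ in $\Bun$: the comparison map has fibers governed by the coCartesian universal property of $\tilde c$ together with the groupoidal fibers of (i), hence contractible. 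Orthogonality is argued identically: a lifting square with left leg in $\Bun^{\tau,\cls}$ and right leg in $\Bun^{\tau,\act}$ projects to a square in $\Bun$ whose space of diagonal fillers is contractible by orthogonality there, and each filler lifts essentially uniquely by presenting the left leg as a coCartesian lift up to an invertible vertical morphism and invoking its universal property over the groupoidal fibers.

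The main obstacle is step (ii): extracting from the closed-coCartesian condition on $\tau \to \Exit$ the statement that closed morphisms of $\Bun$ lift $q$-coCartesianly in $\Bun^\tau \to \Bun$ with invertible fiberwise transport. This is where one must thread the definition of $\Bun^\tau$ through the adjunction $\pi^\ast \dashv \pi_\ast$ and the fact (Lemma~\ref{exit-fbn}) that closed morphisms are Cartesian for the forgetful functor $\Exit \to \Bun$; once this is arranged, Observation~\ref{tau-closed-covers} supplies exactly what is needed, and the remaining arguments are the formal manipulations of (co)Cartesian lifts and contractible fibers sketched above.
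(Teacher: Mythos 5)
Your proposal is correct and follows the same route as the paper: the paper's proof is simply the two-sentence observation that the case $\tau\simeq\Exit$ is Lemma~\ref{closed-active} and that the result then follows because $\Bun^\tau\to\Bun$ is closed-coCartesian. Your write-up supplies exactly the formal details (groupoidal fibers, coCartesian lifts over closed morphisms, and the resulting contractibility of factorization and filler spaces) that the paper leaves implicit.
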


\begin{proof}
In~\S6 of~\cite{striat} we prove the case in which $\cB \to \Vect^{\sf inj}$ is an equivalence.
The result follows because $\Bun^{\cB} \to \Bun$ is closed-coCartesian.  
\end{proof}

Recall from Example~\ref{ex.n} the tangential structure $\Vect^{\sf inj}_{\leq n} \to \Vect^{\sf inj}$; from Notation~\ref{ex.99} the notation $\cB_{\leq n}$.  
\begin{notation}[$\Bun_{\leq n}$]\label{def.dim-n}
We use the simplified notation: 
\[
\Bun_{\leq n}~:=~ \Bun^{\Vect^{\sf inj}_{\leq n}}~.  
\]
For each tangential structure $\cB\to \Vect^{\sf inj}$, we use the simplified notation:
\[
\Bun^{\cB}_{\leq n}:=\Bun^{\cB_{\leq n}}~.
\]  
\end{notation}

\begin{remark}
Explicitly, a functor $\exit(K)\xra{(X\to K)}\Bun$ factors through $\Bun_{\leq n}$ if the fibers of $X\to K$ are bounded above in dimension by $n$.
Indeed, the space of lifts of $\sT_{\pi}\colon \exit(X)\to \Exit\xra{\sT^{\sf fib}} \Vect^{\sf inj}$ through $\Vect^{\sf inj}_{\leq n}$ is either empty or contractible, depending on the dimension of the fibers of $X\to K$.  

\end{remark}

\begin{remark}\label{r7}
Inspecting Notation~\ref{def.dim-n}, which imports Notation~\ref{ex.99}, reveals that, for each tangential structure $\cB$, the canonical functor
\[
\Bun^\cB_{\leq n}
\longrightarrow
\Bun^\cB
\]
is fully faithful, and that its image consists of those $\cB$-structured stratified spaces whose dimension is bounded above by $n$.  

\end{remark}

Recall from Construction~\ref{def.tau-A}, for each $\infty$-category $\cS$, the tangential structure $\un{\cS} = \cS\times \Vect^{\sf inj}$.  

\begin{observation}\label{Bun-tau-A}
Let $\cS$ be an $\infty$-category.
For each functor $\cK \to \Bun$, the $\infty$-category of sections
\[
\Fun_{/\Bun}\bigl(\cK,\Bun^{\un{\cS}}\bigr)~\simeq~\Fun\bigl(\Exit_{|\cK},\cS\bigr)
\]
is canonically identical to that of functors from $\Exit_{|\cK}$ to $\cS$.
In particular, an object of the $\infty$-category $\Bun^{\un{\cS}}$ is a stratified space $X$ together with a functor $\exit(X)\xra{\varphi} \cS$.  

\end{observation}

Finally, we consider vari-framed stratified spaces, and variations thereon, as they organize as an $\infty$-category.
These $\infty$-categories are designed so that they classify constructible families of vari-framed stratified spaces.
This is the primary $\infty$-category of this article.  
\begin{definition}\label{def.vari-fr}
Let $n\geq 0$.
The $\infty$-category of \emph{vari-framed (stratified) $n$-manifolds} is
\[
\Mfld^{\vfr}_n~:=~\Bun^{\vfr_n}~.
\]
For $B\to \BO(n)$ a map between spaces, the $\infty$-category of \emph{solid $B$-framed (stratified) $n$-manifolds} is
\[
\Mfld_n^B~:=~\Bun^{{\sf s}B}~.
\]
The $\infty$-category of \emph{(stratified) $n$-manifolds}, and the $\infty$-category of \emph{solid $n$-framed (stratified) $n$-manifolds}, are the special cases
\[
\Mfld_n~:=~\Mfld^{\BO(n)}_n\qquad \text{ and }\qquad \Mfld_n^{\sfr}~:=~\Mfld^{\ast}_n~.
\]

\end{definition}

\begin{remark}
Following up on Remark~\ref{r7}, the canonical functor
\[
\Mfd_n^{\vfr}
\longrightarrow
\Bun^{\vfr}
\]
is fully faithful, and it factors through $\Bun^{\vfr}_{\leq n}$ (of Notation~\ref{def.dim-n}) as an equivalence between $\infty$-categories over $\Bun$:
\[
\Mfd^{\vfr}_n~\simeq~\Bun^{\vfr}_{\leq n}~.
\]
\end{remark}

\begin{observation}
For each dimension $n$, there are projections
\[
\Bun^{\vfr}~\hookleftarrow~
\Mfld^{\vfr}_n~\rightarrow~\Mfld^{\sfr}_n~.  
\]

\end{observation}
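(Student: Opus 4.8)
The plan is to exhibit both projections as the image under the functor $\tau\mapsto\Bun^\tau$ of two maps of tangential structures over $\Exit$; recall from Observation~\ref{vfr-as-pb} that this functor is a right adjoint, hence in particular functorial.

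For the hooked projection I would not build a new map of structures but instead read it off the pullback square of Observation~\ref{vfr-as-pb}: there $\Mfld_n^{\vfr}$ is identified with $\Bun^{\vfr}\underset{\Bun}\times\Bun_{\leq n}$, so the projection $\Mfld_n^{\vfr}\to\Bun^{\vfr}$ is the base change of $\Bun_{\leq n}\to\Bun$ along $\Bun^{\vfr}\to\Bun$. By the Remark following Definition~\ref{def.dim-n}, a lift of a constructible bundle through $\Exit_{\leq n}$ is either empty or contractible, so $\Bun_{\leq n}\to\Bun$ is a monomorphism; as monomorphisms are stable under base change, $\Mfld_n^{\vfr}\to\Bun^{\vfr}$ is a monomorphism, which is the hooked arrow.

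For the second projection I would use the canonical natural transformation $t\colon\epsilon^{\sf dim}_{|\Exit_{\leq n}}\to\epsilon^n$ constructed just above (from terminality of $n\in[n]$). Its target is constant at $\RR^n$ and $t$ restricts on the $i$-dimensional stratum to the standard inclusion $\RR^i\hookrightarrow\RR^n$, so $t$ factors through $\Vect^{\sf inj}$. Using the defining pullback of $\vfr_n$ (Definition~\ref{def.vari-fr}), which records $\sT$ together with an equivalence $\epsilon^{\sf dim}\simeq\sT$, I would post-compose with $t$ to produce the injection $\sT\simeq\epsilon^{\sf dim}\xra{~t~}\epsilon^n$, i.e.\ a lift to $\Ar(\Vect^{\sf inj})$ over $(\sT,\epsilon^n)$; the universal property of the pullback defining $\sfr_n$ (Definition~\ref{def.B-framing}) then yields a functor $\vfr_n\to\sfr_n$ over $\Exit$. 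Applying $\Bun^{(-)}$ gives $\Mfld_n^{\vfr}=\Bun^{\vfr_n}\to\Bun^{\sfr_n}=\Mfld_n^{\sfr}$.

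The hard part will be verifying that this candidate $\vfr_n\to\sfr_n$ is genuinely a map of tangential structures in the sense of Definition~\ref{def.tangential-structure}, namely that it preserves closed-coCartesian morphisms. This reduces to the compatibility of $t$ with closed morphisms of $\Exit$: along a closed morphism both $\sT$ and $\epsilon^{\sf dim}=\epsilon^\bullet\circ{\sf dim}\circ\sT$ are carried to equivalences — the former by the defining property of the constructible tangent bundle (Definition~\ref{def.tangent}) — while $\epsilon^n$ is constant and sends every morphism to an equivalence, so the naturality square of $t$ over a closed morphism consists of equivalences and the induced arrow in $\sfr_n$ is again closed-coCartesian. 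Since $\vfr_n$ and $\sfr_n$ are both assembled from such pullbacks and tangential structures are stable under these limits (Observation~\ref{taus-limits}), checking the condition on generating closed morphisms suffices; functoriality of $\tau\mapsto\Bun^\tau$ then delivers the second projection.
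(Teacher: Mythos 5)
Your proposal is correct and follows essentially the same route as the paper: the right-hand projection is induced by the canonical map ${\epsilon^{\sf dim}}_{|\Exit_{\leq n}}\to\epsilon^n$ coming from terminality of $n\in[n]$ (which is exactly the construction the paper records in the sentences immediately preceding the Observation), and the left-hand arrow is the forgetful projection read off the pullback of Observation~\ref{vfr-as-pb}. Your added verifications — that $\Bun_{\leq n}\to\Bun$ is a monomorphism stable under base change, and that the induced map $\vfr_n\to\sfr_n$ respects closed-coCartesian morphisms — are details the paper leaves implicit, and your arguments for them are sound.
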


\begin{remark}
In ordinary differential topology, a family of framed $n$-manifolds can be taken as a smooth fiber bundle $E\xra{\pi} B$ together with a trivialization of the fiberwise tangent bundle: 
$\sT_\pi:={\sf Ker}(\sT_E\xra{\sD\pi}\pi^\ast \sT_B) \underset{\varphi}\simeq \epsilon^n_E$.  
We imitate this definition simply by replacing the fiber bundle by a constructible bundle between stratified spaces.
\end{remark}

\begin{remark}
An object of $\Mfd_n^{\vfr}$ is a stratified space $X$ of dimension bounded above by $n$ together with an equivalence $\epsilon^{\sf dim}_X \underset{\varphi}\simeq \sT_X$.  
In particular, the underlying topological space of $X$ need not be a topological manifold, let alone of dimension $n$.  
Our choice for this terminology for the $\infty$-category $\Mfd_n^{\vfr}$ reflects the examples of objects therein that drive our interest: those $(X,\varphi)$ for which $X$ is a refinement of a smooth $n$-manifold.  

\end{remark}

\begin{example}\label{stand-cr}
For each $0\leq i \leq n$, the hemispherical $i$-disk $\DD^i$ of Example~\ref{pre-hemi} is a vari-framed $i$-manifold.
Through Definition~\ref{d2}, we regard $\DD^i$ as a vari-framed $n$-manifold.
The boundary $\partial \DD^i$ too is a vari-framed $n$-manifold.
The inclusion of stratified spaces $c\colon \partial \DD^i \hookrightarrow \DD^i$ is an injective constructible bundle.
The reversed mapping cylinder (see \S6.6 of~\cite{striat}) is a constructible bundle ${\sf Cylr}(c) \to \Delta^1$, thereby defining a morphism in $\cBun$ which is closed.
The canonical functor $\exit({\sf Cylr}(c)) \to \Exit \xra{\sT^{\sf fib}} \Vect^{\sf inj}$ is equipped with a lift to $\vfr$, thereby defining a closed morphism in $\Mfd_n^{\vfr}$:
\[
c\colon \DD^i\longrightarrow \partial \DD^i~;
\]
see Figure~\ref{fig.44}.
Similarly, for each $n\geq i\geq j\geq 0$, the standard projection $a\colon \DD^i \to \DD^j$ between stratified spaces is a surjective constructible bundle.
The canonical functor $\exit({\sf Cylr}(a)) \to \Exit \xra{\sT^{\sf fib}} \Vect^{\sf inj}$ is equipped with a lift to $\vfr$.  
This data defines a creation morphism in $\Mfd_n^{\vfr}$:
\[
a\colon \DD^j\longrightarrow \DD^i~;
\]
see Figure~\ref{fig.55}.

\end{example}

\begin{figure}[ht]
\begin{tikzpicture}

\draw[line width=2] (-0.02,0) to (4,0);
\draw[midarrow=0.5] (0,0) to (0,2);
\draw[line width=2] (-0.02,2) to (4,2);

\draw[->] (2,-0.5) to (2,-1);

\draw[fill] (0,-1.5) circle [radius=0.07];
\draw (0, -1.5) to (4,-1.5);

\draw[fill] (0,0) circle [radius=0.07];
\draw[fill] (0,2) circle [radius=0.07];

\end{tikzpicture}
\caption{A closed morphism in $\cMfd^{\vfr}_1$ from $\DD^1$ to $\partial \DD^0$: by definition of $\cBun$, and thereafter of $\cMfd^{\vfr}_1$, a morphism is, in particular, a proper constructible bundle over the standardly stratified 1-simplex $\Delta^1 = \bigl( \{0\} \subset [0,1]\bigr)$.  
The source, and the target, of such a morphism in $\cBun$ is the fiber over $\{0\}\subset [0,1]$, and over $\{1\}\subset[0,1]$, respectively.  
}
\label{fig.44}
\end{figure}
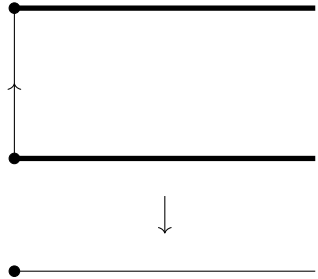

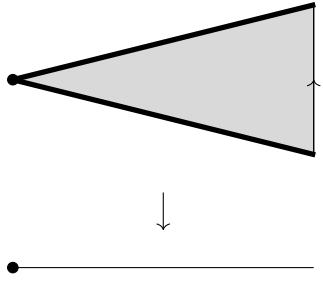
\begin{figure}[ht]
\begin{tikzpicture}

\draw[fill=lightgray] (0,1) to (4,0) to (4,2) to (0,1);

\draw[line width=2] (0,1) to (4.02,0);
\draw[line width=2] (0,1) to (4.02,2);

\draw[fill] (0,1) circle [radius=0.07];

\draw[midarrow=0.5] (4,0) to (4,2);

\draw[->] (2,-0.5) to (2,-1);

\draw[fill] (0,-1.5) circle [radius=0.07];
\draw (0, -1.5) to (4,-1.5);

\end{tikzpicture}
\caption{A creation morphism in $\cMfd^{\vfr}_1$ from $\DD^0$ to $\DD^1$: see the caption for Figure~\ref{fig.44} for how this picture indeed depicts a morphism in $\cMfd^{\vfr}_1$.
}
\label{fig.55}
\end{figure}

\begin{remark}
The $\infty$-category $\Mfd_n^{\vfr}$ captures several functorialities.
We summarize these as the following monomorphisms (see~\S\ref{sec.monos} for a discussion of monomorphisms -- it is an $\infty$-categorical notion of `faithful representation').
\begin{itemize}
\item For each vari-framed stratified space $X$ of dimension bounded above by $n$, there is a monomorphism 
\[
{\sf BAut}^{\vfr}(X)~ \hookrightarrow~ \Mfd_n^{\vfr}~.
\]  
In particular, for $M$ a smooth framed $n$-manifold, there is a monomorphism ${\sf BDiff}^{\fr}(M) \hookrightarrow \Mfd_n^{\vfr}$.
Therefore, a functor from $\Mfd_n^{\vfr}$ determines continuous representations of $\Diff^{\fr}(M)$ for each framed $n$-manifold $M$.  

\item For each vari-framed $n$-manifold $M$, there is a monomorphism from the moduli space of open embeddings 
\[
\Bigl\{ U \underset{\emb}\hookrightarrow M\Bigr\}  ~\hookrightarrow~\Mfd_{n/M}^{\vfr}~.
\]
In this way, descent with respect to open covers can be accounted for.  

\item For each vari-framed $n$-manifold $M$, there is a monomorphism from the moduli space of constructible proper embeddings
\[
\Bigl\{ X \underset{\sf p.cbl.emb}\hookrightarrow M\Bigr\}  ~\hookrightarrow~(\Mfd_{n}^{\vfr})^{M/}~.
\]
In this way, descent with respect to cutting along strata can be accounted for.

\item For each smooth framed $k$-manifold $B$, with $k\leq n$, is a monomorphism from the moduli space of smooth framed proper codimension-$(n-k)$ fiber bundles
\[
\Bigl\{E^n\underset{\sf p.sm.bdl}{\xra{~\pi~}} B~,~ {\sf Ker}(D\pi)\underset{\varphi}\simeq \epsilon_E^{n-k}\Bigr\}~\hookrightarrow~(\Mfd_n^{\vfr})^{B/}~.
\]
Therefore, a functor from $\Mfd_n^{\vfr}$ determines transfer-type maps for each such framed fiber bundle.  

\item For each smooth framed $n$-manifold $M=(M,\varphi)$, there is a monomorphism from the moduli space of vari-framed refinements
\[
\Bigl\{\w{M} \underset{\sf ref} \longrightarrow M~,~\sT_{\w{M}} \underset{\w{\varphi}/\varphi}\simeq \epsilon^{\sf dim}_{\w{M}}  \Bigr\} ~\hookrightarrow~\Mfd_{n/M}^{\vfr}~.
\]
Therefore, a functor from $\Mfd_n^{\vfr}$ determines composition-type maps for each vari-framed refinement.

\end{itemize}

\end{remark}

After Observation~\ref{Bun-tau-A}, we reconsider the constructible tangent bundle of Definition~\ref{def.tangent} as well as the fiberwise dimension constructible bundle of Definition~\ref{def.epsilon}.  
\begin{observation}\label{ob.1}
The functors $\sT^{\sf fib}\colon \Exit \to \Vect^{\sf inj}$ and $\epsilon^{\sf fib.dim}\colon \Exit\to \Vect^{\sf inj}$ both define sections
\[
\sT^{\sf fib}\colon \Bun 
\longrightarrow
\Bun^{\un{\Vect^{\sf inj}}}
\qquad \text{ and }\qquad
\epsilon^{\sf fib.dim}\colon \Bun 
\longrightarrow
\Bun^{\un{\Vect^{\sf inj}}}
\]
of the canonical projection $\Bun^{\un{\Vect^{\sf inj}}} \to \Bun$.

\end{observation}

Unpacking definitions, we recognize, for each tangential structure $\cB$, the $\infty$-category $\Bun^\cB$ as a pullback.
\begin{observation}\label{ob.2}
For each tangential structure, $\cB\to \Vect^{\sf inj}$, there is a canonical pullback diagram among $\infty$-categories
\[
\xymatrix{
\Bun^\cB  \ar[rr]  \ar[d]
&&
\Bun^{\un{\cB}}  \ar[d]
\\
\Bun  \ar[rr]^-{\sT^{\sf fib}}
&&
\Bun^{\un{\Vect^{\sf inj}}}  .
}
\]

\end{observation}

Observation~\ref{ob.2} immediately gives the following.
\begin{observation}\label{vfr-as-pb}
There are canonical pullback diagrams among $\infty$-categories
\[
\xymatrix{
\Mfld^{\vfr}_n  \ar[rr]  \ar[d]
&&
\Bun^{\vfr}  \ar[rr]  \ar[d]
&&
\Bun^{\un{\Vect^{\sf inj}}}  \ar[d]
\\
\Bun_{\leq n} \ar[rr]
&&
\Bun   \ar[rr]^-{(\epsilon^{\sf fib.dim},\sT^{\sf fib})}
&&
\Bun^{\Vect^{\sf inj}\times \Vect^{\sf inj}}.
}
\]

\end{observation}

\section{Disks}
A stratified space naturally accommodates two types of gluing procedures: unions of open subsets, thereby making use of the underlying topology; splicing along strata, thereby making use of the stratification.  
Correspondingly, there are two notions of descent for invariants of stratified spaces.
These are interrelated.
For instance, if a stratified space is sufficiently finely stratified, then these two notions of descent refine each other, in a locally constant sense: regular neighborhoods of strata determines an open cover which, up to isotopy, refines any other open cover.
We articulate this intuition of `sufficiently finely stratified' as a \emph{disk}-stratification.
To define the notion of a disk-stratification we consider suspensions of compact stratified spaces, and suspensions of structures thereon.  
To do so efficiently, we introduce such suspension by way of \emph{wreath} product.

\subsection{Iterated constructible bundles}\label{sec.iterated-bdls}
Here we exploit the universal nature of $\Bun$ as a classifying object for constructible bundles.  
To this end, we utilize the point-set entity $\bun$ from which $\Bun$ is derived (see~\S6.3 of~\cite{striat} for a review).  
The main output of this section is Definition~\ref{def.circ}. This establishes the functor $\Bun^{\un{\Bun}} \xra{\circ}\Bun$ of which we will make ongoing use.

Consider the simplicial category
\[
\bun^\bullet \colon \bdelta^{\op} \longrightarrow \Cat
\]
whose category of $p$-simplices is the subcategory of $\Fun([p]^{\op},\strat)$ whose objects are those $X_\bullet \colon [p]^{\op}\to \strat$ for which, for each $0\leq i\leq j \leq p$, $X_j\to X_i$ is a constructible bundle; the morphisms $X_\bullet \to Y_\bullet$ are those natural transformations for which, for each $0\leq i \leq j \leq p$, the square
\[
\xymatrix{
X_j \ar[r]  \ar[d] 
&
Y_j \ar[d]   
\\
X_i \ar[r] 
&
Y_i
}
\]
is a pullback.
The simplicial structure functors of $\Fun([\bullet]^{\op}, \strat)$ restrict to $\bun^\bullet$, using the result from~\S6 of~\cite{striat} that constructible bundles compose.

Among the simplicial structure functors, restriction along each $\{0<\dots<i\} \hookrightarrow \{0<\dots<p\}$ induces a right fibration
\begin{equation}\label{bun-to-strat}
\bun^{p} \longrightarrow \bun^{i}~.
\end{equation}

\begin{observation}\label{bun-n-trans}
The projection~(\ref{bun-to-strat}) for the case $i=0$ is a transversality sheaf.  
This follows by induction on $p$, after the base case $p=1$ which is proved in~\S6 of~\cite{striat}.  

\end{observation}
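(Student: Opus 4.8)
The plan is to induct on $p$, reducing the projection $\bun^p\to\bun^0=\strat$ of~(\ref{bun-to-strat}) for $i=0$ to the base case $p=1$, which asserts that $\bun^1=\bun\to\strat$ is a transversality sheaf and is proved in \S6 of \cite{striat}. The organizing device is a pullback decomposition of the simplicial category $\bun^\bullet$. For $p\geq 1$, restriction along $\{0<\dots<p-1\}\hookrightarrow\{0<\dots<p\}$ gives the right fibration $\bun^p\to\bun^{p-1}$, while restriction to the top vertex defines $e_{p-1}\colon\bun^{p-1}\to\strat$, $(X_{p-1}\to\dots\to X_0)\mapsto X_{p-1}$; writing $s\colon\bun\to\strat$ for the source functor $(X_1\to X_0)\mapsto X_0$, I would first record the canonical identification
\[
\bun^p~\cong~\bun^{p-1}\underset{\strat}\times\bun~,
\]
whose left and right legs are $e_{p-1}$ and $s$. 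Unwinding, an object on the right is a sequence $(X_{p-1}\to\dots\to X_0)$ together with a constructible bundle $X_p\to X_{p-1}$, i.e. exactly an object $(X_p\to\dots\to X_0)$ of $\bun^p$, and a morphism is a compatible family of pullback squares, matching those of $\bun^p$. This realizes the layer map $\bun^p\to\bun^{p-1}$ as the base change of $s$; since right fibrations are stable under base change and composition, the right-fibration half of the claim is immediate from the base case.

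For the descent half I would compute fibers. Over $K\in\strat$ at the bottom vertex $e_0$, the displayed decomposition gives a Grothendieck-type identification
\[
\bun^p(K)~\simeq~\int_{(X_1\to K)\in\bun(K)}\bun^{p-1}(X_1)~,
\]
an object being a constructible bundle $X_1\to K$ together with an object of $\bun^{p-1}$ over $X_1$. I would then verify that the straightened functor $K\mapsto\bun^p(K)$ carries each of the distinguished colimit diagrams of Theorem~\ref{thm.exit-facts} (open hypercovers, blow-up squares, iterated-cone squares, and the univalence square) to a limit, by combining descent for $\bun(-)$ (the base case) with descent for $\bun^{p-1}(-)$ (the inductive hypothesis) applied to the pulled-back families $X_1\to K$. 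Together with the right-fibration property, this is exactly the assertion that $\bun^p\to\strat$ is a transversality sheaf, completing the induction.

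I expect the sole non-formal point to be the compatibility of descent with the integral above: one must know that the distinguished diagrams in $\strat$ pull back along the constructible bundles $X_1\to K$ appearing in the fibers, and that the limits encoding descent may then be computed fiberwise and reassembled over $\bun(K)$. This is precisely where the two facts cited from the antecedent work enter---that constructible bundles pull back (\S6 of \cite{striat}) and that finite pullbacks preserve colimits in $\strat$---so that the distinguished colimit diagrams are preserved under the pullbacks defining $\bun^p(K)$. Granting these stability properties, every remaining step is formal, and the observation reduces, as claimed, to the base case $p=1$ together with this induction.
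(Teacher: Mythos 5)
Your proposal is correct and follows the same route as the paper: the paper's entire argument is the one-line remark that the claim follows by induction on $p$ from the base case $p=1$ established in \S6 of \cite{striat}, and your pullback decomposition $\bun^p\cong\bun^{p-1}\underset{\strat}\times\bun$ together with the fiberwise Grothendieck-construction descent argument is the natural (and intended) way to carry out that inductive step, correctly isolating the two non-formal inputs (pullback-stability of constructible bundles and preservation of the distinguished colimit diagrams under these pullbacks) that the antecedent work supplies.
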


\begin{notation}[$\Bun^p$]\label{def.bun-n-stri}
After Observation~\ref{bun-n-trans}, the main result of~\cite{striat} associates to each such functor $\bun^p \to \bun^0=\strat$ an $\infty$-category 
\[
\Bun^p~.
\]
Heuristically, the $\infty$-category $\Bun^p$ classifies $p$-fold sequences of constructible bundles.  

\end{notation}

\begin{observation}\label{bun-to-bun-other}
Consider the subcategory $\bdelta_-\subset \bdelta$ consisting of the same objects and morphisms that preserve minima.
The $\infty$-categories $\Bun^p$ assemble as a functor
\[
\Bun^\bullet\colon \bdelta_-^{\op} \to \Cat_{\infty}~.
\]

\end{observation}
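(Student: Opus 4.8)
The plan is to promote the simplicial category $\bun^\bullet\colon \bdelta^{\op}\to \Cat$ to a functor valued in transversality sheaves over $\strat$, after restricting its domain to $\bdelta_-^{\op}$, and then to postcompose with the topologizing diagram functor~(\ref{def.td}). Recall from Notation~\ref{def.bun-n-stri} that each $\Bun^p$ is by definition the image of the projection $\bun^p\to \bun^0=\strat$---restriction along $\{0\}\hookrightarrow [p]$---under the topologizing diagram functor ${\sf Trans}\to \Cat_\infty$; by Observation~\ref{bun-n-trans} this projection is a transversality sheaf, so each $\bun^p$ is canonically an object of ${\sf Trans}$. It therefore suffices to upgrade $[p]\mapsto (\bun^p\to \strat)$ to a functor $\bdelta_-^{\op}\to {\sf Trans}$ and then compose.

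For the morphisms, a map $\rho\colon [q]\to [p]$ in $\bdelta$ supplies, through the simplicial structure of $\bun^\bullet$, a restriction functor $\rho^\ast\colon \bun^p\to \bun^q$. The essential observation is that $\rho^\ast$ is a morphism in ${\sf Trans}$---i.e. a functor compatible with the two right-fibration projections to $\bun^0=\strat$---\emph{precisely} when $\rho$ preserves minima. Indeed, the composite $\bun^p\xra{\rho^\ast}\bun^q\to \strat$ is restriction along $\{0\}\hookrightarrow [q]\xra{\rho}[p]$, hence is restriction at the object $\rho(0)\in [p]$; this agrees with the structural projection $\bun^p\to \strat$, which is restriction at $\{0\}\hookrightarrow [p]$, exactly when $\rho(0)=0$, that is, when $\rho\in \bdelta_-$. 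For such $\rho$, the functor $\rho^\ast$ lies over $\strat$ and, being itself a restriction functor, preserves the Cartesian lifts defining the right-fibration structures, so it is a bona fide morphism of transversality sheaves. Functoriality of $\rho\mapsto \rho^\ast$ is inherited from that of the simplicial category $\bun^\bullet$ restricted to minimum-preserving operators, yielding the desired functor $\bdelta_-^{\op}\to {\sf Trans}$.

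Postcomposing with the topologizing diagram functor ${\sf Trans}\xra{\fF\mapsto |\fF(-\times \Delta^\bullet_e)|}\Cat_\infty$ of~(\ref{def.td}) then produces $\Bun^\bullet\colon \bdelta_-^{\op}\to \Cat_\infty$ with $\Bun^\bullet([p])=\Bun^p$, as asserted. The only genuine point of the argument---and the reason the domain must be $\bdelta_-$ rather than all of $\bdelta$---is the compatibility over $\strat$ isolated above: the zeroth coface maps $d^0$ are exactly the simplicial operators that fail to preserve minima, and they would alter the base of the $\strat$-relative classifying construction, so they cannot be transported through the topologizing diagram. Everything else is the routine bookkeeping of carrying the already-constructed simplicial structure of $\bun^\bullet$ through a functor known to be well-defined, so I expect no further obstacle.
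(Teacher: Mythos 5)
Your proposal is correct and is exactly the argument the paper leaves implicit in calling this an Observation: the only content is that the composite $\{0\}\hookrightarrow[q]\xra{\rho}[p]$ equals $\{0\}\hookrightarrow[p]$ precisely when $\rho(0)=0$, so the simplicial restriction functors $\rho^\ast\colon\bun^p\to\bun^q$ lie over $\strat$ (and hence are maps of transversality sheaves, every morphism of a right fibration being Cartesian) exactly for $\rho\in\bdelta_-$, after which one postcomposes with the topologizing diagram functor. No gaps.
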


\begin{observation}\label{bun-bun}
Consider a functor $\exit(K) \xra{(Y\to K)} \Bun$ classifying the indicated constructible bundle.
From the Construction~\ref{def.tau-A} of $\Bun^{\un{\Bun}}$, there is a canonical monomorphism spaces of functors
\[
\Map_{/\Bun}\bigl(\exit(K),\Bun^{\un{\Bun}}\bigr)~\subset~\Map\bigl(\exit(Y),\Bun\bigr)
\]
which is contravariantly functorial in the variable $K$.  
As such, we have the following description of the space of functors
\[
\Map\bigl(\exit(K), \Bun^{\un{\Bun}}\bigr)~\simeq~|\{X \xra{\cbl} Y\xra{\cbl} K\times \Delta^\bullet_e\}|~.
\]
Better, for each $p>0$, there is a canonical equivalence of $\infty$-categories
\[
\Bun^p ~\simeq~ \Bun^{\un{\Bun^{p-1}}}~.
\]

\end{observation}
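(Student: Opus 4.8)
The plan is to reduce everything to the section description of Observation~\ref{Bun-tau-A} together with the pullback identification of Proposition~\ref{prop.Exit}, and then to use the equivalence $\Stri \simeq \Cat_\infty$ of Theorem~\ref{strict} to promote a pointwise identification of $K$-points into an equivalence of $\infty$-categories. First I would establish the monomorphism and the $K$-point description. Fix a functor $\exit(K)\xra{(Y\to K)}\Bun$. By Construction~\ref{def.tau-A} we have $\Bun^{\Bun}=\Bun^{\Bun\times\Exit}$, so Observation~\ref{Bun-tau-A} (with $\cS=\Bun$) identifies the space of lifts $\Map_{/\Bun}(\exit(K),\Bun^{\Bun})$ with the full subspace of $\Map(\Exit_{|\exit(K)},\Bun)$ cut out by the requirement that closed morphisms be carried into $\Bun^\sim$. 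Proposition~\ref{prop.Exit} supplies the identification $\Exit_{|\exit(K)}\simeq \exit(Y)$, yielding the asserted subspace inclusion $\Map_{/\Bun}(\exit(K),\Bun^{\Bun})\subset \Map(\exit(Y),\Bun)$; this is contravariantly functorial in $K$ since the assignments $\cK\mapsto \Exit_{|\cK}$ and $\cS\mapsto \Bun^{\cS}$ are functorial. Assembling these subspaces over the base $\Map(\exit(K),\Bun)\simeq|\{Y\xra{\cbl}K\times\Delta^\bullet_e\}|$, and using that $\Map(\exit(Y),\Bun)\simeq|\{X\xra{\cbl}Y\times\Delta^\bullet_e\}|$ classifies constructible bundles over $Y$, produces the fibration whose total space is the moduli of composable pairs $X\xra{\cbl}Y\xra{\cbl}K\times\Delta^\bullet_e$, which is the displayed equivalence.

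Next I would identify the target with $\Bun^2$. By Notation~\ref{def.bun-n-stri} the $\infty$-category $\Bun^2$ is the striation sheaf attached to the transversality sheaf $\bun^2\to\strat$ of Observation~\ref{bun-n-trans}, so its value on $K$ is exactly $|\{X\xra{\cbl}Y\xra{\cbl}K\times\Delta^\bullet_e\}|$. Thus the two presheaves $K\mapsto\Map(\exit(K),\Bun^{\Bun})$ and $K\mapsto\Bun^2(K)$ on $\strat$ agree, naturally in $K$; since both $\Bun^{\Bun}$ and $\Bun^2$ are striation sheaves, this is an equivalence in $\Psh(\strat)$ restricting to $\Stri$, and Theorem~\ref{strict} (under which $\Stri\simeq\Cat_\infty$ and $\Bun^{\Bun}(K)\simeq\Map(\exit(K),\Bun^{\Bun})$) upgrades it to an equivalence of $\infty$-categories $\Bun^{\Bun}\simeq\Bun^2$. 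The general case $\Bun^p\simeq\Bun^{\Bun^{p-1}}$ follows by the identical argument run with $\cS=\Bun^{p-1}$: a $K$-point of $\Bun^{\Bun^{p-1}}$ is a constructible bundle $Y\to K$ together with a $\Bun^{p-1}$-point of $Y$, i.e. a $(p-1)$-fold composable sequence of constructible bundles over $Y$, and this concatenates with $Y\to K$ into a $p$-fold sequence over $K$, matching $\Bun^p(K)$ by Notation~\ref{def.bun-n-stri}.

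The one genuinely non-formal point, and the step I expect to be the main obstacle, is verifying that the closed-morphism condition of Observation~\ref{Bun-tau-A} neither adds nor deletes components under the identifications above. That is, I must check that a functor $\exit(Y)\to\Bun^{p-1}$ carries the closed morphisms of $\Exit_{|\exit(K)}$ into the maximal subgroupoid precisely when it classifies an honest composable sequence of constructible bundles over $Y$, so that the subspace inclusion becomes an equality after passing to maximal subgroupoids. This rests on the composability of constructible bundles from \S6 of~\cite{striat} (already invoked to define $\bun^\bullet$) and on the pullback-compatibility of closed morphisms recorded in Observation~\ref{tau-closed-covers}. Once this is settled, the naturality in $K$ and the coherent bookkeeping of the cosimplicial object $\Delta^\bullet_e$ are routine, and the inductive assembly presents no further difficulty.
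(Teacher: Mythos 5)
Your formal skeleton is the right one, and it is essentially the only justification the paper itself offers: the statement is an Observation with no written proof, meant to follow by unwinding Construction~\ref{def.tau-A} through Observation~\ref{Bun-tau-A}, identifying $\Exit_{|\exit(K)}\simeq \exit(Y)$ via Proposition~\ref{prop.Exit}, matching $K$-points against $|\bun^p(K\times\Delta^\bullet_e)|$, and upgrading through $\Stri\simeq\Cat_\infty$. The problem is that the single step you defer is the only step with content, and your framing of it is slightly off. The monomorphism $\Map_{/\Bun}(\exit(K),\Bun^{\Bun})\subset\Map(\exit(Y),\Bun)$ is not asserted to be an equality of those two spaces after any passage to subgroupoids; rather, the displayed equivalence requires that for each fixed $(Y\to K)$ the fiber of $|\{X\xra{\cbl}Y\xra{\cbl}K\times\Delta^\bullet_e\}|$ over $(Y\to K)$ --- which is \emph{all} of $\Map(\exit(Y),\Bun)$, i.e.\ all constructible bundles over $Y$, with no condition --- coincide with the subspace cut out by the closed-morphism condition of Observation~\ref{Bun-tau-A}. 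So you must show that \emph{every} constructible bundle $X\to Y$ classifies a functor $\exit(Y)\to\Bun$ carrying the closed morphisms of $\Exit_{|\exit(K)}$ to equivalences, or else that the inclusion of those which do induces an equivalence after the realization $|-(-\times\Delta^\bullet_e)|$.

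This is genuinely non-formal. Take $K=\Delta^1$ and let $Y\to\Delta^1$ classify a closed morphism $Y_0\to Y_1$ of $\Bun$, so $Y\cong\cylr(\iota)$ for a proper constructible embedding $\iota\colon Y_1\hookrightarrow Y_0$. By Lemma~\ref{exit-fbn}, the closed morphisms of $\Exit_{|\exit(K)}$ are the Cartesian lifts from $(\iota(y)\in Y_0)$ to $(y\in Y_1)$, realized by the exit paths $t\mapsto (y,t)$ in the cylinder direction. The functor classifying $X\to Y$ evaluates on such a path as the constructible bundle $X_{|\{y\}\times\Delta^1}\to\Delta^1$, which is an equivalence in $\Bun$ only if the fiber of $X$ does not change as $t$ passes from $(0,1]$ to $0$. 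Since $(y,0)$ and $(y,t)$ for $t>0$ lie in \emph{different} strata of $Y$, constructibility of $X\to Y$ only gives local triviality over each stratum separately and does not force this: for instance one may enlarge $X$ over a single deep point of $Y_0$ and still have a composable pair of constructible bundles. So the step you postpone requires actual geometric input --- in effect the collar and flow constructions of \S6 of~\cite{striat} that are used there to compose constructible bundles, showing the relevant inclusion becomes an equivalence after realization over $\Delta^\bullet_e$ --- and Observation~\ref{tau-closed-covers}, which you cite, does not speak to it. Until that is supplied, neither the displayed identification of $\Map(\exit(K),\Bun^{\Bun})$ nor the equivalence $\Bun^p\simeq\Bun^{\Bun^{p-1}}$ is established.
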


Observations~\ref{bun-to-bun-other} and~\ref{bun-bun} combine, which we highlight as the following.
\begin{definition}\label{def.circ}
For each $n>0$, the functor between $\infty$-categories
\[
\circ \colon \Bun^n \longrightarrow \Bun
\]
is the assignment of $K$-points 
\[
\bigl(\exit(K)\xra{(X_n\xra{\pi_n} \cdots \xra{\pi_1} K)}\Bun^{\un{\Bun}}\bigr)\mapsto \bigl(\exit(K)\xra{(X_n\xra{\pi_1\circ \dots \circ \pi_n} K)}\Bun\bigr)~.
\]

\end{definition}
\noindent
Because each distinguished class $\psi$ of morphisms in $\Bun$ is closed under composition, the restriction of this functor factors:
\[
\circ^\psi \colon \Bun^{\un{\Bun^{\psi}},\psi}\longrightarrow  \Bun^{\psi}~.
\]

\begin{remark}
We use the notation $\circ$ in Definition~\ref{def.circ} to evoke \emph{composition}.
\end{remark}

Finally, we note that the results in this section are equally valid upon replacing the role of constructible bundles by \emph{proper} constructible bundles.
We highlight this as the following.
\begin{observation}
There is an $\infty$-subcategory $\cBun^n$ of $\Bun^n$ that classifies $n$-fold sequences of \emph{proper} constructible bundles.
Furthermore, there is a factorization
\[
\cBun^n \xra{~\circ~} \cBun
\]
of the restriction of $\Bun^n \xra{\circ} \Bun$.  
\end{observation}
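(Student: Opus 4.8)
The plan is to re-run the constructions of this section with the class of all constructible bundles replaced throughout by the class of proper constructible bundles, exactly as suggested by the preceding remark. The two facts that make this substitution legitimate are that proper constructible bundles are closed under composition and stable under base change: constructibility in each case is handled in \S6 of \cite{striat}, while properness is elementary, being preserved by composition of maps and stable under pullback of stratified spaces.

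First I would introduce the point-set model. Let ${\sf c}\bun^\bullet \colon \bdelta^{\op} \to \Cat$ be the simplicial subcategory of $\bun^\bullet$ whose category of $p$-simplices is the full subcategory of $\bun^p$ spanned by those $X_\bullet \colon [p]^{\op} \to \strat$ for which each structure map $X_j \to X_i$ is proper. Stability of properness under pullback ensures that the defining pullback-square condition on morphisms of $\bun^p$ restricts to ${\sf c}\bun^p$, and closure under composition ensures that the simplicial structure functors restrict. Mirroring Observation~\ref{bun-n-trans}, the projection ${\sf c}\bun^p \to {\sf c}\bun^0 = \strat$ is a transversality sheaf; this is proved by induction on $p$ using that ${\sf c}\bun^p \to {\sf c}\bun^{p-1}$ is a right fibration, with base case $p=1$ supplied by the transversality sheaf underlying $\cBun$ from \cite{striat}. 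Applying the main result of \cite{striat} (equivalently, the topologizing diagram functor) then produces the $\infty$-category $\cBun^p$, and the inclusion ${\sf c}\bun^p \hookrightarrow \bun^p$ induces the monomorphism $\cBun^p \hookrightarrow \Bun^p$. Setting $p = n$ yields the asserted $\infty$-subcategory $\cBun^n \subset \Bun^n$.

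For the factorization, recall from Definition~\ref{def.circ} that $\circ \colon \Bun^n \to \Bun$ carries a $K$-point $\bigl(X_n \xra{\pi_n} \cdots \xra{\pi_1} K\bigr)$ to the single composite $\bigl(X_n \xra{\pi_1 \circ \cdots \circ \pi_n} K\bigr)$. When the $K$-point lies in $\cBun^n$, each $\pi_i$ is a proper constructible bundle, so by closure of proper constructible bundles under composition the composite $\pi_1 \circ \cdots \circ \pi_n$ is again proper constructible, and the resulting $K$-point of $\Bun$ therefore factors through $\cBun$. This assignment is natural in $K$ by construction, so it assembles into the desired factorization $\cBun^n \xra{~\circ~} \cBun$ of the restriction of $\Bun^n \xra{~\circ~} \Bun$.

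I expect the only step requiring genuine care --- rather than a formal transcription --- to be the verification that ${\sf c}\bun^p \to \strat$ is a transversality sheaf. The subtlety is that properness is a global condition, so one must check it is inherited by the iterated links and their pullbacks that appear in the inductive definition of a constructible bundle, and that it is compatible with the blow-up and open-cover descent conditions packaged into the transversality sheaf axioms. Granting this, everything else is a direct restriction of the constructible-bundle case already established.
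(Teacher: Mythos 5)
Your proposal is correct and follows exactly the route the paper intends: the paper states this as an unproved Observation, justified only by the preceding remark that the results of the section remain valid upon replacing constructible bundles by proper constructible bundles throughout, which is precisely the substitution you carry out. Your identification of the two load-bearing facts (closure of proper constructible bundles under composition and stability under pullback) and of the transversality-sheaf verification as the one step needing care matches what the paper leaves implicit.
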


\subsection{Iterated framings}
Here we show that the functor $\Bun^{\un{\Bun}} \xra{\circ} \Bun$ of Definition~\ref{def.circ} respects various notions of framings.
This is articulated as Corollary~\ref{vfr-iterate} which establishes a functor $\Bun^{\vfr}\underset{\Bun}\times \Bun^{\un{\Bun}^{\vfr}} \xra{\circ} \Bun^{\vfr}$.
In brief, this is a functorial construction of a fiberwise vari-framing on a constructible bundle $X\to K$ for each factorization through fiberwise vari-framed constructible bundles $X\to Y\to K$.

We first show that the parametrizing $\infty$-category $\Exit$ respects the functor $\Bun^{\un{\Bun}}\xra{\circ}\Bun$ of Definition~\ref{def.circ}.  
\begin{lemma}\label{exit-over-bun-bun}
There is a filler in the diagram among $\infty$-categories
\[
\xymatrix{
\Exit  \ar[d]  
&&
\Exit_{|\Bun^{\un{\Bun}}}  \ar[rr]  \ar@{-->}[ll]_-\pr  \ar[d]
&&
\Exit  \ar[d]
\\
\Bun  
&&
\Bun^{\un{\Bun}}  \ar[ll]_-\pr  \ar[rr]^-{\circ} 
&&
\Bun.
}
\]
\end{lemma}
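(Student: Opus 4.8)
The plan is to construct the dashed filler $\pr\colon \Exit_{|\Bun^{\Bun}} \to \Exit$ as the operation of pushing a section forward along the top bundle of a tower, and then to observe that this is manifestly a morphism of presheaves on $\strat$, hence a functor of $\infty$-categories by Theorem~\ref{strict}.

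First I would unwind the source. Reading the right-hand square, $\Exit_{|\Bun^{\Bun}}$ is the restriction of $\Exit \to \Bun$ along $\circ\colon \Bun^{\Bun} \to \Bun$ of Definition~\ref{def.circ}, so by Observation~\ref{bun-bun} together with the moduli description of $\Exit$ (Definition~\ref{def.Bun}) a $K$-point of $\Exit_{|\Bun^{\Bun}}$ is a tower of constructible bundles $X \xrightarrow{\pi_X} Y \xrightarrow{\pi_Y} K\times \Delta^\bullet_e$ together with a section $\tau$ of the composite $\pi_Y\pi_X\colon X \to K\times \Delta^\bullet_e$. The target $\Exit$ over the same $K$ records a constructible bundle with a section; under the projection $\pr\colon \Bun^{\Bun} \to \Bun$ of Construction~\ref{def.tau-A} the underlying bundle of the tower is $Y \to K\times\Delta^\bullet_e$.

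Second, I would define the filler on $K$-points by
\[
\bigl(X \xrightarrow{\pi_X} Y \xrightarrow{\pi_Y} K\times\Delta^\bullet_e,\ \tau\bigr) \longmapsto \bigl(Y \xrightarrow{\pi_Y} K\times\Delta^\bullet_e,\ \pi_X\circ\tau\bigr).
\]
Since $\pi_Y\pi_X\tau = {\sf id}$ we have $\pi_Y(\pi_X\tau)={\sf id}$, so $\pi_X\circ\tau$ is genuinely a section of $\pi_Y$, and the right-hand datum is a legitimate $K$-point of $\Exit$. Forgetting the section returns $Y \to K\times\Delta^\bullet_e$, which is exactly $\pr$ applied to the input tower, so the assignment lies over $\pr$ and the left square commutes. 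Conceptually this is nothing but the functoriality of the exit-path construction (Definition~\ref{def.exit-path}) applied fiberwise to $\pi_X$: under the identifications of Proposition~\ref{prop.Exit}, the two vertical fibers over a tower are $\exit(X)$ and $\exit(Y)$, and the filler restricts on fibers to $\exit(\pi_X)\colon \exit(X) \to \exit(Y)$.

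Finally I would check naturality in $K$: pulling back a tower and its section along a conically smooth map $K' \to K$ commutes with composing the section with $\pi_X$, since base change commutes with composition of the structure maps. Thus the formula defines a morphism of presheaves on $\strat$ compatible with the $\Delta^\bullet_e$-enrichment, and because $\Exit$, $\Bun$, and $\Bun^{\Bun}$ are striation sheaves, this morphism is a functor of $\infty$-categories lying over $\pr$. The only point requiring any care — and the main (modest) obstacle — is the bookkeeping across the cosimplicial enrichment, together with confirming that pushing a section forward along a constructible bundle again yields an admissible point of the moduli defining $\Exit$; both are formal, since composition of structure maps and base change are the only operations involved, so no horn-filling or resolution-of-singularities argument is needed here.
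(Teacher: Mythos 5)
Your construction is correct and is essentially the paper's own proof: the paper likewise exhibits the filler as an explicit assignment on $K$-points of the corresponding presheaves on $\strat$, sending a tower $X\xrightarrow{q}Y\xrightarrow{p}K\times\Delta^\bullet_e$ with section $\sigma$ of the composite to the bundle remembered by $\pr$ equipped with the pushed-forward section, naturality in $K$ being manifest. Your added remarks --- that $p(q\sigma)=\mathrm{id}$ so the pushforward is genuinely a section, and that fiberwise the filler is $\exit(q)\colon\exit(X)\to\exit(Y)$ via Proposition~\ref{prop.Exit} --- are accurate and make the verification slightly more explicit than the paper's one-line formula.
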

\begin{proof}
As striation sheaves, this filler is the map of presheaf on $\strat$ for which, for each stratified space $K$, the map on spaces of $K$-points is the assignment
\[
\Small
\xymatrix{
X\times \Delta^\bullet_e \ar[rr]_-{q}
&&
Y\times \Delta^\bullet_e  \ar[rr]_-{p}
&&
K\times \Delta^\bullet_e  \ar@/_1.5pc/[llll]^-{\sigma}
&
\mapsto 
&
X\times \Delta^\bullet_e \ar[rr]_-{pq}
&&
K\times \Delta^\bullet_e  \ar@/_1.5pc/[ll]^-{\sigma}~.
}
\] 
\end{proof}

The next result articulates how, for $\cA$ and $\cA'$ $\infty$-categories, $\un{\cA}$-structures combine with $\un{\cA'}$-structures over the functor $\Bun^{\un{\Bun}}\xra{\circ}\Bun$ of Definition~\ref{def.circ}.
\begin{cor}\label{exp-mon}
Each functor $\cA \times \cA' \to \cA''$ among $\infty$-categories canonically determines a filler in the diagram among $\infty$-categories
\[
\xymatrix{
\Bun^{\un{\cA}}\underset{\Bun}  \times \Bun^{\un{\Bun}^{\un{\cA'}}}  \ar@{-->}[rr]    \ar[d]
&&
\Bun^{\un{\cA''}}  \ar[d]
\\
\Bun^{\un{\Bun}}  \ar[rr]^-{\circ}  
&&
\Bun.
}
\]

\end{cor}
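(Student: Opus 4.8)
The plan is to build the dashed functor directly from the universal description of $\Bun^{\cC}$ in Observation~\ref{Bun-tau-A}. Write $\cK := \Bun^{\cA}\underset{\Bun}\times \Bun^{\Bun^{\cB}}$, with structure functor $\cK \to \Bun^{\Bun}\xra{\circ}\Bun$, and let $\Exit_{|\cK}$ denote the pullback of $\Exit\to\Bun$ along this $\circ$-composite. By Observation~\ref{Bun-tau-A} (applied with $\cS=\cC$), a filler lying over $\Bun$ is precisely a functor $\Exit_{|\cK}\to \cC$ whose restriction to closed morphisms factors through $\cC^\sim$. On an object of $\cK$ — a stratified space $Y$ carrying an $\cA$-structure $\alpha\colon \exit(Y)\to \cA$, together with a constructible bundle $X\to Y$ carrying a fiberwise $\cB$-structure $\beta\colon \exit(X)\to \cB$ — the $\circ$-composite has total space $X$, and the $\cC$-structure I want is the composite $\exit(X)\to \cA\times \cB\to \cC$ whose first coordinate is $\alpha$ restricted along the functor $\exit(X)\to\exit(Y)$ induced by $X\to Y$, and whose second coordinate is $\beta$. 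The task is to promote this pointwise recipe to a functor of $\infty$-categories, coherently in $\strat$-families.

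Both coordinates are supplied by results already in hand. For the first, Lemma~\ref{exit-over-bun-bun}, pulled back along $\cK\to\Bun^{\Bun}$, furnishes a comparison $\pr\colon \Exit_{|\cK}\to \Exit^{\mathrm{base}}_{|\cK}$ covering the two projections $\circ,\pr\colon \Bun^{\Bun}\rightrightarrows \Bun$, where $\Exit^{\mathrm{base}}_{|\cK}$ is the pullback of $\Exit$ along the base projection; pointwise this is exactly $\exit(X)\to \exit(Y)$. The projection $\cK\to\Bun^{\cA}$ exhibits, via Observation~\ref{Bun-tau-A} with $\cS=\cA$, an $\cA$-structure on the base, i.e. a functor $\Exit^{\mathrm{base}}_{|\cK}\to \cA$; composing with $\pr$ yields the first coordinate $\Exit_{|\cK}\to \cA$. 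For the second, the projection $\cK\to \Bun^{\Bun^{\cB}}$ is, by Construction~\ref{def.tau-A} together with Observation~\ref{bun-bun}, a fiberwise $\cB$-structure; unwinding the identification $\Exit_{|\exit(Y)}\simeq \exit(X)$ of Proposition~\ref{prop.Exit} in families, and again invoking Observation~\ref{Bun-tau-A} (now with $\cS=\Bun^{\cB}$), identifies this datum with a functor $\Exit_{|\cK}\to \cB$. Taking the product of the two coordinates and postcomposing with the given $\cA\times \cB\to \cC$ produces the sought functor $\Exit_{|\cK}\to \cC$.

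It then remains to verify two side conditions, both formal. The functor lands in $\Bun^{\cC}$ because its restriction to closed morphisms factors through $\cC^\sim$: the first coordinate does so since $\alpha$ satisfies the defining condition of $\Bun^{\cA}$ and $\pr$ carries closed morphisms of $\Exit_{|\cK}$ to closed morphisms of $\Exit^{\mathrm{base}}_{|\cK}$, the second does so by the defining condition of the fiberwise $\cB$-structure, and any functor preserves the resulting equivalences, so the product lands in $\cA^\sim\times \cB^\sim$ and hence in $\cC^\sim$ after postcomposition; that the filler lies over $\Bun$ via $\circ$ is built into the construction. The one genuinely delicate point — and where I expect to spend the most care — is the coherent, family-level identification of the two pullbacks of $\Exit$ (over the total space via $\circ$, and over the base via $\pr$) together with the comparison $\pr$ between them: that is, upgrading Lemma~\ref{exit-over-bun-bun} and the section descriptions of Observation~\ref{Bun-tau-A} from objects to a diagram of $\infty$-categories natural in the $\strat$-point $K$. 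Everything else is a manipulation of the already-established universal properties.
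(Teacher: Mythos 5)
Your proposal is correct and follows essentially the same route as the paper: reduce via Observation~\ref{Bun-tau-A} to constructing a functor $\Exit_{|\cK}\to\cC$ with the closed-morphism locality, use Lemma~\ref{exit-over-bun-bun} to project $\Exit_{|\cK}$ onto the base and fiberwise pieces, apply the counits of the adjunction defining $\tau\mapsto\Bun^\tau$ to land in $\cA\times\cB$, and postcompose with the given functor. The paper simply asserts the locality is manifest from the construction, whereas you spell out that verification.
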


\begin{proof}
Through Observation~\ref{Bun-tau-A}, which explicates what $\Bun^{\un{\cS}}$ classifies for each $\infty$-category $\cS$, the problem is to construct a canonical functor 
\[
\Exit_{|\Bun^{\un{\cA}}}\underset{\Bun}  \times \Bun^{\un{\Bun^{\un{\cA'}}}} \longrightarrow \cA''
\]
satisfying the locality of Observation~\ref{Bun-tau-A}.
This locality will be manifest from the construction of the functor.  
Lemma~\ref{exit-over-bun-bun}, just above, offers the functor
\[
\Exit_{|\Bun^{\un{\cA}}}\underset{\Bun}  \times \Bun^{\un{\Bun^{\un{\cA'}}}}  \longrightarrow \Exit_{|\Bun^{\un{\cA}}} \times \Exit_{|\Bun^{\un{\cA'}}}~.
\]
The counit of the adjunction defining $\cA'\mapsto \Bun^{\un{\cA'}}$ gives the functor
\[
 \Exit_{|\Bun^{\un{\cA}}} \times \Exit_{|\Bun^{\un{\cA'}}} \longrightarrow \cA\times \cA'~.  
\]
The result follows by composing with $\cA\times \cA'\to \cA''$. 
\end{proof}

Corollary~\ref{exp-mon} applied to the direct sum functor $\Vect^{\sf inj}\times \Vect^{\sf inj} \xra{\oplus} \Vect^{\sf inj}$ gives the next result.  
\begin{cor}\label{vect-sum}
Direct sum of vector spaces $\Vect^{\sf inj}\times \Vect^{\sf inj}\xra{\oplus}\Vect^{\sf inj}$ determines a functor 
\[
\Bun^{\un{\Vect^{\sf inj}}}\underset{\Bun}  \times \Bun^{\un{\Bun}^{\un{\Vect^{\sf inj}}}}  
\xra{~{}~\circ~{}~}
\Bun^{\un{\Vect^{\sf inj}}}
\]
over $\Bun^{\un{\Bun}}\xra{\circ}\Bun$.

\end{cor}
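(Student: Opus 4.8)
The plan is to obtain this functor as a direct instance of Corollary~\ref{exp-mon}, requiring no further geometric input. First I would specialize the three $\infty$-categories appearing in that corollary by setting $\cA = \cB = \cC = \Vect^{\sf inj}$, and take the structure functor $\cA \times \cB \to \cC$ to be the direct sum
\[
\oplus\colon \Vect^{\sf inj}\times \Vect^{\sf inj} \longrightarrow \Vect^{\sf inj}~.
\]
The only point needing checking is that $\oplus$ is a well-defined functor landing in $\Vect^{\sf inj}$: this is precisely the earlier observation that the symmetric monoidal structure on $\Vect$, given by fiberwise direct sum of vector bundles, restricts to a symmetric monoidal structure on the $\infty$-category $\Vect^{\sf inj}$, since fiberwise direct sum carries a pair of fiberwise injections to a fiberwise injection.

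With this data in hand, Corollary~\ref{exp-mon} canonically produces a filler in the square
\[
\xymatrix{
\Bun^{\Vect^{\sf inj}}\underset{\Bun}\times \Bun^{\Bun^{\Vect^{\sf inj}}}  \ar@{-->}[rr]  \ar[d]
&&
\Bun^{\Vect^{\sf inj}}  \ar[d]
\\
\Bun^{\Bun}  \ar[rr]^-{\circ}
&&
\Bun~,
}
\]
whose top horizontal arrow is exactly the functor asserted in the statement. That this functor lies over $\Bun^{\Bun} \xra{~\circ~} \Bun$ is built into the conclusion of Corollary~\ref{exp-mon}, so nothing further is required to establish the claim.

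Because the whole construction is a pure specialization, there is no substantive obstacle here. All of the genuine work has already been carried out upstream: in Lemma~\ref{exit-over-bun-bun}, which supplies the comparison of $\Exit$ over the composition functor $\Bun^{\Bun}\xra{~\circ~}\Bun$; in the adjunction $\tau \mapsto \Bun^\tau$ used to assemble the relevant functor $\infty$-categories; and in Corollary~\ref{exp-mon} itself, which combines these to turn a functor $\cA\times\cB\to\cC$ into the desired filler. The one item I would flag for the record is the well-definedness of $\oplus$ on $\Vect^{\sf inj}$ noted above; beyond that, the proof is a single invocation of Corollary~\ref{exp-mon}.
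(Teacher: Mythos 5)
Your proposal is correct and matches the paper exactly: the paper obtains this corollary as a direct application of Corollary~\ref{exp-mon} to the direct sum functor $\Vect^{\sf inj}\times \Vect^{\sf inj}\xra{\oplus}\Vect^{\sf inj}$, with the restriction of $\oplus$ to $\Vect^{\sf inj}$ already recorded as an earlier observation. Nothing further is needed.
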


\begin{cor}\label{B.sum}
Let $\cB$, $\cB'$, and $\cB''$ be tangential structures.
Each functor $\cB\times \cB'\to \cB''$ over the direct sum functor $\Vect^{\sf inj}\times \Vect^{\sf inj}\xra{\oplus} \Vect^{\sf inj}$ determines a commutative square among $\infty$-categories:
\[
\xymatrix{
\Bun^{\un{\cB}}\underset{\Bun}\times \Bun^{\un{\Bun^{\un{\cB'}}}}  \ar[rr]   \ar[d]
&&
\Bun^{\un{\cB''}}   \ar[d]
\\
\Bun^{\un{\Vect^{\sf inj}}}\underset{\Bun}\times \Bun^{\un{\Bun^{\un{\Vect^{\sf inj}}}}}  \ar[rr]^-{\circ}
&&
\Bun^{\un{\Vect^{\sf inj}}}  .
}
\]

\end{cor}

The next result articulates how the functor of Corollary~\ref{vect-sum} just above respects the fiberwise constructible tangent bundle as well as the fiberwise dimension constructible bundle. 
Recall from Observation~\ref{ob.1} the rephrasing of the fiberwise constructible tangent bundle, as well as the fiberwise dimension constructible bundle, as sections of the canonical projection $\Bun^{\un{\Vect^{\sf inj}}}\to \Bun$.
\begin{lemma}\label{splittings}
The diagram among $\infty$-categories
\[
\xymatrix{
\Bun^{\un{\Vect^{\sf inj}}}\underset{\Bun}\times \Bun^{\un{\Bun^{\un{\Vect^{\sf inj}}}}}  \ar[d]_-{\circ}
&&&
\Bun^{\un{\Bun}}  \ar[rrr]^-{(\sT^{\sf fib},\Bun^{\sT^{\sf fib}})}  \ar[lll]_-{(\epsilon^{\sf fib.dim},\Bun^{\epsilon^{\sf fib.dim}})}  \ar[d]_-{\circ}
&&&
\Bun^{\un{\Vect^{\sf inj}}}\underset{\Bun}\times \Bun^{\un{\Bun^{\un{\Vect^{\sf inj}}}}}   \ar[d]^-{\circ}
\\
\Bun^{\un{\Vect^{\sf inj}}}  
&&&
\Bun  \ar[rrr]^-{\sT^{\sf fib}}  \ar[lll]_-{\epsilon^{\sf fib.dim}}
&&&
\Bun^{\un{\Vect^{\sf inj}}}
}
\]
commutes.

\end{lemma}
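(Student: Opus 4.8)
The content of the diagram is the \emph{additivity} of the vertical constructible tangent bundle and of the vertical dimension bundle under composition of constructible bundles. For an object $\bigl(X\xra{q}Y\xra{p}K\bigr)$ of $\Bun^{\Bun}$, writing $\pi=pq$, the right square asserts a natural equivalence
\[
\sT^{\sf v}_{X/K}~\simeq~q^\ast\sT^{\sf v}_{Y/K}\oplus \sT^{\sf v}_{X/Y}
\]
of functors $\exit(X)\to \Vect^{\sf inj}$ (here $q^\ast$ denotes restriction along $\exit(q)$), and the left square the analogous equivalence for $\epsilon^{\sf vdim}$. My first move is to observe that the left square follows formally from the right one: by Definition~\ref{def.epsilon} we have $\epsilon^{\sf vdim}=\epsilon^\bullet\circ{\sf dim}\circ\sT^{\sf v}$, and both ${\sf dim}\colon\Vect^{\sf inj}\to\ZZ_{\geq 0}$ and $\epsilon^\bullet\colon\ZZ_{\geq 0}\to\Vect^{\sf inj}$ are symmetric monoidal; since the functors $\circ$ of Corollary~\ref{vect-sum} are built from $\oplus$ compatibly with the symmetric monoidal structures, applying $\epsilon^\bullet\circ{\sf dim}$ to the right square produces the left square. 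So the work is concentrated in the right square.

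Next I would translate the right square into a comparison of $\Vect^{\sf inj}$-valued functors. Through Observation~\ref{Bun-tau-A}, a functor $\Bun^{\Bun}\to\Bun^{\Vect^{\sf inj}}$ lying over $\circ\colon\Bun^{\Bun}\to\Bun$ is precisely a functor from the pullback $\Bun^{\Bun}\underset{\Bun}\times\Exit$ (taken along $\circ$) to $\Vect^{\sf inj}$ that carries closed morphisms to equivalences. The two ways around the square give two such functors: going down-then-across is $\sT$ restricted along this pullback (the vertical tangent of the composite $\pi$), while going across-then-down is, on unwinding the construction of $\circ$ in Corollaries~\ref{exp-mon} and~\ref{vect-sum}, the composite $\oplus\circ(\sT\times\sT)$ applied to the splitting of Lemma~\ref{exit-over-bun-bun} that records a point of $X$ together with its image in $Y$. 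The geometric input making these agree is that a constructible bundle restricts on each stratum to a smooth fiber bundle, so the stratum of $X$ through a point $x$ maps by a submersion onto the stratum of $Y$ through $q(x)$ with vertical kernel the tangent to the stratum of the fiber $X_{q(x)}$; this produces a short exact sequence $\sT_{X/Y}\hookrightarrow \sT_{X/K}\twoheadrightarrow q^\ast\sT_{Y/K}$ whose space of splittings is contractible.

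To upgrade this stratum-wise splitting to the required equivalence of functors I would invoke the characterization of Definition~\ref{def.tangent} and Proposition~\ref{unique-action}. Both functors $\Bun^{\Bun}\underset{\Bun}\times\Exit\to\Vect^{\sf inj}$ are symmetric monoidal under $\Vect^\sim$ and carry closed and embedding morphisms to equivalences; hence, exactly as in the proof of Proposition~\ref{unique-action}, they factor through the cospan construction and are determined by their restriction to the final subcategory $\Vect^\sim$ via Lemma~\ref{0-final}. On that subcategory $X$ is, vertically over $Y$, a vector space sitting over a point of $Y$, both functors reduce to the evident decomposition $V\oplus W\simeq V\oplus W$, and they manifestly agree. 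A parametrized (over $\Bun^{\Bun}$) form of the finality in Lemma~\ref{0-final} then forces the two functors to coincide, establishing the right square and with it the lemma.

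The hard part will be the coherence in this last step: promoting the split short exact sequence of constructible tangent bundles to an honest equivalence of functors that matches the $\oplus$-composition produced by Corollary~\ref{vect-sum}, equivalently, verifying the parametrized finality of $\Vect^\sim$ inside $\cSpan\bigl((\Bun^{\Bun}\underset{\Bun}\times\Exit)^{[\bullet]}\bigr)^{\sf cls\text{-}emb}$ needed to apply the uniqueness. As elsewhere in the construction of $\sT$, the contractibility of the space of splittings (the convexity underlying the choice of a complement, cf.\ the inner-product discussion of Remark~\ref{parallel-transport}) is what makes this tractable, allowing the uniqueness of Proposition~\ref{unique-action} to absorb all the choices rather than forcing me to track them by hand.
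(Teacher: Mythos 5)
You have correctly isolated the geometric content: for a composable pair $X\xra{q}Y\xra{p}K$ there is a short exact sequence of $\Vect$-valued functors on $\exit(X)$ with kernel the vertical tangent of $q$ and cokernel the pullback of the vertical tangent of $p$, and its space of splittings is contractible because the values are finite-dimensional vector spaces. Your preliminary reduction of the $\epsilon^{\sf vdim}$-square to the $\sT^{\sf v}$-square, via $\epsilon^{\sf dim}=\epsilon^\bullet\circ{\sf dim}\circ\sT$ and the symmetric monoidality of ${\sf dim}$ and $\epsilon^\bullet$, is a legitimate small economy that the paper does not bother with: it simply runs the identical splitting argument for both functors in parallel. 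The paper's proof then stops essentially where your second paragraph ends. It fixes a $K$-point of $\bun^2$, writes down the two short exact sequences, splits them canonically (the contractibility of the space of splittings is what makes ``canonically'' meaningful), and observes that the sequences manifestly pull back along morphisms $K\to K'$, so the value-wise identifications assemble into the asserted commutativity of functors. No appeal to the characterization of $\sT$ is needed.

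The gap is in your third paragraph. Proposition~\ref{unique-action} characterizes $\sT$ among symmetric monoidal functors $\Exit\to\Vect^{\sf inj}$ under $\Vect^\sim$, whereas the two functors you want to compare live on the pullback $\Bun^{\Bun}\underset{\Bun}\times\Exit$; the uniqueness statement therefore does not apply as stated, and you would need a genuinely new finality statement --- the ``parametrized form of Lemma~\ref{0-final}'' you invoke --- before the argument closes. You flag this yourself as the hard part, but it is precisely the part that is missing, and it is not obviously available in the form you need: among other things, you would have to verify that the across-then-down composite inverts the closed and embedding morphisms of the \emph{composite} bundle $X\to K$, not merely those of $X\to Y$ and $Y\to K$ separately, before either candidate satisfies the hypotheses of the characterization. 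Since the direct splitting argument already yields the required coherence for free via naturality in $K$, the detour through the uniqueness machinery should simply be dropped.
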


\begin{proof}
In this proof we will use the following notation.
\begin{itemize}
\item[~]
For $X\xra{p}K$ a constructible bundle, we use the notations
\[
\sT^{\sf fib}_p\colon \exit(X) \longrightarrow \Vect^{\sf inj}\qquad\text{ and }\qquad \epsilon_p^{{\sf dim}_{(-)}(p^{-1}(p(-))}\colon \exit(X) \longrightarrow \Vect^{\sf inj}
\]
for the restrictions of $\sT^{\sf fib}$ and $\epsilon^{\sf fib.dim}$ along $\exit(X)\simeq \Exit_{|\exit(K)} \hookrightarrow \Exit$.  
\end{itemize}

Fix a $K$-point $X\xra{q}Y \xra{p}K$ of $\bun^2$.  
Inspecting their definitions, there are short exact sequences of $\Vect$-valued functors from $\exit(X)$
\[
0\longrightarrow \sT^{\sf fib}_q  \longrightarrow \sT^{\sf fib}_{pq}  \xra{~Dq~} q^\ast \sT^{\sf fib}_q \longrightarrow 0
\]
and
\[
0\longrightarrow \epsilon_X^{{\sf dim}_{(-)}(q^{-1}q(-))}  \longrightarrow \epsilon_X^{{\sf dim}_{(-)}({pq}^{-1}(pq(-))} \longrightarrow q^\ast \epsilon_Y^{{\sf dim}_{(-)}(p^{-1}(p(-))} \longrightarrow 0~.
\]
Because each of these exact sequences is comprised functors valued in \emph{finite dimensional} vector spaces, they each canonically split in $\Vect$.   
This is to say there are canonical identifications in $\Vect^{\sf inj}$:
\[
\sT^{\sf fib}_q \oplus q^\ast \sT^{\sf fib}_q ~\simeq~ \sT^{\sf fib}_{pq}\qquad \text{ and }\qquad \epsilon_X^{{\sf dim}_{(-)}(q^{-1}q(-))}\oplus q^\ast \epsilon_Y^{{\sf dim}_{(-)}(p^{-1}(p(-))}~\simeq~ \epsilon_X^{{\sf dim}_{(-)}({pq}^{-1}(pq(-))}~.
\]
This verifies the asserted commutativity for each $K$-point of $\bun^2$.  
The asserted commutativity follows because the aforementioned short exact sequences manifestly pullback along morphisms $K\to K'$.
\end{proof}

The next result is the culmination of the formal manipulations above.  
It articulates a sense in which the functor $\Bun^{\un{\Bun}} \xra{\circ} \Bun$ respects direct sums of tangential structures.  
\begin{prop}\label{B.sums}
Let $\cB$, $\cB'$, $\cB''$ be tangential structures.
A functor $\cB \times \cB'\to \cB''$ over the direct sum functor $\Vect^{\sf inj} \times \Vect^{\sf inj}\xra{\oplus}\Vect^{\sf inj}$ determines a functor
\[
\Bun^{\cB}\underset{\Bun}\times \Bun^{\un{\Bun^{\cB'}}}    
\longrightarrow
\Bun^{\cB''} 
\]
over the functor $\Bun^{\un{\Bun}}  \xra{\circ} \Bun$.
\end{prop}

\begin{proof}
Using Observation~\ref{ob.2}, the sought functor is the result of base change of the top horizontal functor in Corollary~\ref{B.sum} along the rightward pair of horizontal functors in Lemma~\ref{splittings}.

\end{proof}

Recall Definition~\ref{def.epsilon} of the functor $\epsilon^\bullet\colon \ZZ_{\geq 0}\to \Vect^{\sf inj}$.
Via this functor $\epsilon^\bullet$, the functor $\ZZ_{\geq 0} \times \ZZ_{\geq 0} \xra{+}\ZZ_{\geq 0}$ evidently lies over the direct sum functor $\Vect^{\sf inj}\times \Vect^{\sf inj}\xra{\oplus} \Vect^{\sf inj}$.
Also, the identification $\RR^n\oplus \RR^{k}\simeq \RR^{n+k}$ in $\Vect^{\sf inj}$ results in a functor $\Vect^{\sf inj}_{/\RR^n} \times \Vect^{\sf inj}_{/\RR^k}\xra{\oplus} \Vect^{\sf inj}_{/\RR^{n+k}}$ over direct sum.  
Proposition~\ref{B.sums} applied to these functors gives the next result.  
\begin{cor}\label{vfr-iterate}
There are functors 
\[
\Bun^{\vfr}\underset{\Bun}\times \Bun^{\un{\Bun^{\vfr}}}
\xra{~{}~\circ~{}~}
\Bun^{\vfr} 
\qquad
\text{ and }
\qquad
\Mfld^{\vfr}_n\underset{\Bun}\times \Bun^{\un{\Mfld^{\vfr}_k}} 
\xra{~{}~\circ~{}~}
\Mfld^{\vfr}_{n+k}  ~,
\]
as well as 
\[
\Mfld^{\sfr}_n\underset{\Bun}\times \Bun^{\un{\Mfld^{\sfr}_k}} 
\xra{~{}~\circ~{}~}
\Mfld^{\sfr}_{n+k} ~,
\]
over $\Bun^{\un{\Bun}}\xra{\circ} \Bun$.

\end{cor}

\subsection{Suspension}
We construct a suspension of framed stratified spaces.  
This formation will play an important role in our definition of disk-stratifications, for instance, by constructing cells in a way amenable to inductive on dimension.  
The main result in this section is Lemma~\ref{autos-suspend}, which states that every automorphism of a suspended vari-framed stratified space $L$ is the suspension of an automorphism of $L$.
This result reflects the rigidity of vari-framings, which ultimately is responsible for the contact between higher categories and vari-framed differential topology.

For the next definition we make reference to the vari-framed stratified space $\DD^1$ of Example~\ref{pre-hemi}.  
The underlying space of $\DD^1$ is $[-1,1]$, and as such it is equipped with maps $\{-1\} \to \DD^1 \la \{1\}$, each of which is a constructible proper embedding.  
\begin{definition}\label{def.suspension}
The \emph{suspension} of a compact stratified space $X$ is the iterated pushout in $\strat$:
\[
\sS(X)~:=~\ast \underset{X\times \{-1\}}\coprod (X\times \DD^1) \underset{X\times \{1\}} \coprod \ast~.
\]
The \emph{fiberwise suspension} of a proper constructible bundle $X\to K$ is the iterated pushout in $\strat$:
\[
\sS^{\sf fib}(X)~:=~K\underset{X\times \{-1\}}\coprod (X\times \DD^1) \underset{X\times \{1\}} \coprod K~.
\]
\end{definition}

\begin{observation}\label{susp-facts}
For each proper constructible bundle $X\to K$, there is a natural factorization of the projection
\[
\sS^{\sf fib}(X)\longrightarrow K\times \DD^1 \xra{~\sf pr~}K
\]
through proper constructible bundles.   
Furthermore, for each conically smooth map $K'\to K$, the diagram of stratified spaces
\[
\xymatrix{
\sS^{\sf fib}(X_{|K'})\ar[rr]  \ar[d]
&&
K'  \times \DD^1 \ar[rr]  \ar[d]
&&
K'  \ar[d]
\\
\sS^{\sf fib}(X)\ar[rr]  
&&
K\times \DD^1 \ar[rr]
&&
K 
}
\]
is comprised of pullback squares.

\end{observation}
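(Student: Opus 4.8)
For this observation I would establish the two asserted properties in turn, treating first the factorization through proper constructible bundles and then the stability under base change.

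For the factorization, the outer projection $K\times\DD^1\to K$ is immediate: it is the product of $\mathrm{id}_K$ with the compact stratified space $\DD^1$, hence a fiber bundle with compact fibers, and so lies in the class $\pcbl$. The content lies in the inner map $f\colon\sS^{\sf fib}(X)\to K\times\DD^1$. First I would note that $f$ is proper, since its fibers over $K\times(-1,1)$ are the compact fibers of $X\to K$ (compact because $X\to K$ is proper) while its fibers over the two endpoint loci $K\times\{\pm1\}$ are single points; properness of the whole map then follows from properness of $X\to K$ together with compactness of $\DD^1$. To see that $f$ is a constructible bundle I would run the inductive Definition~\ref{def.classes-of-maps}. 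Over the open stratum $K\times(-1,1)$ the restriction of $f$ is the pullback of $X\to K$ along the projection, hence a fiber bundle; over each endpoint stratum $K_q\times\{\pm1\}$ the restriction is the identity of $K_q$. This gives weak constructibility. For the inductive link condition at an endpoint stratum I would identify $\Link_{K\times\{-1\}}\bigl(\sS^{\sf fib}(X)\bigr)$ with the collared copy of $X$ supplied by the cone structure, and observe that the resulting comparison map to $f^{-1}(K\times\{-1\})\times_{K\times\{-1\}}\Link_{K\times\{-1\}}(K\times\DD^1)$ is, up to the collar coordinate, the bundle $X\to K$ itself; since $X\to K$ is a constructible bundle, the condition holds, and symmetrically at $+1$.

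The mechanism underlying the previous paragraph is that near each cone-point locus $f$ is modeled on the fiberwise open cone of the proper constructible bundle $X\to K$, and coning a proper constructible bundle produces a constructible bundle. This is the same local phenomenon recorded by the iterated-cone colimit diagrams of Theorem~\ref{thm.exit-facts} and exhibited concretely in Example~\ref{stand-cr}, where $\cylr\bigl(\partial\DD^i\hookrightarrow\DD^i\bigr)\to\Delta^1$ is seen to be a (closed) constructible bundle; the general form is part of the toolkit of \cite{aft1} and \cite{striat}. I expect this link bookkeeping to be the main obstacle, since identifying the link of the cone locus against the ambient stratification of $K$ is the only genuinely nonformal step.

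For the base-change claim I would argue that the formation of $\sS^{\sf fib}$ commutes with pullback along an arbitrary conically smooth map $K'\to K$. The defining pushout of Definition~\ref{def.suspension} is a diagram of stratified spaces lying over $K$, with the gluing carried out along the closed embeddings $X\times\{\pm1\}\hookrightarrow X\times\DD^1$ and their proper projections to $K$. Base change along $K'\to K$ carries each vertex to the corresponding vertex for $X_{|K'}\to K'$, using the evident identifications $X_{|K'}\times\DD^1\cong(X\times\DD^1)\times_{K}K'$ and $K\times_{K}K'\cong K'$. Because the pushout is along closed embeddings and all identifications are fiberwise over $K$, the quotient commutes with the fiber product $-\times_{K}K'$: concretely, $\sS^{\sf fib}(X)\times_{K}K'$ is the quotient of $\bigl(K\amalg X\times\DD^1\amalg K\bigr)\times_{K}K'=K'\amalg X_{|K'}\times\DD^1\amalg K'$ by the pulled-back relation, which is exactly $\sS^{\sf fib}(X_{|K'})$, with conical smoothness of the result guaranteed by the local structure theory of \cite{aft1}. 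This identification yields the left-hand square of the asserted diagram, the two right-hand squares being manifest pullbacks of products; concatenating the three squares gives the stated pullback decomposition.
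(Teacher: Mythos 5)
Your argument is correct. The paper states this as an \emph{Observation} and offers no proof at all, so there is nothing to compare against; your write-up supplies exactly the verification the authors leave implicit. You correctly isolate the two points that carry any content: the link condition at the two cone-point loci $K\times\{\pm 1\}$, where the comparison map of Definition~\ref{def.classes-of-maps} reduces (up to the collar coordinate) to the given constructible bundle $X\to K$, and the fact that the defining pushout of Definition~\ref{def.suspension} is along closed constructible embeddings over $K$, so that it is computed on underlying sets and hence commutes with restriction of fibers along any $K'\to K$ -- which is precisely why the authors felt entitled to call the base-change claim manifest.
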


The next result articulates a sense in which suspension respects vari-framings as well as solid framings.  
\begin{lemma}\label{fr-sus}
The assignment
\begin{equation}\label{cpt-to-wreath}
(X\underset{\sf p.cbl}{\longrightarrow} K)\mapsto \Bigl(\sS^{\sf fib}(X)\longrightarrow K\times \DD^1 \longrightarrow K\Bigr)
\end{equation}
defines a functor
\[
\sS\colon \cBun  \longrightarrow
\cMfld^{\vfr}_1 \underset{\Bun}\times  \Bun^{\un{\cBun}} ~.
\]
This functor admits lifts
\[
\sS^{\fr}\colon \cBun^{\vfr}  \longrightarrow
\cMfld^{\vfr}_1 \underset{\Bun}\times  \Bun^{\un{\cBun}^{\vfr}}
\qquad
\text{ and }~{}~
\qquad
\sS^{\fr}\colon \cBun^{\sfr_n}  \longrightarrow
\cMfld^{\vfr}_1 \underset{\Bun}\times  \Bun^{\un{\cBun}^{\sfr_{n+1}}}
\]
for each dimension $n$.

\end{lemma}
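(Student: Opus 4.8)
The plan is to build $\sS$ first as a morphism of striation sheaves, and then to lift it by extending the framing data over the two new cone-point strata. Since $\Stri\simeq \Cat_\infty$ (Theorem~\ref{strict}) and $\Stri\subset \Psh(\strat)$ is full, a functor between the $\infty$-categories in question is precisely a natural transformation of their underlying presheaves on $\strat$; the target $\cMfld^{\vfr}_1\underset{\Bun}\times \Bun^{\cBun}$ is a striation sheaf, being a limit of such. Thus I would produce, naturally in the stratified space $K$, a map from $\cBun(K)\simeq |\{X\to K\times\Delta^\bullet_e\}|$ (proper constructible bundles) to the space of $K$-points of the target. The fiberwise suspension $X\mapsto \sS^{\sf fib}(X)$, together with the factorization $\sS^{\sf fib}(X)\to K\times\DD^1\to K$ of Observation~\ref{susp-facts}, supplies exactly this: the outer constant bundle $K\times\DD^1\to K$ carries the vari-framing of Example~\ref{pre-hemi} fiberwise and so is a $K$-point of $\cMfld^{\vfr}_1$, while the inner proper constructible bundle $\sS^{\sf fib}(X)\to K\times\DD^1$ is a $K$-point of $\Bun^{\cBun}$. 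Both project in $\Bun$ to $K\times\DD^1\to K$, so the pair lands in the fiber product; naturality in $K$---hence functoriality, including on morphisms of $\cBun$---is the base-change compatibility recorded in Observation~\ref{susp-facts}.

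The crux of both lifts is the identification of the vertical constructible tangent bundle of $\sS^{\sf fib}(X)\to K\times\DD^1$. Working fiberwise (the general case following by naturality of $\sT$ on $\Exit$), near a cone point the total space is the open cone $\sC(X_k)$, and its projection to $\DD^1$ carries the radial collar direction onto the base. Hence, after restricting $\sT$ along the diagonal section as in Notation~\ref{def.theta-sub}, the vertical tangent bundle equals $\sT^{\sf v}_X$ on the interior locus $X\times(-1,1)$ and equals $0$ on each cone-point stratum, and the link morphism from a cone point to the interior is the zero inclusion $0\hookrightarrow \sT^{\sf v}_X$. The collar term $\epsilon^1$ appearing in the link formula of~\S\ref{tangent-explicit} is absorbed into the base $\DD^1$ and therefore drops out of the \emph{vertical} data; correspondingly $\epsilon^{\sf vdim}_{\sS^{\sf fib}(X)}$ is $\epsilon^{\sf vdim}_X$ on the interior, $\RR^0$ on the cone points, and sends the link morphism to the standard inclusion.

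Granting this, the vari-framing lift is immediate: a vertical vari-framing of $X\to K$ is an equivalence $\varphi\colon \epsilon^{\sf vdim}_X\simeq \sT^{\sf v}_X$ of functors $\exit(X)\to\Vect^{\sf inj}$, and I would extend it by the trivial equivalence $\RR^0\simeq 0$ over the cone points. By the previous paragraph the two functors agree on objects via $\varphi$ or the identity of the zero space, and agree on the cone-to-interior link morphisms because both send them to standard inclusions intertwined by $\varphi$, with no collar term to reconcile; this yields $\sS^{\fr}\colon \cBun^{\vfr}\to \cMfld^{\vfr}_1\underset{\Bun}\times \Bun^{\cBun^{\vfr}}$. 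For the solid case, a vertical solid $n$-framing is an injection $\sT^{\sf v}_X\hookrightarrow\eta$ with a trivialization $\eta\simeq\epsilon^n_X$; on $\sS^{\sf fib}(X)$ I would take the ambient bundle $\eta\oplus\epsilon^1$ with trivialization $\eta\oplus\epsilon^1\simeq\epsilon^{n+1}$ and injection $\sT^{\sf v}_{\sS^{\sf fib}(X)}\hookrightarrow\eta\hookrightarrow\eta\oplus\epsilon^1$ on the interior, $0\hookrightarrow\eta\oplus\epsilon^1$ on the cone points. The extra $\epsilon^1$ records the suspension coordinate and raises the solid rank from $n$ to $n+1$, giving $\sS^{\fr}\colon \cBun^{\sfr_n}\to \cMfld^{\vfr}_1\underset{\Bun}\times \Bun^{\cBun^{\sfr_{n+1}}}$. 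In each case the construction is manifestly natural in $K$.

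The hard part is exactly the middle step: correctly separating the suspension/collar direction, which lives in the base $K\times\DD^1$, from the genuinely vertical directions, so that vertically the suspension only introduces new zero-dimensional cone strata with the zero inclusion as link morphism. Once this is pinned down the framing extensions are forced and their link coherences are automatic, and the only remaining labor---verifying that each assignment is natural in $K$ rather than defined merely on objects---is routine given Observation~\ref{susp-facts} and the functoriality of $\sT$.
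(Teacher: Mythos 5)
Your proposal is correct and takes essentially the same approach as the paper: the functor $\sS$ is assembled from Observation~\ref{susp-facts} (via the topologizing diagram) together with the preferred vari-framing on $\DD^1$, and the lift reduces to the fact that the fiberwise suspension only adds strata whose fibers are points, over which both $\sT^{\sf v}$ and $\epsilon^{\sf vdim}$ vanish. The one point the paper makes explicit that you leave implicit is the reason your ``link coherences are automatic'': the paper phrases the extension as the assertion that equator-restriction $\vfr\bigl(\sS^{\sf fib}(X)\to K\times\DD^1\bigr)\to\vfr(X\to K)$ is an equivalence of \emph{spaces}, which holds because the zero vector space is initial in $\Vect^{\sf inj}$ --- that single initiality statement is what forces all higher coherences of your hand-extension over the cone points (not merely the agreement on objects and link morphisms that you verify), and it handles the solid-framing lift, which you spell out and the paper's proof leaves to the reader, by the same token.
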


\begin{proof}
Observation~\ref{susp-facts} gives that the assignment~(\ref{cpt-to-wreath}) defines a functor ${\sf cBun} \to \bun^2$ over $\strat$, which takes values in composable \emph{proper} constructible bundles.  
Applying the topologizing diagram, there results a functor $\cBun \to \Bun^{\un{\cBun}}$ between $\infty$-categories, regarded here as striation sheaves through the main result of~\cite{striat}.  
The composite functor $\cBun \to \Bun^{\un{\cBun}} \xra\pr \Bun$ is constant at $\DD^1$.  
The preferred vari-framing of $\DD^1$ determines a lift
\[
\cBun \longrightarrow \cMfld_1^{\vfr} \underset{\Bun}\times \Bun^{\un{\cBun}}
\]
as desired.  

Now, consider a proper constructible bundle $X\xra{\pi} K$.
We must explain how each fiberwise vari-framing of $X\to K$ canonically determines a fiberwise vari-framing of $\sS^{\sf fib}(X)\to K\times \DD^1$.  It will be clear that the fiberwise vari-framing pulls back among stratified maps of the $K$ argument.  
Restriction along the equator of the fiberwise suspension determines the map between spaces of lifts over $\Exit$
\begin{equation}\label{equat}
\vfr\bigl(\sS^{\sf fib}(X)\to K\times \DD^1\bigr) \longrightarrow \vfr(X\to K)~.
\end{equation}
We will explain that this map is an equivalence.
Using the results of~\S3 of \cite{striat}, the double pushout defining the fiberwise suspension is preserved by the exit-path functor:
\[
\exit(K)\underset{\exit(X)\times\{0\}}\amalg \exit(X)\times [1]\underset{\exit(X)\times\{1\}}\amalg \exit(K)\xra{~\simeq~}\exit\bigl(\sS^{\sf fib}(X)\bigr)
\]
over $\exit(K)\times [1]\xra{\simeq} \exit(K\times \DD^1)$.  
Because the fibers of $\sS^{\sf fib}(X)\to K\times \DD^1$ over $K\times \partial \DD^1$ are terminal, both $\sT$ and $\epsilon^{\sf dim}$ restrict along $\exit(K)\times \partial [1]\hookrightarrow \exit\bigl(\sS^{\sf fib}(X)\bigr)$ as the constant functor to $\Vect^{\sf inj}$ valued at the zero vector space.
Because the zero vector space is initial in $\Vect^{\sf inj}$, the restriction map~(\ref{equat}) is an equivalence of spaces.  
\end{proof}

In ordinary differential topology, an orientation on $M$ determines an orientation on $M\times \RR$ in a standard manner.  
This is similarly the case for spin structures, as well as framings.  
In the abstract, these are the \emph{data} of maps of spaces $B_n \to B_{n+1}$ over the standard map $\BO(n) \xra{-\times \RR} \BO(n+1)$ for the cases $B_n = {\sf BSO}(n)$ and $B_n={\sf BSpin}(n)$ and $B_n = {\sf EO}(n)$.  
The next definition imitates such data for the general case of $\cB$-structures on stratified spaces, so as to vary in constructible families.  
\begin{definition}\label{def.suspending}
A \emph{suspending tangential structure} is a tangential structure $\cB$ together with a \emph{framed suspension} functor
\[
\sS^{\cB} \colon \cBun^{\cB} \longrightarrow \Bun^{\cB}
\]
over the composite functor $\cBun \xra{\sS} \{\DD^1\}\underset{\Bun}\times \Bun^{\un{\cBun}} \xra{\circ} \Bun$.

\end{definition}

\begin{example}
The functor of Lemma~\ref{fr-sus}, given in terms of fiberwise suspensions, composed with the functor of Corollary~\ref{vfr-iterate}, which articulates compatibilities with vari-framings, gives a framed suspension functor for the tangential structure $\vfr$.

\end{example}

\begin{lemma}\label{autos-suspend}
\footnote{This Lemma is false without the dimension bound: see the erratum included in \S\ref{sec.erratum}.}
For each compact vari-framed stratified space $L$ with dimension $\dim(L)\leq 1$, the map between spaces of automorphisms 
\[
\sS^{\fr}\colon \Aut_{\Bun^{\vfr}}(L) \longrightarrow \Aut_{\Bun^{\vfr}}\bigl(\sS^{\fr}(L)\bigr)
\]
is an equivalence.

\end{lemma}

\begin{proof}
We seek to establish the equivalence between the maximal connected $\infty$-subgroupoids of $\Bun^{\vfr}$ that contain the respective objects $L$ and $\sS^{\fr}(L)$.
The maximal $\infty$-subgroupoid of $\Bun^{\vfr}$ corresponds, via the main result of~\cite{striat}, to the striation sheaf that classifies fiber bundles among stratified spaces which are equipped with a fiberwise vari-framing.  
Therefore, to prove the equivalence of the lemma it is enough to prove, for each smooth manifold $T$, that each $T$-point of the codomain lifts to an $T$-point of the domain.  
So consider an $T$-point of the striation sheaf ${\sf BAut}_{\Bun^{\vfr}}\bigl(\sS^{\fr}(L)\bigr)$; it classifies a proper fiber bundle $E\xra{p} T$, equipped with a fiberwise vari-framing $\exit(E) \xra{g} \vfr$, with each structured fiber $\bigl(E_t,g_{|\exit(E_t)}\bigr)$ is equivalent in $\Bun^{\vfr}$ to $\sS^{\fr}(L)$.
To construct the desired lift of this $T$-point to ${\sf BAut}_{\Bun^{\vfr}}(L)$ is the problem of constructing a fiberwise vari-framed proper fiber bundle $E_0\to T$ and an equivalence $\sS^{\sf fib,\fr}(E_0) \simeq E$ of fiberwise vari-framed fiber bundles over $T$ from the fiberwise framed suspension.

The fiberwise $0$-dimension strata of $E$ is a 2-sheeted covering over $T$.
The fiberwise vari-framing of $E\to T$ in particular implies this 2-sheeted cover is trivial.
We denote it as $T_-\sqcup T_+ \subset E$, with the subscripts marking if first coordinate agrees or disagrees with a collaring-coordinate about each cofactor.  
Taking the unzip along this closed constructible subspace gives the composable pair of proper constructible bundles
\[
\unzip_{T_-\sqcup T_+}(E) \xra{~q~} E \xra{~p~} T~.
\]
Because $T$ is a smooth manifold the functor $\sT_T \colon \exit(T) \to \Vect^{\sf inj}$ factors through $\Vect^\sim$, the maximal $\infty$-subgroupoid.  
Using that the fiberwise constructible tangent bundle is a morphism $\sT^{\sf fib} \colon \Exit \to \Vect^{\sf inj}$ of $\Vect^\sim$-modules, 
there results the short exact sequence of $\Vect$-valued functors from $\exit\bigl(\unzip_{T_-\sqcup T_+}(E)\bigr)$
\[
0 \longrightarrow \sT_q  \longrightarrow \sT_{pq} \longrightarrow q^\ast \sT_p  \longrightarrow 0~.
\]
(Here, and through this proof, we use the notation established at the beginning of the proof of Lemma~\ref{splittings}.)
Restricted to $\exit\bigl(\Link_{T_-\sqcup T_+}(E)\bigr)$, the cokernel term vanishes;
restricted to
\[
\exit\bigl(\unzip_{T_-\sqcup T_+}(E) \smallsetminus \Link_{T_-\sqcup T_+}(E)\bigr)\xra{\simeq} \exit(E\smallsetminus T_-\sqcup T_+)~,
\]
the kernel term vanishes and the cokernel term does not.
The first coordinate of the fiberwise vari-framing $\exit(E) \to \vfr$ therefore determines a non-vanishing parallel vector field on $\unzip_{T_-\sqcup T_+}(E) \smallsetminus \Link_{T_-\sqcup T_+}(E)$ in the sense of~\S8 of~\cite{aft1}.
We will now extend this vector field to all of $\unzip_{T_-\sqcup T_+}(E)$.

Let $\ov{\alpha}\colon \Link_{T_-\sqcup T_+}(E)\times[0,1) \hookrightarrow \unzip_{T_-\sqcup T_+}(E)$ be a choice of collaring, the existence of which is guaranteed by the results in~\S8 of~\cite{aft1}.
Denote the restriction $\alpha\colon \Link_{T_-\sqcup T_+}(E)\times \{\frac{1}{2}\} \to \unzip_{T_-\sqcup T_+}(E)\smallsetminus \Link_{T_-\sqcup T_+}(E) \cong E\smallsetminus (T_-\sqcup T_+)$.
The fiberwise vari-framing of $E\to T$ determines an identification of short exact sequences of $\Vect$-valued functors from $\exit\bigl(\Link_{T_-\sqcup T_+}(E)\bigr)$
\[
\xymatrix{
0  \ar[r]  \ar[d]
&
\sT^{\sf fib}_{\Link_{T_-\sqcup T_+}(E)}  \ar[r]  \ar[d]^-{\simeq}
&
\alpha^\ast \sT_{E\smallsetminus T_-\sqcup T_+}  \ar[r]  \ar[d]^-{\simeq}
&
\epsilon^1_{\Link_{T_-\sqcup T_+}(E)}  \ar[r]  \ar[d]^-{\simeq}
&
0  \ar[d]
\\
0  \ar[r]  \ar[u]
&
\alpha^\ast \epsilon^{\sf fib.dim-1}_{E\smallsetminus T_-\sqcup T_+}   \ar[r]  \ar[u]
&
\alpha^\ast \epsilon^{\sf fib.dim}_{E\smallsetminus T_-\sqcup T_+}  \ar[r]  \ar[u]
&
\alpha^\ast \epsilon^1_{E\smallsetminus T_-\sqcup T_+}  \ar[r]  \ar[u]
&
0 \ar[u]
}
\]
The above diagram grants that this vector field extends to a non-vanishing vector field on $\unzip_{T_-\sqcup T_+}(E)$ in a neighborhood of $\Link_{T_-\sqcup T_+}(E)$ which agrees with the one on $\unzip_{T_-\sqcup T_+}(E)\smallsetminus \Link_{T_-\sqcup T_+}(E)$ constructed in the paragraph above.

Flowing by the specified vector field on $\unzip_{T_-\sqcup T_+}(E)$ gives a partially defined continuous map
\[
\gamma\colon \unzip_{T_-\sqcup T_+}(E)\times \RR~\dashrightarrow~ \unzip_{T_-\sqcup T_+}(E)\times\RR
\]
over $T\times \RR$.
It restricts to a partially defined continuous map
\begin{equation}\label{e1}
\gamma_|\colon \Link_{T_-}(E)\times [0,\infty)\longrightarrow \unzip_{T_-\sqcup T_+}(E)
\end{equation}
over $T$.
This map further restricts to a conically smooth isomorphism over $T\times \DD^1$,
\begin{equation}\label{e111}
\gamma_|\colon \Link_{T_-}(E)\times \DD^1\longrightarrow \unzip_{T_-\sqcup T_+}(E)
\end{equation}
if and only if, for each $x\in \Link_{T_-}(E)$, the maximal domain of definition of the partially defined map $\gamma(x,-)\colon [0,\infty) \to \unzip_{T_-\sqcup T_+}(E)$ is $[0,a]$ for some $a\in (0,\infty)$.  
This condition is ensured if $\dim(L)\leq 1$.  
Upon applying $\exit(-)$, this isomorphism~(\ref{e111}) lies over $\vfr$.
In particular, we recognize an isomorphism of stratified spaces $\sS^{\sf fib, \fr}\bigl(\Link_{T_-}(E)\bigr) \cong E$ over $T$ which, upon applying $\exit(-)$, lies over $\vfr$.
The result follows from from the identification of fiberwise vari-framed constructible bundles $\Link_{T_-}(E) \simeq L$ over $T$.
\end{proof}

The following definition is supported by Lemma~\ref{fr-sus}, which articulates that suspensions naturally carry compact vari-framed stratified spaces to vari-framed stratified spaces. 
\begin{definition}[Hemispherical disks]\label{hemi-disks}
The \emph{hemispherical $n$-disk} is the vari-framed $n$-manifold defined inductively as the framed suspension
\[
\DD^n ~:=~\sS^{\fr}(\DD^{n-1})~\in~\cBun^{\vfr}
\]
with $\DD^0=\ast$.  
We set the convention $\DD^{-1}:= \emptyset$.  

\end{definition}

\begin{cor}\label{no-autos}
\footnote{
This result is false without the dimension bound; see the erratum included in~\S\ref{sec.erratum}.  
}
For each dimension $n<3$, the space of automorphisms
\[
\Aut_{\Bun^{\vfr}}(\DD^n)~\simeq~\ast
\]
is contractible.

\end{cor}

\begin{proof}
By definition, $\DD^n = \sS^{\sf fr}(\DD^{n-1})$.
The result follows from Lemma~\ref{autos-suspend}, by induction on $n<3$; the base case of $n=0$ is clear. 
\end{proof}

\subsection{Closed covers as limit diagrams}

Here we show that, for each tangential structure $\cB$, purely closed covers are limit diagrams in $\Bun^\cB$.

Consider a stratified space $X$.
Recall from~\S8.3 of~\cite{aft1} that $X$ is \emph{finitary} if it admits a finite open cover by basic singularity types; or equivalently, if $X$ is the interior of a compact stratified space with boundary.  
In particular, if $X$ is compact, then it is finitary.
Denote by ${\sf Cls}(X)$ the poset, ordered by inclusion, of \emph{proper constructible subspaces}.
Specifically, an element of ${\sf Cls}(X)$ is a closed subset $P\subset X$ which is a union of strata of $X$.  
The \emph{reversed mapping cylinder construction} of~\S6.6 of~\cite{striat} defines an equivalence between $\infty$-categories
\[
{\sf Cylr}\colon {\sf Cls}(X)^{\op}~\simeq~ (\Bun^{\cls})^{X/}
~,\qquad
(P\subset X)\mapsto \Bigl({\sf Cylr}(X\la P)  \to \Delta^1\Bigr)~,
\]
to the $\infty$-category of closed morphisms from $X$.  
Because $\Bun^\cB\to \Bun$ is closed-coCartesian, this morphism canonically lifts as a functor
\[
{\sf Cylr}\colon {\sf Cls}(X)^{\op}~\simeq~ (\Bun^{\cB,\cls})^{X/}~.
\]
Composition defines a functor
\[
{\sf Cls}(X)^{\op}~\simeq~ (\Bun^{\cB,\cls})^{X/}
\hookrightarrow (\Bun^{\cB})^{X/}
\]
to the $\infty$-undercategory.  
In particular, each full subposet $\cU \subset {\sf Cls}(X)$ defines a functor
\begin{equation}\label{20}
(\cU^{\op})^{\tl} \longrightarrow \Bun^\cB
\end{equation}
whose value on the cone-point is $X$.  
\begin{prop}\label{cls.lims}
Let $\cB$ be a tangential structure.  
Let $X$ be a finitary stratified space, equipped with a $\cB$-framing.  
Let $\cU \to {\sf Cls}(X)$ be a fully faithful right fibration whose codomain is the poset of proper constructible subspaces of $X$.  
The functor
\[
(\cU^{\op})^{\tl} 
\xra{~(\ref{20})~}
\Bun^\cB
\]
is a limit diagram if and only if the union $\underset{P\in \cU} \bigcup P$ is $X$.  

\end{prop}

\begin{proof}
By Observation~\ref{tau-closed-covers}, it suffices to prove the result in the case that the tangential structure $\cB \xra{\simeq} \Vect^{\sf inj}$. 

Consider the union $U := \underset{P\in \cU} \bigcup P\subset X$.
Then $U$ is a union of strata of $X$.
By definition of a stratified space, $X$ is a countable open union of basic singularity types.  
It follows that $X$ is locally of finite depth.
Therefore $U$ is locally closed.
It also follows that $X$ is locally compact.
Therefore the union $U \subset X$ is closed.  
We conclude that $U\subset {\sf Cls}(X)$ is a proper constructible subspace of $X$.  
There results a closed morphism $X\to U$ in $\Bun$, as well as a closed morphism $U \to P$ in $\Bun$ for each object $P\in \cU$.

So if $X$ is the limit of the functor $\cU^{\op} \xra{P\mapsto P} \Bun$, then there is a morphism $U\to X$ for which the composition $X\to U \to X$ is the identity morphism of $X$.  
That is, $X$ is a retract of $U$ in $\Bun$.
However, since $X\to U$ is a closed morphism, it is an epimorphism,
We conclude that if $X$ is equivalent to $\limit (\cU^{\op} \to \Bun)$, then the union $U = \underset{P\in \cU} \bigcup P\subset X$ is all of $X$.

We now prove the converse: if the union $U = X$ is all of $X$, then $X$ is the limit of 
\begin{equation}\label{21}
\cU^{\op} 
\longrightarrow 
\Bun~.
\end{equation}  
So let $(\cU^{\op})^{\tl}\longrightarrow \Bun$ be an extension of~(\ref{21}).  
Denote the value of this functor on the cone-point as $Z\in \Bun$.  
We first construct a morphism $Z\to X$ over $\cU^{\op}$, then we show that this morphism over $\cU^{\op}$ is unique.

The closed-active factorization system~(Theorem~6.5.6 of~\cite{striat}) assures for each $P\in \cU$, the given morphism $Z\to P$ factors uniquely $Z\xra{\sf cls}Z_P \xra{\sf act} P$ as a closed morphism followed by an active morphism.  
By the uniqueness of each such factorization, the assignment $P\mapsto Z_P$ defines a functor $Z_\bullet \colon \cU^{\op} \to (\Bun^{\sf cls})^{Z/}$. 
Furthermore, the morphism $Z_P \to P$ for each $P\in \cU$ defines a natural transformation between functors $\cU^{\op} \to \Bun^{Z/}$ from $Z_\bullet$ to the original functor $P\mapsto P$.  
Consider the union $Z_U:=\underset{P\in \cU}\bigcup Z_P \subset Z$.
As argued in the first paragraph of this proof, this union is a proper constructible subspace of $Z$.  
Therefore, there exists a closed morphism $Z\to Z_U$ in $\Bun$ over the functor $Z_\bullet\colon \cU^{\op} \to \Bun$.
From its construction, any morphism from $Z$ to $X$ over the functor $\cU^{\op} \to \Bun$ factors uniquely through this closed morphism $Z \to Z_U$.
We are therefore reduced to showing that there is a unique morphism from $Z_U$ to $X$ over the functor $\cU^{\op} \to \Bun$.

The above natural transformation from $Z_\bullet$ to the given functor $P\mapsto P$ defines a functor
\[
\cU^{\op} \longrightarrow
\Ar(\Bun)^{\sf cls}
\]
to the $\infty$-subcategory of the $\infty$-category of arrows in $\Bun$, consisting of all objects and those morphisms, which are natural transformations by closed morphisms.  
Via the reversed mapping cylinder construction, such a functor is equivalent to a functor between categories,
\[
E_\bullet \colon \cU \longrightarrow {\sf Cbl}(\Delta^1)^{\sf cls}~,
\]
to the category whose objects are constructible bundles over $\Delta^1$ and whose morphisms are proper constructible bundles over $\Delta^1$ between such.  
Consider the colimit of the composite functor 
\[
E_U~:=~\colim\Bigl(\cU \xra{E_\bullet}{\sf Cbl}(\Delta^1)^{\sf cls} \hookrightarrow {\sf Cbl}(\Delta^1) \Bigr)
\]
to the category of constructible bundles over $\Delta^1$ and maps over $\Delta^1$ between such.  
This colimit exists: the colimit certainly exists as a stratified topological space; because each morphism in this diagram is closed, this stratified topological space inherits a unique conically smooth structure compatible with each term in the diagram.  
Realized in this way, there are canonical identifications of the base changes
\[
Z_U~\cong~ (E_U)_{|\Delta^{\{0\}}}
\qquad \text{ and }\qquad
X=U~\cong~(E_U)_{|\Delta^{\{1\}}}
\]
over $\cU$.  
Therefore, this $E_U$ defines a morphism in $\Bun$ from $Z_U$ to $X$ in $\Bun$ over the functor $\cU^{\op} \to \Bun$.

We now prove that the morphism from $Z_U$ to $X$ in $\Bun$ over the functor $\cU^{\op}\to \Bun$ is unique. 
Let $S$ be a sphere (of some dimension).  
It suffices to show that each map from $S$ to the space of morphisms $Z_U$ to $X$ over $\cU^{\op}$ is homotopic to the constant such map at the morphism constructed above.  
Such a map from $S$ is given by a functor $\cU^{\tr} \to {\sf Cbl}(\Delta^1\times S)^{\sf cls}$ extending the functor $E_\bullet\times S$, and whose value on the cone-point is a constructible bundle 
\[
\w{E}_U 
\longrightarrow 
\Delta^1  \times S
\]
with identifications
\[
(\w{E}_{U})_{|\Delta^{\{0\}}\times S }~\cong~ Z_U\times S
~{}~{}~
\text{ over }
\Delta^{\{0\}}\times S 
\qquad \text{ and } \qquad 
(\w{E}_{U})_{|\Delta^{\{1\}}\times S }~\cong~ X \times S
~{}~{}~
\text{ over }
\Delta^{\{1\}}\times S  .
\]
From the definition of $E_U$ above as a colimit, these stipulated identifications of $\w{E}$ assemble as an identification
\[
E_U\times S~\cong~ \w{E}_U
~{}~{}~
\text{ over }
\Delta^1 \times S~,
\]
compatibly with the stipulated identifications.
In other words, the given functor $\cU^{\tr} \to {\sf Cbl}(\Delta^1\times S)^{\sf cls}$ agrees with the product of the colimit diagram defining $E_U$ with $S$.  
We conclude that the given map from $S$ to the space of morphisms from $Z_U$ to $X$ in $\Bun$ over the functor $\cU^{\op}\to \Bun$ is homotopic to the constant map at $E_U$.  
\end{proof}

\subsection{Disks}\label{sec.disks}
We introduce compact vari-framed \emph{disk}-stratified spaces.
We do so through a universal property, as the smallest collection containing $\emptyset$ and $\ast$ that is closed under the formation of framed suspension as well as \emph{closed covers}, which we now define.  
The disk-stratified condition captures our intuition for a `finely stratified space', and they will ultimately play the role of a basis for a descent among stratified spaces concerning gluing along strata.

A 2-term open cover of a smooth manifold determines a pushout diagram
\[
\xymatrix{
M  
&
V  \ar[l]
\\
U   \ar[u]
&
U\cap V  \ar[u]  \ar[l]
}
\]
among smooth maps.
In the case of stratified spaces we concern ourselves with analogous pushouts in which each arrow is not an open embedding but the inclusion of a closed union of strata.  
This can be phrased as a pullback diagram in $\Bun$, which we call a \emph{purely closed cover}, using the monomorphism $(\Strat^{\sf p.cbl.inj})^{\op} \underset{\rm Def~\ref{def.classes}}\hookrightarrow \Bun$.  
The $\infty$-category $\Bun$ allows for another class of pullback diagram, which we call \emph{refinement-closed covers}, which embody how a refinement of a stratum of a stratified space determines a refinement of that stratified space.  

\begin{definition}[Closed covers]\label{def.closed-cover}
For each tangential structure $\cB$, 
a limit diagram $[1]\times[1]\to \Bun^{\cB}$, written 
\[
\xymatrix{
X   \ar[r]   \ar[d] 
&
X''   \ar[d]
\\
X'   \ar[r]
&
X_0,
}
\]
is
\begin{itemize}
\item a \emph{purely closed cover} if the diagram is comprised of closed morphisms;
\item a \emph{refinement-closed cover} if the horizontal arrows are refinement morphisms while the fiberwise arrows are closed morphisms.
\end{itemize}
The limit diagram is a {\it closed cover} if it is either a purely closed cover or a refinement-closed cover.
\end{definition}

\begin{remark}\label{explicate-pure-cov}
The opposite of a purely closed cover $([1]\times[1])^{\op}\to (\Bun^{\cls})^{\op}\simeq \Strat^{{\pcbl}, {\sf inj}}$ is a pushout diagram in $\Strat$ by proper constructible embeddings; this is Proposition~\ref{cls.lims}, above.
Through the results of~\S3 of~\cite{striat}, we conclude that the composite functor $([1]\times[1])^{\op} \to \Strat^{{\pcbl}, {\sf inj}} \xra{\exit}\Cat_\infty$ is a pushout by fully faithful functors. 

\end{remark}

\begin{definition}\label{def.disks}
Let $\cB$ be a suspending tangential structure.
The $\infty$-category of \emph{compact disk-stratified $\cB$-structured spaces} is the smallest full $\infty$-subcategory 
\[
\cDisk^\cB~\subset~\cBun^{\cB}
\]
with the following properties.
\begin{enumerate}
\item For each $i=-1,0$, and for each $\cB$-structure $g$ on $\DD^i$, the object $(\DD^i,g)$ belongs to $\cDisk^\cB$.  

\item The framed suspension $\sS^{\fr}(X)$ belongs to $\cDisk^\cB$ whenever $X$ does.

\item An object $X\in \cBun^{\cB}$ belongs to $\cDisk^\cB$ whenever there is a closed cover in $\Bun^{\cB}$
\[
\xymatrix{
X   \ar[r]   \ar[d] 
&
X''   \ar[d]
\\
X'   \ar[r]
&
X_0
}
\]
in which $X'$ and $X_0$ and $X''$ belong to $\cDisk^\cB$.

\end{enumerate}
For each dimension $n$, the $\infty$-category of \emph{compact disk-stratified $\cB$-structured $n$-manifolds} is the intersection
\[
\cDisk^{\cB}_n~:=~ \cDisk^{\cB}\underset{\Bun^{\cB}}\times \Bun^{\cB}_{\leq n}~.
\]

\end{definition}

\begin{remark}
Definition~\ref{def.disks} can be approached iteratively.
For instance, (1) grants that $\DD^0=\ast$ belongs to $\cDisk^{\vfr}$.
Thereafter, (2) inductively grants that $\DD^{n+1}:=\sS^{\sf fr}(\DD^{n})$ belongs to $\cDisk^{\vfr}$ for each $n\geq 0$.
Point~(3) thereafter gives, in the case $X_0=\DD^{-1}=\emptyset$ of a purely closed cover, that finite disjoint unions of copies of $\DD^n$ belong to $\cDisk_n^{\vfr}$.  
Also, for each $0\leq i \leq n$ the two standard closed morphisms $i_\pm \colon \DD^n\to \DD^i$ (visit Remark~\ref{explicate-pure-cov}) give that the pullback $\DD^n\underset{\DD^i}\times \DD^n$, which is the stratified space obtained by gluing together two $n$-disks along oppositely arranged $i$-hemispheres, belongs to $\cDisk^{\vfr}$.  
Iterating such purely closed covers, we see that pasting diagrams belong to $\cDisk^{\vfr}$, in particular.
More complex stratified spaces are obtained through refinement-closed covers.  
Through these formations one sees that simplices, equipped with a particular vari-framing, belong to $\cDisk^{\vfr}$.  
This is illustrated as Figure~\ref{fig:covers}.  
Thereafter, we see that triangulations of closed manifolds belong to $\cDisk^{\vfr}$, provided the triangulation is equipped with a vari-framing.

\end{remark}

\begin{figure}
\begin{subfigure}{.4\textwidth}
\begin{tikzpicture}
\tikzstyle{every node}=[font=\tiny]

\begin{scope}
\draw[fill=lightgray] (0,0) [out=75, in=105] to (2,0) [out=-105, in=-75] to (0,0);
\draw (0,0) to (2,0);
\draw[fill] (0,0) circle [radius=0.07];
\draw[fill] (2,0) circle [radius=0.07];
\end{scope}

\draw[style={->}] (2.5,0) to (3.5,0);

\begin{scope}[shift={(4,0)}]
\draw[fill=lightgray] (0,0) [out=75, in=105] to (2,0);
\draw (0,0) to (2,0);
\draw[fill] (0,0) circle [radius=0.07];
\draw[fill] (2,0) circle [radius=0.07];
\end{scope}

\draw[style={->}] (1,-0.8) to (1,-1.4);
\draw[style={->}] (5,-0.8) to (5,-1.4);

\begin{scope}[shift={(0,-2)}]
\draw[fill=lightgray] (2,0) [out=-105, in=-75] to (0,0);
\draw (0,0) to (2,0);
\draw[fill] (0,0) circle [radius=0.07];
\draw[fill] (2,0) circle [radius=0.07];
\end{scope}

\draw[style={->}] (2.5,-2) to (3.5,-2);

\begin{scope}[shift={(4,-2)}]
\draw (0,0) to (2,0);
\draw[fill] (0,0) circle [radius=0.07];
\draw[fill] (2,0) circle [radius=0.07];
\end{scope}

\end{tikzpicture}
\caption{}
\end{subfigure}\hfill%
\begin{subfigure}{.4\textwidth}
\begin{tikzpicture}
\tikzstyle{every node}=[font=\tiny]

\begin{scope}
\draw[fill=lightgray] (0,0) [out=75, in=105] to (2,0) [out=-105, in=-75] to (0,0);
\draw (0,0) to (2,0);
\draw (2,0) to (3,0);
\draw[fill] (0,0) circle [radius=0.07];
\draw[fill] (2,0) circle [radius=0.07];
\draw[fill] (3,0) circle [radius=0.07];
\end{scope}

\draw[style={->}] (3.5,0) to (4.5,0);

\begin{scope}[shift={(5,0)}]
\draw (0,0) to (1,0);
\draw[fill] (0,0) circle [radius=0.07];
\draw[fill] (1,0) circle [radius=0.07];
\end{scope}

\draw[style={->}] (1,-0.7) to (1,-1.3);
\draw[style={->}] (5.5,-0.7) to (5.5,-1.3);

\begin{scope}[shift={(0,-2)}]
\draw[fill=lightgray] (0,0) [out=75, in=105] to (2,0) [out=-105, in=-75] to (0,0);
\draw (0,0) to (2,0);
\draw[fill] (0,0) circle [radius=0.07];
\draw[fill] (2,0) circle [radius=0.07];
\end{scope}

\draw[style={->}] (3.5,-2) to (4.5,-2);

\begin{scope}[shift={(5,-2)}]
\draw[fill] (0,0) circle [radius=0.07];

\end{scope}

\end{tikzpicture}
\caption{}
\end{subfigure}%

\vspace{0.7cm}

\begin{subfigure}{.4\textwidth}

\begin{tikzpicture}

\begin{scope}
\draw[fill=lightgray] (1.62,0.43) [out=160, in=75] to (0,0) [out=-75, in=160] to (0.38,-0.44) [out=340, in=-105] to (2,0) [out=105, in=-20] to (1.62,0.43);
\draw (0.38,-0.44) to (1.62, 0.43);

\draw[fill] (0,0) circle [radius=0.07];
\draw[fill] (0.38, -0.44) circle [radius=0.07];
\draw[fill] (1.62,0.43) circle [radius=0.07];
\draw[fill] (2,0) circle [radius=0.07];
\end{scope}

\draw[style={->}] (2.5,0) to (3.5,0);

\begin{scope}[shift={(4,0)}]

\draw[fill=lightgray] (1.62,0.43) [out=160, in=75] to (0,0) [out=-75, in=160] to (0.38,-0.44);
\draw (0.38,-0.44) to (1.62, 0.43);

\draw[fill] (0,0) circle [radius=0.07];
\draw[fill] (0.38, -0.44) circle [radius=0.07];
\draw[fill] (1.62,0.43) circle [radius=0.07];

\end{scope}

\draw[style={->}] (1,-0.7) to (1,-1.3);
\draw[style={->}] (5,-0.7) to (5,-1.3);

\begin{scope}[shift={(0,-2)}]

\draw[fill=lightgray] (0.38,-0.44) [out=340, in=-105] to (2,0) [out=105, in=-20] to (1.62,0.43);
\draw (0.38,-0.44) to (1.62, 0.43);
\draw[fill] (2,0) circle [radius=0.07];
\draw[fill] (0.38, -0.44) circle [radius=0.07];
\draw[fill] (1.62,0.43) circle [radius=0.07];

\end{scope}

\draw[style={->}] (2.5,-2) to (3.5,-2);

\begin{scope}[shift={(4,-2)}]
\draw (0.38,-0.44) to (1.62, 0.43);
\draw[fill] (0.38, -0.44) circle [radius=0.07];
\draw[fill] (1.62,0.43) circle [radius=0.07];
\end{scope}

\end{tikzpicture}

\caption{}
\end{subfigure}\hfill%
\begin{subfigure}{.4\textwidth}
\begin{tikzpicture}
\tikzstyle{every node}=[font=\tiny]

\begin{scope}
\draw[fill=lightgray] (0,0) [out=75, in=105] to node (TOP) {} (2,0) [out=-105, in=-75] to (0,0);
\draw[fill] (TOP) circle [radius=0.07];
\draw[fill] (0,0) circle [radius=0.07];
\draw[fill] (2,0) circle [radius=0.07];
\end{scope}

\draw[style={->}] (2.5,0) to (3.5,0);

\begin{scope}[shift={(4,0)}]
\draw[fill=lightgray] (0,0) [out=75, in=105] to (2,0) [out=-105, in=-75] to (0,0);
\draw[fill] (0,0) circle [radius=0.07];
\draw[fill] (2,0) circle [radius=0.07];
\end{scope}

\draw[style={->}] (1,-0.7) to (1,-1.3);
\draw[style={->}] (5,-0.7) to (5,-1.3);

\begin{scope}[shift={(0,-2.2)}]
\draw (0,0) [out=75, in=105] to node (TOP) {} (2,0);
\draw[fill] (TOP) circle [radius=0.07];
\draw[fill] (0,0) circle [radius=0.07];
\draw[fill] (2,0) circle [radius=0.07];
\end{scope}

\draw[style={->}] (2.5,-2) to (3.5,-2);

\begin{scope}[shift={(4,-2.2)}]
\draw (0,0) [out=75, in=105] to (2,0);
\draw[fill] (0,0) circle [radius=0.07];
\draw[fill] (2,0) circle [radius=0.07];
\end{scope}

\end{tikzpicture}
\caption{}
\end{subfigure}
\caption{(A)--(C) depict purely closed covers; (D) depicts a refinement-closed cover.
(Note:
indication of the tangential structure $\cB$ has been suppressed in these pictures.
So, strictly speaking, each of (A)--(D) depicts a diagram in $\Bun$.)}
\label{fig:covers}
\end{figure}
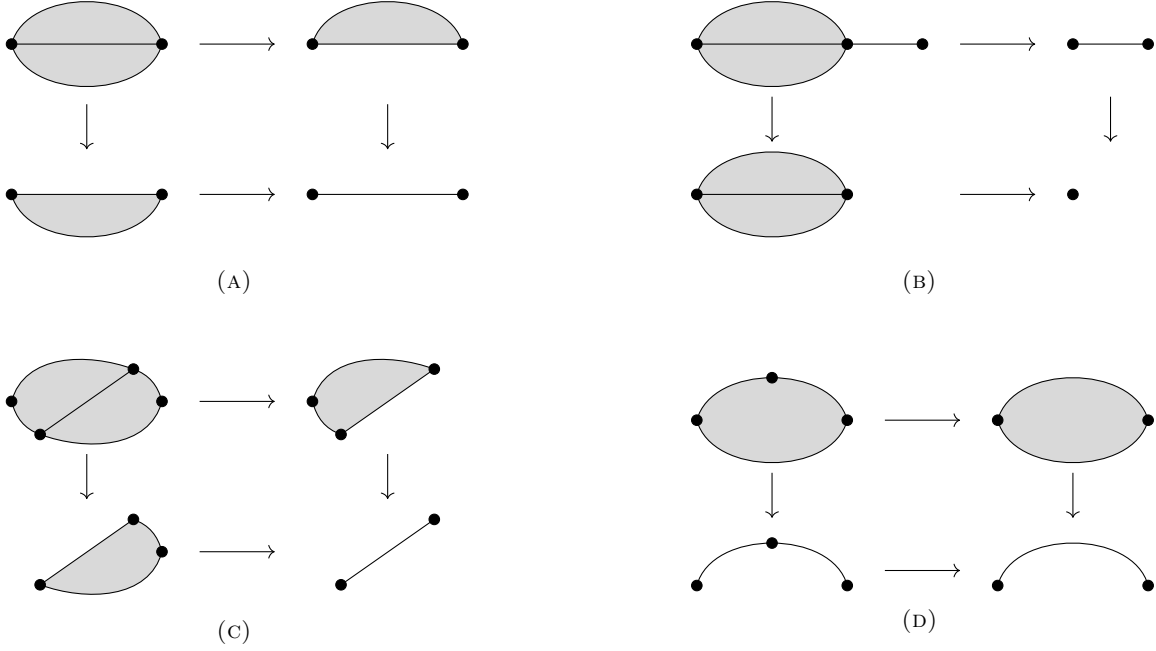

The $0$-dimensional case is explicit, as the next result demonstrates.  
\begin{lemma}\label{disk-zero} 
There is an equivalence of $\infty$-categories
\[
\cDisk^{\vfr}_{\leq 0}~\simeq~\Fin^{\op}
\]
with the opposite of finite sets.

\end{lemma}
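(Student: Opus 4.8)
The plan is to exhibit the equivalence by first pinning down the objects, then computing mapping spaces through the closed--active factorization system, and finally checking that the vari-framing data contributes nothing beyond contractible choices.

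First I would identify the objects. By Definition~\ref{def.disks}, restricting to dimension $\leq 0$ discards the framed-suspension clause (suspension strictly raises dimension) and the refinement-closed covers (a $0$-manifold admits no nontrivial refinement), leaving only $\DD^0 = \ast$, $\DD^{-1}=\emptyset$, and purely closed covers with $X_0=\emptyset$, i.e.\ finite disjoint unions of points. On such an $X$ both $\sT_X$ and $\epsilon^{\sf dim}_X$ are the zero functor $\exit(X)\to \Vect^{\sf inj}$, so the space of vari-framings is the contractible space of self-equivalences of the zero functor. Thus objects of $\cDisk^{\vfr}_{\leq 0}$ are exactly finite sets with an essentially unique framing, and I would send a finite set $S$ to $\coprod_S \DD^0$.

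Since $\cDisk^{\vfr}_{\leq 0}$ is full in $\cBun^{\vfr}$, mapping spaces are computed there, and I would analyze them via the factorization system $(\Bun^{\vfr,\cls},\Bun^{\vfr,\act})$ of Lemma~\ref{fact-sys}. A morphism $\coprod_S \ast \to \coprod_T \ast$ factors as a closed morphism followed by an active one. By Definition~\ref{def.classes} a closed morphism $\coprod_S\ast \to Z$ is the image under $(\Strat^{\pcbl,{\sf inj}})^{\op}$ of a proper constructible injection, which between finite sets is just an injection of sets $Z\hookrightarrow S$; meanwhile the active morphisms between $0$-manifolds in $\cBun$ reduce to creation morphisms, since refinements are trivial and embedding morphisms that are also proper are invertible, by the identification $\cBun\cap \Bun^{\emb}\simeq \cBun^\sim$. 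Hence an active morphism $Z\to \coprod_T\ast$ is the image of a proper constructible surjection $T\twoheadrightarrow Z$. Composing, the underlying $\cBun$-morphism is precisely the datum of a map of finite sets $\phi\colon T\to S$, realized concretely by the reversed mapping cylinder $\cylr(\phi)\to \Delta^1$ as in Example~\ref{stand-cr}, and the factorization system guarantees that this description is faithful and that these mapping spaces are discrete.

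It then remains to confirm that passing to the vari-framed setting introduces no extra homotopy. For the cylinder $X=\cylr(\phi)$ I would check that the vertical tangent bundle $\sT^{\sf v}$ and the vertical dimension bundle $\epsilon^{\sf vdim}$ both vanish: the fibers are $0$-dimensional, and along each $1$-dimensional strand the lone tangent direction is the base direction to $\Delta^1$, hence absorbed by the projection -- exactly the $\epsilon^1$-term in the link description of $\sT$ in \S\ref{tangent-explicit}. So the space of vertical vari-framings is once more the contractible space of equivalences of zero functors, and the forgetful functor $\cDisk^{\vfr}_{\leq 0}\to \cBun$ is an equivalence on each mapping space. Finally, by Corollary~\ref{no-autos} each $\DD^0$ is rigid, so $\Aut_{\cDisk^{\vfr}_{\leq 0}}(\coprod_S\ast)\simeq \Sigma_S$, matching $\Aut_{\Fin^{\op}}(S)$. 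Assembling the object identification, the discrete identifications $\Map(\coprod_S\ast,\coprod_T\ast)\simeq \Hom_{\Fin}(T,S)$, and compatibility with composition yields $\cDisk^{\vfr}_{\leq 0}\simeq \Fin^{\op}$. The main obstacle is the middle step: cleanly matching closed and active $\cBun$-morphisms between finite sets with injections and surjections and verifying the mapping spaces are discrete, together with the contractibility of the vari-framing on the cylinders; everything else is bookkeeping.
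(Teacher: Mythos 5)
Your route is genuinely different from the paper's. The paper does not touch the closed--active factorization system here: it observes that a proper constructible bundle $E\to K$ with finite fibers has a unique path lifting property, so that $\exit(E)\to\exit(K)$ is a right fibration with finite discrete fibers, hence is classified by a functor $\exit(K)\to\Fin^{\op}$; this assignment is natural in the test stratified space $K$, so it defines a map of striation sheaves $\cBun_{\leq 0}\to\Fin^{\op}$, which is then checked to be an equivalence by evaluating on $K=\ast$ and $K=\Delta^1$. Your preliminary reductions (the vanishing of $\sT$ and $\epsilon^{\sf dim}$ in dimension $\leq 0$, so that $\cDisk^{\vfr}_{\leq 0}\to\cBun_{\leq 0}$ is an equivalence, and the identification of objects with finite sets) agree with the paper's, and your identification of the underlying map of finite sets carried by a morphism is correct in substance.

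The gap is in your middle step. The factorization system of Lemma~\ref{fact-sys} expresses $\Map(\coprod_S\ast,\coprod_T\ast)$ as a coend of the space of closed morphisms against the space of active morphisms over the groupoid of intermediate objects; it does not by itself ``guarantee that these mapping spaces are discrete.'' The spaces of closed and of active morphisms are geometric realizations of simplicial groupoids of \emph{families} of proper constructible injections and surjections over $\Delta^\bullet_e$, not bare hom-sets, and you must show their components are contractible; likewise your claim that every active morphism between $0$-dimensional objects is a creation requires controlling arbitrary composites of creations, refinements, and embeddings, not just single generators. (Compare the proof of Lemma~\ref{disk-1}, where exactly this kind of $S$-point analysis is carried out in dimension $1$; the clean substitute in dimension $0$ is the unique-path-lifting observation.) A second, related gap: even granting that objects, hom-sets, and composition all match, an object-and-morphism-wise dictionary does not by itself assemble into a functor of $\infty$-categories $\cDisk^{\vfr}_{\leq 0}\to\Fin^{\op}$; this is precisely why the paper defines the comparison on arbitrary $K$-points at once. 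Your argument is repairable---once discreteness is established, both sides are ordinary categories and a $1$-categorical functor suffices---but the discreteness is the actual content of the lemma and needs a real argument rather than an appeal to the factorization system.
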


\begin{proof}
This proof is similar to that of the main result of~\cite{striat}: we show that both $\infty$-categories, when viewed as sheaves on stratified spaces, classify the same structure.  

By inspection, there is a unique equivalence of the restricted fiberwise constructible vector bundles $\sT^{\sf fib}_{|\Bun_{\leq 0}}\simeq \epsilon^{\sf fib.dim}_{|\Bun_{\leq 0}}$.
Therefore, the functor  $\vfr_{\leq 0}\to \Vect^{\sf inj}_{\leq 0}$ is an equivalence.
It follows that the projection $\cDisk^{\vfr}_{\leq 0} \xra{\simeq} \cBun_{\leq 0}$ is an equivalence.  
The $\infty$-category $\cDisk^{\vfr}_{\leq 0}$ classifies proper constructible bundles with finite fibers.  
Each such constructible bundle $E\to K$ has a unique path lifting property in $\strat$,
\[
\xymatrix{
\Delta^{\{1\}}  \ar[rr]    \ar[d]
&&
E  \ar[d]
\\
\Delta^1  \ar[rr]  \ar@{-->}[urr]^-{\exists !}
&&
K,
}
\]
which implies the functor $\exit(E)\to \exit(K)$ is a right fibration whose fibers are $0$-types.  
The latter is classified by a functor $\exit(K)\to \Fin^{\op}$.
By inspection, this assignment restricts along conically smooth maps $K'\to K$, and so we have the desired functor $\cBun_{\leq 0} \to \Fin^{\op}$.  

Inspecting the case that $K=\ast$ reveals that this functor is essentially surjective.
Inspecting the case that $K=\Delta^1$ reveals that this functor is bijective on mapping components.  
The above unique lifting property implies the morphism spaces of $\cDisk_{\leq 0}^{\vfr}$ are $0$-types.
We conclude that this functor $\cDisk_{\leq 0}^{\vfr} \to \Fin^{\op}$ is an equivalence of $\infty$-categories.  
\end{proof}

The $1$-dimensional case is manageable as the next result indicates.
This result is useful for comparison with more combinatorial entities, such as $\bdelta$.  
\begin{lemma}\label{disk-1}
The $\infty$-category $\cDisk_1^{\vfr}$ is an ordinary category.

\end{lemma}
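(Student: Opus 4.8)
The goal is to show that every mapping space of $\cDisk_1^{\vfr}$ is $0$-truncated, so that this complete Segal space is the nerve of an ordinary category; note that we do \emph{not} claim that objects have trivial automorphisms, only that $\Map_{\cDisk_1^{\vfr}}(X,Y)$ is discrete for each ordered pair. The plan is to represent a morphism by a vari-framed proper constructible bundle over $\Delta^1$ and to exploit the factorization system. By Lemma~\ref{fact-sys} applied to the suspending tangential structure $\vfr$ (and its restriction to $\cDisk_1^{\vfr}$), the pair of closed and active subcategories factorizes every morphism, so that $\Map_{\cDisk_1^{\vfr}}(X,Y)$ is equivalent to the homotopy colimit, over the maximal subgroupoid $(\cDisk_1^{\vfr})^\sim$, of the products $\Map^{\cls}(X,Z)\times \Map^{\act}(Z,Y)$. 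It therefore suffices to establish three facts: that the space of closed morphisms between any two objects is $0$-truncated; that the space of active morphisms between any two objects is $0$-truncated; and that assembling these over the groupoid of intermediate objects introduces no higher homotopy.

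First I would analyze the objects. By Definition~\ref{def.disks}, every object of $\cDisk_1^{\vfr}$ is assembled from copies of $\DD^{-1}$, $\DD^0$, and $\DD^1=\sS^{\fr}(\DD^0)$ by iterated closed covers; in dimension $1$ this means a finite disjoint union of subdivided intervals (chains of $\DD^1$'s glued endpoint-to-endpoint through $\DD^0$'s) together with isolated points, each carrying a vari-framing. Using Corollary~\ref{no-autos} that $\Aut_{\Bun^{\vfr}}(\DD^i)\simeq\ast$ for $i=0,1$, together with the fact (Remark~\ref{explicate-pure-cov}, via Theorem~\ref{thm.exit-facts}) that the exit-path functor carries purely closed covers to pushouts of fully faithful functors, a descent argument identifies $\Aut_{\Bun^{\vfr}}(X)$ with the discrete group of combinatorial symmetries of the gluing diagram of $X$ that preserve the vari-framing. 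In particular each automorphism group is finite and discrete, so $(\cDisk_1^{\vfr})^\sim$ is a $1$-groupoid.

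Next I would treat the two classes of morphisms. Closed morphisms are, by Definition~\ref{def.classes}, opposites of proper constructible embeddings; between disk-stratified $1$-manifolds such an embedding is the inclusion of a sub-union of strata, and the rigidity supplied by the contractibility of $\Aut^{\vfr}$ of each disk collapses the ambient isotopy parameter, leaving a discrete set of such inclusions. Active morphisms between compact objects are generated by creations and refinements, since embedding morphisms among compact objects are equivalences (the remark following Definition~\ref{def.classes}); a refinement of a $\DD^1$ is a subdivision determined by a finite ordered configuration of interior points, whose configuration space in an interval is contractible, while a creation is combinatorial. Hence both $\Map^{\cls}$ and $\Map^{\act}$ are $0$-truncated, each combinatorial type contributing a single contractible component. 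The $0$-dimensional case, where $\cDisk_{\leq 0}^{\vfr}\simeq \Fin^{\op}$ by Lemma~\ref{disk-zero}, is the base case of this analysis and confirms the pattern.

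The main obstacle is the third point: controlling the homotopy colimit over $(\cDisk_1^{\vfr})^\sim$ so that assembling the $0$-truncated closed and active pieces does not resurrect higher homotopy through the possibly nontrivial automorphisms of an intermediate object $Z$. Concretely, I must show that any vari-framed automorphism of $Z$ fixing a given closed map into $Z$ and a given active map out of $Z$ is the identity, which is precisely the freeness needed for the homotopy quotient to remain discrete. Here the rigidity of vari-framings is indispensable: Corollary~\ref{no-autos} and Lemma~\ref{autos-suspend} force every self-equivalence of a vari-framed disk-stratified $1$-manifold to be pinned down on each stratum, so that once a refinement or a closed inclusion distinguishes the strata there is no residual symmetry. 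Equivalently, and this is the way I would phrase the final step, the moduli space of vari-framed constructible bundles over $\Delta^1$ of prescribed combinatorial type and with prescribed boundary fibers is contractible, because all the constituent diffeomorphism spaces are contractible by Corollary~\ref{no-autos}; this contractibility is exactly the statement that each morphism of $\cDisk_1^{\vfr}$ occupies a single point of its mapping space.
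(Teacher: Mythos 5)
Your overall strategy---factor morphisms into classes, show each class has $0$-truncated mapping spaces, and reassemble---is in the same spirit as the paper's proof, which also factors an arbitrary family of morphisms as a closed/creation morphism followed by a refinement (via the link system and flows from \cite{striat}) and then shows each factor is rigid. But your execution has a genuine gap at exactly the point you flag as ``the main obstacle.'' Writing $\Map(X,Y)$ as a coend of $\Map^{\cls}(X,-)\times\Map^{\act}(-,Y)$ over $(\cDisk_1^{\vfr})^\sim$ forces you to prove that the action of $\Aut(Z)$ on pairs is free for every intermediate object $Z$, and your closing justification---that ``all the constituent diffeomorphism spaces are contractible by Corollary~\ref{no-autos}''---is false: Corollary~\ref{no-autos} applies only to the single hemispherical disk $\DD^n$, not to general objects of $\cDisk_1^{\vfr}$. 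A disjoint union of two points already has automorphism group $\Sigma_2$; general objects have finite, discrete, but nontrivial automorphism groups (this is Theorem~\ref{disc-gpd}, proved later and giving only discreteness, not contractibility). So the coend is a homotopy quotient of a set by a nontrivial discrete group, and its $0$-truncatedness is precisely the freeness statement you have not proved. The paper sidesteps this entirely by arguing one $S$-family at a time: it takes a connected smooth $S$-point of the mapping striation sheaf, factors it, and shows each factor-family is constant, so no quotient by automorphisms ever appears.

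Two further points need repair. First, your treatment of active morphisms asserts they are ``generated by'' creations and refinements and then analyzes each generator separately; but $\Map^{\act}(Z,Y)$ is not the union or product of those pieces---you need a second factorization (creation-then-refinement), which in the paper is supplied by the link system $D\xla{\pi}\Link(E)\xra{\gamma}D'$ and the flow construction of \S6 of \cite{striat}, and which would reintroduce the same coend/freeness issue one level down. Second, your claim that closed morphisms form a discrete set because rigidity ``collapses the ambient isotopy parameter'' conflates the vari-framed and unframed settings: the paper instead first shows the fibers of $\cDisk_1^{\vfr}(D,D')\to\cDisk_{\leq 1}(D,D')$ are $0$-types (using $\un{\sO}_{|\leq 1}\simeq(\ZZ/2\ZZ)^{\tl}$), strips the framing, and then runs an unframed geometric argument (blow-up squares reducing to $D'$ a finite set or a closed interval, where surjective constructible bundles $\DD^1\to\DD^1$ are isomorphisms). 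If you want to keep your coend architecture, you must prove the freeness claim directly---for instance, that a nontrivial automorphism of $Z$ cannot fix a proper constructible embedding $Z\hookrightarrow X$ up to a path of such embeddings---rather than deriving it from a contractibility statement that does not hold.
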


\begin{proof}
We must show that, for each pair of objects $D$ and $D'$ in $\cDisk_1^{\vfr}$, the space of morphisms $\cDisk_1^{\vfr}(D,D')$ has contractible components.
Recall the group $\infty$-category $\un{\sO}$ of Construction~\ref{def.O}.
By inspection, there is an identification of group $\infty$-categories $\un{\sO}_{|\leq 1}\simeq (\ZZ/2\ZZ)^{\tl}$.
In particular, the projection map between spaces of morphisms
\[
\cDisk_1^{\vfr}(D,D') \longrightarrow \cDisk_{\leq 1}(D,D')
\]
has fibers which are $0$-types; here we are using the same notation for objects in $\cDisk^{\vfr}$ and their unstructured projections to $\cDisk$.
So it suffices to prove that $\cDisk_{\leq 1}(D,D')$ is a $0$-type.

Let $s_0\in S$ be a pointed connected smooth manifold, and consider an $S$-point of the striation sheaf $\cDisk_1(D,D')$.
This classifies a proper constructible bundle $E\to S\times \Delta^1$ whose fibers are bounded above in dimension by $1$, equipped with identifications $E_{|S\times\Delta^{\{0\}}} \simeq D\times S$ and $E_{|S\times\Delta^{\{1\}}} \simeq D'\times S$ over $S$.
We wish to construct an equivalence $E\simeq E_{|\{s_0\}\times \Delta^1}\times S$ over $S\times \Delta^1$.

Consider the link system over $S$:
\[
D\times S\cong E_{|S\times\Delta^{\{0\}}}\xla{~\pi~}\Link_{E_{|S\times\Delta^{\{0\}}}}(E)\xra{~\gamma~} E_{|S\times\Delta^{\{1\}}}\cong D'\times S~;
\]
the map $\pi$ is proper and constructible while the map $\gamma$ is a refinement.  
In~\S6 of~\cite{striat} we use flows to construct an isomorphism over $S\times \Delta^1$:
\[
\Bigl(D\times S \underset{\Link_{E_{|S\times\Delta^{\{0\}}}}(E)\times\Delta^{\{0\}}} \coprod \Link_{E_{|S\times\Delta^{\{0\}}}}(E)\times \Delta^1 \underset{\Link_{E_{|S\times\Delta^{\{0\}}}}(E)\times \Delta^{\{1\}}}\coprod D'\times S\Bigr)
\xra{~\cong~}E~.
\]
In other words, the given $S$-point of $\cDisk_1(D,D')$ admits a factorization as an $S$-point of $\cDisk^{\sf c.cr}_1(D,D'')$ composed with an $S$-point of $\cDisk_1^{\sf ref}(D'',D')$.  
Therefore, we may consider these two cases separately.

Suppose the given $S$-point of $\cDisk_1(D,D')$ factors through $\cDisk_1^{\sf ref}(D,D') \simeq \Strat^{\sf ref}(D,D')$.  
By definition, the dimension of $D'$ is bounded above by $1$, and each connected stratum of $D$ is either a singleton or an open interval.  
It follows from the connectivity of $S$ that this $S$-point of $\Strat^{\sf ref}(D,D')$ is constant, at $\gamma_{|\{s_0\}}$.  

Suppose the given $S$-point of $\cDisk_1(D,D')$ factors through $\cDisk_1^{\vfr, \sf c.cr}(D,D')\simeq  \Strat^{\sf p.cbl}(D',D)$. 
Denote by $D_0\subset D$ the $0$-dimensional strata; denote by $\{\ov{D}'_\alpha\}$ the collection of the closures of each $1$-dimensional stratum of $D'$, this collection is indexed by a finite set.  
As established in~\S7 of~\cite{aft1}, there is the blow-up square in $\strat$
\[
\xymatrix{
\Link_{D'_0}(D')  \ar[r]  \ar[d]
&
\underset{\alpha}\coprod \ov{D}_\alpha \ar[d]
\\
D_0  \ar[r]
&
D.
}
\]
This diagram is a pushout by proper constructible maps.
By definition, the dimension of $D'$ is bounded above by $1$, and the closure of each connected stratum of $\ov{D}_\alpha$ is either a singleton or a closed interval.  It follows that $\Link_{D'_0}(D')$ is a finite set.  
We are therefore reduced to the case that $D'$ is either a finite set or a closed interval.  

If $D'$ is a finite set, then any constructible bundle $D'\to D$ factors through $D_0\subset D$, the $0$-dimensional strata of $D$.
This reduces us to the situation that both $D$ and $D'$ are finite sets.  
The result in this case follows directly from Lemma~\ref{disk-zero}.  

If $D'$ is a closed interval, then any constructible bundle $D'\to D$ factors surjectively through a constructible closed subspace $D''\subset D$ which is either a singleton or a closed interval.
This reduces us to the situation that $D$ is $\ast$ or $\DD^1$.  
The result in the first case is trivially true.
The result for the second case is true because any surjective constructible bundle $\DD^1 \to \DD^1$ is an isomorphism, the space of which is contractible.  
\end{proof}

The next result facilitates inductive arguments on dimension; we use it within a number of upcoming proofs.  
\begin{lemma}\label{truncate}
For each dimension $n$ there is a localization
\[
(-)_{\leq n}\colon \Bun^{\vfr} \rightleftarrows \Mfld^{\vfr}_{n}
\]
which restricts to a localization
\[
(-)_{\leq n}\colon \cDisk^{\vfr} \rightleftarrows \cDisk^{\vfr}_{n}~.
\]
The units of these adjunctions are by closed morphisms.  

\end{lemma}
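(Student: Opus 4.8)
The plan is to realize $\Mfld^{\vfr}_{n}$ as a reflective subcategory of $\Bun^{\vfr}$ whose reflection is a geometric truncation, and then to check that this reflection preserves disk-stratified objects. First I would construct the reflection objectwise. For a vari-framed stratified space $X$, let $X_{\le n}\subseteq X$ be the union of those strata of dimension at most $n$. By the frontier condition for conically smooth stratifications --- the closure of a stratum meets only strata of strictly smaller dimension --- this union is closed, hence a proper constructible subspace, so $X_{\le n}\hookrightarrow X$ represents a closed morphism $\eta_X\colon X\to X_{\le n}$ in $\Bun$. Since $\sT$ inverts closed morphisms (Definition~\ref{def.tangent}), and hence so does $\epsilon^{\sf dim}=\epsilon^\bullet\circ{\sf dim}\circ\sT$, both functors restrict compatibly along $\exit(X_{\le n})\hookrightarrow\exit(X)$; the vari-framing $\epsilon^{\sf dim}_X\simeq\sT_X$ therefore restricts to $\epsilon^{\sf dim}_{X_{\le n}}\simeq\sT_{X_{\le n}}$, upgrading $\eta_X$ to a closed morphism in $\Bun^{\vfr}$ with $X_{\le n}\in\Mfld^{\vfr}_{n}$.

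Next I would produce the adjunction via the criterion (\cite{HTT}) that the inclusion $\iota\colon\Mfld^{\vfr}_{n}\hookrightarrow\Bun^{\vfr}$ admits a left adjoint precisely when each under-category $\{X\to \iota Y \mid Y\in\Mfld^{\vfr}_{n}\}$ has an initial object. The engine is the factorization system $(\Bun^{\vfr,\cls},\Bun^{\vfr,\act})$ of Lemma~\ref{fact-sys}, together with two dimension facts. First, active morphisms are generated by refinements, open embeddings, and creations; refinements and embeddings preserve dimension while creations raise it (Example~\ref{stand-cr}), so any active $Z\xra{\act}Y$ with $\dim Y\le n$ forces $\dim Z\le n$. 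Second, a closed morphism $X\to Z$ is the opposite of a proper constructible embedding whose link maps isomorphically to its base, so its image is a union of strata of $X$, and when $\dim Z\le n$ this image lies in $X_{\le n}$. The first fact already gives fullness of $\iota$: factoring a morphism between $\le n$-dimensional objects as $\cls$-then-$\act$ shows all its fibers have dimension $\le n$, so it lifts to $\Bun^{\vfr_n}$. Both facts together show $\eta_X$ is initial: given $f\colon X\to Y$ with $Y\in\Mfld^{\vfr}_{n}$, factor $f=X\xra{\cls}Z\xra{\act}Y$, so $\dim Z\le n$, and the closed part factors canonically as $X\xra{\eta_X}X_{\le n}\to Z$ since $X_{\le n}$ is the maximal $\le n$-dimensional closed union of strata of $X$. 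On mapping spaces this is the identification $\Map^{\cls}(X,Z)\simeq\Map^{\cls}(X_{\le n},Z)$ for $\dim Z\le n$, which inserted into the factorization-system description of mapping spaces yields $\eta_X^\ast\colon\Map(X_{\le n},Y)\xra{\sim}\Map(X,Y)$. This exhibits the localization $(-)_{\le n}\dashv\iota$ with closed unit $\eta_X$.

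For the disk-stratified statement I would prove $X_{\le n}\in\cDisk^{\vfr}_{n}$ whenever $X\in\cDisk^{\vfr}$, by induction along the three generating operations of Definition~\ref{def.disks}. On the generators one has $(\DD^i)_{\le n}=\DD^i$ for $i\le n$, while for $i>n$ the object $(\DD^i)_{\le n}$ is the $n$-dimensional part of $\partial\DD^i$; for instance $(\DD^{n+1})_{\le n}=\partial\DD^{n+1}$, which is a purely closed cover of two copies of $\DD^n$ and so lies in $\cDisk^{\vfr}$. Truncation commutes with framed suspension up to a dimension shift, and it sends a closed cover to a closed cover because intersecting with the closed subspace $X_{\le n}$ preserves the defining limit diagrams. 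Since $\cDisk^{\vfr}_{n}=\cDisk^{\vfr}\times_{\Bun^{\vfr}}\Bun^{\vfr}_{\le n}$ is a full subcategory of $\Bun^{\vfr}$ containing every $X_{\le n}$ and every unit $\eta_X$, the ambient universal property restricts verbatim to give the second localization, with unit again closed.

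I expect the principal obstacle to be upgrading the objectwise assignment $X\mapsto X_{\le n}$ to a genuinely initial object, rather than a mere pointwise reflection: one must verify the mapping-space equivalence $\eta_X^\ast$ in families (on $K$-points, through the striation-sheaf presentation), together with the naturality of the factorization-system decomposition of mapping spaces that the argument relies on. On the disk side, the subsidiary difficulty is the induction showing truncation commutes with framed suspension, which requires unwinding the hemispherical stratification of $\DD^n$ and its boundary.
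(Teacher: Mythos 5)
Your proof is correct, but it reaches the adjunction by a genuinely different route than the paper. The paper builds the left adjoint globally and at the point-set level: it takes the fiberwise $n$-skeleton as an endofunctor of the ordinary category $\bun$ of constructible bundles, applies the topologizing diagram of \cite{striat} to obtain $(-)_{\leq n}\colon \Bun\to\Bun_{\leq n}$, manufactures the unit as a mapping-cylinder endofunctor of $\bun$ over $\strat\xra{-\times\Delta^1}\strat$, and then transports the $\vfr$-structure along this closed unit using that $\Bun^{\tau}\to\Bun$ is coCartesian over closed morphisms, packaged as a correspondence over $[1]$ that is simultaneously Cartesian and coCartesian. You instead define the reflection objectwise as the union of strata of dimension at most $n$ and verify the universal property of $\eta_X$ directly, via the closed--active factorization system of Lemma~\ref{fact-sys} together with two dimension counts (active morphisms never have source of larger dimension than their target, since this holds for each of the three generating classes; closed morphisms with target of dimension at most $n$ factor through the skeleton), feeding these into the coend description of mapping spaces and the pointwise criterion for existence of a left adjoint. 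The paper's route buys naturality and the family version for free and applies verbatim to an arbitrary tangential structure $\tau$, at the price of more striation-sheaf bookkeeping; your route makes the universal property of the unit transparent and isolates the geometric content in the two dimension facts, at the price of invoking the coend formula for mapping spaces attached to a factorization system. You are also more explicit than the paper about the disk-stratified case, which the paper dismisses with ``by construction'': your induction over the generating operations of Definition~\ref{def.disks} --- the generators, compatibility of truncation with framed suspension up to a dimension shift, and stability of closed covers under truncation --- is exactly the argument one would have to write out.
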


\begin{proof}
As established in~\cite{aft1} (see~\S2), there is an $n$-skeleton functor $(-)_{\leq n}:\strat^{\emb}\ra \strat^{\emb}$ on stratified spaces and open embeddings thereamong, given by removing strata of dimension greater than $n$. 
Taking this $n$-skeleton fiberwise defines a functor $(-)_{\leq n}:\bun \ra \bun$ over $\strat$. 
Namely, assign to a constructible bundle $M\xra{\pi} K$ the $n$-skeleton $(M|K)_{\leq n}$ relative $K$, which we define as the stratified subspace of $M$ consisting of those $x\in M$ for which there is a bound of local dimensions of the fiber ${\sf dim}_x(\pi^{-1}\pi(x))\leq n$.
The restriction $(M|K)_{\leq n}\xra{\pi_|} K$ is again a constructible bundle. 

Applying the topologizing diagram (\S2 from~\cite{striat}) to this functor $(-)_{\leq n}:\bun\ra \bun$ gives a functor of $\oo$-categories $(-)_{\leq n}: \Bun\ra \Bun$. 
By construction, this functor factors through the $\oo$-subcategory $\Bun_{\leq n}$. 
We now construct a unit natural transformation ${\sf id} \ra (-)_{\leq n}$.  
We do this by applying the topologizing diagram to an endofunctor $\bun \to \bun$ over $\strat \xra{-\times \Delta^1} \strat$ with the property that over $\Delta^{\{0\}}$ this endofunctor is the identity functor and over $\Delta^{\{1\}}$ it is the functor $(-)_{\leq n}$.
This endofunctor is, for each stratified space $K$, the assignment of groupoids of constructible bundles
\[
(M\xra{\pi}K)~\mapsto~\Bigl(M\underset{(M|K)_{\leq n} \times \Delta^{\{0\}}} \amalg (M|K)_{\leq n} \times \Delta^{1}\xra{~\pi_|\times {\sf id}_{\Delta^1}~} K\times \Delta^1\Bigr)~;
\]
this assignment evidently pulls back along maps among the $K$-argument.
This assignment has the requisite restrictions over $\Delta^{\{0\}}$ and $\Delta^{\{1\}}$.  
By construction, this unit is by closed morphisms. 

We extend this to the general $\cB$-structured case.
We do this by dint of the applying requirement which ensures that the restriction of $\Bun^{\cB} \ra \Bun$ to closed morphisms is a coCartesian fibration, applied to fact that the unit of the above localization is implemented by closed morphisms. 
Picture this geometrically as follows. For each $K$-point of $\Bun^{\cB}$ represented by a constructible bundle $M\ra K$, we can again form the stratified space $M\times\{0\} \cup (M\times\Delta^1|K\times\Delta^1)_{\leq n}$. 
By the coCartesian property, a $\cB$-structure on $M\times \{0\}$ canonically extends to a fiberwise $\cB$-structure on the entire space over $K\times \Delta^1$. 
Formally, we construct the corresponding correspondence of $\oo$-category $\widetilde{\Bun^{\cB}}\ra \widetilde\Bun \ra [1]$, and we show the composite functor is both a Cartesian and a coCartesian fibration. First, it is again manifestly Cartesian. 
To check the coCartesian property amounts to the existence of coCartesian morphisms with any fixed source $M\in\Bun^{\cB} \simeq \widetilde{\Bun^{\cB}}_{|\{0\}}$ lying over the single non-identity morphism in $[1]$. 
We first choose the lift in $\widetilde{\Bun}$, using that $\widetilde\Bun \ra [1]$ is coCartesian. 
This lift is a closed morphism, by construction, and $\Bun^{\cB} \ra \Bun$ is a coCartesian fibration over closed morphisms; consequently, a second lift can be chosen, and we obtain that the composite functor $\widetilde{\Bun^{\cB}}\ra [1]$ is coCartesian and Cartesian.

Lastly, we set  $\cB=\vfr$ and observe the first localization.
Because $(M|K)_{\leq n} \to K$ is proper whenever the constructible bundle $M\to K$ is proper, the first localization restricts as a localization $\cBun^{\vfr} \rightleftarrows \cMfd_n^{\vfr}$.  
Finally, by construction $M_{\leq n}$ is disk-stratified whenever the vari-framed stratified space $M$ is disk-stratified.
This completes the result.  
\end{proof}

\subsection{Wreath}\label{sec.wreath}
We compare the \emph{wreath} construction, which ultimately defines $\btheta_n$, with iterated constructible bundles.

\subsubsection{\bf Finite correspondences}
We will consider the $\infty$-category 
\begin{equation}\label{e500}
\Corr(\Fin)
\end{equation}
of correspondences of finite sets. An object of $\Corr(\Fin)$ is a finite set; the groupoid of morphisms from $I$ to $J$ consists of diagrams $I\la K \ra J$ among finite sets, with composition given by base change:
\[
(I_0 \la K_{01} \to I_1) \circ (I_1 \la K_{12} I_2)~:=~(I_0 \la K_{01}\underset{I_1}\times K_{12} \to I_2)~.
\]
Likewise, an object of the $\infty$-category $\ov{\Corr}(\Fin)$ is a finite set $i\in I$ together with an element in it; the groupoid of morphisms in $\ov{\Corr}(\Fin)$ from $(i\in I)$ to $(j\in J)$ consists of spans $(i\in I) \la (k\in K) \ra (j\in J)$ among pointed finite sets, and composition is given by base change.  
There is a forgetful functor 
\begin{equation}\label{e502}
\ov{\Corr}(\Fin)
\longrightarrow 
\Corr(\Fin)
~,\qquad
(i\in I)\mapsto I ~.
\end{equation}

More precisely, $\Corr(\Fin)$ represents the presheaf
\[
\bdelta^{\op}
\longrightarrow
\Spaces
~,\qquad
[p]\mapsto \Cat^{\sf lim}\bigl(\TwAr([p]) , \Fin \bigr)~,
\]
whose value on $[p]$ is the space of limit-preserving functors to the category of finite sets from the twisted arrow category of $[p]$.
The $\infty$-category $\ov{\Corr}(\Fin)$ over $\Corr(\Fin)$ represents the presheaf on $(\bdelta_{/\Corr(\Fin)})^{\op}$ whose value on $\TwAr([p]) \to \Fin$ is the space of lifts along the forgetful functor $\Fin^{\ast/}\to \Fin$~.

\begin{remark}\label{t600}
Lemma~\ref{corr.conduche} of the appendix states that the natural forgetful functor $\ov{\Corr}(\Fin)\to \Corr(\Fin)$ is an exponentiable fibration, in the sense of Definition~\ref{def.efib}.
\end{remark}

\begin{remark}\label{t601}
The following assertions are immediate consequences of the main result in~\cite{fibrations}.
\begin{itemize}
\item	
The $\infty$-category $\Corr(\Fin)$ represents the presheaf on the $\infty$-category $\Cat_{\infty}$ of (small) $\infty$-categories whose value on $\cK$ is the $\infty$-groupoid of \emph{finite exponentiable fibrations} and isomorphisms among them.
Here, a functor $\cE\to \cK$ is a \emph{finite exponentiable fibration} if
\begin{itemize}
\item
it is an exponentiable fibration (Definition~\ref{def.efib});

\item
for each morphism $c_1 \to \cK$, the $\infty$-category $\Fun_{/\cK}(c_1 , \cE)$ of sections over this morphism is a finite $0$-type.  
\end{itemize}

\item
The functor $\ov{\Corr}(\Fin)\to \Corr(\Fin)$ is the universal finite exponentiable fibration.  

\end{itemize}

\end{remark}

\begin{example}\label{t605}
Consider the ordinary category $\Fin_\ast := \Fin^{\ast/}$ of based finite sets, an object we will typically denote as $I_+ = I\amalg \{+\}$.  
Consider the full subcategory of the undercategory,
\[
\Fin_{\ast \star}~\subset~ \Fin_\ast^{\star_+/}
\]
consisting those based maps $(\star_+ \xra{f} I_+)$ from the 2-element based set for which $f(\star)\neq +$.  
Lemma~\ref{fin-conduche} of the appendix gives that the evident projection functor $\Fin_{\ast \star} \to \Fin_\ast$ is a finite exponentiable fibration, as in Remark~\ref{t601}.
By Remark~\ref{t601}, this finite exponentiable fibration is classified by a functor 
\begin{equation}\label{e501}
\Fin_\ast 
\longrightarrow
\Corr(\Fin)~.
\end{equation}
Explicitly, this functor evaluates on an object $I_+\in \Fin_\ast$ as the object $I\in \Corr(\Fin)$, and on a morphism $f\colon I_+ \to J_+$ as the span $(I \la f^{-1}(J) \to J)$.
\end{example}

\subsubsection{\bf Wreath construction}
We recall the wreath construction.

Using Observation~\ref{t600}, after presentability considerations, Lemma~\ref{fin-conduche} of the appendix gives that base change along $\ov{\Corr}(\Fin)\to \Corr(\Fin)$ is a left adjoint.  
\begin{definition}[Wreath]\label{def.wreath}
For each $\infty$-category $\cD\to \Corr(\Fin)$ over based finite sets, 
the \emph{wreath} functor 
\[
\cD\wr - \colon \Cat_\infty \longrightarrow {\Cat_\infty}_{/\cD}
\]
is the right adjoint to the composite functor
\[
{\Cat_\infty}_{/\cD} \longrightarrow {\Cat_\infty}_{/\Corr(\Fin)} \xra{~{}~-\underset{\Corr(\Fin)}\times \ov{\Corr}(\Fin)~{}~} {\Cat_\infty}_{/\ov{\Corr}(\Fin)} \xra{~\rm forget~} \Cat_\infty~.
\]

\end{definition}

\begin{remark}\label{t602}
After Remark~\ref{t601}, the wreath construction admits the following description.
Let $\cE\to \cD$ be a finite exponentiable fibration; let $\cC$ be an $\infty$-category.
There is a canonical identification between $\infty$-categories over $\cD$,
\[
\cD\wr \cC~\simeq~\Fun^{\sf rel}_{\cD}(\cE,\cC)
~,
\]
with the \emph{relative functor $\infty$-category over $\cD$}.  That is, for each $\infty$-category $\cJ\to \cD$ over $\cD$, there is a canonical identification between the space of sections
\begin{equation}\label{e505}
\Cat_{/\cD}(\cJ , \cD \wr \cC)
~\simeq~
\Cat(\cE_{|\cJ} , \cC)
\end{equation}
and the space of functors from the base change of $\cE\to \cD$ along $\cJ \to \cD$.
\end{remark}

\begin{example}\label{simp-circle}
We regard the opposite of the simplex category $\bdelta^{\op}$ as an $\infty$-category over 
$\Corr(\Fin)$ 
by way of the simplicial circle
\[
\Delta[1]/\partial \Delta[1] \colon \bdelta^{\op} \longrightarrow \Fin_\ast
\overset{~(\ref{e501})~}\hookrightarrow
\Corr(\Fin)~.
\]

\end{example}

\begin{observation}\label{wreath-ff}
For each $\infty$-category $\cD$ over $\Corr(\Fin)$, a fully faithful functor $\cC\hookrightarrow \cC'$ between $\infty$-categories determines a fully faithful functor $\cD\wr \cC \to \cD\wr \cC'$. 

\end{observation}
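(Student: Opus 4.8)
The plan is to unwind the definition of the wreath as a right adjoint and to track full-faithfulness through each of the elementary operations that constitute it. Writing $d\colon \cD \to \Fin_\ast$ and $p\colon \Fin_{\ast\star}\to \Fin_\ast$ for the two structure maps and passing to right adjoints in the three-step composite of Definition~\ref{def.wreath}, one obtains a formula
\[
\cD\wr\cC ~\simeq~ d^\ast\, p_\ast\bigl(\cC\times \Fin_{\ast\star}\bigr)~,
\]
in which $\cC\times\Fin_{\ast\star}\to\Fin_{\ast\star}$ is the projection (the right adjoint to the forgetful functor ${\Cat_\infty}_{/\Fin_{\ast\star}}\to\Cat_\infty$), the functor $p_\ast$ is the right adjoint to base change $p^\ast$ furnished by Lemma~\ref{fin-conduche}, and $d^\ast$ is pullback along $d$ (the right adjoint to post-composition with $d$). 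Under this identification the functor $\cD\wr\cC\to\cD\wr\cC'$ induced by a fully faithful $G\colon\cC\hookrightarrow\cC'$ is $d^\ast p_\ast$ applied to $G\times{\sf id}_{\Fin_{\ast\star}}$. It therefore suffices to check that each of the three operations $(-)\times\Fin_{\ast\star}$, $p_\ast$, and $d^\ast$ carries fully faithful functors to fully faithful functors.

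The first is immediate, since mapping spaces in a product are products of mapping spaces, so $G\times{\sf id}$ is fully faithful whenever $G$ is. The operation $d^\ast$ is handled by the mapping-space formula for a fiber product of $\infty$-categories: for $X\to X'$ a functor over $\Fin_\ast$, the mapping space $\Map_{\cD\times_{\Fin_\ast}X}\bigl((d_1,x_1),(d_2,x_2)\bigr)$ is the fiber product $\Map_\cD(d_1,d_2)\times_{\Map_{\Fin_\ast}(\ov d_1,\ov d_2)}\Map_X(x_1,x_2)$, and full-faithfulness of $X\to X'$ makes the $\Map_X$-factor an equivalence, whence the whole fiber product is an equivalence by base change.

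The crux is the middle step, that $p_\ast$ preserves full-faithfulness, and this is where I expect the real work. The plan is to use that $p$ is an exponentiable (Conduch\'e) fibration — this is precisely the property invoked in the appendix to produce $p_\ast$ — so that morphism spaces in $p_\ast Z$ admit an explicit description. Concretely, the fiber of $p$ over $I_+\in\Fin_\ast$ is the \emph{discrete} set $I$ (an object of $\Fin_{\ast\star}$ over $I_+$ is just a choice of non-basepoint element of $I$), and for a morphism $\alpha\colon I_+\to J_+$ of $\Fin_\ast$ the pulled-back correspondence $[1]\times_{\Fin_\ast}\Fin_{\ast\star}$ is the graph of $\alpha$: a $1$-category with object set $I\sqcup J$ and a unique arrow $i\to\alpha(i)$ for each $i$ with $\alpha(i)\neq+$, and no nontrivial composites. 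Since $Z=\cC\times\Fin_{\ast\star}$ is a constant family, a morphism in $p_\ast Z$ over $\alpha$ from a section $(c_i)_{i\in I}$ to a section $(d_j)_{j\in J}$ is a functor from this graph to $\cC$ restricting to the prescribed objects, that is, a point of
\[
\prod_{i\in I,\ \alpha(i)\neq +}\Map_\cC\bigl(c_i,\, d_{\alpha(i)}\bigr)~.
\]
The induced map to the corresponding product of $\Map_{\cC'}\bigl(Gc_i,\,Gd_{\alpha(i)}\bigr)$ is a product of equivalences by full-faithfulness of $G$; taking the coproduct over the discrete set $\Map_{\Fin_\ast}(I_+,J_+)$ then shows that $p_\ast(G\times{\sf id})$ is fully faithful. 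Assembling the three steps yields the claim. The main obstacle, as flagged, is pinning down the morphism spaces of the dependent product $p_\ast Z$ rigorously from exponentiability; it is precisely the combinatorics of $\Fin_{\ast\star}\to\Fin_\ast$ — discrete fibers and graph-of-$\alpha$ correspondences — that makes this tractable and reduces the whole computation to the height-one wreath formula displayed above.
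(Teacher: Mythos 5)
Your argument is correct; the paper states this as an Observation with no accompanying proof, and the intended justification is exactly the unwinding you give -- the wreath is the composite of right adjoints $d^\ast\circ p_\ast\circ\bigl((-)\times\Fin_{\ast\star}\bigr)$, each of which preserves fully faithful functors, the only nontrivial step being the identification of mapping spaces in $p_\ast(\cC\times\Fin_{\ast\star})$ via the discrete fibers of $\Fin_{\ast\star}\to\Fin_\ast$ and the graph-of-$\alpha$ correspondences. Your formula $\prod_{i,\,\alpha(i)\neq+}\Map_\cC\bigl(c_i,d_{\alpha(i)}\bigr)$ for the mapping space over $\alpha$ is right, and the fact that $p_\ast Z$ is a complete Segal space (so that its mapping spaces may be read off as fibers of $\Map([1],-)\to\Map(\partial[1],-)$) is exactly what Lemma~\ref{fin-conduche} together with the proof of Lemma~\ref{conduche} supplies.
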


\begin{lemma}\label{disks-to-fin}
For each dimension $n$, and each suspending tangential structure $\cB$, there is a functor
\[
\cDisk_n^\cB \longrightarrow \Corr(\Fin)
~,\qquad
(D,\varphi)
\mapsto D_n~,
\]
whose value on an object is the $0$-type which is its $n$-dimensional strata.  

\end{lemma}

\begin{proof}
It suffices to prove the result for the case where
$\cB \xra{\simeq} \Vect^{\sf inj}	$ 
is an equivalence.  
Consider the full $\infty$-subcategory 
$\Exit^{=n}\subset\Exit$
consisting of those pointed stratified spaces $(x\in X)\in \Exit$ for which $x\in X_n$ belongs to an $n$-dimensional stratum of $X$.  
Denote the base change of the resulting projection $\Exit^{=n} \to \Bun$ along a functor $\cJ \to \Bun$ as $\Exit^{=n}_{|\cJ} \to \cJ$. 
Through Remark~\ref{t601}, the result follows from the projection $\Exit^{=n}_{|\cDisk_n} \to \cDisk_n$ being a finite exponentiable fibration, which we show now.  

The fact that $\Exit^{=n}_{|\cDisk_n} \to \cDisk_n$ is an exponentiable fibration follows upon noting the following.
\begin{itemize}
\item
The fully faithful inclusion $\Exit^{=n}_{|\cDisk_n} \hookrightarrow \Exit_{|\cDisk_n}$ is a left fibration.  

\item
After the previous point, Corollary~\ref{cor.conduche} of the appendix gives that $\Exit^{=n}_{|\cDisk_n} \hookrightarrow \Exit_{|\cDisk_n}$ is an exponentiable fibration.

\item
Lemma~\ref{exit-exp} of the appendix states that the functor $\Exit \to \Bun$ is an exponentiable fibration.  
Thereafter, Lemma~\ref{conduche}(3) implies that the base change $\Exit_{|\cDisk_n} \to \cDisk_n$ is an exponentiable fibration.  

\item
Lemma~\ref{conduche}(3) implies that the composition of two composable exponentiable fibrations is again an exponentiable fibration.  
Applying this to the sequence of exponentiable fibrations $\Exit^{=n}_{|\cDisk_n} \to \Exit_{|\cDisk_n} \to \cDisk_n$ gives that the functor $\Exit^{=n}_{|\cDisk_n} \to \cDisk_n$ is an exponentiable fibration, as desired.  

\end{itemize}

It remains to show that the exponentiable fibration $\Exit^{=n}_{|\cDisk_n} \to \cDisk_n$ is \emph{finite}, in the sense of Remark~\ref{t601}.
From the definition of $\cDisk_n$, each morphism in $\cDisk_n$ is, in particular, a proper constructible bundle $X\to \Delta^1$ for which each stratum of $X$ is contractible and of dimension at most $n$.
It follows that, for each such morphism $X\to \Delta^1$, the space is a finite $0$-type which consists of those conically smooth sections $\Delta^1 \xra{\sigma} X$ whose value $\sigma(\Delta^{\{0\}}) \in X_n$ belongs to an $n$-dimensional stratum.

\end{proof}

\begin{lemma}\label{wreath-formal}
Let $n$ be a dimension, and let $\cB$ be a suspending tangential structure.
For each $\infty$-category $\cC$ there is a functor
\[
\cDisk_n^{\cB} \wr \cC \longleftarrow \cDisk_n^{\cB}\underset{\Bun}\times \Bun^{\un{\cC}}
\]
over $\cDisk_n^{\cB}$.
Furthermore, if $\cC$ has an initial object then this functor admits a fully faithful left adjoint over $\cDisk_n^{\cB}$.

\end{lemma}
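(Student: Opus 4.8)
The plan is to realize $F$ as the operation that restricts a $\cC$-labeling of \emph{all} strata to the top-dimensional strata, and to produce its left adjoint as the relative left Kan extension that labels every lower-dimensional stratum by the initial object of $\cC$. The conceptual heart is that in a disk-stratified space the $n$-dimensional strata are exactly the maximal objects of the exit-path category, so there is nothing for a labeling of the lower strata to map \emph{into} — forcing the free extension to be the empty colimit.

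First I would pin down what each side classifies over a test $\cK \to \cDisk_n^\tau$. By Observation~\ref{Bun-tau-A}, the space $\Map_{/\cDisk_n^\tau}(\cK,\cDisk_n^\tau\underset{\Bun}\times\Bun^{\cC})$ is the full subspace of $\Fun(\Exit_{|\cK},\cC)$ on those labelings carrying closed morphisms into $\cC^\sim$. On the other side, Definition~\ref{def.wreath} gives $\Map_{/\cDisk_n^\tau}(\cK,\cDisk_n^\tau\wr\cC)\simeq\Map(\cK\underset{\Fin_\ast}\times\Fin_{\ast\star},\cC)$. The functor $\cDisk_n^\tau\to\Fin_\ast$ of Lemma~\ref{disks-to-fin} is by construction the one classifying $\Exit^{=n}_{|\cDisk_n^\tau}\to\cDisk_n^\tau$, the locus of points landing in top strata; since $\Fin_{\ast\star}\to\Fin_\ast$ has discrete fibers (a marked non-basepoint element) and every stratum of an object of $\cDisk_n^\tau$ is contractible, there results a natural identification $\cK\underset{\Fin_\ast}\times\Fin_{\ast\star}\simeq\Exit^{=n}_{|\cK}$. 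Under these identifications the functor $F$ of the statement is represented by restriction along the fully faithful inclusion $\Exit^{=n}_{|\cK}\hookrightarrow\Exit_{|\cK}$, naturally in $\cK$; this constructs the arrow.

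Next I would produce the left adjoint as the relative left Kan extension $\lkan$ along $\Exit^{=n}\hookrightarrow\Exit$ over the base $\cDisk_n^\tau$, deducing from the base-change criterion for adjunctions in the appendix that the fiberwise left adjoints to restriction assemble into a genuine left adjoint $L$ over $\cDisk_n^\tau$. The substance is the pointwise formula, which I would evaluate using the exit-path structure of disks. By Example~\ref{pre-hemi} and Lemma~\ref{exit-explicit}, the strata of a disk-stratified space are ordered by ``lying in the closure of'' and all exit paths increase dimension, so the $n$-dimensional strata are precisely the maximal objects of $\Exit_{|D}$. Hence, for a top-stratum labeling $h$ and a stratum $b$, the comma category indexing $(\lkan h)(b)$ is contractible with terminal object $(b,\mathsf{id})$ when $b$ is an $n$-stratum, and is \emph{empty} when $\dim b<n$. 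Thus $(\lkan h)(b)=h(b)$ on top strata while $(\lkan h)(b)$ is the empty colimit, i.e.\ the initial object of $\cC$, on every lower stratum.

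Two consequences follow at once: the Kan extension exists exactly when these empty colimits exist, namely when $\cC$ has an initial object, and — being left Kan extension along a fully faithful functor — its unit $\mathsf{id}\to FL$ is an equivalence, so $L$ is fully faithful. This settles the ``if'' direction, modulo checking that the extension-by-initial-object labeling satisfies the closed-to-groupoid condition of Observation~\ref{Bun-tau-A} (closed morphisms between freshly initial-labeled strata go to identities) and is compatible with the closed, creation, and refinement morphisms of the base, initial objects being preserved by the relevant pullbacks; these are the routine but necessary verifications. For the converse I would restrict a hypothetical left adjoint to the fiber over $\DD^n$ with $n\ge1$, which has strata of dimension $<n$: the fiberwise functor is left Kan extension along $\Exit^{=n}_{|\DD^n}\hookrightarrow\Exit_{|\DD^n}$, whose existence forces the empty colimit, hence an initial object of $\cC$. (For $n=0$ there are no lower strata and $F$ is an equivalence, a degenerate case.) The main obstacle I anticipate is precisely this globalization step — assembling the pointwise initial-object extensions into an adjoint \emph{over} $\cDisk_n^\tau$ that is natural in the stratum-changing morphisms of the base and lands in the fiber product — rather than the clean fiberwise computation that identifies the extension itself.
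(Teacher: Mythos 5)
Your proposal is correct and follows essentially the same route as the paper's proof: identify both sides via functors out of $\Exit_{|\cK}$ and $\Exit^{=n}_{|\cK}\simeq {\Fin_{\ast\star}}_{|\cK}$, realize the comparison functor as restriction along the fully faithful inclusion $\Exit^{=n}_{|\cDisk_n}\hookrightarrow\Exit_{|\cDisk_n}$, and compute the left adjoint as a left Kan extension whose pointwise comma categories are final over top-dimensional strata and empty over lower ones, so that existence is equivalent to $\cC$ having an initial object and fully faithfulness follows from fully faithfulness of the inclusion. Your extra attention to the closed-morphism condition of Observation~\ref{Bun-tau-A} and to the globalization over the base is more careful than the paper's terse "unwinding the constructions as right adjoints," but it does not change the argument.
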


\begin{proof}
It suffices to prove the result for the case 
$\cB\xra{\simeq}\Vect^{\sf inj}$ is an equivalence.  
Recall from the proof of Lemma~\ref{disks-to-fin}, the full $\infty$-subcategory
\begin{equation}\label{fin-exit}
\Exit^{=n}_{|\cDisk_n^{\cB}}
\hookrightarrow
\Exit_{|\cDisk_n}
\end{equation}
over $\cDisk_n$.
In particular, for each $\infty$-category $\cC$, and each $\infty$-category $\cJ$ over $\cDisk_n$, there is a restriction functor
\begin{equation}\label{Fun-fin-exit}
\Fun(\Exit_{|\cJ},\cC) \longrightarrow \Fun(\Exit^{=n}_{|\cJ}, \cC)~.
\end{equation}
The arrow in the statement of the lemma follows upon 
unwinding the definitions of the domain and codomain of that arrow (see~(\ref{e505}), for instance).

Now, because~(\ref{fin-exit}) is fully faithful, then a left adjoint to~(\ref{Fun-fin-exit}) over $\cDisk_n$ is fully faithful, should it exist. 
Indeed, for each functor $\cJ \to \cDisk_n$, any left adjoint to~(\ref{Fun-fin-exit}) is computed on sections over $\cJ$ via left Kan extension: 
\[
\bigl(\Exit^{=n}_{|\cJ}\xra{\cF}\cC\bigr) \mapsto \Bigl(\Exit_{|\cJ} \ni (x\in D)\mapsto \underset{(x'\in D')\in (\Exit^{=n}_{|\cJ})_{/(x\in D)}} \colim \cF(x'\in D'_n)\Bigr)~.
\]
The full $\infty$-subcategory $\Exit^{=n}_{|\cJ} \subset \Exit_{|\cJ}$ has the property that, for each object $(x\in D)\in \Exit_{|\cJ}$, 
the over $\infty$-category indexing this colimit
\[
(\Exit^{=n}_{|\cJ})_{/(x\in D)}
\]
is either empty or has a final object.
Indeed, each stratum of an object of $\cDisk_n$ is contractible and has dimension bounded by $n$.
We conclude that such a left adjoint exists if $\cC$ has an initial object.

\end{proof}

\begin{cor}\label{wreath}
For each pair of dimensions $i\geq 0$ 
and $j$, and for each suspending tangential structure $\cB\to \Vect^{\sf inj}$ for which the restricted projection $\cB_{\leq 0} \xra{\simeq} \Vect^{\sf inj}_{\leq 0}\simeq \ast$ is an equivalence, there is a fully faithful functor
\[
\cDisk_i^\cB \wr \cDisk^\cB_j  ~\hookrightarrow~\cDisk_i^\cB \underset{\Bun}\times \Bun^{\cDisk^\cB_j}
\]
over $\cDisk_i^{\cB}$.
In particular, there are fully faithful functors
\[
\cDisk_1^{\vfr}\wr \cDisk^{\vfr}_{n-1}  ~\hookrightarrow~\cDisk_1^{\vfr} \underset{\Bun}\times \Bun^{\cDisk^{\vfr}_{n-1}}
\]
and
\[
\cDisk_1^{\sfr} \wr \cDisk^{\sfr}_{n-1}  ~\hookrightarrow~\cDisk_1^{\sfr} \underset{\Bun}\times \Bun^{\cDisk^{\sfr}_{n-1}}~,
\]
each over $\cDisk_1^{\vfr}$.

\end{cor}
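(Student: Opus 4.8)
The plan is to produce the asserted functor as the fully faithful left adjoint supplied by Lemma~\ref{wreath-formal}. Apply that lemma with its dimension taken to be $i$ and with coefficient $\infty$-category $\cC:=\cDisk_j^\tau$; since $\cDisk_i^\tau$ is an $\infty$-category over $\Fin_\ast$ by Lemma~\ref{disks-to-fin}, the wreath $\cDisk_i^\tau\wr\cDisk_j^\tau$ is defined and the lemma yields a functor
\[
\cDisk_i^\tau\wr\cDisk_j^\tau~\longleftarrow~\cDisk_i^\tau\underset{\Bun}\times\Bun^{\cDisk_j^\tau}
\]
that admits a fully faithful left adjoint if and only if $\cDisk_j^\tau$ has an initial object. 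This left adjoint is precisely the fully faithful functor the corollary claims. Thus the whole statement reduces to verifying that, under the hypothesis $\tau_{\leq 0}\xra{\simeq}\Exit_{\leq 0}$, the $\infty$-category $\cDisk_j^\tau$ has an initial object; the two ``in particular'' assertions then follow by taking $(i,j)=(1,n-1)$ and $\tau=\vfr$, respectively $\tau=\sfr$.

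I claim the initial object is the $0$-disk $\DD^0=\ast$ with its (unique) $\tau$-structure. The hypothesis $\tau_{\leq 0}\simeq\Exit_{\leq 0}$ makes the $\tau$-structure over any $0$-dimensional stratum contractible; repeating the argument of Lemma~\ref{disk-zero} then gives $\cDisk^\tau_{\leq 0}\simeq\Fin^{\op}$, whose initial object is the singleton $\DD^0$. To promote this to all of $\cDisk_j^\tau$, I would compute $\Map_{\cDisk_j^\tau}(\ast,X)$ using the factorization system $(\cls,\act)$ of Lemma~\ref{fact-sys}. Any morphism $\ast\to X$ factors as a closed morphism $\ast\xra{\cls}W$ followed by an active morphism $W\to X$, and a closed morphism out of $\ast$ is the opposite of a proper constructible injection $W\hookrightarrow\ast$, so necessarily $W\simeq\ast$ or $W\simeq\emptyset$. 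Since open embeddings are invertible in $\cBun$ (the intersection $\cBun\cap\Bun^{\sf emb}$ is $\cBun^\sim$, as recorded after Definition~\ref{def.classes}), active morphisms out of $\emptyset$ reach only $\emptyset$; hence for $X\not\simeq\emptyset$ the intermediate term must be $W\simeq\ast$, the closed factor is the contractible identity, and $\Map_{\cDisk_j^\tau}(\ast,X)\simeq\Map^{\act}_{\cDisk_j^\tau}(\ast,X)$.

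It remains to show that the space of active morphisms $\ast\to X$ is contractible for every nonempty $X$, and this is the main obstacle, being the only genuinely geometric input. A canonical such morphism exists: the unique proper constructible collapse $X\to\ast$ (proper because $X$ is compact) determines, through its reversed mapping cylinder and a lift of its $\tau$-structure as in Example~\ref{stand-cr} and Lemma~\ref{fr-sus}, a creation morphism $\ast\to X$ in $\cDisk_j^\tau$. For contractibility I would argue that $\Map^{\act}(\ast,X)$ is both connected and has contractible components. The latter follows from the rigidity of vari-framings, namely the discreteness-of-automorphisms results Corollary~\ref{no-autos} and Theorem~\ref{disc-gpd}; indeed, after reducing along the closed-cover presentation of $X$ in Definition~\ref{def.disks} and inducting on dimension exactly as in the proof of Theorem~\ref{disc-gpd}, the components of these mapping spaces are controlled by $\Aut_{\Bun^\tau}(\DD^k)\simeq\ast$. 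The connectedness---that every active degeneration of $X$ to the point agrees with the total collapse $X\to\ast$---is the subtle part: I would establish it by the same dimension induction, using that $X\to\ast$ is the unique proper constructible map to a point and that, cell by cell, any active morphism $\ast\to\DD^k$ is forced to be the standard creation of Example~\ref{stand-cr}.

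Finally, I would check the hypothesis $\tau_{\leq 0}\simeq\Exit_{\leq 0}$ in the two cases. For $\tau=\vfr$ this is exactly the equivalence $\sT^{\sf v}_{|\Bun_{\leq 0}}\simeq\epsilon^{\sf vdim}_{|\Bun_{\leq 0}}$ recorded in the proof of Lemma~\ref{disk-zero}. For $\tau=\sfr$ it holds because over a $0$-dimensional stratum the fibre of $\sfr$ is the space $\Vect^{\sf inj}(0,\RR^k)\simeq\ast$ of injections out of the zero vector space, which is contractible, so $\sfr_{\leq 0}\to\Exit_{\leq 0}$ is an equivalence as well. With $(i,j)=(1,n-1)$ these two cases yield the displayed fully faithful functors, completing the proof.
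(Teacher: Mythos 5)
Your proof takes the same route as the paper: the entire corollary reduces, via Lemma~\ref{wreath-formal}, to checking that $\cDisk^\tau_j$ has an initial object, which the hypothesis on $\tau_{\leq 0}$ lets one identify with the initial object $\ast$ of $\cDisk_j$. The paper's proof is exactly this reduction and asserts the initial object without further argument, so the additional material in your write-up (the closed--active factorization analysis of $\Map(\ast,X)$ and the verification of the hypothesis for $\vfr$ and $\sfr$) elaborates on points the paper takes for granted rather than diverging in method.
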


\begin{proof}
After Lemma~\ref{wreath-formal} it need only be checked that $\cDisk^\cB_j$ has an initial object.  
Because of the condition on $\cB_{\leq 0}$, this is the case exactly because $\cDisk_j$ has an initial object, which is $\ast$.  
\end{proof}

\subsection{Iterated linear orders}
We recall the definition of Joyal's category $\btheta_n$ as well as some notions within it.  
We first recall the following notions within the simplex category $\bdelta$.

\begin{definition}\label{def.segal-covs}
\begin{itemize}
\item[~]
\item {\bf Inerts:} The \emph{inert} subcategory $\bdelta_{\sf inrt} \subset \bdelta$ consists of the same objects and those morphisms $[p]\xra{\rho} [q]$ for which $\rho(i-1) = \rho(i)-1$ for each $0<i\leq p$.  

\item {\bf Actives:} The \emph{active} subcategory $\bdelta_{\sf act}\subset \bdelta$ consists of the same objects and those morphisms $[p]\xra{\rho}[q]$ that preserve extrema: $\rho(0)=0$ and $\rho(p)=q$.  

\item {\bf Segal covering diagrams:} A colimit diagram $[1]\times[1]\to \bdelta$, written
\[
\xymatrix{
[p_0]  \ar[r]  \ar[d]
&
[p']  \ar[d]
\\
[p'']  \ar[r]
&
[p],
}
\]
is a \emph{Segal covering} diagram if each arrow is inert.   

\item {\bf Univalence diagram:} The \emph{univalence} diagram is the colimit diagram in $\bdelta$
\[
\xymatrix{
&
\{1<3\}  \ar[r]  \ar[d]
&
\ast  \ar[dd]
\\
\{0<2\}  \ar[r]  \ar[d]
&
\{0<1<2<3\}  \ar[dr]
&
\\
\ast \ar[rr]
&&
\ast.
}
\]
\end{itemize}

\end{definition}

\begin{remark}
Consider the subcategory $\Fin_{\sf inj}\subset \Fin$ consisting of the same objects and those morphisms which are \emph{injective}.  
There is a monomorphism $\Fin_{\sf inj}^{\op} \to \Fin_\ast$ given by 1-point compactifications and collapse-maps thereamong.
Recall from Example~\ref{simp-circle} the functor $\bdelta^{\op}\to \Fin_\ast$.  
With respect to these functors, there is an identification $\bdelta_{\sf inrt}^{\op} \simeq \bdelta^{\op}\underset{\Fin_\ast}\times \Fin_{\sf inj}^{\op}$.

\end{remark}

\begin{remark}\label{act-inrt}
It is standard that $(\bdelta_{\sf act},\bdelta_{\sf inrt})$ is a factorization system on $\bdelta$.  

\end{remark}

The next result is definitional.  
\begin{prop}\label{diagrams-to-colims}
The standard functor 
\[
\bdelta \xra{~[\bullet]~} \Cat_\infty
\]
carries both Segal covering diagrams and the univalence diagram to a colimit diagram.  

\end{prop}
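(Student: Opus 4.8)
The plan is to recognize the proposition as a direct consequence of the very construction of $\Cat_\infty$ as complete Segal spaces. First I would factor the standard functor as
\[
[\bullet]\colon \bdelta \xra{~y~} \Psh(\bdelta) \xra{~L~} \Cat_\infty~,
\]
where $y$ is the Yoneda embedding and $L$ is the localization onto complete Segal spaces, after Rezk~\cite{rezk-n}; here $L$ is left adjoint to the fully faithful inclusion of those presheaves satisfying the Segal and completeness conditions, and since representables on $\bdelta$ are nerves of posets (hence already complete Segal spaces) one has $L(y[p])\simeq [p]$. The whole argument then rests on two facts: that $L$, being a left adjoint, preserves all colimits, and that $L$ inverts precisely the comparison maps attached to the Segal covering diagrams (the Segal condition) and to the univalence diagram (completeness). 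Everything else is formal.

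For a Segal covering diagram $\sigma\colon [1]\times[1]\to \bdelta$ as in Definition~\ref{def.segal-covs}, I would form the comparison map
\[
c_\sigma\colon y[p']\underset{y[p_0]}\coprod y[p'']\longrightarrow y[p]
\]
out of the pushout in $\Psh(\bdelta)$ of the span of representables indexed by the punctured square. A presheaf satisfies the Segal condition exactly when it is local with respect to all such $c_\sigma$ (equivalently, with respect to the spine inclusions, which generate the same localization), so $L(c_\sigma)$ is an equivalence. Since $L$ preserves pushouts, $L(y[p']\coprod_{y[p_0]}y[p''])\simeq [p']\coprod_{[p_0]}[p'']$, and $L(c_\sigma)$ exhibits this as $[p]$. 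Thus $[\bullet]\circ \sigma$ is a pushout, i.e.\ a colimit diagram, in $\Cat_\infty$.

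The univalence diagram is handled by the same mechanism. Deleting the terminal cocone vertex $\ast=[0]$ leaves a generalized span whose pushout of representables carries a canonical comparison map to $y[0]$; unwinding definitions, this is exactly the completeness (univalence) map of the localization, so $L$ inverts it, and $L$ preserves the colimit. Hence $[\bullet]$ sends the univalence diagram to a colimit diagram, with colimit the terminal $\infty$-category. Concretely, this records the computation that gluing $[3]$ to points along $\{0<2\}$ and $\{1<3\}$ produces the free-living isomorphism $E\simeq \ast$.

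The only genuine work — and so the main obstacle, modest as it is — is matching conventions: confirming that the localization $\Psh(\bdelta)\to \Cat_\infty$ of~\cite{rezk-n} inverts exactly these two families of comparison maps. For the Segal family this is the standard equivalence between the Segal condition and locality against spine inclusions; for univalence it is the identification of the colimit of the punctured univalence diagram with the walking isomorphism $E$, together with the fact that completeness is precisely the assertion $E\simeq \ast$. Once these identifications are recorded, the proposition is immediate, which is why the authors regard it as definitional.
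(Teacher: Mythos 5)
Your proposal is correct and, once unwound, is the same argument the paper gives: the paper likewise verifies locality directly, checking for a Segal cover that $\Map([p],\cC)\to \Map(\{0<1\},\cC)\times_{\Map(\{1\},\cC)}\Map(\{1<\dots<p\},\cC)$ is an equivalence (the Segal condition, after reducing a general inert cover to this basic case), and for univalence computing that the pushout of the punctured diagram corepresents a morphism equipped with a left and a right inverse, identifying this with the walking equivalence, and invoking completeness. Your repackaging through the reflective localization $L\colon \Psh(\bdelta)\to \Cat_\infty$ and the comparison maps $c_\sigma$ is a cosmetic reorganization of those same two computations, and the convention-matching you flag at the end (spine inclusions generate the inert covers; the punctured univalence pushout is the walking isomorphism) is precisely the content the paper writes out.
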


\begin{proof}
Let $[1]\times [1]\to \bdelta$, written 
\[
\xymatrix{
[p_0]  \ar[r]  \ar[d]
&
[p']  \ar[d]
\\
[p'']  \ar[r]
&
[p],
}
\]
be a Segal covering diagram.
Because colimits commute with one another, to argue that this is a colimit diagram among $\infty$-categories it suffices to argue the case that $[p'']=\{0<1\}$ and $[p]=\{1\}$ and $[p'] = \{1<\dots,p\}$.  
That is, for each $\infty$-category $\cC$, the canonical map of spaces of functors
\[
\Map\bigl([p],\cC\bigr) \longrightarrow \Map\bigl(\{0<1\},\cC\bigr) \underset{\Map(\{1\},\cC)}\times \Map\bigl(\{1<\dots<p\},\cC\bigr)
\]
must be an equivalence.  
This is manifestly the case.

Consider the functor $\cE^{\tr} \to \bdelta$ representing the univalence diagram.
Consider the $\infty$-category $E:=\colim(\cE\to \bdelta \to \Cat_\infty)$.  
This $\infty$-category corepresents the data of a morphism together with a left and a right inverse:
\[
\Map_{\Cat_\infty}(E,\cC)~\simeq~\Bigl\{d\xra{f^R}c \xra{f} d \xra{f^L} c\text{ and } f\circ f^R\simeq {\sf id}_d  \text{ and } {\sf id}_c\simeq   f^L\circ f, \text{ all in }\cC \Bigr\}~.
\]
Manifestly, left and right inverses are unique whenever they exist.
Thus, we identify the above space simply as $\Map\bigl([1],\cC^\sim\bigr)$, the space of morphisms in the maximal $\infty$-subgroupoid of $\cC$.  
There is the further identification $\cC^\sim \xra{\simeq} \Map\bigl([1],\cC^\sim\bigr)$ induced by the unique functor $[1]\to \ast$.  
In other words, the unique functor $E\to \ast$ is an equivalence of $\infty$-categories.  
In conclusion, the composite functor $\cE^{\tr} \to \bdelta \to \Cat_\infty$ is a colimit diagram.  
\end{proof}

We give a definition of Joyal's category $\btheta_n$~(\cite{joyaltheta}). This follows Definition~3.9 in~\cite{berger}, adapted through the wreath Construction~2.4.4.1 of~\cite{HA}.  
\begin{definition}[$\btheta_n$]\label{def.theta}
For $n\geq 0$, the $\infty$-category over $\bdelta^{\op}$, 
\[
\btheta_n^{\op} \longrightarrow \bdelta^{\op}~,
\]
is defined inductively as
$
\btheta_{n}^{\op}:=\bdelta^{\op} \wr \btheta_{n-1}^{\op}
$
for $n>0$, while $\btheta_0^{\op}:=\ast \xra{\{[0]\}} \bdelta^{\op}$.

\end{definition}

\begin{remark}\label{r3}
Fix $n>0$.
An object in $\btheta_n$ is the data of an object $[p]\in \bdelta$ together with, for each $0<i\leq p$, an object $T_i\in \btheta_{n-1}$.  
Following the notation of~\cite{berger}, we denote such data as
\[
[p]\bigl( T_1,\dots,T_p)~\in \btheta_n~.
\]
With this notation, here are some objects in $\btheta_2$:
\[
[0]
~,\qquad
[1]\bigl( [3] \bigr)
~,\qquad
[4]\bigl( [1] , [0] , [5] , [3] \bigr)
~,\qquad
[3]\bigl([6] , [0] , [2]  \bigr)
~\in \btheta_2~.
\]

\end{remark}

\begin{remark}
We have presented $\btheta_n^{\op}$ as an $\infty$-category.
However, for each pair of objects $T,T'\in \btheta_n^{\op}$, the space of morphisms $\btheta_n^{\op}(T',T)$ is a $0$-type; in other words, $\btheta_n^{\op}$ is an ordinary category.  
Indeed, this follows quickly by induction on $n$, using that each of $\bdelta^{\op}$ and $\Fin_\ast$ and $\Fin_{\ast\star}$ are ordinary categories.  

\end{remark}

We record some notions within the category $\btheta_n:= (\btheta_n^{\op})^{\op}$.  
\begin{definition}\label{def.n-Segal-cov}
Let $n\geq 0$.  
\begin{itemize}

\item {\bf Inerts:}  The \emph{inert} $\infty$-subcategory of $\btheta_n^{\op}$ is defined inductively as $\btheta_{n,\sf inrt}^{\op}:=\bdelta_{\sf inrt}^{\op} \wr \btheta_{n-1,\sf inrt}^{\op}$ for $n>0$ while $\btheta_{0,\sf inrt} = \btheta_0 = \ast$.  

\item {\bf Actives:}  The \emph{active} $\infty$-subcategory of $\btheta_n^{\op}$ is defined inductively as $\btheta_{n,\sf act}^{\op}:=\bdelta_{\sf act}^{\op} \wr \btheta_{n-1,\sf act}^{\op}$ for $n>0$ while $\btheta_{0,\sf act} = \btheta_0 = \ast$.  

\item {\bf Cells:}
For each $0\leq i \leq n$, the \emph{$i$-cell} $c_i\in \btheta_n^{\op}$ is the initial object if $i=0$ and if $i>0$ it is the object $\ast \xra{\{c_i\}} \btheta_n^{\op}$ representing the pair of functors $\ast \xra{\{[1]\}} \bdelta^{\op}$ and $\ast \simeq \{[1]\}\underset{\Fin_\ast}\times \Fin_{\ast \star} \xra{\{c_{i-1}\}} \btheta_{n-1}^{\op}$.   

\item {\bf Segal covering diagrams:} A colimit diagram $[1]\times[1]\to \btheta_n$, written
\[
\xymatrix{
T_0  \ar[r]  \ar[d]
&
T'  \ar[d]
\\
T''  \ar[r]
&
T,
}
\]
is a \emph{Segal covering} diagram if each arrow is inert.   

\item {\bf Univalence diagrams:} For $n>0$, a colimit diagram $\cE^{\tr} \to \btheta_n$ is a \emph{univalence} diagram if it has either of the following two properties:
\begin{itemize}

\item The projection $\cE^{\tr} \to \bdelta$ factors through the functor $\ast \xra{\{c_1\}} \bdelta$, and the functor $(\cE^{\tr})^{\op} \simeq (\cE^{\tr})^{\op} \underset{\Fin_\ast}\times \Fin_{\ast \star} \to \btheta_{n-1}^{\op}$ is the opposite of a univalence diagram.

\item The projection ${\cE^{\tr}} \to \bdelta$ is the univalence diagram, and the functor $(\cE^{\tr})^{\op}\underset{\Fin_\ast}\times \Fin_{\ast \star} \to \btheta_{n-1}^{\op}$ factors through $\ast \xra{\{c_0\}} \btheta_{n-1}^{\op}$.

\end{itemize}

\end{itemize}

\end{definition}

\begin{remark}\label{n-act-inrt}
The active-inert factorization system on $\bdelta$ (see Remark~\ref{act-inrt}) determines an active-inert factorization system on $\btheta_n$ for each $n\geq 0$. 

\end{remark}

\begin{observation}\label{theta-stand-bump}
We note that, for each $0\leq k \leq n$, there are monomorphisms among categories
\[
\iota_{k}\colon \btheta_k^{\op} ~\hookrightarrow~ \btheta_n^{\op} ~\hookleftarrow~ \btheta_{n-k}^{\op} \colon c_k \wr -~,
\]
the first of which is fully faithful; these functors preserve Segal covering diagrams and univalence diagrams.  
For $k=0$ the left functor is the inclusion of the initial object while the right functor is the identity.  
For $0<k\leq n$ these functors are given through induction as
\[
\btheta_k^{\op}:=\bdelta^{\op} \wr \btheta_{k-1}^{\op}
~\overset{{\sf id}_{\bdelta^{\op}} \wr \iota_{k}}\hookrightarrow~
\bdelta^{\op} \wr \btheta_{n-1}^{\op} =: \btheta_n^{\op}:= \bdelta^{\op}\wr \btheta_{n-1}^{\op}
~\overset{\{[1]\} \wr (c_{k-1}\wr-)}\hookleftarrow~
\{\star_+\}\wr \btheta_{n-k}^{\op}
\simeq 
\btheta_{n-k}^{\op}~.  
\]
The injectivity assertions follow quickly by induction, for which it is useful that all $\infty$-categories here are ordinary categories.  
That these functors preserve Segal covering diagrams follows because, by induction, they preserve inert morphisms, and, by induction, they preserve colimits.  
That these functors preserve univalence diagrams is direct from definitions.  

\end{observation}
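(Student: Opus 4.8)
The plan is to establish all of the assertions---existence of the two functors, fully faithfulness of $\iota_k$, the monomorphism property of $c_k\wr-$, and preservation of Segal covering and univalence diagrams---simultaneously by induction on $n$, with $0\leq k\leq n$ ranging freely. The functors themselves are produced by the explicit inductive formulas recorded in the statement, so existence is immediate once the induction is set up. The base case $n=0$ is vacuous, since $\btheta_0^{\op}=\ast$ forces $k=0$ and both functors are the identity. In the inductive step I would dispatch the two boundary cases first. When $k=0$ the left functor $\iota_0$ is the inclusion of the initial object $c_0\in\btheta_n^{\op}$; this is fully faithful because an initial object has contractible mapping spaces, so in particular $\btheta_n^{\op}(c_0,c_0)\simeq\ast$, and it is a monomorphism since $\btheta_n^{\op}$ is an ordinary category, while the corresponding right functor is the identity. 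When $k=n$ the right functor $c_n\wr-$ is the inclusion $\btheta_0^{\op}=\ast\to\btheta_n^{\op}$ of the cell $c_n$, trivially a monomorphism, and the left functor is the identity.

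For $0<k\leq n$ I would use the wreath presentations. The left functor is ${\sf id}_{\bdelta^{\op}}\wr\iota$, where the inner functor is the standard inclusion $\btheta_{k-1}^{\op}\hookrightarrow\btheta_{n-1}^{\op}$, fully faithful by the inductive hypothesis in dimension $n-1$; Observation~\ref{wreath-ff}, applied with $\cD=\bdelta^{\op}$, then gives that
\[
\iota_k\colon \btheta_k^{\op}=\bdelta^{\op}\wr\btheta_{k-1}^{\op}\longrightarrow \bdelta^{\op}\wr\btheta_{n-1}^{\op}=\btheta_n^{\op}
\]
is fully faithful. The right functor is $\{[1]\}\wr(c_{k-1}\wr-)$, in which $c_{k-1}\wr-\colon\btheta_{n-k}^{\op}\to\btheta_{n-1}^{\op}$ is a monomorphism by induction; since $\{[1]\}\hookrightarrow\bdelta^{\op}$ is the inclusion of a single object, wreathing preserves injectivity on objects and faithfulness, which I would verify directly using that every category in play is ordinary. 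This settles existence together with the (fully) faithfulness claims.

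Preservation of inert morphisms is then immediate from the inductive definition $\btheta_{n,{\sf inrt}}^{\op}=\bdelta_{\sf inrt}^{\op}\wr\btheta_{n-1,\sf inrt}^{\op}$: the identity on $\bdelta^{\op}$ preserves inerts, the object $\{[1]\}$ is inert, and the inner functors preserve inerts by induction. For a Segal covering diagram I would argue that its image is again a Segal covering, hence a colimit diagram by definition. The inert-preservation just noted handles the arrows; for the pushout I would use the decomposition $\btheta_n^{\op}=\bdelta^{\op}\wr\btheta_{n-1}^{\op}$, under which an inert pushout splits into a Segal covering of the underlying $\bdelta^{\op}$-simplex together with Segal coverings of the $\btheta_{n-1}^{\op}$-labels. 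Since $\iota_k$ acts as the identity on the $\bdelta^{\op}$-coordinate---where Segal coverings are colimits by Proposition~\ref{diagrams-to-colims}---and as the inclusion $\btheta_{k-1}^{\op}\hookrightarrow\btheta_{n-1}^{\op}$ on the labels, which preserves Segal coverings by induction, the image square is again a Segal covering. The univalence diagrams are governed by the two-case inductive definition in Definition~\ref{def.n-Segal-cov}: such a diagram is either a $\btheta_{n-1}$-univalence diagram living in the label of a $c_1$-cell, or the $\bdelta$-univalence diagram carrying the constant cell $c_0$; in both cases the compatibility of $\iota_k$ and $c_k\wr-$ with the cells $c_0$ and $c_1$ makes preservation direct from the definitions.

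The main obstacle I anticipate is the pushout-preservation step for Segal coverings. Fully faithfulness does not force a functor to preserve colimits, so it cannot be invoked as a black box here; instead the verification genuinely requires tracking how the wreath product interacts with pushouts along inert morphisms. The crux is a coordinate-wise description of such inert pushouts in $\bdelta^{\op}\wr\btheta_{n-1}^{\op}$---the top $\bdelta^{\op}$-simplex decomposing by the $\bdelta$-Segal condition and the labels reassembling via Segal coverings in $\btheta_{n-1}^{\op}$---which reduces the claim to the base case in $\bdelta$ (Proposition~\ref{diagrams-to-colims}) and the inductive hypothesis in $\btheta_{n-1}$. Once this coordinate-wise computation of inert pushouts is in hand, both the Segal-covering and the univalence assertions propagate formally.
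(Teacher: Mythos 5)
Your proposal is correct and follows essentially the same route as the paper: induction on $n$ via the wreath presentations, injectivity from the fact that all categories in play are ordinary (with Observation~\ref{wreath-ff} supplying fully faithfulness of $\iota_k$), preservation of Segal coverings by combining inert-preservation with preservation of the relevant colimits, and univalence directly from the two-case definition. The coordinate-wise analysis of inert pushouts in $\bdelta^{\op}\wr\btheta_{n-1}^{\op}$ that you flag as the crux is precisely what the paper compresses into the phrase ``by induction, they preserve colimits,'' so your elaboration is a faithful unpacking rather than a different argument.
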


\subsection{Cellular realization}
We define a fully faithful functor $\btheta_n^{\op} \hookrightarrow \cDisk_n^{\vfr}$, which we call \emph{cellular realization}.  
The existence of this functor, and that it is fully faithful, is the culmination of choices behind the definition of $\Bun$ and its vari-framed version. 
This fully faithful functor founds the contact between higher categories and vari-framed differential topology, as we articulate as factorization homology of the coming section.

The next result establishes the cellular realization functor for dimension $1$.
Recall that a morphism in $\cDisk_1^{\vfr}$ is \emph{closed/active} if it is carried to a closed/active morphism by the forgetful functor $\cDisk_1^{\vfr} \to \cBun$.  
\begin{lemma}\label{delta-to-disk}
There is a functor
\[
\lag-\rag\colon \bdelta^{\op} \longrightarrow  \cDisk^{\vfr}_1
\]
with the following properties.
\begin{enumerate}
\item  The functor is fully faithful.

\item The functor carries $[0]$ to $\DD^0$ and $[1]$ to $\DD^1$.

\item The functor carries the opposite of inert morphisms to closed morphisms.

\item The functor carries the opposite of active morphisms to active morphisms.

\item The functor carries Segal covers to purely closed covers.

\item Let $[p]\xra{\rho}[q]$ be a morphism in $\bdelta$.
For each factorization $\lag \rho\rag\colon \lag [q]\rag \xra{c} D \to \lag [p]\rag$ in $\cDisk^{\vfr}_1$ in which $c$ is a closed morphism, the object $D$ belongs to the essential image of $\bdelta^{\op}$.

\end{enumerate}

\end{lemma}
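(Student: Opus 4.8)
The plan is to build $\langle-\rangle$ by hand, matching the active--inert factorization system on $\bdelta$ (Remark~\ref{act-inrt}) with the closed--active factorization system on $\cDisk_1^{\vfr}$ (Lemma~\ref{fact-sys}), and then to extract full faithfulness from the explicit descriptions of morphism spaces established in the proof of Lemma~\ref{disk-1}. On objects, I would set $\langle[p]\rangle$ to be the vari-framed stratified interval obtained by splicing $p$ copies of $\DD^1$ end-to-end along $\DD^0$'s; equivalently, the interval $[0,p]$ with the integer points marked as $0$-strata, equipped with the vari-framing assembled from the preferred vari-framing of $\DD^1$ across the purely closed covers of Definition~\ref{def.closed-cover}. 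By construction $\langle[0]\rangle=\DD^0$ and $\langle[1]\rangle=\DD^1$, which is property~(2), and iterating these purely closed covers both exhibits $\langle[p]\rangle\in\cDisk_1^{\vfr}$ and shows that a Segal cover of $[p]$, being a splicing of subintervals, is carried to a limit square of closed morphisms, i.e.\ a purely closed cover; this is property~(5).

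For morphisms, I would specify $\langle-\rangle$ on the generators of $\bdelta^{\op}$ and verify the cosimplicial identities; this is legitimate because both $\bdelta^{\op}$ and $\cDisk_1^{\vfr}$ are ordinary categories (Lemma~\ref{disk-1}), so composition is strict. The outer cofaces $\delta^0,\delta^p$ are inert and go to the closed morphisms reversing the inclusion of the first, respectively last, subinterval (as in Example~\ref{stand-cr}), yielding property~(3). The inner cofaces $\delta^i$ with $0<i<p$ are injective-active and go to the refinements that forget the interior vertex $v_i$, merging two adjacent edges, while the codegeneracies $\sigma^i$ are surjective-active and go to the creation morphisms inserting an edge (again Example~\ref{stand-cr}). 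Since injective-active and surjective-active morphisms generate the active subcategory under epi--mono factorization, this assignment forces property~(4), and checking the cosimplicial relations then amounts to comparing explicit reversed mapping cylinders and refinements of subdivided intervals.

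Full faithfulness, property~(1), I would reduce to the two factors of the factorization system: $\langle-\rangle$ sends the active--inert factorization of a morphism in $\bdelta^{\op}$ to the closed--active factorization of its image, so it suffices to check bijectivity on each factor. On the inert/closed side, closed morphisms out of $\langle[q]\rangle$ are, by Definition~\ref{def.classes} and Remark~\ref{explicate-pure-cov}, opposites of proper constructible injections of disk-stratified subspaces into the interval $\langle[q]\rangle$, and the connected such subspaces are precisely the subintervals $\langle[r]\rangle$, matching the inert maps $[r]\hookrightarrow[q]$. On the active side, the identifications $\cDisk^{\sf ref}_1(D,D')\simeq\Strat^{\sf ref}(D,D')$ and $\cDisk^{\sf c.cr}_1(D,D')\simeq\Strat^{\pcbl}(D',D)$ from the proof of Lemma~\ref{disk-1} identify refinements and creations of subdivided intervals with the injective-active and surjective-active maps in $\bdelta$, respectively. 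Combining the two gives the bijection $\bdelta^{\op}([q],[p])\simeq\cDisk_1^{\vfr}(\langle[q]\rangle,\langle[p]\rangle)$. Property~(6) then follows from the same description of closed morphisms: in a factorization $\langle[q]\rangle\xra{c}D\to\langle[p]\rangle$ of $\langle\rho\rangle$ with $c$ closed, the object $D$ is a closed disk-stratified subspace of $\langle[q]\rangle$ still surjecting onto the connected $\langle[p]\rangle$, hence connected, hence a subinterval $\langle[r]\rangle$, which lies in the essential image.

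The main obstacle, I expect, is the coherent construction of the functor on morphisms: one must guarantee that the reversed-mapping-cylinder data defining the closed and creation morphisms splice correctly with the open-map data defining the refinements into a single composition-respecting assignment. The ordinariness of $\cDisk_1^{\vfr}$ (Lemma~\ref{disk-1}) removes the higher coherence and reduces this to a finite check of cosimplicial identities together with the interchange of the two classes of generating morphisms; this bookkeeping, rather than any single conceptual step, is where the real work lies.
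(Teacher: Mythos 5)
Your route is genuinely different from the paper's, and it is worth seeing what each buys. The paper never defines $\lag-\rag$ on generators: it isolates the full subcategory $\cD\subset\cDisk_1^{\vfr}$ of objects admitting a refinement to $\DD^1$ or $\DD^0$ and constructs an equivalence $\cD\to\bdelta^{\op}$ in the \emph{opposite} direction, as a map of striation sheaves. A $K$-family $(X\to K,\varphi)$ in $\cD$ is embedded conically smoothly in $\RR\times K$ over $K$ compatibly with the vari-framing; this induces a fiberwise linear order on the $0$-strata, independent of the embedding because the space of such embeddings is connected, and the resulting Cartesian fibration over $\exit(K)$ is classified by a functor to $\bdelta^{\op}$. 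Functoriality, full faithfulness, and the identification of the essential image (hence (6)) all fall out of this single moduli description. Your generators-and-relations construction is legitimate because $\cDisk_1^{\vfr}$ is an ordinary category (Lemma~\ref{disk-1}), and your full-faithfulness argument through the inert--active versus closed--active factorization systems is exactly the strategy the paper deploys later for Theorem~\ref{theta-ff}; but do not underestimate the relations check. Composition in $\Bun$ exists only through the Segal condition, via unzips and flows, so each mixed cosimplicial identity requires identifying the restriction to $\Delta^{\{0<2\}}$ of a constructible bundle over $\Delta^2$ and then comparing closed--active factorizations; this is precisely the labor the paper's families argument is engineered to bypass.

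The one concrete gap is in (6) and in the control of middle objects for your coend argument. You claim that in a factorization $\lag[q]\rag\xra{c}D\to\lag[p]\rag$ with $c$ closed, the object $D$ surjects onto the connected $\lag[p]\rag$ and is therefore connected. For an arbitrary second leg this fails: for the inert $\rho\colon[0]\to[2]$ hitting $0$, the morphism $\lag\rho\rag$ factors through the closed morphism $\lag[2]\rag\to\{0\}\sqcup\{2\}$ onto the two endpoints, followed by the non-active morphism $\{0\}\sqcup\{2\}\to\DD^0$ that kills one point, and the disconnected middle object is not in the essential image. The property, and your argument, hold once the second leg is required to be active---which is the only case used in your proof of (1) and in the paper's proof of Theorem~\ref{theta-ff}. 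There the argument does close: embedding morphisms in $\cBun$ are equivalences, so active morphisms of $\cBun$ are composites of creations and refinements (Definition~\ref{def.classes}), and both of these pull connectivity back from the target. State the activity hypothesis explicitly and your reduction goes through.
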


\begin{proof}
Consider the full $\infty$-subcategory $\cD\subset \cDisk^{\vfr}_1$ consisting of those $D$ for which there is a refinement morphism $D\to \DD^1$ or $D\to \DD^0$ in $\cDisk^{\vfr}_1$.  
By inspection, and likewise, this full $\infty$-subcategory has property~(6): 
if a morphism $D'\to D''$ in $\cD$ factors through a closed morphism $D'\to D$ then $D$ belongs to $\cD$.  

We will construct an equivalence of $\infty$-categories
\[
\cD\xra{~\simeq~}\bdelta^{\op}
\]
as striation sheaves, according to the main result of~\cite{striat}. 
Let $K$ be a stratified space.  
Because the morphism spaces in $\cDisk_n^{\vfr}$ are $0$-types (Lemma~\ref{disk-1}), the space of $K$-points of each of these striation sheaves is a $0$-type.

Consider a functor $\exit(K)\xra{(X\to K,\phi)} \cD$.
Denote the fiberwise $0$-dimensional strata as $X_0\to K$, and likewise $X_{>0}\to K$ for its complement.  
Choose a conically smooth embedding 
\[
e\colon X\hookrightarrow \RR\times K
\]
over $K$ for which the pullback fiberwise vari-framing $e_{>0}^\ast \partial_t\colon \exit(X_{>0}) \to \vfr$ is in the same component of the restriction $\phi_{>0}\colon \exit(X_{>0}) \to \vfr$. 
This embedding $e$ determines a fiberwise linear order $\leq$ on the constructible bundle $X_0\to K$, by which we mean a constructible closed subspace of the fiber product $X_0 \underset{K}\times X_0$ that intersects each fiber $X_{|\{k\}}\times X_{|\{k\}}$ as a linear order.  
This subspace $\leq$ is constructible and closed, and does not depend on the choice of embedding $e$.  
Taking values in $\cD$ implies $X_0 \to K$ is a surjective proper constructible bundle and has the following unique path lifting property in $\strat$
\[
\xymatrix{
\Delta^{\{1\}}  \ar[rr]    \ar[d]
&&
X_0  \ar[d]
\\
\Delta^1  \ar[rr]  \ar@{-->}[urr]^-{\exists !}
&&
K.
}
\]
Furthermore, the constructible fiberwise linear order enhances the right fibration $\exit(X_0) \to \exit(K)$ to a Cartesian fibration whose fibers are non-empty finite linearly ordered sets.
Such a Cartesian fibration is classified by a functor $\exit(K) \xra{(X_0\to K,\leq)} \bdelta^{\op}$.

We have thus produced a well-defined assignment of $K$-points from those of $\cD$ to those of $\bdelta^{\op}$.  
Tracing through the construction of this assignment, it restricts along conically smooth maps $K'\to K$, thereby producing the desired functor 
\[
\cD \longrightarrow \bdelta^{\op}~.  
\]

We now wish to show this functor is an equivalence of $\infty$-categories.
It is clearly essentially surjective, and surjective on mapping components.  
To see that this functor is injective on mapping components follows upon observing that, in the situation of the preceding argument, the space of embeddings $e\colon X\hookrightarrow \RR\times K$ over $K$ is connected.  
This proves property~(1).

Properties~(2),(3),(4), and (5) follow by direct inspection of the functor $\cD\to \bdelta^{\op}$ just constructed.
\end{proof}

\begin{remark}
The value $\lag [0]\rag$ is a point, $\ast$, regarded as a vari-framed manifold.
For $p>0$, the value $\lag [p]\rag$ is a var-framed refinement of the interval $\DD^1=[-1,1]$ whose $0$-dimensional strata bijective with the set $\{0,\dots , p\}$.  
See Figure~\ref{fig.66}.

\end{remark}

\begin{figure}[ht]
\begin{tikzpicture}

\node at (0,0) {$[0] \ \mapsto$};
\tikzstyle{every node}=[font=\tiny]
\draw[fill] (0.75,0) circle [radius=0.07] node[below]{0};
\tikzstyle{every node}=[font=\normalsize]

\node at (0,-0.75) {$[1] \ \mapsto$};
\tikzstyle{every node}=[font=\tiny]
\draw[fill] (0.75,-0.75) circle [radius=0.07] node[below]{0};
\draw[fill] (1.75,-0.75) circle [radius=0.07] node[below]{1};
\draw[midarrow=0.5] (0.75,-0.75) to (1.75,-0.75); 
\tikzstyle{every node}=[font=\normalsize]

\node at (0,-1.5) {$[2] \ \mapsto$};
\tikzstyle{every node}=[font=\tiny]
\draw[fill] (0.75,-1.5) circle [radius=0.07] node[below]{0};
\draw[fill] (1.75,-1.5) circle [radius=0.07] node[below]{1};
\draw[fill] (2.75,-1.5) circle [radius=0.07] node[below]{2};
\draw[midarrow=0.5] (0.75,-1.5) to (1.75,-1.5);
\draw[midarrow=0.5] (1.75,-1.5) to (2.75,-1.5); 
\tikzstyle{every node}=[font=\normalsize]

\node at (0.5,-2) {$\vdots$};

\end{tikzpicture}
\caption{Some values of the cellular realization functor $\lag - \rag \colon \bdelta^{\op} \to \cDisk^{\vfr}_1$.}
\label{fig.66}
\end{figure}

The definition of cellular realization is supported by the inductive Definition~\ref{def.theta} of $\btheta_n$ combined with Corollary~\ref{wreath}, which compares the wreath construction to iterated constructible bundles, and the vari-framed lift of $\Bun^{\un{\Bun}} \xra{\circ} \Bun$ provided by Corollary~\ref{vfr-iterate}.  
\begin{definition}[Cellular realization]\label{def.cellular}
For each dimension $n$, the \emph{cellular realization} functor
\[
\lag-\rag\colon \btheta_n^{\op} \longrightarrow \cDisk^{\vfr}_n
\]
is the composition
\[
\lag-\rag\colon \btheta_n^{\op} ~:=~\bdelta^{\op}\wr \btheta_{n-1}^{\op} \underset{\rm induction}\longrightarrow \bdelta^{\op} \wr \cDisk^{\vfr}_{n-1} \underset{\rm Cor~\ref{wreath}}\longrightarrow \bdelta^{\op} \underset{\Bun}\times \Bun^{\cDisk^{\vfr}_{n-1}} \underset{\rm Cor~\ref{vfr-iterate}}{\xra{~\circ~}} \cDisk^{\vfr}_n
\]
provided $n>1$.
For $n=0$ it is the functor $\ast \xra{\{\DD^0\}} \cDisk^{\vfr}_0$; for $n=1$ it is the functor of Lemma~\ref{delta-to-disk}.   

\end{definition}

\begin{remark}\label{r4}
Intuitively, the value $\lag T\rag$ is literally the pasting diagram associated to $T\in \btheta_n$ which can be regarded as a stratified subspace of $\RR^n$ as it inherits a vari-framing.  
To make this intuition precise and assemble this description as a functor in any point-set sense is not practical, if possible at all.  
The functor $\lag-\rag$ fully embraces the setting of $\infty$-categories.  
See Figure~\ref{fig.77}.

\end{remark}

%
%
%
%
%
%
%
%
%

\begin{figure}[ht]
\begin{tikzpicture}


\node at (-0.5,5.5) {$[0]$};
\node at (1.2,5.65) {$\overset{\langle \ \rangle}{\longmapsto}$};
\tikzstyle{every node}=[font=\tiny]
\draw[fill] (1.9,5.5) circle [radius=0.07] node[below]{0};


\tikzstyle{every node}=[font=\normalsize]
\node at (-0.5,4.5) {$[1]([0])$};
\node at (1.2,4.65) {$\overset{\langle \ \rangle}{\longmapsto}$};
\tikzstyle{every node}=[font=\tiny]

\draw[fill] (1.9,4.5) circle [radius=0.07] node[below]{0};
\draw[fill] (2.9,4.5) circle [radius=0.07] node[below]{1};
\draw[->-] (1.9,4.5) to (2.9,4.5);
\node at (2.5, 4.65) {0};

\tikzstyle{every node}=[font=\normalsize]

\node at (-0.5,3) {$[1]([1])$};
\node at (1.2,3.15) {$\overset{\langle \ \rangle}{\longmapsto}$};

\tikzstyle{every node}=[font=\tiny]

\draw[fill=lightgray] (1.9,3) to[out=80,in=100] (2.9,3) to[out=-100,in=-80] (1.9,3);

\draw[fill] (1.9,3) circle [radius=0.07] node[below]{0};
\draw[fill] (2.9,3) circle [radius=0.07] node[below]{1};

\draw[midarrow=0.5] (1.9,3) to[out=80,in=100] (2.9,3);
\draw[midarrow=0.5] (1.9,3) to[out=-80,in=-100] (2.9,3);

\node at (2.5,3.4) {1};
\node at (2.5,2.6) {0};

\tikzstyle{every node}=[font=\normalsize]

\node at (-0.5,1.5) {$[2]([1],[0])$};
\node at (1.2,1.65) {$\overset{\langle \ \rangle}{\longmapsto}$};

\tikzstyle{every node}=[font=\tiny]

\draw[fill=lightgray] (1.9,1.5) to[out=80,in=100] (2.9,1.5) to[out=-100,in=-80] (1.9,1.5);
\draw[->-] (2.9,1.5) to (3.9,1.5);

\draw[fill] (1.9,1.5) circle [radius=0.07] node[below]{0};
\draw[fill] (2.9,1.5) circle [radius=0.07] node[below]{1};
\draw[fill] (3.9,1.5) circle [radius=0.07] node[below]{2};

\draw[midarrow=0.5] (1.9,1.5) to[out=80,in=100] (2.9,1.5);
\draw[midarrow=0.5] (1.9,1.5) to[out=-80,in=-100] (2.9,1.5);

\node at (2.5,1.9) {1};
\node at (2.5,1.1) {0};
\node at (3.5, 1.65) {0};

\tikzstyle{every node}=[font=\normalsize]

\node at (-0.5,0) {$[3]([1],[0],[2])$};
\node at (1.2,0.15) {$\overset{\langle \ \rangle}{\longmapsto}$};

\tikzstyle{every node}=[font=\tiny]

\draw[fill=lightgray] (1.9,0) to[out=80,in=100] (2.9,0) to[out=-100,in=-80] (1.9,0);
\draw[fill=lightgray] (3.9,0) to[out=80,in=100] (4.9,0) to[out=-100,in=-80] (3.9,0);

\draw[fill] (1.9,0) circle [radius=0.07] node[below]{0};
\draw[fill] (2.9,0) circle [radius=0.07] node[below]{1};
\draw[fill] (3.9,0) circle [radius=0.07] node[below]{2};
\draw[fill] (4.9,0) circle [radius=0.07] node[below]{3};

\draw[midarrow=0.5] (1.9,0) to[out=80,in=100] (2.9,0);
\draw[midarrow=0.5] (1.9,0) to[out=-80,in=-100] (2.9,0);

\draw[->-] (2.9,0) to (3.9,0);

\draw[midarrow=0.5] (3.9,0) to (4.9,0);
\draw[midarrow=0.5] (3.9,0) to[out=80,in=100] (4.9,0);
\draw[midarrow=0.5] (3.9,0) to[out=-80,in=-100] (4.9,0);

\node at (2.5,0.4) {1};
\node at (2.5,-0.4) {0};
\node at (3.5, 0.15) {0};
\node at (4.5,0.4) {2};
\node at (4.5, 0.1) {1};
\node at (4.5, -0.4) {0};


\draw[->] (5.9,-0.5) to (6.2,-0.5) node[below]{1};
\draw[->] (5.9,-0.5) to (5.9,-0.2) node[left]{2};

\end{tikzpicture}
\caption{Typical values of the cellular realization functor $\lag - \rag \colon \btheta_2^{\op} \to \cDisk^{\vfr}_2$.}
\label{fig.77}
\end{figure}

\begin{observation}\label{wreath-ff-bun-bun}
For each $0\leq k \leq n$ the fully faithful functor 
$
\iota_k\colon \btheta_k^{\op} \hookrightarrow \btheta_n^{\op}
$
lies over $\cDisk^{\vfr}_k \hookrightarrow \cDisk^{\vfr}_n$.  
This follows by induction on $i$ from Observation~\ref{wreath-ff}, with base case the fully faithful functor $\ast \hookrightarrow \bdelta^{\op}$.  

\end{observation}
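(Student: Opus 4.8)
The plan is to prove the statement by induction on $n$, at each stage treating all $0\leq k\leq n$ at once and reducing the pair $(k,n)$ to $(k-1,n-1)$. Unwinding the assertion, what must be produced is a commuting square whose top edge is $\iota_k$, whose bottom edge is the inclusion $\cDisk^{\vfr}_k\hookrightarrow \cDisk^{\vfr}_n$, and whose vertical legs are the two cellular realization functors of Definition~\ref{def.cellular}. The base case is $n=0$, where only $k=0$ occurs: there $\btheta_0^{\op}=\ast$ realizes to $\DD^0$ and the bottom inclusion is the identity, so nothing is to be checked. More generally, for $k=0$ and arbitrary $n$, the functor $\iota_0$ is the inclusion of the initial object $c_0\in\btheta_n^{\op}$ (Observation~\ref{theta-stand-bump}), and unwinding Definition~\ref{def.cellular} gives $\lag c_0\rag = \DD^0$, which lies in the image of $\cDisk^{\vfr}_0\hookrightarrow\cDisk^{\vfr}_n$; this is precisely the edge case $\ast\hookrightarrow\bdelta^{\op}$.

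For the inductive step with $0<k\leq n$ I would invoke the inductive descriptions of both functors. By Observation~\ref{theta-stand-bump} one has $\iota_k = \id_{\bdelta^{\op}}\wr\iota_k$, with the inner functor the inclusion $\btheta_{k-1}^{\op}\hookrightarrow\btheta_{n-1}^{\op}$, which by the induction hypothesis lies over $\cDisk^{\vfr}_{k-1}\hookrightarrow\cDisk^{\vfr}_{n-1}$. I would then assemble the rectangle
\[
\xymatrix{
\bdelta^{\op}\wr\btheta_{k-1}^{\op} \ar[r] \ar[d] & \bdelta^{\op}\wr\cDisk^{\vfr}_{k-1} \ar[r] \ar[d] & \cDisk^{\vfr}_k \ar[d] \\
\bdelta^{\op}\wr\btheta_{n-1}^{\op} \ar[r] & \bdelta^{\op}\wr\cDisk^{\vfr}_{n-1} \ar[r] & \cDisk^{\vfr}_n
}
\]
in which each horizontal composite is a cellular realization (via Corollary~\ref{wreath} and Corollary~\ref{vfr-iterate}), the left vertical legs are $\id_{\bdelta^{\op}}\wr(-)$ applied to the relevant inner functors, and the outer vertical legs are the two inclusions. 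The left-hand square commutes by applying the wreath functoriality of Observation~\ref{wreath-ff} to the inductive square. The right-hand square is the assertion that the comparison functor of Corollary~\ref{wreath} and the iterated-framing composition $\circ$ of Corollary~\ref{vfr-iterate} are natural in their coefficient $\infty$-category with respect to $\cDisk^{\vfr}_{k-1}\hookrightarrow\cDisk^{\vfr}_{n-1}$. Tracing the outer rectangle then yields exactly the desired identification $\lag\iota_k(-)\rag_n \simeq (\cDisk^{\vfr}_k\hookrightarrow\cDisk^{\vfr}_n)\circ\lag-\rag_k$.

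The main obstacle will be the commutativity of the right-hand square, that is, the naturality of the constructions of Corollaries~\ref{wreath} and~\ref{vfr-iterate} in the coefficient category. This is not a calculation but a diagram chase through the definitions: the wreath functor is a right adjoint (Definition~\ref{def.wreath}), the comparison of Corollary~\ref{wreath} arises from the fully faithful inclusion ${\Fin_{\ast\star}}_{|\cDisk_n}\hookrightarrow\Exit_{|\cDisk_n}$ of Lemma~\ref{wreath-formal}, and $\circ$ descends from the right adjoint $\tau\mapsto\Bun^\tau$. Each of these is visibly functorial in its coefficient input, so compatibility with the fully faithful inclusion follows once one records that $\cDisk^{\vfr}_{k-1}\hookrightarrow\cDisk^{\vfr}_{n-1}$ respects the structures in play---the projection to $\Fin_\ast$, the framed suspension, and the projection to $\Bun$. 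I expect no new geometric input beyond what is already encoded in the inductive Definition~\ref{def.cellular}, only careful bookkeeping of these naturalities, with fully faithfulness of the resulting comparison inherited along the way from Observation~\ref{wreath-ff}.
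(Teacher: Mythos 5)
Your proposal is correct and follows essentially the same route as the paper, which disposes of this observation in one sentence: induction using the wreath functoriality of Observation~\ref{wreath-ff}, with base case the inclusion of the initial object $\ast\hookrightarrow\bdelta^{\op}$. Your expansion correctly identifies the only point the paper leaves implicit---the naturality of the comparison functors of Corollaries~\ref{wreath} and~\ref{vfr-iterate} in the coefficient $\infty$-category---and your justification of it via the right-adjoint/restriction descriptions of those constructions is sound.
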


Recall the $\infty$-category $\cBun^n$ of~\S\ref{sec.iterated-bdls} that classifies $n$-fold proper constructible bundles among stratified spaces.  
There is a $\cB$-structured version of this $\infty$-category that we highlight as the following notation.
\begin{notation}\label{iterated-tau's}
For $\cB$ a tangential structure, we inductively denote
\[
\cBun^{\cB, n}~:=~ \cBun^{\cB} \underset{\Bun}\times \Bun^{\un{\cBun}^{\cB,n-1}}
\]
where $\Bun^{\cB,0}:= \ast$.  
\end{notation}

With Notation~\ref{iterated-tau's} we notice from the Definition~\ref{def.cellular} that the cellular realization functor factors
\[
\btheta_n^{\op} \longrightarrow \cDisk^{\vfr_{\leq 1},n} \xra{~\circ~} \cDisk_n^{\vfr}~.
\]
\begin{lemma}\label{theta-ff-in-wreath}
The functor $\btheta_n^{\op} \to \cDisk^{\vfr_{\leq 1},n}$ is fully faithful, and it carries Segal covers to purely closed covers.

\end{lemma}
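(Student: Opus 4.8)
The plan is to induct on $n$, playing the inductive structure $\btheta_n^{\op}=\bdelta^{\op}\wr\btheta_{n-1}^{\op}$ of Definition~\ref{def.theta} against the inductive structure $\cDisk^{\vfr_{\leq 1},n}\simeq\cDisk_1^{\vfr}\underset{\Bun}\times\Bun^{\cDisk^{\vfr_{\leq 1},n-1}}$, which is the disk-analogue of Notation~\ref{iterated-tau's}. For the base case $n=1$ the functor in question is exactly the level-one cellular realization $\bdelta^{\op}\to\cDisk_1^{\vfr}$, which is fully faithful by Lemma~\ref{delta-to-disk}(1) and carries Segal covers to purely closed covers by Lemma~\ref{delta-to-disk}(5); for $n=0$ both assertions are vacuous.

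For the inductive step I would factor the cellular realization as
\[
\bdelta^{\op}\wr\btheta_{n-1}^{\op}\longrightarrow \cDisk_1^{\vfr}\wr\cDisk^{\vfr_{\leq 1},n-1}\hookrightarrow \cDisk_1^{\vfr}\underset{\Bun}\times\Bun^{\cDisk^{\vfr_{\leq 1},n-1}}=\cDisk^{\vfr_{\leq 1},n},
\]
where the second arrow is the fully faithful left adjoint supplied by Lemma~\ref{wreath-formal} (equivalently Corollary~\ref{wreath}): since $\cDisk^{\vfr_{\leq 1},n-1}$ is assembled from copies of $\cDisk_1^{\vfr}$, it has an initial object, so the left adjoint exists and is fully faithful. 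It therefore suffices to show the first arrow—the wreath of the two fully faithful functors $\bdelta^{\op}\to\cDisk_1^{\vfr}$ (Lemma~\ref{delta-to-disk}) and $\btheta_{n-1}^{\op}\to\cDisk^{\vfr_{\leq 1},n-1}$ (the induction hypothesis)—is fully faithful.

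To prove that a wreath of two fully faithful functors is fully faithful, I would factor it through an intermediate wreath,
\[
\bdelta^{\op}\wr\btheta_{n-1}^{\op}\longrightarrow\cDisk_1^{\vfr}\wr\btheta_{n-1}^{\op}\longrightarrow\cDisk_1^{\vfr}\wr\cDisk^{\vfr_{\leq 1},n-1}.
\]
The second arrow is fully faithful by Observation~\ref{wreath-ff}, which handles full faithfulness in the fibre variable. The first arrow is induced by the outer functor $\bdelta^{\op}\to\cDisk_1^{\vfr}$, regarded as a functor over $\Fin_\ast$ by means of Lemma~\ref{disks-to-fin}, and its full faithfulness must be checked directly from the mapping-space description of the wreath coming from its defining adjunction. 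Concretely, a morphism in $\bdelta^{\op}\wr\btheta_{n-1}^{\op}$ out of an object over $[p]$ decomposes into an outer morphism of $\bdelta^{\op}$ together with inner morphisms of $\btheta_{n-1}^{\op}$ indexed by the intermediate objects of the associated active--inert factorizations, and likewise in $\cDisk_1^{\vfr}\wr\btheta_{n-1}^{\op}$ with $\cDisk_1^{\vfr}$ in place of $\bdelta^{\op}$. Matching these two descriptions is exactly where Lemma~\ref{delta-to-disk}(6) enters: it guarantees that every object arising as a closed-factorization intermediate of $\lag\rho\rag$ already lies in the essential image of $\bdelta^{\op}$, so the index sets governing the inner $\btheta_{n-1}^{\op}$-factors coincide on the two sides. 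I expect this matching—reconciling the combinatorial indexing of wreath morphisms in $\btheta_n$ with the geometric closed/active factorizations of $\cDisk_1^{\vfr}$—to be the main obstacle, with property~(6) the decisive ingredient.

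Finally, for the claim that Segal covers map to purely closed covers I would again induct on $n$ via Definition~\ref{def.n-Segal-cov}: a Segal covering diagram of $\btheta_n$ is built from inert morphisms, and an inert morphism of $\btheta_n$ is a wreath of an inert morphism of $\bdelta$ with an inert morphism of $\btheta_{n-1}$. Since the level-one realization sends opposites of inerts to closed morphisms (Lemma~\ref{delta-to-disk}(3)) and Segal covers to purely closed covers (Lemma~\ref{delta-to-disk}(5)), and since the fibre-product inclusion of Lemma~\ref{wreath-formal} preserves and detects the pullback squares that factor through closed morphisms (Observation~\ref{tau-closed-covers}), the wreath of these compatibilities carries a Segal covering diagram of $\btheta_n$ to a limit diagram of closed morphisms in $\cDisk^{\vfr_{\leq 1},n}$, that is, to a purely closed cover, completing the induction.
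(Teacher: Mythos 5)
Your proposal follows the same skeleton as the paper's proof: the base case is Lemma~\ref{delta-to-disk}, the fiber variable of the wreath is handled by Observation~\ref{wreath-ff}, the passage from the wreath to the fiber product over $\Bun$ is Lemma~\ref{wreath-formal}/Corollary~\ref{wreath}, and the covers statement is the same induction grounded in Lemma~\ref{delta-to-disk}(3) and (5). The one place you diverge is the base-change step $\bdelta^{\op}\wr\btheta_{n-1}^{\op}\to\cDisk_1^{\vfr}\wr\btheta_{n-1}^{\op}$, which you single out as the main obstacle and propose to resolve with Lemma~\ref{delta-to-disk}(6). That step is in fact formal, and (6) plays no role in it: by the defining adjunction of the wreath, $\Map_{/\cD}(\cK,\cD\wr\cC)\simeq\Map\bigl(\cK\underset{\Fin_\ast}\times\Fin_{\ast\star},\cC\bigr)$ depends only on the composite $\cK\to\Fin_\ast$, so for any functor $\cD\to\cD'$ over $\Fin_\ast$ one has $\cD\wr\cC\simeq\cD\underset{\cD'}\times(\cD'\wr\cC)$; the induced functor $\cD\wr\cC\to\cD'\wr\cC$ is therefore a pullback of $\cD\to\cD'$ and is fully faithful whenever $\cD\to\cD'$ is. Concretely, the index set governing the inner $\btheta_{n-1}^{\op}$-factors of a wreath morphism is read off from the underlying map of based finite sets, which is the same on both sides because $\bdelta^{\op}\to\cDisk_1^{\vfr}$ lies over $\Fin_\ast$; no closed-active factorization intermediates enter, so no control over them is needed. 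Property~(6) of Lemma~\ref{delta-to-disk} earns its keep later, in the proof of Theorem~\ref{theta-ff}, where one must constrain the possible intermediates $D_0$ of a closed-active factorization $\DD^k\to D_0\to \DD^{k'}$. With that correction your argument goes through and matches the paper's.
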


\begin{proof}
Lemma~\ref{delta-to-disk} gives that $\bdelta^{\op} \to \cDisk^{\vfr}_1$ is fully faithful.  
Thereafter, Observation~\ref{wreath-ff} gives that $\btheta_n^{\op} \to (\cDisk^{\vfr})^{\wr n}$ is fully faithful.  
Lastly, Corollary~\ref{wreath} gives that $\btheta_n^{\op} \to (\cDisk^{\vfr})^{\wr n} \to \cDisk^{\vfr_{\leq 1},n}$ is fully faithful.
The statement about covers follows immediately by induction, with the case $n=1$ given by Lemma~\ref{delta-to-disk}.  
\end{proof}

\begin{lemma}\label{theta-preserves} 
For each dimension $n$, the functor
\[
\btheta_n^{\op} \xra{~\lag-\rag~} \cDisk^{\vfr}_n
\]
caries Segal covers to purely closed covers.

\end{lemma}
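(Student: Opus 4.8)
The plan is to exploit the factorization of cellular realization recorded just before Lemma~\ref{theta-ff-in-wreath},
\[
\btheta_n^{\op} \longrightarrow \cDisk^{\vfr_{\leq 1},n} \xra{~\circ~} \cDisk_n^{\vfr}~,
\]
and to reduce the claim to a statement about the composition functor $\circ$ alone. Indeed, Lemma~\ref{theta-ff-in-wreath} already asserts that the first functor carries Segal covering diagrams to purely closed covers. Thus it suffices to show that $\circ\colon \cDisk^{\vfr_{\leq 1},n} \to \cDisk_n^{\vfr}$ carries purely closed covers to purely closed covers; Lemma~\ref{theta-preserves} then follows by composing the two implications.

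To analyze $\circ$, I would first note that it preserves closed morphisms: this is exactly the factorization of $\circ$ through closed morphisms recorded after Definition~\ref{def.circ} (closed morphisms being stable under composition), together with its vari-framed refinement from Corollary~\ref{vfr-iterate}. Hence the image of a purely closed cover is at least a square of closed morphisms, and it remains only to verify that this square is a limit diagram. Here I would invoke Observation~\ref{tau-closed-covers}: since $\cDisk_n^{\vfr} \subset \Mfld_n^{\vfr} = \Bun^{\vfr_n}$, the projection to $\Bun$ both preserves and detects pullback diagrams that factor through closed morphisms. Consequently it is enough to check that the underlying square of stratified spaces in $\Bun$ — the image under the underlying composition functor $\Bun^n \to \Bun$ — is a pullback of closed morphisms.

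The remaining, and genuinely geometric, point is that the underlying $\circ\colon \Bun^n \to \Bun$ preserves purely closed covers. Recall from Remark~\ref{explicate-pure-cov} that the opposite of a purely closed cover is a pushout square of proper constructible embeddings. Now $\circ$ sends an $n$-fold tower $X_n \to \dots \to X_1 \to \ast$ to the composite bundle $X_n \to \ast$, leaving the top total space $X_n$ unchanged and only forgetting the intermediate bundle structure. Because pullbacks through closed morphisms in $\Bun^n \simeq \Bun^{\Bun^{n-1}}$ (Observation~\ref{bun-bun}) are computed levelwise — by iterating Observation~\ref{tau-closed-covers} and using that proper constructible embeddings compose and pull back — the top level of such a cover is already a pushout of proper constructible embeddings among the total spaces $X_n$. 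Extracting this top level, together with its composite projection, therefore yields precisely a purely closed cover in $\Bun$, and hence in $\cDisk_n^{\vfr}$.

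The main obstacle is this levelwise computation of pullbacks-through-closed-morphisms in the iterated category $\cDisk^{\vfr_{\leq 1},n}$, and the attendant verification that the induced vari-framing on the composite, supplied by Corollary~\ref{vfr-iterate}, is compatible with the pushout structure on total spaces. Once one knows the underlying square of total spaces is a pushout of proper constructible embeddings, the vari-framing datum is forced by the detection property of Observation~\ref{tau-closed-covers}, so no further choices intervene; the base case $n=1$ is exactly property~(5) of Lemma~\ref{delta-to-disk}.
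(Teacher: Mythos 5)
Your reduction follows the paper's own proof almost step for step: factor cellular realization through $\cDisk^{\vfr_{\leq 1},n} \xra{~\circ~} \cDisk_n^{\vfr}$, invoke Lemma~\ref{theta-ff-in-wreath} (with base case Lemma~\ref{delta-to-disk}(5)) for the first functor, use Observation~\ref{tau-closed-covers} to reduce the second functor to the unstructured $\Bun^n \xra{\circ} \Bun$, and induct on $n$ down to the case $n=2$. All of that matches the paper and is fine.

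The gap is in the last, genuinely geometric step. After the reductions, what must be shown is: for a pushout square $Y \cong Y'\underset{Y_0}\amalg Y''$ in $\strat$ along proper constructible embeddings, and a constructible bundle $X\to Y$, the square of restrictions $X_{|Y_0}\to X_{|Y'},\, X_{|Y''}\to X$ is again a pushout along proper constructible embeddings. You assert this (``the top level of such a cover is already a pushout \dots among the total spaces $X_n$''), but your justification --- that pullbacks through closed morphisms in $\Bun^{\Bun^{n-1}}$ are ``computed levelwise by iterating Observation~\ref{tau-closed-covers}'' --- does not deliver it. That observation concerns only the projection $\Bun^{\Bun}\to \Bun$ remembering the \emph{base}; it tells you the square of bases is a purely closed cover, but says nothing about the square of total spaces, which is exactly the data that $\circ$ extracts. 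Likewise, the fact that proper constructible embeddings compose and pull back only produces the comparison map $X_{|Y'}\underset{X_{|Y_0}}\amalg X_{|Y''}\to X$; it does not show this map is an isomorphism. The missing input is a descent statement for constructible bundles along closed decompositions of the base --- in the paper's language, that the presheaf $\Bun$ on $\strat$ is a striation sheaf and in particular cone-local, so that it carries the relevant pushouts in $\strat$ to limit diagrams. This is precisely the one-line geometric fact the paper invokes to close the argument, and it is not a formal consequence of the observations you cite.
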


\begin{proof}
Lemma~\ref{delta-to-disk} gives the case of $n=1$.  
This implies the assertion for the functor $\btheta_n^{\op} \to \cDisk^{\vfr_{\leq 1},n}$.
Using Lemma~\ref{theta-ff-in-wreath} just above, it remains to verify that the functor $\cDisk^{\vfr_{\leq 1},n}\xra{\circ} \cDisk^{\vfr}_n$ carries purely closed covers to purely closed covers.  
Observation~\ref{tau-closed-covers} gives that purely closed covers in $\Bun^{\cB}$ are detected by the projection $\Bun^{\cB} \to \Bun$.
So it suffices to verify that the functor $\Bun^n \xra{\circ} \Bun$ carries purely closed covers to purely closed covers. 
By induction, we may reduce to the case $n=2$.
This case follows from the following observation, which follows because the sheaf $\Bun$ on $\strat$ is a striation sheaf, and in particular it is \emph{cone-local} (see~\S4 of~\cite{striat} for a discussion of this term):
\begin{itemize}
\item[~]
For each pushout diagram in $\strat$
\[
\xymatrix{
Y_0  \ar[r]  \ar[d]
&
Y'' \ar[d]
\\
Y'  \ar[r]
&
Y
}
\]
in which each morphism is a proper constructible embedding, and for each constructible bundle $X\to Y$, the square of pullbacks
\[
\xymatrix{
X_{|Y_0}  \ar[r]  \ar[d]
&
X_{|Y''} \ar[d]
\\
X_{|Y'}  \ar[r]
&
X
}
\]
too is a pushout. 

\end{itemize}
\end{proof}

\begin{theorem}\label{theta-ff}
\footnote{
This result is false without the dimension bound; see the erratum included in~\S\ref{sec.erratum}.
}
For each dimension $n<3$, the cellular realization functor
\[
\lag-\rag\colon \btheta_n^{\op} \longrightarrow \cDisk^{\vfr}_n
\]
is fully faithful.

\end{theorem}

\begin{proof}
Let $T,T'\in \btheta_n$ be objects.
We must show that the map of spaces 
\begin{equation}\label{theta-map}
\btheta_n(T',T) \xra{\lag-\rag} \cDisk^{\vfr}_n\bigl(\lag T\rag,\lag T'\rag\bigr)
\end{equation}
is an equivalence.  We will prove ~(\ref{theta-map}) is an equivalence by induction on the dimension of the underlying stratified space of $\lag T'\rag$. 
Suppose the dimension of $\lag T'\rag$ is zero.
Necessarily, $T'=c_0$ is the $0$-cell and $\lag T'\rag = \DD^0$ is the hemispherical $0$-disk, which is just $\ast$.  
As such, both $\btheta_n(c_0,T)$ and $\cDisk^{\vfr}_n\bigl(\lag T\rag,\DD^0\bigr)$ are compatibly identified as the space $\lag T\rag_0$ which is the $0$-dimensional stratum of the stratified space $\lag T\rag$.  
This proves the base case of our induction. 

Now suppose~(\ref{theta-map}) is an equivalence whenever the dimension of the underlying stratified space of $\lag T'\rag$ is less than $k'$.  
We proceed by induction on the dimension of the underlying stratified space of $\lag T\rag$.
Suppose the dimension of $\lag T\rag$ is zero.
Necessarily, $T=c_0$ is the $0$-cell and $\lag T\rag=\DD^0$ is the hemispherical $0$-disk, which is just $\ast$.  
As such, both $\btheta_n(T',c_0)$ and $\cDisk^{\vfr}_n\bigl(\DD^0, \lag T'\rag\bigr)$ are terminal.  
This proves the base case of our nested induction. 
So suppose~(\ref{theta-map}) is an equivalence whenever the dimension of the underlying stratified space of $\lag T\rag$ is less than $k$.

By construction, each object of $\btheta_n^{\op}$ can be witnessed as a finite iteration of Segal covers among the cells $c_k$ $(0\leq k \leq n)$.
Because Segal covers are in particular limit diagrams in $\btheta_n^{\op}$, there is a finite limit diagram $\cU^{\tl} \to \btheta_n^{\op}$ whose value on the cone-point is $T'$ and whose value on each $U\in \cU$ is a $k$-cell for some $0\leq k \leq n$.  
Lemma~\ref{theta-preserves} gives that the composite functor $\cU^{\tl} \to \btheta_n^{\op} \to \cDisk^{\vfr}_n$ too is a finite limit diagram whose value on the cone-point is $\lag T'\rag$ and whose value on each $U\in \cU$ is a hemispherical $k$-disk for some $0\leq k \leq n$.  
This explains the vertical equivalences
\[
\xymatrix{
\btheta_n(T',T)   \ar[d]_-{\simeq}  \ar[rr]^-{\lag-\rag} 
&&
\cDisk^{\vfr}_n\bigl(\lag T\rag,\lag T'\rag\bigr)  \ar[d]^-{\simeq}
\\
\underset{U\in \cU}{\sf lim} \btheta_n(U,T)    \ar[rr]^-{\lag-\rag} 
&&
\underset{U\in \cU} {\sf lim} \cDisk^{\vfr}_n\bigl(\lag T\rag,\lag U\rag\bigr).
}
\]
Therefore, the map~(\ref{theta-map}) is an equivalence if and only if it is for $T'= c_{j'}$ for each $0\leq j' \leq n$.  '

Lemma~\ref{theta-preserves} gives that each object $T\in \btheta_n^{\op}$ can be witnessed as a finite colimit diagram $\cV^{\tr} \to \btheta_n^{\op}$ with the value on the cone-point $T$ and with the value on each $V\in \cV$ a $k$-cell for some $0\leq k \leq n$.  
Lemma~\ref{cls-mor-sections} grants that the composite functor $\cV^{\tr} \to \btheta_n^{\op} \to \cDisk^{\vfr}_n$ is again a finite colimit diagram whose value on the cone-point is $\lag T\rag$ and whose value on each $V\in \cV$ is a hemispherical $k$-disk for some $0\leq k \leq n$.  
This explains the vertical equivalences
\[
\xymatrix{
\btheta_n(T',T)   \ar[d]_-{\simeq}  \ar[rr]^-{\lag-\rag} 
&&
\cDisk^{\vfr}_n\bigl(\lag T\rag,\lag T'\rag\bigr)  \ar[d]^-{\simeq}
\\
\underset{V\in \cV}{\sf lim} \btheta_n(T',V)    \ar[rr]^-{\lag-\rag} 
&&
\underset{V\in \cV} {\sf lim} \cDisk^{\vfr}_n\bigl(\lag V\rag,T'\bigr).
}
\]
Therefore, the map~(\ref{theta-map}) is an equivalence if and only if it is for $T= c_j$ for each $0\leq j \leq n$.

We have reduced the problem of showing~(\ref{theta-map}) is an equivalence to the case $T=c_k$ and $T'=c_{k'}$, with the assumption that the map~(\ref{theta-map}) is an equivalence whenever the dimensions of $\lag T\rag$ and $\lag T'\rag$ are smaller than $k$ and $k'$, respectively.  
Lemma~\ref{fact-sys} states a closed-active factorization system on $\cDisk^{\vfr}_n$.
Lemma~\ref{theta-ff-in-wreath} implies such a factorization system for $\btheta_n^{\op}$.   
This is to say that the composition maps from the coends
\begin{equation}\label{coends-t}
\circ\colon \btheta_n^{\sf act}(-,T) \underset{\btheta_n^\sim}\bigotimes \btheta_n^{\sf cls}(T',-) \xra{~\simeq~} \btheta_n(T',T)
\end{equation}
and
\begin{equation}\label{coends-d}
\circ\colon \cDisk_n^{\vfr, \sf cls}\bigl(\lag T\rag,-\bigr) \underset{\cDisk_n^{\vfr,\sim}}\bigotimes \cDisk_n^{\vfr, \sf act}\bigl(-,\lag T'\rag\bigr) \xra{~\simeq~} \cDisk^{\vfr}_n\bigl(\lag T\rag,\lag T'\rag\bigr)
\end{equation}
are equivalences.
Now, consider a composable pair of morphisms $\DD^k\to D_0\to \DD^{k'}$ in $\cDisk^{\vfr}_n$ in which the first morphism is closed and the second is active.  
The target of a closed morphism from $\DD^k$ is either $\DD^l$ or $\partial \DD^l$ for some $k \geq l\leq k'$.  

We now rule out the case that this target is $\partial \DD^l$ for some $l\leq k$.
We show by contradiction that there are no active morphisms from $\partial \DD^l$ to $\DD^{k'}$.
So assume there exists an active morphism $\partial \DD^l \xra{a} \DD^{k'}$.  
Necessarily, $l-1\leq k'$.
Consider the composition $\DD^{l-1} \xra{i} \partial \DD^l \xra{a} \DD^{k'}$ with the standard creation morphism of Example~\ref{stand-cr} -- this composite is again an active morphism.
By induction, this composite morphism $\DD^{l-1} \xra{ai}\DD^{k'}$ is in the image of $\btheta_n^{\op}$.
There is a unique such morphism in $\btheta_n^{\op}$, which is $\DD^{l-1}\xra{i} \DD^{k'}$, and it does not factor through $\partial \DD^{l-1}$.  
This is a contradiction.

The conclusion of the previous paragraph is that $D_0 \simeq \DD^l$.
With this, the equivalences~(\ref{coends-t}) and~(\ref{coends-d}) reduce us, by induction, to showing that each of the two maps
\[
\btheta_n^{\sf cls,\op}(c_{k},c_k) \xra{~\lag-\rag~} \cDisk_n^{\vfr,\sf cls}\bigl(\DD^k,\DD^{k}\bigr)
\qquad\text{ and }\qquad
\btheta_n^{\sf act,\op}(c_{k'},c_{k'}) \xra{~\lag-\rag~} \cDisk_n^{\vfr,\sf act}\bigl(\DD^{k'},\DD^{k'}\bigr)
\]
is an equivalence of spaces.  
However, every closed endomorphism $\DD^k \to \DD^k$ is an isomorphism in $\cDisk^{\vfr}_n$.
We are thus reduced to showing the righthand equivalence which is just for spaces of active morphisms between cells of the same dimension.  
We will now explain the solid diagram among mapping spaces
\[
\xymatrix{
\btheta_n^{\sf act,\op}(c_k,c_k)\ar[rr]  \ar[d]_-{\lag-\rag}
&&
\underset{{c_l\xra{\cls}c_k}_{,l<k}} {\sf lim} \btheta_n(c_l, c_k) \ar[d]^-{\lag-\rag}_-{\simeq}
\\
\cDisk_n^{\vfr, \sf act}(\DD^k,\DD^k) \ar[r]  \ar@{-->}[d]^-{(b)}   \ar@{-->}[rd]^-{(a)}
&
\cDisk^{\vfr}_n(\DD^k, \partial \DD^k)  \ar[r]^-{\simeq}_-{(1)}  \ar[dr]_-{\simeq}^-{(2)}
&
\underset{{\DD^k\xra{\cls}\DD^l}_{,l<k}} {\sf lim} \cDisk^{\vfr}_n(\DD^k, \DD^l)
\\
\Aut_{\cDisk_n^{\vfr}}(\partial \DD^k)  \ar[r]
&
\cDisk_n^{\vfr, \sf act}(\partial \DD^k, \partial \DD^k)    \ar[r]
&
\cDisk^{\vfr}_n(\partial \DD^k, \partial \DD^k).
}
\]
The middle horizontal equivalence~(1) is from the standard hemispherical closed cover of the hemispherical $(k-1)$-sphere $\partial \DD^k$ by hemispherical disks, each of dimension smaller than $k$.
The upper left vertical arrow is induced by the cellular realization functor; it is an equivalence by induction.
The lower diagonal equivalence~(2) is from the closed-active factorization system on $\cDisk^{\vfr}_n$.
The bottom horizontal maps are inclusions of components.
We will now argue the existence of the dashed arrow (a)

Consider a constructible bundle $E\to \Delta^1$ classified by an active endomorphism in $\Bun$ of $\DD^k$.
Consider the fiberwise boundary $\partial E \subset E$.
The projection $\partial E\to \Delta^1$ is a constructible bundle classifying an endomorphism in $\Bun$ of $\partial \DD^k$.  
For the morphism in $\Bun$ classifying $E\to \Delta^1$ to be active, it is equivalent to the condition that the continuous map $E\to \Delta^1$ is a fiber bundle of underlying topological spaces.  
It follows that the constructible bundle $\partial E\to \Delta^1$ too is a fiber bundle of underlying topological spaces.
Thereafter, it is classified by an \emph{active} endomorphism of $\partial \DD^k$.  
With this consideration, we conclude the factorization which is~(a).

Continuing with the situation of the previous paragraph, if $\partial E\to \Delta^1$ is a fiber bundle of stratified spaces (not just underlying topological spaces), then necessarily $E\to \Delta^1$ too is a conically smooth fiber bundle of stratified spaces.
We conclude that the inclusion of components
\[
\Aut_{\cDisk_n^{\vfr}}(\partial \DD^k)
\xra{~\simeq~}
\cDisk_n^{\vfr,\sf act}(\DD^k,\DD^k)
\]
is an equivalence of spaces.
This provides the factorization which is~(b).  
Because $\btheta_n^{\sf aut}(c_k,c_k) \simeq \ast$ is terminal, we have reduced the problem of showing~(\ref{theta-map}) is an equivalence to that of showing the space of automorphisms
\[
\Aut_{\cDisk_n^{\vfr}}(\partial \DD^k)~\simeq~\ast
\]
is terminal.
This is the statement of Corollary~\ref{no-autos}.
\end{proof}

The following two lemmas were used in the above proof of Theorem~\ref{theta-ff}, the fully faithfulness of cellular realization.

\begin{lemma}\label{cls-mor-sections}
For each inert morphism $T\to T'$ in $\btheta_n^{\op}$ there is a natural section $T'\to T$.  
Furthermore, for each solid pullback diagram in $\btheta_n^{\op}$ among inert morphisms
\[
\xymatrix{
T        \ar[d]      \ar[r]
&
T''\ar[d]\ar@{-->}@/_.7pc/[l]
\\
\ar@{-->}@/^.7pc/[u]T'  \ar[r]&T_0\ar@{-->}@/^.7pc/[l]\ar@{-->}@/_.7pc/[u]
}
\]
the natural sections determine a pushout diagram.  

\end{lemma}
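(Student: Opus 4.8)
The plan is to prove both assertions by induction on $n$, reducing everything to the base case in $\bdelta^{\op}$ where the combinatorics is explicit. Recall from Definition~\ref{def.theta} that $\btheta_n^{\op} = \bdelta^{\op} \wr \btheta_{n-1}^{\op}$ and that the inert subcategory is $\btheta_{n,\sf inrt}^{\op} = \bdelta_{\sf inrt}^{\op} \wr \btheta_{n-1,\sf inrt}^{\op}$. An inert morphism in $\btheta_n^{\op}$ is thus packaged as the data of an inert morphism $[p]\to[q]$ in $\bdelta^{\op}$ (equivalently, a based injection $J_+ \to I_+$ of the associated pointed sets via Example~\ref{simp-circle}) together with compatible inert morphisms among the $\btheta_{n-1}^{\op}$-labels sitting over the non-basepoint fibers. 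The whole point is that inerts in $\bdelta$ correspond, under the simplicial circle $\Delta[1]/\partial\Delta[1]$, to \emph{injections} on based sets, so the construction of a section is a purely combinatorial matter of retracting an injection.

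First I would establish the section for $\bdelta^{\op}$ itself. An inert morphism $[p]\to[q]$ in $\bdelta^{\op}$ is the opposite of an inert $\rho\colon[p]\hookrightarrow[q]$ in $\bdelta$, which identifies $\{0,\dots,p\}$ with a consecutive block $\{\rho(0),\dots,\rho(0)+p\}\subset\{0,\dots,q\}$. The natural section is the opposite of the unique active surjection $[q]\to[p]$ that collapses everything below $\rho(0)$ to $0$ and everything above $\rho(p)$ to $p$ and is the identity on the block; this is natural in the block, hence assembles over all of $\bdelta_{\sf inrt}$. Equivalently, in terms of based sets the section is the collapse map $I_+\to J_+$ sending the image of $J$ identically and everything else to the basepoint, which is manifestly functorial in injections. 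I would then feed this through the wreath construction: since $\cD\wr(-)$ is built as a right adjoint from pullback along $\Fin_{\ast\star}\to\Fin_\ast$ (Definition~\ref{def.wreath}), and since sections for the $\bdelta^{\op}$-layer and, by the inductive hypothesis, for the $\btheta_{n-1}^{\op}$-labels assemble compatibly over the fibers, one obtains a section $T'\to T$ of the given inert $T\to T'$. The naturality clause in Observation~\ref{theta-stand-bump}, that the structural inclusions preserve inert morphisms and colimits, is what lets the inductive sections glue.

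For the second statement, consider the solid pullback square among inert morphisms. Applying the projection $\btheta_n^{\op}\to\bdelta^{\op}$ of Definition~\ref{def.theta}, a pullback of inerts maps to a pullback of inerts in $\bdelta^{\op}$; translating through Example~\ref{simp-circle} and the identification $\bdelta_{\sf inrt}^{\op}\simeq\bdelta^{\op}\underset{\Fin_\ast}\times\Fin_{\sf inj}^{\op}$, this becomes a pushout square of based sets along injections, namely $I'_+ \sqcup_{(I_0)_+} I''_+ \xrightarrow{\simeq} I_+$. On such a pushout the collapse-map sections of the previous paragraph are readily checked to be compatible and to exhibit $T$ as the pushout $T'\amalg_{T_0}T''$: a based set decomposes as the union of the two blocks glued along their common sub-block, and the retractions split accordingly. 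I would then propagate this through the wreath layers by induction, using that $\Fin_{\ast\star}$ and $\btheta_{n-1}^{\op}$ are ordinary categories so all the coherence is at the level of $0$-types, and that the labels over the pushed-out fibers are themselves governed by the inductive hypothesis.

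The main obstacle I anticipate is bookkeeping the wreath-level compatibility precisely: one must verify that the fiberwise sections over the $\btheta_{n-1}^{\op}$-labels, supplied by induction, match up along the basepoint-collapsing maps so that they genuinely assemble into a morphism of $\btheta_n^{\op}$ rather than merely a levelwise collection, and likewise that the pushout square's universal property is respected fiber-by-fiber under the right-adjoint wreath construction. Since everything here is a diagram of ordinary categories (all mapping spaces are $0$-types by the remark following Definition~\ref{def.theta}), this is a matter of careful but elementary combinatorial verification rather than genuine homotopical subtlety, so I expect the argument to go through cleanly once the base case in $\bdelta^{\op}$ is recorded.
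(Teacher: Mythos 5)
Your proposal is correct and follows essentially the same route as the paper: construct the section at the $\bdelta^{\op}$ level as the collapse-onto-the-consecutive-block retraction (which the paper writes directly in terms of linearly ordered sets rather than based sets, a cosmetic difference), verify functoriality and the pushout claim there by inspecting underlying sets, and then induct on $n$ through the wreath construction, using that the wreath is a right adjoint to recognize the limit/colimit layerwise. The only divergence is presentational — the paper makes the inductive colimit step slightly more explicit by stating that a diagram in $\bdelta^{\op}\wr\cC$ is a limit when its images in $\bdelta^{\op}$ and in $\cC$ are — but your argument covers the same ground.
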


\begin{proof}
We first consider the $n=1$ case where $\btheta_1^{\op} =\bdelta^{\op}$.
Let $\rho^{\op}$ be an inert morphism in $\bdelta^{\op}$ from $[q]$ to $[p]$; it is the data of a map of linearly ordered sets $[p]\xra{\rho}[q]$.  
We define the section $\sigma^{\op}\colon [p]\to [q]$ to the morphism $\rho^{\op}$ in $\bdelta^{\op}$ as the data of the map of linearly ordered sets $[q]\xra{\sigma}[p]$ given as follows.
Declare $\sigma(i)= \rho^{-1}(i)$ whenever $i$ lies in the image of $\rho$; declare $\sigma(i) = 0$ whenever $i<\rho(j)$ for all $j\in [p]$; declare $\sigma(i)=q$ whenever $i>\rho(j)$ for all $j\in [p]$.  
Using that $\rho^{\op}$ is inert, these assignments are well-defined and respect linear orders.
The construction of this section is functorial in the following sense:
For $T_0\to T'\to T$ a composition of inert morphisms in $\bdelta^{\op}$, the section of the composite $T_0 \hookrightarrow T$ is the composite of the sections.

It remains to show that if $T = T' \underset{T_0}\times T''$ is a pullback in $\bdelta^{\op}$ among inert morphisms then $T$ is the pushout of the diagram formed by the sections. 
Because the square is comprised of inert morphisms, the pullback in question indeed exists.
For the same reason, as a linearly ordered set, the underlying set of this pullback is the pushout of the underlying sets of the constituents of the pullback.  
The desired pushout follows by inspecting the constructions of the sections to the inert morphisms.

If $n=0$, the result is trivially true.
We proceed by induction on $n>0$.  
Assume the result for $\btheta_{n-1}^{\op}$. 
Consider an inert morphism $([p],(T_i)_{0<i\leq p}) \to ([p'],(T_{i'})_{0<i'\leq p'})$ in $\btheta_n^{\op}$.  
By definition, this is the data of an inert morphism $[p]\xra{\rho^{\op}} [p']$ in $\bdelta^{\op}$ together with, for each $0<i'\in p'$, an inert morphism $T_{\rho(i')} \to T'_{i'}$ in $\btheta_{n-1}^{\op}$.  
By induction, there is a natural section $[p']\to [p]$ as well as a natural section $T'_{i'} \ra T_{\rho(i')}$ for each $0<i'\leq p'$.
Together, these define a section
$([p'],(T_{i'})_{0<i'\leq p'}) \to ([p],(T_i)_{0<i\leq p})$ in $\btheta_n^{\op}$.
Because it is so for $\bdelta^{\op}$ and $\btheta_{n-1}^{\op}$ by induction, the construction of this section associated to the given inert morphism is functorial: the composition of the sections is the section of the composite.  
It remains to check that this process converts an inert colimit diagram in $\btheta_n$ to a limit diagram. 
For this, we use that the wreath construction is a right adjoint; specifically, we can recognize a diagram in $\bdelta^{\op}\wr \cC$ as a limit if the corresponding diagram in $\bdelta^{\op}$ is a limit and the corresponding diagram in $\cC$ is a limit.
\end{proof}

\begin{lemma}
The cellular realization $\lag-\rag: \btheta_n^{\op} \ra \cDisk_n^{\vfr}$ preserves the dashed colimits of Lemma~\ref{cls-mor-sections}.
\end{lemma}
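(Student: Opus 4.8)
The plan is to mirror the proof of Lemma~\ref{theta-preserves}, treating separately the two factors of the cellular realization $\btheta_n^{\op} \to \cDisk^{\vfr_{\leq 1},n} \xra{\circ} \cDisk_n^{\vfr}$ supplied by Lemma~\ref{theta-ff-in-wreath}, and proceeding by induction on $n$. First I would record the dual of the observation behind Lemma~\ref{theta-preserves}: whereas the solid Segal square of Lemma~\ref{cls-mor-sections} sits among inert morphisms and is carried to a purely closed cover, the dashed square sits among \emph{active} morphisms and should be carried to a pushout among active morphisms. Concretely, in the base category $\bdelta^{\op}$ each natural section is, by its explicit definition in the proof of Lemma~\ref{cls-mor-sections}, the opposite of an active surjection $[q]\to[p]$; Lemma~\ref{delta-to-disk}(4) then carries it to an active (indeed creation) morphism of $\cDisk_1^{\vfr}$, and the wreath-inductive construction of the sections propagates this through all $n$.

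The inductive step would reduce the claim to two assertions, exactly as in Lemma~\ref{theta-preserves}. The first is that the wreath-level functor $\btheta_n^{\op} \to \cDisk^{\vfr_{\leq 1},n}$ preserves the section pushouts; since the wreath is a right adjoint and, by Observation~\ref{wreath-ff}, respects the relevant constructions factorwise, this follows from the single-wreath-factor case, i.e.\ from the $n=1$ statement for $\bdelta^{\op}\to\cDisk_1^{\vfr}$. The second is that the composition functor $\circ\colon \cDisk^{\vfr_{\leq 1},n} \to \cDisk_n^{\vfr}$ carries the resulting pushouts to pushouts. For this I would argue as in Lemma~\ref{theta-preserves}: reduce by the detection of covers under $\Bun^\tau\to\Bun$ (cf.\ Observation~\ref{tau-closed-covers}) to the unstructured functor $\Bun^n \xra{\circ}\Bun$, then by induction to the case $n=2$, and invoke the cone-locality of the striation sheaf $\Bun$ from \S4 of~\cite{striat} to see that pullback along a constructible bundle preserves the gluing colimit recorded by the section square.

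The crux, and the step I expect to be the main obstacle, is the base case $n=1$: that $\lag-\rag\colon \bdelta^{\op}\to\cDisk_1^{\vfr}$ sends a Segal-cover section square to a genuine pushout. The difficulty is that this colimit cannot be computed naively on underlying stratified spaces, because the section morphisms are opposites of proper constructible \emph{surjections}, so the literal fiber product in $\strat$ has the wrong dimension; the colimit must be taken inside $\cDisk_1^{\vfr}$ as a striation sheaf. I would instead verify the universal property directly in the ordinary category $\cDisk_1^{\vfr}$, which is a $1$-category by Lemma~\ref{disk-1}. For target objects lying in the essential image of $\bdelta^{\op}$ this is immediate from full faithfulness (Lemma~\ref{delta-to-disk}(1)) combined with the fact, from Lemma~\ref{cls-mor-sections}, that the section square is already a pushout in $\bdelta^{\op}$.

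For a general target $Y\in\cDisk_1^{\vfr}$ one must work harder, since $Y$ need not be a subdivided interval. Here I would use the closed-active factorization system (Lemma~\ref{fact-sys}) to factor an arbitrary morphism $\lag[p]\rag\to Y$, together with the explicit model of morphisms out of a subdivided interval obtained in the proof of Lemma~\ref{delta-to-disk}, to reduce the universal property to the statement that a morphism out of $\lag[p]\rag$ is determined by, and can be assembled from, its restrictions along the two creation maps from the halves of the subdivision whenever these agree on the overlap $\lag[p_0]\rag$. That compatibility datum is precisely what the section square encodes, so the universal property holds, completing the base case and hence the induction.
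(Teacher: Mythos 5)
Your proposal takes a genuinely different, and much longer, route than the paper. The paper's entire proof is two sentences: each realized section $\lag T'\rag \to \lag T\rag$ is a \emph{creation} morphism, hence lies in the image of the limit-preserving monomorphism $(\Strat^{\pcbl})^{\op} \to \Bun$ of Theorem~\ref{thm.class-maps}; and the square formed by the corresponding surjective proper constructible bundles, viewed as a diagram in $\Strat^{\cbl}$, is a pullback, so the realized section square inherits the desired (co)limit property along that monomorphism. There is no induction on $n$, no wreath decomposition, and no separate treatment of the two factors $\btheta_n^{\op}\to\cDisk^{\vfr_{\leq 1},n}\xra{\circ}\cDisk_n^{\vfr}$. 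Note also that your remark that ``the literal fiber product in $\strat$ has the wrong dimension'' does not engage with the paper's mechanism: the pullback the paper invokes is taken in the non-full subcategory $\Strat^{\cbl}$, whose morphisms are proper constructible bundles, and a limit there is not the fiber product of underlying stratified spaces. If you decline that route, you owe a replacement argument, and that is where the proposal comes up short.

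Two concrete gaps. First, your base case $n=1$ is not a proof: ``reduce the universal property to the statement that a morphism out of $\lag[p]\rag$ is determined by, and can be assembled from, its restrictions along the two creation maps $\ldots$ whenever these agree on the overlap'' is a restatement of the pushout property, not an argument for it. For a general target $Y\in\cDisk_1^{\vfr}$ you supply no mechanism for amalgamating a compatible pair $\lag[p']\rag\to Y$, $\lag[p'']\rag\to Y$ into a morphism $\lag[p]\rag\to Y$, nor for uniqueness; the closed-active factorization system and ``the explicit model of morphisms out of a subdivided interval'' are named but never actually deployed, and this base case is essentially the whole content of the lemma. Second, the inductive step imports the wrong tool: the cone-locality observation in the proof of Lemma~\ref{theta-preserves} says that pulling back a constructible bundle along a pushout of proper constructible \emph{embeddings} yields a pushout --- that is exactly why $\circ$ preserves \emph{purely closed} covers. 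The section squares consist of \emph{creation} morphisms, i.e.\ opposites of \emph{surjective} proper constructible bundles, and that observation does not apply to them; you would need a separate base-change statement for such surjections, which is precisely the role played by the paper's appeal to the limit-preserving monomorphism $(\Strat^{\pcbl})^{\op}\to\Bun$.
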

\begin{proof}
Each section $\lag T'\rag \ra \lag T\rag$ is a creation, which is to say that it lies in the image of the limit preserving monomorphism $\Strat^{\cbl,\op}\ra \Bun$. 
As a diagram in $\Strat^{\cbl}$, it is a pullback.  
\end{proof}

\section{Factorization homology}
We now give a definition of factorization homology.
We do this in two conceptual steps.
The first step can be interpreted as extending sheaves from a basis for a topology.
The second step can be interpreted as integration.
We leave to later works a thorough examination of the properties of factorization homology.

\subsection{Higher categories}

We recall a slight modification of Rezk's definition of $(\infty,n)$-categories.  
\begin{definition}[After~\cite{rezk-n}]\label{def.n-cat}
The $\infty$-category $\Cat_{(\infty,n)}$ of $(\infty,n)$-categories is equipped with a functor $\btheta_n \to \Cat_{(\infty,n)}$ and is initial among all such for which
\begin{itemize}
\item $\Cat_{(\infty,n)}$ is presentable;

\item {\bf Segal:} The functor $\btheta_n \to \Cat_{(\infty,n)}$ carries Segal covering diagrams to colimit diagrams;

\item {\bf Univalent:} The functor $\btheta_n \to \Cat_{(\infty,n)}$ carries univalence diagrams to colimit diagrams.  

\end{itemize}

\end{definition}

\noindent
Necessarily, the given functor $\btheta_n\to \Cat_{(\infty,n)}$ is fully faithful.  
As such, the restricted Yoneda functor gives a presentation
\[
\Cat_{(\infty,n)}   \hookrightarrow   \Psh(\btheta_n)
\]
as the full $\oo$-subcategory consisting of those functors $\cC\colon \btheta_n^{\op} \to \Spaces$ that carry the opposites of both Segal covering diagrams and univalence diagrams to limit diagrams.
We will sometimes refer to such presheaves as \emph{univalent Segal $\btheta_n$-spaces}.

\begin{example}
For each $n>0$ consider the ordinary category ${n\sf Cat}$ of ordinary categories enriched over the ordinary Cartesian category ${(n-1)\sf Cat}$, where ${ 0\sf Cat}:={\sf Set}$.  
In general, for $\sV$ an ordinary category that admits finite products, there is a functor $\bdelta^{\op} \wr \sV^{\op} \to \Cat(\sV)^{\op}$ to $\sV$-enriched categories; this is constructed in~\cite{berger}.  
By induction on $n$, there results a functor $\btheta_n \hookrightarrow {n \sf Cat}$; in~\cite{berger} this functor is shown to be fully faithful.  
The left Kan extension of the defining functor $\btheta_n\to \Cat_{(\infty,n)}$ along this functor defines a functor between $\infty$-categories
\[
{n\sf Cat} \longrightarrow \Cat_{(\infty,n)}~.
\]
In particular, each strict $n$-category determines an $(\infty,n)$-category.  

\end{example}

\begin{remark}
The work of Barwick and Schommer-Pries~\cite{clark-chris} establishes an axiomatic approach to $(\infty,n)$-categories, in the background of quasi-categories. 
They show that each candidate quasi-category of $(\infty,n)$-categories is equivalent to Rezk's Definition~\ref{def.n-cat} above.  
 \end{remark}

\begin{example}\label{assoc}
For $n=1$, the standard functor $\bdelta^{\op} \to {\sf Assoc}$ over $\Fin_\ast$ to the associative $\infty$-operad determines a fully faithful functor 
\[
\fB \colon \Alg_{\sf Assoc}(\Spaces^\times) ~\hookrightarrow~\Cat_{(\infty,1)}^{\ast/}~\subset~\Psh(\bdelta)^{\ast/}
\]
from the $\infty$-category of associative algebras in the Cartesian symmetric monoidal $\infty$-category of spaces to pointed $(\infty,1)$-categories.
In this way, associative monoids in spaces give examples of $(\infty,1)$-categories. (The fact that this functor $\fB$ factors as indicated is because $\bdelta^{\op} \to {\sf Assoc}$ is an \emph{approximation}, as developed in~\S2.3.3 of~\cite{HA}.)
\end{example}

\begin{example}\label{n-assoc}
For $n>0$, the previous Example~\ref{assoc} inductively defines a functor 
\[
\btheta_n^{\op}\simeq \bdelta^{\op}\wr \btheta_{n-1}^{\op} \longrightarrow \cE_1\wr \cE_{n-1} \xra{~\times~} \cE_n
\]
over $\Fin_\ast$ to the $\infty$-operad of little $n$-disks.  
This functor determines a functor
\[
\fB^n\colon \Alg_{\cE_n}(\Spaces^\times) \longrightarrow \Cat_{(\infty,n)}^{\ast/}~\subset~\Psh(\btheta_n)^{\ast/}
\]
from $\cE_n$-algebras in the Cartesian symmetric monoidal $\infty$-category of spaces to pointed $(\infty,n)$-categories.  
In this way, $\cE_n$-algebras in spaces give examples of $(\infty,n)$-categories.  
In particular, $\cE_\infty$-algebras in spaces, such as commutative monoids and infinite-loop spaces, give examples of $(\infty,n)$-categories.  (The fact that this functor $\fB^n$ factors as indicated is because $\bdelta^{\op} \to {\sf Assoc}$ is an \emph{approximation}, because the wreath construction respects inert-coCartesian morphisms (see~\S2.4.4 of~\cite{HA}),
and because the map from the wreath to $\cE_n$ respects inert-coCartesian morphisms (see~\S5.1.2 of~\cite{HA}).)
\end{example}

\begin{remark}\label{commutative}
Example~\ref{n-assoc} in particular gives that an $\cE_\infty$-space determines an $(\infty,n)$-category for each $n\geq 0$.
This can be achieved more directly through the foundational work of Segal in~\cite{segal-gamma}.
Namely, in that work Segal shows how an infinite-loop space $C$ determines a functor $C\colon \Fin_\ast \to \Spaces$ that satisfies a \emph{reduced} condition, meaning $C(\ast)\simeq \ast$ is terminal, and what has been termed the \emph{Segal} condition,
meaning $C$ carries each pullback diagram in $\Fin_\ast$ among \emph{inert} morphisms to pullback squares among spaces.  (In this situation of based finite sets, a based map $I_+\to J_+$ is \emph{inert} if the restriction $I_{|J} \to J$ is an isomorphism.)

\end{remark}

\begin{example}\label{space-cat}
For each $0\leq k \leq n$, right Kan extension along the functor $\iota_k \colon \btheta_k \hookrightarrow \btheta_n$ of Observation~\ref{theta-stand-bump} defines a functor between $\infty$-categories
\[
(\iota_k)_\ast\colon \Cat_{(\infty,k)} ~\hookrightarrow~ \Cat_{(\infty,n)}
\]
which is fully faithful.  
Thus, $(\infty,k)$-categories are examples of $(\infty,n)$-categories.
In particular, there is a fully faithful functor from spaces, regarded as $\infty$-groupoids:
\[
\Spaces \simeq \Cat_{(\infty,0)} ~\hookrightarrow~\Cat_{(\infty,n)}~. 
\]

\end{example}

\subsection{Labeling systems from higher categories}
Toward the construction of factorization homology from higher categories, we explain here how an $(\infty,n)$-category determines a labeling system on a sufficiently finely stratified vari-framed $n$-manifold.

The next result makes use of the cellular realization functor $\lag-\rag\colon \btheta_n^{\op} \hookrightarrow \cDisk_n^{\vfr}$ of Definition~\ref{def.cellular}, which for $n<3$ Theorem~\ref{theta-ff} verifies is fully faithful.
We postpone the proof of this result to the end of this section.  
\begin{lemma}\label{fC}
The restricted Yoneda functor $(\cDisk_n^{\vfr})^{\op} \to \Psh(\btheta_n)$ takes values in $(\infty,n)$-categories.  

\end{lemma}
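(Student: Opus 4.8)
The plan is to unwind the restricted Yoneda functor and reduce the two defining conditions of Definition~\ref{def.n-cat} to properties of cellular realization that are already, essentially, in hand. For an object $D\in\cDisk_n^{\vfr}$ the associated presheaf is
\[
\fC(D)\colon \btheta_n^{\op}\to\Spaces,\qquad T\mapsto \Map_{\cDisk_n^{\vfr}}\bigl(D,\lag T\rag\bigr),
\]
that is, $\fC(D)=\Map_{\cDisk_n^{\vfr}}(D,-)\circ\lag-\rag$ with $\lag-\rag\colon\btheta_n^{\op}\to\cDisk_n^{\vfr}$ the cellular realization of Definition~\ref{def.cellular} (which Theorem~\ref{theta-ff} shows is fully faithful). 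The key structural observation is that the corepresentable functor $\Map_{\cDisk_n^{\vfr}}(D,-)$ preserves all limits. Consequently, to show $\fC(D)$ carries the opposite of each Segal covering diagram and each univalence diagram to a limit diagram of spaces, it suffices to prove that $\lag-\rag$ carries these diagrams -- regarded as limit diagrams in $\btheta_n^{\op}$ -- to limit diagrams in $\cDisk_n^{\vfr}$. This reduction is uniform in $D$, so I need only establish it once for the functor $\lag-\rag$ itself.

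The Segal condition is then immediate: Lemma~\ref{theta-preserves} states that $\lag-\rag$ carries Segal covers to purely closed covers, and purely closed covers are limit diagrams by Definition~\ref{def.closed-cover}. Applying the limit-preserving functor $\Map_{\cDisk_n^{\vfr}}(D,-)$ yields the required pullback squares of mapping spaces, which is precisely the Segal condition for $\fC(D)$.

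It remains to treat the univalence diagrams, which is the heart of the matter. I would argue that $\lag-\rag$ carries each univalence diagram to a limit diagram in $\cDisk_n^{\vfr}$ by induction on $n$, exploiting the parallel wreath-inductive descriptions of univalence diagrams (Definition~\ref{def.n-Segal-cov}) and of cellular realization (Definition~\ref{def.cellular}), under which $\lag-\rag$ is the composite of $\bdelta^{\op}\wr\lag-\rag_{n-1}$ followed by the composition functor $\circ$. The two types of univalence diagram reduce, respectively, to a lower-dimensional univalence diagram (handled by the inductive hypothesis, propagated through the wreath construction and $\circ$ using Observation~\ref{theta-stand-bump} together with the compatibility of $\circ$ with purely closed covers already exploited in the proof of Lemma~\ref{theta-preserves}), and to the $\bdelta$-univalence diagram in the first wreath coordinate with the remaining coordinate constant at $c_0$. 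The base case is thus the assertion that $\lag-\rag\colon\bdelta^{\op}\to\cDisk_1^{\vfr}$ carries the univalence diagram of $\bdelta$ to a limit diagram with cone point $\lag[0]\rag=\DD^0$. Since $\cDisk_1^{\vfr}$ is an ordinary category (Lemma~\ref{disk-1}) presented as a complete Segal space through the striation-sheaf formalism, this is precisely its completeness: the relevant limit collapses because isomorphisms are uniquely determined. This is underwritten by Theorem~\ref{thm.exit-facts}, where the univalence diagram is a colimit in $\Strat$ preserved by $\exit$, transported to $\cDisk_1^{\vfr}$ via its realization as a striation sheaf and the identity $\colim_{\cE}[\bullet]\simeq[0]$ recorded in Proposition~\ref{diagrams-to-colims}.

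The main obstacle I anticipate is exactly this univalence step: verifying that the composition functor $\circ$ and the wreath construction transport the realized univalence diagrams to genuine limit diagrams in $\cDisk_n^{\vfr}$, and pinning down the $n=1$ base case as the completeness of $\cDisk_1^{\vfr}$. The Segal condition and the reduction to target-side statements about $\lag-\rag$ are formal, but the univalence diagrams mix the simplicial and suspension directions, so the bookkeeping through the inductive wreath description will require care.
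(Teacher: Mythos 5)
Your handling of the Segal condition coincides with the paper's proof: Lemma~\ref{theta-preserves} sends Segal covers to purely closed covers, these are limit diagrams in $\cDisk_n^{\vfr}$, and the corepresentable $\Map_{\cDisk_n^{\vfr}}(D,-)$ preserves limits. The univalence half, however, has a genuine gap. First, observe that your ``reduction'' there is not a reduction: the assertion that $\lag-\rag$ carries a univalence diagram to a limit diagram in $\cDisk_n^{\vfr}$ means, by definition of a limit, that $\Map_{\cDisk_n^{\vfr}}(D,-)$ of that diagram is a limit of spaces for \emph{every} test object $D$ --- which is verbatim the statement that every $\fC(D)$ is univalent. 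All the content is therefore deferred to your base case, and that is where the argument breaks. The completeness of $\cDisk_1^{\vfr}$, which is what Lemma~\ref{disk-1}, Theorem~\ref{thm.exit-facts}, and the striation-sheaf formalism supply, concerns the simplicial space $[p]\mapsto \Map_{\Cat_\infty}([p],\cDisk_1^{\vfr})$, i.e.\ equivalences \emph{of} the category $\cDisk_1^{\vfr}$. What univalence of $\fC(X)$ requires is a statement about the simplicial spaces $[p]\mapsto \Map_{\cDisk_1^{\vfr}}(X,\lag [p]\rag)$, whose $1$-simplices are morphisms $X\to \DD^1$ --- creation and refinement morphisms, not isomorphisms of $\cDisk_1^{\vfr}$. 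These are different objects, and the former says nothing about the latter.

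The danger your argument must rule out is concrete: for the stratified circle $D=\ast\underset{\partial\DD^1}\amalg\DD^1$ there is a creation morphism $D\to\DD^1$ (wrapping $\DD^1$ around $D$), an endomorphism of the unique object of the Segal space $\fC(D)$; univalence demands that any such endomorphism which is invertible in $\fC(D)$ factor through $\DD^0\to\DD^1$, and nothing formal prevents the wrapping morphism from being a nondegenerate equivalence. Excluding this is the actual geometric content of the lemma. The paper does it by invoking the idempotent criterion of Lemma~\ref{univ-criterion} --- reducing univalence to the absence of non-identity $k$-idempotents --- and then ruling those out with the $p$-sheeted covers $F_p^k\colon D^k\to D^k$ and a counting argument on the link of a constructible bundle over $\Delta^2$: the fiber of the link over $\Delta^{\{0\}}$ has twice the cardinality of the fiber over $\Delta^{\{1\}}$, forcing the $k$-dimensional strata to be empty. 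Some argument of this kind is unavoidable, and your proposal contains none. (Your inductive step through the wreath construction is also suspect, since at stage $n$ the limit must be tested against all objects of $\cDisk_n^{\vfr}$ rather than those governed by the stage-$(n-1)$ hypothesis, but the base case is the primary defect.)
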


\begin{notation}\label{def.fC}
After Lemma~\ref{fC}, we denote the factorized restricted Yoneda functor as
\[
\fC\colon (\cDisk^{\vfr}_n)^{\op} \longrightarrow \Cat_{(\infty,n)}~.
\]

\end{notation}

\begin{remark}\label{fC-explain}
The construction of $\fC$, as a functor among $\infty$-categories, embodies many of the choices and constructions in this article.
To given some intuition for the value $\fC(M)$ on a compact vari-framed disk-stratified $n$-manifold $M$,
there is a non-identity $i$-morphism of this $(\infty,n)$-category for each connected $i$-dimensional stratum of $M$.  
Figure~\ref{fig.88} depicts the $2$-category which is the value of $\fC$ on the apparent compact vari-framed disk-stratified $2$-manifold.  
Namely, the set of objects in this $2$-category is the set of 0-strata;
the underlying 1-category, with the specified objects, is freely generated by the set of 1-strata, with source/target as indicated by the arrows on each edge;
the 2-category, with specified underlying 1-category, is freely generated by the set of 2-strata, with source/target as indicated by the double-arrows in each region.
Note that the source/target of these 2-morphisms are not, simply, generating 1-morphisms, but are composites there among.  
\end{remark}

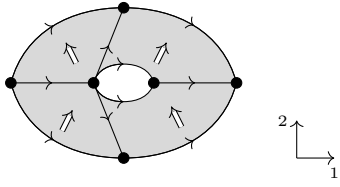
\begin{figure}
\begin{tikzpicture}
\tikzstyle{every node}=[font=\tiny]

\draw[fill=lightgray] (0,1) [out=75, in=180] to node [opacity=0] (TOP) {} (1.5,2) [out=0, in=105] to (3,1) [out=-105, in=0] to node [opacity=0] (BOTTOM) {} (1.5,0) [out=-180, in=-75] to (0,1);

\draw[fill=white] (1.1,1) [out=75, in=180] to (1.5,1.25) [out=0,in=105] to (1.9,1) [out=-105,in=0] to (1.5, 0.75) [out=-180,in=-75] to (1.1,1);

\draw[fill] (0,1) circle [radius=0.07];
\draw[fill] (3,1) circle [radius=0.07];

\draw[fill] (1.1,1) circle [radius=0.07];
\draw[fill] (1.9,1) circle [radius=0.07];

\draw[fill] (1.5,2) circle [radius=0.07];
\draw[fill] (1.5,0) circle [radius=0.07];

\draw[midarrow=0.5] (0,1) [out=75, in=180] to (1.5,2);
\draw[midarrow=0.5] (0,1) [out=-75,in=-180] to (1.5,0);
\draw[midarrow=0.5] (1.5,2) [out=0, in=105] to (3,1);
\draw[midarrow=0.5] (1.5,0) [out=0, in=-105] to (3,1);

\draw[->] (1.1,1) [out=75, in=180] to (1.5,1.25);
\draw[->] (1.1,1) [out=-75,in=-180] to (1.5,0.75);


\draw[->-] (1.1,1) to (1.5,2);
\draw[->-] (1.1,1) to (1.5,0);
\draw[->-] (0,1) to (1.1,1);
\draw[->-] (1.9,1) to (3,1);

\draw[-{Implies},double distance=1.5pt,shorten >=2pt,shorten <=2pt] (0.9, 1.2) to (0.7, 1.6);
\draw[-{Implies},double distance=1.5pt,shorten >=2pt,shorten <=2pt] (1.9, 1.2) to (2.1, 1.6);

\draw[-{Implies},double distance=1.5pt,shorten >=2pt,shorten <=2pt] (0.65, 0.3) to (0.85, 0.7);

\draw[-{Implies},double distance=1.5pt,shorten >=2pt,shorten <=2pt] (2.3, 0.35) to (2.1, 0.75);

\draw[->] (3.8,0) to (4.3,0) node[below]{1};
\draw[->] (3.8,0) to (3.8,0.5) node[left]{2};

\end{tikzpicture}
\caption{The $2$-category which is the value of $\fC$ on an object of $\cDisk^{\vfr}_2$.}
\label{fig.88}
\end{figure}

Consider the sequence of functors
\begin{equation}\label{RKE}
\Cat_{(\infty,n)} \longrightarrow \Psh(\btheta_n) \longrightarrow \Fun(\cDisk^{\vfr}_n,\Spaces)
\end{equation}
in which the second is given by right Kan extension.
Explicitly, for $\cC$ an $(\infty,n)$-category, this right Kan extension evaluates on a compact vari-framed disk-stratified $n$-manifold $M$ as the space
\[
\Cat_{(\infty,n)}\bigl(\fC(M),\cC\bigr)
\]
of functors between $(\infty,n)$-categories.

\begin{remark}
We think of the space $\Cat_{(\infty,n)}\bigl(\fC(M),\cC\bigr)$ appearing in the above expression as the space of $\cC$-labeling systems on the vari-framed disk-stratified $n$-manifold $M$.  
In other words, the $(\infty,n)$-category corepresents $M$-labeling systems.

\end{remark}

\begin{cor}\label{RKE-ff}
\footnote{
This result is incorrect without the dimension bound; see the erratum in~\S\ref{sec.erratum}.
}
For $n<3$, the composite functor~(\ref{RKE})
\[
\Cat_{(\infty,n)} ~\hookrightarrow~ \Fun(\cDisk^{\vfr}_n,\Spaces)~,\qquad \cC\mapsto \Bigl(M \mapsto \Cat_{(\infty,n)}\bigl(\fC(M),\cC\bigr)\Bigr)
\]
is fully faithful.

\end{cor}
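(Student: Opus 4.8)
The plan is to factor the functor~(\ref{RKE}) through the fully faithful cellular realization of Theorem~\ref{theta-ff} and invoke the standard fact that right Kan extension along a fully faithful functor is itself fully faithful. Concretely, the functor $\Cat_{(\infty,n)} \to \Fun(\cDisk_n^{\vfr},\Spaces)$ is the composite of the restricted Yoneda embedding $\Cat_{(\infty,n)} \hookrightarrow \Psh(\btheta_n)$ (which Definition~\ref{def.n-cat} already records as fully faithful) with right Kan extension along $\lag-\rag^{\op}\colon \btheta_n \to (\cDisk_n^{\vfr})^{\op}$. So the first step is to recognize that the second arrow in~(\ref{RKE}) is precisely $(\lag-\rag^{\op})_\ast\colon \Psh(\btheta_n) \to \Fun(\cDisk_n^{\vfr},\Spaces)$, right Kan extension along the opposite of cellular realization.

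The key step is then the general principle that if $j\colon \cA \hookrightarrow \cB$ is a fully faithful functor between $\infty$-categories, then right Kan extension $j_\ast\colon \Psh(\cA) \to \Psh(\cB)$ is fully faithful, with the counit of the adjunction $(j^\ast, j_\ast)$ being an equivalence. This holds because for a fully faithful $j$, the restriction $j^\ast j_\ast \simeq \mathrm{id}$; the unit of the $(j^\ast \dashv j_\ast)$ adjunction restricted to representables is an equivalence, which by the pointwise formula for the Kan extension reduces to the fully faithfulness of $j$. Applying this with $j = \lag-\rag^{\op}$, which is fully faithful by Theorem~\ref{theta-ff}, gives that $(\lag-\rag^{\op})_\ast$ is fully faithful. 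Since a composite of fully faithful functors is fully faithful, and the restricted Yoneda embedding $\Cat_{(\infty,n)} \hookrightarrow \Psh(\btheta_n)$ is fully faithful, the composite~(\ref{RKE}) is fully faithful, as claimed.

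The main obstacle—really the only substantive content beyond this formal argument—is establishing that the right Kan extension lands in, and is computed by, the expression $\cC \mapsto \bigl(M\mapsto \Cat_{(\infty,n)}(\fC(M),\cC)\bigr)$ asserted in the statement. This is where Lemma~\ref{fC} does its work: the pointwise formula for right Kan extension along $\lag-\rag^{\op}$ evaluates at $M\in\cDisk_n^{\vfr}$ as the limit over the comma category $(\btheta_n)_{\lag-\rag/M}$, and one must identify this limit with $\Cat_{(\infty,n)}(\fC(M),\cC)$. The identification follows because $\fC(M)$ is defined as the value of the restricted Yoneda functor $(\cDisk_n^{\vfr})^{\op}\to\Psh(\btheta_n)$ at $M$ (Notation~\ref{def.fC}), which Lemma~\ref{fC} verifies is a univalent Segal $\btheta_n$-space, i.e.\ an $(\infty,n)$-category; the corepresentability of the Kan extension by $\fC(M)$ is then exactly the adjunction between restriction and right Kan extension, combined with the fully faithful inclusion $\Cat_{(\infty,n)}\hookrightarrow\Psh(\btheta_n)$ to replace $\Psh(\btheta_n)$-mapping spaces by $\Cat_{(\infty,n)}$-mapping spaces. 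Thus once Lemma~\ref{fC} is in hand, the corollary is a purely formal consequence, and I would present it as such rather than re-deriving the Kan extension formula from scratch.
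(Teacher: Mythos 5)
Your proposal is correct and is essentially the paper's own argument: the composite is fully faithful because the restricted Yoneda embedding $\Cat_{(\infty,n)}\hookrightarrow\Psh(\btheta_n)$ is fully faithful by the defining universal property, and right Kan extension along the fully faithful cellular realization (Theorem~\ref{theta-ff}) is fully faithful. Your additional remarks identifying the pointwise formula with $\Cat_{(\infty,n)}\bigl(\fC(M),\cC\bigr)$ via Lemma~\ref{fC} make explicit what the paper leaves implicit, but the route is the same.
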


\begin{proof}
The functor is given as a composition of two functors.  
The first is fully faithful, from the defining universal property of the $\infty$-category $\Cat_{(\infty,n)}$, as discussed above.
That the second functor is fully faithful is an immediate because the cellular realization $\lag-\rag\colon \btheta_n^{\op} \to \cDisk_n^{\vfr}$ is fully faithful for $n<3$ (Theorem~\ref{theta-ff}).
\end{proof}

\begin{proof}[Proof of Lemma~\ref{fC}]
We will consider the restricted Yoneda functor $\fC\colon (\cMfd_n^{\vfr})^{\op} \to \Psh(\btheta_n^{\op})$ which enlarges the restricted Yoneda functor of Notation~\ref{def.fC}. 
Lemma~\ref{theta-preserves} directly states that Segal covering diagrams are carried by the cellular realization functor to purely closed covers.  
Because purely closed covers are in particular limit diagrams in $\cDisk_n^{\vfr}$, each presheaf $\fC(M)$ on $\btheta_n$ carries the opposites of Segal covering diagrams to limit diagrams of spaces.

It remains to verify that each value $\fC(M)$ carries the opposites of univalence diagrams to limit diagrams of spaces.  
For this, we use the criterion of Lemma~\ref{univ-criterion}, which reduces us to showing that the only $k$-idempotents of the Segal $\btheta_n$-space $\fC(M)$ are identity $k$-morphisms.

Consider the unique (up to equivalence) vari-framed $1$-manifold $D$ whose underlying stratified space is the pushout $\ast \underset{\partial \DD^1}\amalg \DD^1$ in $\strat$.  
Up to equivalence, there is a unique refinement morphism $D\to S^1$.
As such, for each $p\geq 0$, there is a fiber bundle $\w{D}_p\to D$ among stratified spaces which refines the $p$-sheeted covering map $S^1 \xra{z\mapsto z^p}S^1$, as well as a refinement $\w{D}_p \to D$, of which there are $p$.  
There results a composite morphism $F_p\colon D \xra{\sf cr} \w{D}_p \xra{\sf ref} D$ in $\cMfd_1^{\vfr}$.  
Thereafter, for each $0<k\leq n$, there is a morphism $F_p^k\colon D^k \to D^k$ in $\cMfd_n^{\vfr}$ among $(k-1)$-fold framed suspensions, $D^k:=\sS^{\fr,\circ (k-1)}(D)$.  

From the definition of the stratified space $D$ as a pushout in $\strat$, there is a canonical map of stratified spaces $\DD^1 \xra{u} D$.  
This map is a constructible bundle, and therefore determines the morphism $D\to \DD^1$ in $\cBun$ which classifies the reversed mapping cylinder of the map of stratified spaces: ${\sf Cylr}(u) \to \Delta^1$.  
This cylinder is equipped with a fiberwise vari-framing: $\exit({\sf Cylr}(u)) \to \vfr$ over $\Exit$.  
We arrive at a creation morphism $D \to \DD^1$ in $\cMfd_1^{\vfr}$.  
Taking iterated framed suspensions determines a creation morphism
\[
D^k:=\sS^{\fr,\circ(k-1)}(D) \longrightarrow \sS^{\fr,\circ(n-1)}(\DD^1) \cong \DD^k
\]
in $\cMfd_n^{\vfr}$ for each $0<k\leq n$.  
Applying $\fC$ determines a map $c_k \to \fC(D^k)$ of presheaves on $\btheta_n$.
By inspection, this map factors through an equivalence from a quotient of a $k$-cell
\[
c_k \simeq c_{k-1} \wr c_1 \longrightarrow c_{k-1} \wr (c_1/\partial c_1)~\simeq~ \fC(D^k)
\]
of presheaves on $\btheta_n^{\op}$.
As such the Segal space $\fC(D^k)$ is univalent, and therefore presents an $(\infty,n)$-category;
this $(\infty,n)$-category $\fC(D^k)$ corepresents a $k$-endomorphism of a $(k-1)$-morphism of an $(\infty,n)$-category.  
As such, there is a preferred functor $\fC(D^k) \xra{e^k} {\sf Idem}^k$ between $(\infty,n)$-categories, together with, for each $p\geq 0$, an identification of functors $e^k\circ \fC(F_p^k) \simeq e^k$.

Consequently, to prove that each functor ${\sf Idem}^k \to \fC(M)$ factors through $c_{k-1} \to \fC(M)$ it is enough to prove that each morphism $M \xra{\epsilon^k} D^k$ in $\cMfd_n^{\vfr}$ factors through the standard creation map $\DD^{k-1}\to D^k$ whenever there is an identification of morphisms $\epsilon^k\circ F_p^k \simeq \epsilon^k$ in $\cMfld_n^{\vfr}$.  
This statement is implied by its version without vari-framings.  
Namely, we have reduced to proving the following assertion.
\begin{itemize}
\item[~]
For each commutative diagram in $\cBun$
\[
\xymatrix{
M  \ar[rr]^-{\epsilon^k}  \ar[dr]_-{\epsilon^k}
&&
D^k    \ar[dl]^-{F_2^k}
\\
&
D^k  
&
}
\]
there is a factorization $\epsilon^k\colon M \xra{\cls} M_{<k} \to D^k$ through the unit of the adjunction of Lemma~\ref{truncate} which is given by forgetting strata of dimension at least $k$.

\end{itemize}
Using the closed-active factorization system on $\cBun$ (Lemma~\ref{closed-active}), we can reduce to the case that the morphism $\epsilon^k$ is active.  In this case the assertion becomes the statement that the topological dimension of $M$ is less than $k$.

Such a commutative diagram in $\cBun$ is a constructible bundle $X\to \Delta^2$ together with identifications of its restrictions along $\Delta^S\subset \Delta^2$ for various non-empty linearly ordered subsets $\emptyset\neq S\subset \{0<1<2\}$.  
Consider the link system among stratified spaces 
\[
M \underset{\sf p.cbl.surj}{\xla{~\pi~}}\Link_{M}(X)\underset{\sf ref}{\xra{~\gamma~}} X_{|\Delta^{\{1<2\}}}~;
\]
here, the leftward map is proper and constructible and surjective while the rightward map is a refinement.  
The naturality of links grants a surjective proper constructible bundle $\Link_{M}(X) \to \Link_{\Delta^{\{0\}}}(\Delta^2) \cong \Delta^1$.  
We obtain a surjective proper constructible bundle $\Link_{M}(X) \to M\times \Delta^1$.  
Because the topological dimension of the stratified space $D^k$ is bounded above by $k$, then the topological dimension of the $\Link_{M}(X)$ is bounded above by $(k+1)$, and therefore the topological dimension of $M$ is bounded above by $k$. 
 
Let $M_k \subset M$ be the open subspace consisting of the $k$-dimensional strata; we must explain why $M_k$ is empty.  
For dimension reasons, the surjective proper constructible bundle $\Link_{M}(X)_{|M_k\times \Delta^1} \to M_k\times \Delta^1$ has finite fibers which are empty if and only if $M_k$ is empty.  
By assumption, there is an identification of stratified spaces $\Link_{M}(X)_{|M_k\times \Delta^{\{0\}}} \cong\Link_{M}(X)_{|M_k\times \Delta^{\{1\}}}$ over $M_k$.  
On the other hand, from the construction of the morphism $F_2^k$ in $\cBun$, the cardinality of the fiber $\Link_{M}(X)_{|\{x\}\times \Delta^{\{0\}}}$ is twice that of the fiber $\Link_{M}(X)_{|\{x\}\times \Delta^{\{1\}}}$ for each $x\in M_k$.
We conclude that $M_k=\emptyset$, as desired.  
\end{proof}

\subsection{Factorization homology}\label{sec.fact.def}
Recall the functor~(\ref{RKE}), which assigns to each $(\infty,n)$-category a copresheaf on compact disk-stratified vari-framed $n$-manifolds.
By definition, such stratified disks lie fully among all such stratified manifolds.
We can now define factorization homology as the left Kan extension from this full $\infty$-subcategory to copresheaves of manifolds.

\begin{definition}\label{def.fact.homology}
Factorization homology is the composite
\[
\xymatrix{
\displaystyle\int:\Cat_{(\oo,n)}\ar@{->}[rr]&&\Fun\bigl(\cDisk_n^{\vfr},\spaces\bigr)\ar@{^{(}->}[rr]&&\Fun\bigl(\cMfd_n^{\vfr},\spaces\bigr)
}
\]
of right Kan extension along $\btheta_n^{\op} \hookrightarrow \cDisk^{\vfr}_n$ followed by left Kan extension along $\cDisk_n^{\vfr}\hookrightarrow \cMfd_n^{\vfr}$.
\end{definition}

Equivalently, given an $(\oo,n)$-category $\cC$, factorization homology is defined by the following two Kan extensions:
\[\xymatrix{
\btheta_n^{\op}\ar@{->}[d]\ar[rr]^\cC&&\spaces\\
\cDisk_n^{\vfr}\ar@{-->}[urr]_\cC\ar@{_{(}->}[d]\\
\cMfd_n^{\vfr}\ar@/_.5pc/@{-->}[uurr]_{\displaystyle\int\cC}\\}\]
This left Kan extension is equivalent to the classifying space of the unstraightening construction $\cC_M$ of the composite
\[
\cDisk^{\vfr}_{n/M}\longrightarrow \cDisk_n^{\vfr} \overset{\cC}\longrightarrow \spaces~.
\]
This unstraightening construction can be thought of the $\oo$-category of $\cC$-labeled disk-stratifications over $M$. 
By that token the factorization homology over a compact vari-framed $n$-manifold $M$,
\[
\int_M\cC
~\simeq~ 
\sB\bigl(\cC_M\bigr)
~\simeq~
\colim\Bigl(  \cDisk^{\vfr}_{n/M}\longrightarrow \cDisk_n^{\vfr} \xra{~\Map(\fC(-),\cC)~} \Spaces \Bigr)~,
\]
is the classifying space of $\cC$-labeled disk-stratifications over $M$, as well as the colimit over the $\infty$-overcategory $\Disk^{\vfr}_{n/M}$.

\begin{remark}
For $M$ a compact framed smooth $n$-manifold, and for $\cC$ an $(\infty,n)$-category, heuristically the factorization homology
$\int_M \cC$ is the integral, or average, of sufficiently fine $\cC$-labeled vari-framed refinements of $M$.  
This description as a left Kan extension makes this heuristic precise as well as functorial in $M$ up to coherent homotopy.

\end{remark}

Examination of this factorization homology, and its development in the enriched case, will take place in other works.  Here are some easy values of this factorization homology.  
\begin{example} 
For each $0\leq i \leq n$, and for each $(\infty,n)$-category $\cC$, the value $\int_{\DD^i}\cC$ is the space of $i$-morphisms of $\cC$.  In particular, $\int_{\DD^0}\cC \simeq \cC^\sim$ is the space of objects.  

\end{example}

\begin{remark}
Evidently, the domain of factorization homology can be extended to arbitrary presheaves on $\btheta_n$ via the same prescription as Definition~\ref{def.fact.homology}:
\[
\displaystyle\int
\colon
{\sf PShv}(\btheta_n)
\xra{~\sf RKan~}
\Fun\bigl(\cDisk_n^{\vfr},\spaces\bigr)
\xra{~\sf LKan~}
\Fun\bigl(\cMfd_n^{\vfr},\spaces\bigr)~.
\]

\end{remark}

\section{Appendix: some $\infty$-category theory}
We go over some notions within $\infty$-category theory.

\subsection{Monomorphisms}\label{sec.monos}
In this section we characterize monomorphisms among $\infty$-categories.
We first recall the following standard definition.

\begin{definition}[Mono/Epi]\label{def.epi}
A map $X\to Y$ between spaces is a \emph{monomorphism} if, when regarded as a functor between $\infty$-groupoids, it is fully faithful.  
A morphism $f\colon [1]\to \cX$ in an $\infty$-category is a \emph{monomorphism} if, for each object $\ast \xra{x} \cX$, the composite functor
\[
[1]\xra{~f~}\cX \longrightarrow \Psh(\cX) \xra{~x^\ast~} \Psh(\ast) \simeq \Spaces
\]
is a monomorphism between spaces.  
A morphism $f\colon [1]\to \cX$ in an $\infty$-category is an \emph{epimorphism} if $[1] \simeq  [1]^{\op} \xra{f^{\op}}\cX^{\op}$ is a monomorphism.  

\end{definition}

\begin{remark}
Alternatively, a map between spaces $f\colon X\to Y$ is a monomorphism if and only if it is an inclusion of path components.
This is to say that, for any choice of base point $x\in X$, the homomorphism between homotopy groups $\pi_q(X;x) \to \pi_q(Y;f(x))$ is an isomorphism for each $q>0$.  
\end{remark}

\begin{example}
Let $X$ be a space for which its suspension
\[
\sS(X):=\ast \underset{X} \amalg \ast~\simeq~\ast
\]
is contractible.
Because epimorphisms are preserved by co-base change, we conclude that the unique map $X\to \ast$ is an epimorphism.

\end{example}

Because it is the case for monomorphisms among spaces, both monomorphisms and epimorphisms satisfy a certain two-out-of-three property:
\begin{observation}
Let $[2]\to \cX$ be a functor between $\infty$-categories.
For each $0\leq i<j\leq 2$, denote the restriction $f_{ij}\colon \{i<j\} \to \cX$.  
\begin{itemize}
\item If $f_{01}$ and $f_{12}$ are monomorphisms, then so is $f_{02}$.
\item If $f_{01}$ and $f_{12}$ are epimorphisms, then so is $f_{02}$.
\item If $f_{12}$ and $f_{02}$ are monomorphisms, then so is $f_{01}$.  
\item If $f_{01}$ and $f_{02}$ are epimorphisms, then so is $f_{12}$.
\end{itemize}
  
\end{observation}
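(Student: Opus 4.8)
The plan is to reduce all four assertions to two elementary facts about monomorphisms of \emph{spaces}, to apply them corepresentably, and to handle the epimorphism statements by passing to the opposite $\infty$-category.

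First I would record the two space-level facts. Recalling (as in the Remark following Definition~\ref{def.epi}) that a map of spaces is a monomorphism exactly when it is injective on $\pi_0$ and an isomorphism on $\pi_q$ for all $q>0$ at every basepoint, I would prove: \textbf{(S1)} a composite of two monomorphisms of spaces is a monomorphism; and \textbf{(S2)} if $h\colon Y\to Z$ and $h\circ g\colon X\to Z$ are monomorphisms of spaces, then $g\colon X\to Y$ is a monomorphism. Both are immediate from the homotopy-group criterion: for (S1) a composite of isomorphisms (resp.\ of injections) is again one; for (S2), from $(h\circ g)_\ast = h_\ast\circ g_\ast$ together with $h_\ast$ being an isomorphism on $\pi_{q}$ $(q>0)$ and injective on $\pi_0$, one cancels $h_\ast$ to see that $g_\ast$ is an isomorphism on $\pi_q$ $(q>0)$ and injective on $\pi_0$.

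Next I would deduce the two monomorphism statements. Fix a functor $[2]\to\cX$ with vertices $a_0,a_1,a_2$ and edges $f_{01},f_{12},f_{02}$, and fix an object $x\in\cX$. Applying the corepresentable functor $\Map_\cX(x,-)$ --- equivalently, the composite $\cX\to\Psh(\cX)\xra{x^\ast}\Spaces$ of Definition~\ref{def.epi} --- to the whole $2$-simplex yields a functor $[2]\to\Spaces$, i.e.\ a composable pair of maps of spaces whose edges are the post-composition maps $(f_{ij})_\ast\colon \Map_\cX(x,a_i)\to\Map_\cX(x,a_j)$. By definition, $f_{ij}$ is a monomorphism in $\cX$ precisely when $(f_{ij})_\ast$ is a monomorphism of spaces for every $x$. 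Hence: if $f_{01}$ and $f_{12}$ are monomorphisms, then $(f_{01})_\ast$ and $(f_{12})_\ast$ are monomorphisms for every $x$, so $(f_{02})_\ast$ is by (S1), giving that $f_{02}$ is a monomorphism; and if $f_{12}$ and $f_{02}$ are monomorphisms, then $(f_{12})_\ast$ and $(f_{02})_\ast$ are monomorphisms for every $x$, so $(f_{01})_\ast$ is by (S2), giving that $f_{01}$ is a monomorphism.

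Finally I would obtain the epimorphism statements by duality. Since $f$ is an epimorphism in $\cX$ iff $f^{\op}$ is a monomorphism in $\cX^{\op}$, I would apply $(-)^{\op}$ to the $2$-simplex and precompose with the order-reversing isomorphism $[2]\cong[2]^{\op}$; this produces a $2$-simplex in $\cX^{\op}$ whose first factor is $(f_{12})^{\op}$, whose second factor is $(f_{01})^{\op}$, and whose composite is $(f_{02})^{\op}$. Under this relabeling the statement ``$f_{01},f_{12}$ epi $\Rightarrow f_{02}$ epi'' becomes ``both factors mono $\Rightarrow$ composite mono'', which is the first monomorphism statement, while ``$f_{01},f_{02}$ epi $\Rightarrow f_{12}$ epi'' becomes ``second factor and composite mono $\Rightarrow$ first factor mono'', which is the second monomorphism statement. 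The only point requiring care is precisely this bookkeeping of the reindexing under $(-)^{\op}$, which reverses the direction of composition and so interchanges the two cancellation patterns; once it is set up correctly, all four assertions follow.
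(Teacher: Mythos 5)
Your argument is correct and is exactly the route the paper intends: the Observation is stated without proof, prefaced only by the remark that the two-out-of-three property holds ``because it is the case for monomorphisms among spaces,'' and your corepresentable reduction via $x^\ast$ together with the $\pi_0$/$\pi_q$ criterion and the passage to $\cX^{\op}$ for the epimorphism clauses just makes that reduction explicit. The op-duality bookkeeping (the order-reversing isomorphism $[2]\cong[2]^{\op}$ swapping the roles of $f_{01}$ and $f_{12}$) is handled correctly.
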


Monomorphisms, respectively epimorphisms, are closed under the formation of limits, respectively colimits, in the following sense.  
\begin{observation}\label{limit-closed}
For $\cX$ an $\infty$-category, the monomorphisms and the epimorphisms form full $\infty$-subcategories $\Ar^{\sf mono}(\cX)\subset \Ar(\cX)\supset \Ar^{\sf epi}(\cX)$ of the $\infty$-category of arrows, $\Ar(\cX):=\Fun([1],\cX)$.  
Furthermore, each diagram among $\infty$-categories
\[
\xymatrix{
\cJ  \ar[r]  \ar[d]
&
\Ar^{\sf mono}(\cX)  \ar[d]
\\
\cJ^{\tl}  \ar[r]^-{{\sf lim}}  \ar@{-->}[ur]
&
\Ar(\cX),
}
\]
in which the bottom horizontal arrow is a limit diagram, factors as a limit diagram.
Likewise, each diagram among $\infty$-categories
\[
\xymatrix{
\cJ  \ar[r]  \ar[d]
&
\Ar^{\sf epi}(\cX)  \ar[d]
\\
\cJ^{\tr}  \ar[r]^-{\colim}  \ar@{-->}[ur]
&
\Ar(\cX),
}
\]
in which the bottom horizontal arrow is a colimit diagram, factors as a colimit diagram.

\end{observation}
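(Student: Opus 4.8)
The plan is to reduce both closure statements to the single elementary fact that monomorphisms of \emph{spaces} are stable under limits, exploiting that the monomorphism condition of Definition~\ref{def.epi} is tested on corepresentable functors, which preserve limits.

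First I would treat the monomorphism case. Given a diagram $\cJ \to \Ar^{\sf mono}(\cX)$ with objects $(f_j\colon a_j \to b_j)$ together with a limit cone $\cJ^{\tl} \to \Ar(\cX)$ extending it, I would use that limits in $\Ar(\cX) = \Fun([1],\cX)$ are computed objectwise, so that the value on the cone point is a morphism $f\colon a \to b$ with $a \simeq \lim_j a_j$, $b \simeq \lim_j b_j$, and $f \simeq \lim_j f_j$. Since $\Ar^{\sf mono}(\cX) \subset \Ar(\cX)$ is a full $\infty$-subcategory, and a fully faithful inclusion reflects those limits whose cone point lies in the subcategory, it suffices to prove that $f$ is a monomorphism; the factorization through $\Ar^{\sf mono}(\cX)$ and its being a limit diagram there then follow formally. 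For each object $x$ of $\cX$, the corepresentable functor $\Map_\cX(x,-)$ preserves limits, so the induced map $f_\ast \colon \Map_\cX(x,a) \to \Map_\cX(x,b)$ is identified with $\lim_j (f_j)_\ast\colon \lim_j \Map_\cX(x,a_j) \to \lim_j \Map_\cX(x,b_j)$, a limit over $\cJ$ of monomorphisms of spaces, each $(f_j)_\ast$ being a monomorphism because $f_j$ is.

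Then I would establish the base case, that monomorphisms of spaces are closed under limits, via the diagonal criterion. For a diagram of monomorphisms $g_j\colon A_j \to B_j$ with limit $g = \lim_j g_j\colon A \to B$, recall that $g$ is a monomorphism precisely when the diagonal $A \xra{\delta} A\times_B A$ is an equivalence. Because limits commute with one another, the $\cJ$-indexed limit commutes with the finite pullback computing the diagonal, yielding an equivalence $A\times_B A \simeq \lim_j (A_j \times_{B_j} A_j)$ under which $\delta \simeq \lim_j \delta_j$. Each $\delta_j$ is an equivalence since $g_j$ is a monomorphism, and a limit of equivalences is an equivalence; hence $\delta$ is an equivalence and $g$ is a monomorphism. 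Applied to the maps $f_\ast$ of the previous paragraph, this shows $f_\ast$ is a monomorphism of spaces for every $x$, so $f$ is a monomorphism in $\cX$, as desired.

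Finally, the epimorphism statement is obtained by dualizing. By Definition~\ref{def.epi} a morphism is an epimorphism in $\cX$ exactly when its reverse is a monomorphism in $\cX^{\op}$, and colimit diagrams in $\cX$ are precisely limit diagrams in $\cX^{\op}$; the arrow-reversing equivalence $\Ar^{\sf epi}(\cX) \simeq \bigl(\Ar^{\sf mono}(\cX^{\op})\bigr)^{\op}$ therefore carries the colimit assertion to the limit assertion already proved. I do not anticipate a genuine obstacle: the only points needing care are that limits in the arrow category are objectwise and that corepresentables preserve limits, both standard, together with the commutation of the indexing limit with the diagonal pullback in the base case.
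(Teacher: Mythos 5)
The paper records this as an unproved Observation, so there is no argument of the authors' to compare against; your write-up supplies the standard justification, and its core is correct: the base case that monomorphisms of spaces are stable under limits via the diagonal criterion (a map is a monomorphism exactly when its diagonal is an equivalence, and the $\cJ$-indexed limit commutes with the pullback computing the diagonal), the reduction to that case by applying the limit-preserving corepresentables $\Map_{\cX}(x,-)$, the remark that a full $\infty$-subcategory reflects limit diagrams whose cone point it contains, and the dualization $\Ar^{\sf epi}(\cX)\simeq \bigl(\Ar^{\sf mono}(\cX^{\op})\bigr)^{\op}$ for the epimorphism clause.

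The one step you should not wave through is the opening claim that a limit in $\Ar(\cX)=\Fun([1],\cX)$ is automatically computed objectwise. Evaluation at the source, $\ev_s$, always preserves limits (it admits the left adjoint $x\mapsto {\sf id}_x$), but evaluation at the target preserves limits only under hypotheses (its left adjoint is left Kan extension along $\{1\}\hookrightarrow[1]$, which requires an initial object), so for a general $\cX$ a limit cone in $\Ar(\cX)$ need not have $b\simeq \lim_j b_j$. For instance, in the poset with $a,b\leq c,d$ and no other relations, the arrows $(a\leq c)$ and $(a\leq d)$ have product ${\sf id}_a$ in the arrow category even though $c$ and $d$ have no meet. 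Your argument as written therefore proves the statement when the diagrams $j\mapsto a_j$ and $j\mapsto b_j$ admit limits in $\cX$ (equivalently, when the given limit cone is objectwise), which is certainly the intended scope --- every invocation in the paper has $\cX\simeq\Spaces$ or mapping spaces thereof, where completeness makes the objectwise computation automatic --- but you should either state that hypothesis or remark that the evaluation functors carry the given cone to limit cones before identifying $f_\ast$ with $\lim_j (f_j)_\ast$.
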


\begin{lemma}
A functor $F\colon \cC \to \cD$ between $\infty$-categories is a monomorphism if and only if the map between maximal $\infty$-subgroupoids $\cC^\sim \to \cD^\sim$ is a monomorphism, and, for each pair of objects $\partial [1] \xra{x\sqcup y} \cC$, the map between spaces of morphisms 
\[
\cC(x,y) \simeq \Map^{x\sqcup y/}([1],\cC) \xra{~F\circ -~} \Map^{F\circ(x\sqcup y)/}([1],\cD) \simeq \cD(F(x),F(y))
\]
is a monomorphism. 

\end{lemma}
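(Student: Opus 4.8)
The plan is to unwind Definition~\ref{def.epi} for $F$ itself, regarded as an object of $\Ar(\Cat_\infty)$. By the Yoneda lemma the composite $\Cat_\infty \to \Psh(\Cat_\infty) \xra{\cE^\ast} \Spaces$ appearing there is the corepresentable $\cC \mapsto \Map(\cE,\cC)$, so $F$ is a monomorphism precisely when, for every $\infty$-category $\cE$, the map of spaces $\Map(\cE,\cC) \to \Map(\cE,\cD)$ is a monomorphism. The first step is to reduce this family of conditions to the two test objects $\cE=[0]$ and $\cE=[1]$. Since $\Map(-,\cC)$ and $\Map(-,\cD)$ send colimits of $\infty$-categories to limits of spaces, and monomorphisms of spaces are closed under limits (Observation~\ref{limit-closed} applied to $\cX = \Spaces$), it suffices to test on a family of $\infty$-categories generating $\Cat_\infty$ under colimits. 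Every object of $\Cat_\infty$ is a colimit of the standard simplices $[p]$—the co-Yoneda presentation in $\Psh(\bdelta)$ pushed through the colimit-preserving localization onto complete Segal spaces—and each $[p]$ is in turn the colimit of copies of $[0]$ and $[1]$ along its spine, i.e. along iterated Segal covering diagrams (Proposition~\ref{diagrams-to-colims}). Thus $\{[0],[1]\}$ generates $\Cat_\infty$ under colimits, and $F$ is a monomorphism iff both $\Map([0],\cC) \to \Map([0],\cD)$ and $\Map([1],\cC) \to \Map([1],\cD)$ are monomorphisms of spaces.

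The second step identifies these two maps. Evaluation at the object identifies $\Map([0],-)$ with $(-)^\sim$, so the first map is exactly $\cC^\sim \to \cD^\sim$, matching condition (a). For the second, restriction along $\partial[1] \hookrightarrow [1]$ yields a commuting square whose vertical maps are the endpoint fibrations $(\ev_0,\ev_1)\colon \Map([1],\cC) \to \cC^\sim \times \cC^\sim$ and $\Map([1],\cD) \to \cD^\sim \times \cD^\sim$, with fibers over $(x,y)$ and $(Fx,Fy)$ equal to $\cC(x,y)$ and $\cD(Fx,Fy)$; the induced map on these fibers is precisely the map in condition (b).

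The third step is the bookkeeping linking the monomorphism property of the total map $\Map([1],\cC) \to \Map([1],\cD)$ to (a) and (b) through this square. The key device is the pullback $P := \Map([1],\cD) \underset{\cD^\sim\times\cD^\sim}\times (\cC^\sim\times\cC^\sim)$, through which the total map factors as $\Map([1],\cC) \xra{\alpha} P \xra{\beta} \Map([1],\cD)$. The projection $\beta$ is a base change of $\cC^\sim\times\cC^\sim \to \cD^\sim\times\cD^\sim$, which is a monomorphism as soon as (a) holds, a product of inclusions of components being such. The map $\alpha$ lies over $\cC^\sim\times\cC^\sim$ and restricts over each $(x,y)$ to the map of (b); since monomorphisms of spaces are detected on fibers, $\alpha$ is a monomorphism iff (b) holds. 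For the implication $(\text{a})\wedge(\text{b}) \Rightarrow F$ monomorphism, I will observe that $\alpha$ and $\beta$ are both monomorphisms, hence so is their composite. For the converse, I will assume $F$ is a monomorphism, read off (a) from the $[0]$-test, note that $\beta$ is then a monomorphism, and apply the two-out-of-three property recorded after Definition~\ref{def.epi}—with $\beta$ and $\beta\alpha$ monomorphisms, $\alpha$ is one as well—so that (b) follows by passing to fibers over $\ast \xra{(x,y)} \cC^\sim\times\cC^\sim$.

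The main obstacle I anticipate is the first step: justifying rigorously that $\{[0],[1]\}$ generates $\Cat_\infty$ under colimits, and that this genuinely reduces the test to these two objects. The delicate point is that colimits in $\Cat_\infty$ differ from those in $\Psh(\bdelta)$, so one must invoke the colimit-preservation of the localization $\Psh(\bdelta) \to \Cat_\infty$, identify the image of each representable $[p]$, and then reassemble $[p]$ from $[0]$ and $[1]$ via the Segal colimit of Proposition~\ref{diagrams-to-colims}. Once this reduction is secured, the remainder is the elementary calculus that monomorphisms of spaces are stable under limits, base change, and composition, and are detected fiberwise.
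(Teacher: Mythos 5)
Your proposal is correct and follows essentially the same route as the paper: reduce to the test objects $[0]$ and $[1]$ via the generation of $\Cat_\infty$ under colimits by $\bdelta_{\leq 1}$ together with the closure of monomorphisms under limits, and then relate the $[1]$-test to conditions (a) and (b) through the restriction square along $\partial[1]\hookrightarrow[1]$. Your explicit factorization through the pullback $P$ is just a slightly more careful rendering of the fiberwise bookkeeping the paper leaves implicit.
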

\begin{proof}
Let $F\colon \cC\to \cD$ be a functor between $\infty$-categories.
Restriction along the functor $\partial [1]  \to [1]$ determines the downward maps in the diagram of spaces of functors
\[
\xymatrix{
\Map_{\Cat_{\infty}}([1],\cC) \ar[rr]^-{F\circ-}  \ar[d]
&&
\Map_{\Cat_{\infty}}([1],\cD)  \ar[d]
\\
\Map_{\Cat_{\infty}}(\partial [1],\cC)  \ar[rr]^-{F\circ -}
&&
\Map_{\Cat_{\infty}}(\partial [1],\cD).
}
\]
Suppose $F$ is a monomorphism.
Then the horizontal maps in this diagram are monomorphisms.
It follows that, for each pair of objects $x\sqcup y \colon \partial[1]\to \cC$, the map of fibers $\cC(x,y)\to \cD(F(x),F(y))$ is a monomorphism.  
Also, $F$ being a monomorphism implies the map between spaces $\cC^\sim \simeq \Map_{\Cat_\infty}(\ast,\cC) \xra{F\circ-} \Map_{\Cat_\infty}(\ast,\cD)\simeq \cD^\sim$ is a monomorphism.

Now suppose, that the map of spaces $\cC^\sim \to \cD^\sim$ is a monomorphism and, for each pair of objects $x\sqcup y\colon \partial [1]\to \cC$, that the map of spaces $\cC(x,y)\to \cD\bigl(F(x),F(y)\bigr)$ is a monomorphism.
Let $\cK$ be an $\infty$-category.
We must show the map between spaces of functors
\[
\Map_{\Cat_\infty}(\cK, \cC) \xra{~F\circ-~}\Map_{\Cat_\infty}(\cK, \cD)
\]
is a monomorphism.
First note that the fully faithful functor $\bdelta_{\leq 1} \hookrightarrow \Cat_\infty$ generates $\Cat_\infty$ under colimits.  
That is, for each full $\infty$-subcategory $\bdelta_{\leq 1}\subset \cS\subset \Cat_\infty$ that is closed under the formation of colimits, then the inclusion $\cS\to \Cat_\infty$ is an equivalence.  
Using this, choose a colimit diagram $\cK_\bullet\colon \cJ^{\tr} \to \Cat_\infty$ that carries the cone-point to $\cK$ and each $j\in \cJ$ to $[0]$ or $[1]$.  
Using that monomorphisms are closed under the formation of limits (Observation~\ref{limit-closed}), we conclude that the map of spaces of functors
\[
\Map_{\Cat_\infty}(\cK, \cC)\xra{\simeq}\underset{j\in \cJ}{\sf lim} \Map_{\Cat_\infty}(\cK_j, \cC) \longrightarrow \underset{j\in \cJ}{\sf lim} \Map_{\Cat_\infty}(\cK_j, \cD) \xla{\simeq} \Map_{\Cat_\infty}(\cK, \cD)
\]
is a monomorphism provided it is in the cases that $\cK = \ast$ and $\cK = [1]$.
The case of $\cK = \ast$ is the assumption that $\cC^\sim \to \cD^\sim$ is a monomorphism.
With the above square diagram, the case of $\cK=[1]$ follows from the case of $\cK=\ast$ together with the additional assumption about mapping spaces.  
\end{proof}

\begin{example}
Fully faithful functors among $\infty$-categories are monomorphisms.
\end{example}

\begin{example}
For $\cE \xra{\pi} \cB$ a functor between $\infty$-categories, the collection of $\pi$-Cartesian morphisms determines a monomorphism $\cE^{{\sf Cart}_{/\pi}}\hookrightarrow \cE$: a functor $\cK \to \cE$ factors through $\cE^{{\sf Cart}_{/\pi}}$ whenever each composition $[1]\to \cK \to \cE$ is a $\pi$-Cartesian morphism.  
In the case that $\cB \simeq \ast$ is terminal, we see that the functor $\cE^\sim\hookrightarrow \cE$ from the maximal $\infty$-subgroupoid is a monomorphism.  

\end{example}

\begin{example}
For each monomorphism $\cC\xra{F} \cD$ among $\infty$-categories, and for each $\infty$-category $\cK$, the functor between functor $\infty$-categories
\[
\Fun(\cK,\cC)\xra{~F\circ-~} \Fun(\cK,\cD)
\]
too is a monomorphism.

\end{example}

\subsection{Cospans}\label{sec.cspan}
We record some facts about $\infty$-categories of cospans.
See~\cite{barwick} for a more thorough development.  

We denote the functor from finite sets to posets,
\[
\cP_{\neq \emptyset}(-)\colon \Fin\longrightarrow \poset~,
\]
whose value on a set $S$ is the poset of non-empty subsets of $S$, ordered by inclusion.  
We will use the same notation for its precomposition with the forgetful functor $\bdelta \to \Fin$ given by forgetting linear orders.

Let $\cC$ be an $\infty$-category, and let $\cC^\sim\subset \cL,\cR \subset \cC$ be a pair of $\infty$-subcategories each of which contains the maximal $\infty$-subgroupoid of $\cC$.
The simplicial space $\cSpan(\cC)^{\cL\text{-}\cR}$ is the subfunctor of the composite functor
\[
\bdelta^{\op} \xra{\cP_{\neq \emptyset}(-)} \poset^{\op} \xra{\Map(-,\cC)} \Spaces
\]
consisting of those values $\cP_{\neq \emptyset}(\{0,\dots,p\}) \xra{F} \cC$ that satisfy the following conditions.
\begin{itemize}
\item The functor $F$ carries colimit diagrams to colimit diagrams.

\item The functor $F$ carries inclusions $S\subset T$ to morphisms in $\cL$ whenever ${\sf Min}(S)={\sf Min}(T)$.

\item The functor $F$ carries inclusions $S\subset T$ to morphisms in $\cR$ whenever ${\sf Max}(S)={\sf Max}(T)$.

\end{itemize}
Explicitly, the value $\cSpan(\cC)^{\cL\text{-}\cR}[0]\simeq \cC^\sim$ is the maximal $\infty$-subgroupoid of $\cC$;
the value on $[1]$ is the pullback among spaces
\[
\xymatrix{
\cSpan(\cC)^{\cL\text{-}\cR}[1]  \ar[r]  \ar[d]
&
\Ar(\cL)^\sim   \ar[d]^-{\ev_t}
\\
\Ar(\cR)^\sim  \ar[r]^-{\ev_t}
&
\cC^\sim
}
\]
involving the spaces of arrows in $\cL$ and in $\cR$ and evaluations at their targets.
So a point in $\cSpan(\cC)^{\cL\text{-}\cR}$ is an object of $\cC$, and a $1$-simplex in $\cSpan(\cC)^{\cL\text{-}\cR}$ is a \emph{cospan} in $\cC$ by $\cL$ and $\cR$, by which we mean a diagram in $\cC$
\[
c_-  \xra{~l~} c_0 \xla{~r~} c_+
\]
in which $l$ is a morphism in $\cL$ and $r$ is a morphism in $\cR$.

\begin{criterion}\label{cspan-rep}
Let $\cL \to \cC\la \cR$ be essentially surjective monomorphisms among $\infty$-categories.
Suppose each diagram $c_+ \xla{r} c_0 \xra{l} c_-$ in $\cC$ admits a pushout whenever the morphism $l$ belongs to $\cL$ and $r$ belongs to $\cR$.  
Then the simplicial space $\cSpan(\cC)^{\cL\text{-}\cR}$ presents an $\infty$-category.  

\end{criterion}

We consider an $\infty$-subcategory 
\[
{\sf Pair}~\subset~ \Fun\bigl(\cP_{\neq \emptyset}(\{\pm\}),\Cat_\infty\bigr)~.
\]
The objects are those $\cL \to \cC\la \cR$ for which both functors are essentially surjective monomorphisms and which satisfy the condition of Criterion~\ref{cspan-rep}.
The morphisms are those which preserve the pushouts of Criterion~\ref{cspan-rep}.  
Manifest from its construction, these simplicial spaces of cospans organize as a functor
\begin{equation}\label{cspan}
\cSpan\colon {\sf Pair} \longrightarrow \Cat_\infty~,\qquad (\cC^\sim\subset\cL,\cR\subset \cC)\mapsto \cSpan(\cC)^{\cL\text{-}\cR}~.
\end{equation}  
By inspection, this functor~(\ref{cspan}) preserves finite products.  
The previous criterion thus gives the next observation.
\begin{observation}\label{cspan-ot-rep}
The functor~(\ref{cspan}) lifts as a functor to symmetric monoidal $\infty$-categories
\[
\cSpan\colon \CAlg({\sf Pair}^\times)\longrightarrow \Cat_\infty^{\ot}
\]
from commutative algebras in the Cartesian $\infty$-operad associated to the $\infty$-category ${\sf Pair}$.

\end{observation}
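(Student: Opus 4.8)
The plan is to deduce this lift purely formally from the fact---noted just above---that the functor~(\ref{cspan}) preserves finite products, using the general principle that a finite-product-preserving functor between $\infty$-categories with finite products induces a functor on commutative monoid objects for the respective Cartesian symmetric monoidal structures.

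First I would check that the $\infty$-category ${\sf Pair}$ admits finite products, computed componentwise from those of $\Fun(\cP_{\neq\emptyset}(\{\pm\}),\Cat_\infty)$: given $(\cL\to \cC\la \cR)$ and $(\cL'\to \cC'\la \cR')$ in ${\sf Pair}$, the triple $(\cL\times\cL'\to \cC\times \cC'\la \cR\times \cR')$ again consists of essentially surjective monomorphisms, and it satisfies Criterion~\ref{cspan-rep} because a diagram $c_+\xla{r} c_0\xra{l} c_-$ in a product admits a pushout as soon as each of its two components does, pushouts in a product being computed componentwise. This makes the Cartesian symmetric monoidal structure ${\sf Pair}^\times$ well-defined, so that $\Alg_{\sf Com}({\sf Pair}^\times)$ is the $\infty$-category of commutative monoid objects of ${\sf Pair}$.

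Next I would record the product-preservation of $\cSpan$ explicitly. For a pair of objects of ${\sf Pair}$ as above, the levelwise description of cospans identifies
\[
\cSpan(\cC\times \cC')^{(\cL\times \cL')\text{-}(\cR\times \cR')}~\simeq~ \cSpan(\cC)^{\cL\text{-}\cR} \times \cSpan(\cC')^{\cL'\text{-}\cR'}
\]
as simplicial spaces, since a functor $\cP_{\neq\emptyset}(\{0,\dots,p\})\to \cC\times \cC'$ is precisely a pair of such functors, and each of the three defining conditions---preservation of colimit diagrams, the $\cL$-condition at minima, and the $\cR$-condition at maxima---is tested componentwise. Together with $\cSpan(\ast)\simeq \ast$ on the terminal object, this exhibits~(\ref{cspan}) as a finite-product-preserving functor.

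Finally, I would invoke the standard fact (see~\cite{HA}) that a finite-product-preserving functor $F\colon \cA\to \cB$ between $\infty$-categories with finite products lifts canonically to a symmetric monoidal functor $F^\times\colon \cA^\times \to \cB^\times$ for the Cartesian structures, and hence induces a functor $\Alg_{\sf Com}(\cA^\times)\to \Alg_{\sf Com}(\cB^\times)$ on commutative monoid objects; this is immediate from the Segal description of monoids, since postcomposition with $F$ carries a Segal object to a Segal object. Applying this with $F=\cSpan$, $\cA={\sf Pair}$, and $\cB=\Cat_\infty$, and using the identification $\Alg_{\sf Com}(\Cat_\infty^\times)\simeq \Cat_\infty^{\ot}$ of commutative monoids in $\Cat_\infty$ with symmetric monoidal $\infty$-categories, yields exactly the asserted lift. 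I do not anticipate a genuine obstacle: the argument is entirely formal once product-preservation of $\cSpan$ is in hand, and the only point demanding care is the componentwise verification of the three cospan conditions recorded above, which is routine.
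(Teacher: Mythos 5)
Your proposal is correct and follows exactly the route the paper intends: the paper simply notes that the functor~(\ref{cspan}) preserves finite products (``by inspection'') and then states the observation, leaving implicit the standard passage from product-preserving functors to functors on commutative monoid objects. Your componentwise verifications of the products in ${\sf Pair}$, of Criterion~\ref{cspan-rep} for a product, and of the product-preservation of $\cSpan$ supply precisely the details the paper elides.
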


\begin{observation}\label{from-gpd}
For each symmetric monoidal $\infty$-category $\cC$ there is a canonical identification of symmetric monoidal $\infty$-groupoids 
\[
\cC^\sim~ \simeq ~\cSpan(\cC)^{\cC^\sim\text{-}\cC^\sim}~.
\]
In particular, for each symmetric monoidal pair $(\cL\subset \cC\supset \cR)\in \CAlg({\sf Pair}^{\times})$, there is a canonical symmetric monoidal functor
\[
\cC^\sim\longrightarrow \cSpan(\cC)^{\cL\text{-}\cR}
\]
from the maximal symmetric monoidal $\infty$-subgroupoid.

\end{observation}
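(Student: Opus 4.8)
The plan is to establish the identification $\cC^\sim \simeq \cSpan(\cC)^{\cC^\sim\text{-}\cC^\sim}$ directly, and then to deduce the second assertion formally from the functoriality of $\cSpan$. I would work level-wise in the simplicial space defining $\cSpan(\cC)^{\cC^\sim\text{-}\cC^\sim}$: by construction its value on $[p]$ is the space of those functors $F\colon \cP_{\neq\emptyset}(\{0,\dots,p\}) \to \cC$ that preserve colimit diagrams and that carry ${\sf Min}$-preserving (respectively ${\sf Max}$-preserving) inclusions into $\cC^\sim$. The core observation is that, once both $\cL$ and $\cR$ are taken to be $\cC^\sim$, these two conditions force $F$ to send \emph{every} morphism to an equivalence.

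To see this I would factor an arbitrary inclusion $S \subseteq T$ in $\cP_{\neq\emptyset}(\{0,\dots,p\})$. If ${\sf Min}(S) = {\sf Min}(T)$ the inclusion is already ${\sf Min}$-preserving; otherwise ${\sf Min}(T) \notin S$, and setting $S' := S \cup \{{\sf Min}(T)\}$ one has ${\sf Max}(S') = {\sf Max}(S)$ and ${\sf Min}(S') = {\sf Min}(T)$, so that $S \subseteq S'$ is ${\sf Max}$-preserving while $S' \subseteq T$ is ${\sf Min}$-preserving. Hence $F(S) \to F(T)$ is a composite of morphisms in $\cC^\sim$, so an equivalence. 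Since the poset $\cP_{\neq\emptyset}(\{0,\dots,p\})$ has a terminal object $\{0,\dots,p\}$, it is weakly contractible, and a functor out of it that inverts all morphisms factors through $\cC^\sim$ as an essentially constant functor. The colimit-preservation condition is then automatic, the relevant covering squares now consisting entirely of equivalences. Evaluation at the object $\{0\}$ thus exhibits an equivalence $\cSpan(\cC)^{\cC^\sim\text{-}\cC^\sim}[p] \xrightarrow{\ \simeq\ } \cC^\sim$, compatibly in $[p]$.

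It remains to upgrade these level-wise equivalences to an equivalence of symmetric monoidal $\infty$-groupoids. For the simplicial structure I would note that each structure map is induced by restriction along a map of posets $\cP_{\neq\emptyset}(\{0,\dots,q\}) \to \cP_{\neq\emptyset}(\{0,\dots,p\})$ and hence, under the evaluation equivalences, is carried to the identity of $\cC^\sim$; thus $\cSpan(\cC)^{\cC^\sim\text{-}\cC^\sim}$ is the constant simplicial space on the $\infty$-groupoid $\cC^\sim$, which is $\cC^\sim$ as an $\infty$-category. For the monoidal refinement I would invoke Observation~\ref{cspan-ot-rep}: since $\cC$ is symmetric monoidal and $(\cC^\sim \subseteq \cC \supseteq \cC^\sim)$ is an essentially surjective pair of monomorphisms admitting the pushouts of Criterion~\ref{cspan-rep} (a span of equivalences has a pushout, namely either vertex), it is a commutative algebra in ${\sf Pair}^\times$, so $\cSpan(\cC)^{\cC^\sim\text{-}\cC^\sim}$ is symmetric monoidal and the displayed equivalence is one of commutative monoid objects, the tensor product on cospans being computed objectwise from that of $\cC$.

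The ``in particular'' clause then follows by applying the symmetric monoidal functor $\cSpan$ of Observation~\ref{cspan-ot-rep} to the evident morphism of symmetric monoidal pairs $(\cC^\sim \subseteq \cC \supseteq \cC^\sim) \to (\cL \subseteq \cC \supseteq \cR)$, given by the inclusions $\cC^\sim \subseteq \cL$ and $\cC^\sim \subseteq \cR$ over $\cC$ (both $\cL$ and $\cR$ contain the maximal $\infty$-subgroupoid), and precomposing with the identification above. I expect the main obstacle to be the symmetric monoidal bookkeeping of these last two steps: verifying that the objectwise evaluation equivalences assemble \emph{coherently} into an equivalence of commutative monoid objects, rather than merely of underlying simplicial spaces. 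The purely homotopical content---that a cospan all of whose legs are invertible is constant---is the easy part.
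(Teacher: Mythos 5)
Your proof is correct, and it supplies exactly the verification that the paper leaves implicit: Observation~\ref{from-gpd} is stated with no proof at all, and your direct level-wise check---every inclusion $S\subseteq T$ in $\cP_{\neq\emptyset}(\{0,\dots,p\})$ factors as a ${\sf Max}$-preserving inclusion $S\subseteq S\cup\{{\sf Min}(T)\}$ followed by a ${\sf Min}$-preserving one, so with $\cL=\cR=\cC^\sim$ the functor $F$ inverts every morphism and is essentially constant on the weakly contractible poset---is the intended content, as is the deduction of the ``in particular'' clause from the functoriality of $\cSpan$ on ${\sf Pair}$ via Observation~\ref{cspan-ot-rep}. One point deserves explicit care: read literally, ``carries colimit diagrams to colimit diagrams'' would include disjoint-union diagrams such as $\{0\}\amalg\{1\}\to\{0,1\}$, under which the asserted identification would fail outright; your argument tacitly adopts the reading (forced by the paper's own description of the $1$-simplices as a pullback of arrow spaces) in which only the connected colimit diagrams governing composition of cospans are imposed, and for those the condition is indeed automatic for an essentially constant functor over a weakly contractible index. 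With that reading made explicit, the argument is complete.
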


\subsection{Exponentiability in $\Cat_\infty$}\label{sec.expo}
In this section we verify a couple relevant examples of \emph{exponentiable fibrations} $\cE \to \cB$. See \cite{fibrations} for a more extensive treatment of this subject.

\subsubsection{\bf Basic notions}
Each functor $\cE \xra{\pi} \cB$ between $\infty$-categories determines a functor
\[
\pi_!\colon {\Cat_\infty}_{/\cE} \xra{-\circ \pi~} {\Cat_\infty}_{/\cB}~,\qquad (\cK\to \cE)\mapsto (\cK\to \cE\xra{\pi} \cB)
\]
given by composing with $\pi$.  
This functor preserves colimits.
This functor has a right adjoint
\[
\pi_!\colon {\Cat_\infty}_{/\cE}  ~\rightleftarrows~{\Cat_\infty}_{/\cB} \colon \pi^\ast~,
\]
which we refer to as \emph{base change}, which evaluates as $\pi^\ast \colon (\cK\to \cB)\mapsto (\cE_{|\cK}\to \cE)$ where $\cE_{|\cK}:= \cK\underset{\cB}\times \cE$ is the fiber product.

\begin{definition}\label{def.efib}
A functor $\cE\xra{\pi} \cB$ between $\infty$-categories is an \emph{exponentiable fibration} if the base change functor $\pi^\ast$ is a left adjoint.
In this case, the right adjoint
\[
\pi^\ast \colon {\Cat_\infty}_{/\cB}~ \rightleftarrows~{\Cat_\infty}_{/\cE}\colon \pi_\ast
\]
is the \emph{exponential} functor.  

\end{definition}
\noindent
By adjunction, the value of this exponential functor on $\tau \to \cE$ has the following universal property:
\begin{itemize}
\item[~]
For each functor $\cK \to \cB$, there is a natural identification of the space of functors over $\cB$
\[
\Map_{/\cB}\bigl(\cK,\pi_\ast \tau\bigr)~\simeq~ \Map_{/\cE}\bigl(\cE_{|\cK},\tau\bigr)
\]
with the space of functors over $\cE$ from the pullback.  
\end{itemize}

\begin{observation}\label{exp-op}
A functor $\cE\to \cB$ is an exponentiable fibration if and only if its opposite $\cE^{\op}\to \cB^{\op}$ is.  

\end{observation}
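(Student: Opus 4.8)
The plan is to deduce the statement formally from the fact that the opposite-category involution $(-)^{\op}\colon \Cat_\infty \to \Cat_\infty$ is an auto-equivalence, together with the observation that it is compatible with the composition functors used to define base change. Write $\sigma := \pi^{\op}\colon \cE^{\op}\to \cB^{\op}$. First I would record that passing to opposites induces equivalences of $\infty$-categories $\Phi_\cE\colon {\Cat_\infty}_{/\cE} \xra{\simeq} {\Cat_\infty}_{/\cE^{\op}}$ and $\Phi_\cB\colon {\Cat_\infty}_{/\cB}\xra{\simeq}{\Cat_\infty}_{/\cB^{\op}}$, each given on objects by $(\cK\to \cE)\mapsto (\cK^{\op}\to \cE^{\op})$; these are equivalences because $(-)^{\op}$ is an involutive auto-equivalence of $\Cat_\infty$. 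Unwinding definitions, $\Phi_\cB\circ \pi_! \simeq \sigma_!\circ \Phi_\cE$ as functors, since both send $(\cK\xra{f}\cE)$ to $(\cK^{\op}\xra{f^{\op}}\cE^{\op}\xra{\sigma}\cB^{\op})$.

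Next I would transport the base-change adjunction across these equivalences. Since $\Phi_\cE$ and $\Phi_\cB$ are equivalences, hence preserve mapping spaces, for $X\in {\Cat_\infty}_{/\cE}$ and $Y\in {\Cat_\infty}_{/\cB}$ the chain of natural equivalences
\[
\Map(\sigma_!\,\Phi_\cE X,\, \Phi_\cB Y) \simeq \Map(\pi_! X, Y) \simeq \Map(X,\pi^\ast Y) \simeq \Map(\Phi_\cE X,\, \Phi_\cE\pi^\ast Y)
\]
exhibits $\Phi_\cE\circ \pi^\ast\circ \Phi_\cB^{-1}$ as a right adjoint to $\sigma_!$; by uniqueness of adjoints this functor is $\sigma^\ast$, so that $\Phi_\cE\circ \pi^\ast \simeq \sigma^\ast\circ \Phi_\cB$. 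In words, the opposite equivalences intertwine the two base-change functors. It is worth flagging that, although the involution $(-)^{\op}$ applied to a single adjunction swaps left and right adjoints, here the transport is along equivalences of the ambient slice $\infty$-categories, and the mapping-space computation above shows that no such swap occurs; this is the one point I would take care to verify rather than assume.

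Finally I would conclude. By definition $\pi$ is an exponentiable fibration precisely when the base-change functor $\pi^\ast$ admits a right adjoint. Because existence of a right adjoint for a functor between $\infty$-categories is invariant under pre- and post-composition with equivalences, the identity $\Phi_\cE\circ \pi^\ast \simeq \sigma^\ast\circ \Phi_\cB$ shows that $\pi^\ast$ admits a right adjoint if and only if $\sigma^\ast$ does; and when they exist these right adjoints are themselves intertwined, $\Phi_\cE\circ\pi_\ast \simeq \sigma_\ast\circ\Phi_\cB$. Hence $\pi$ is exponentiable if and only if $\sigma = \pi^{\op}$ is, which is the assertion. The argument has no substantive obstacle: it is entirely formal, and the only place demanding attention is confirming the correct handedness in the transport of adjoints, handled above.
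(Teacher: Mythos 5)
Your proof is correct, and it is the standard formal justification of a statement the paper records as an unproved ``Observation'' (the authors treat it as immediate from the fact that $(-)^{\op}$ is an involutive auto-equivalence of $\Cat_\infty$, which is exactly the mechanism you make explicit). Your care about the handedness of the transported adjoint is well placed and correctly resolved: conjugating by equivalences of the slice $\infty$-categories preserves the direction of adjunctions, unlike passing to opposites of the slices themselves, and one could alternatively observe directly that $(-)^{\op}$ preserves pullbacks so that $\Phi_\cE\circ\pi^\ast\simeq\sigma^\ast\circ\Phi_\cB$ without invoking uniqueness of adjoints.
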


\begin{remark}\label{non-exp}
Not every functor is an exponentiable fibration.  
For instance, base change along the functor $\{0<2\}\to [2]$ carries the pushout diagram among $\infty$-categories over $[2]$
\[
\xymatrix{
\{1\}  \ar[r]  \ar[d]
&
\{1<2\}  \ar[d]
&&
\emptyset  \ar[r]  \ar[d]
&
\{2\}  \ar[d]
\\
\{0<1\}  \ar[r]
&
[2]
&\text{ to the diagram }&
\{0\}  \ar[r]
&
\{0<2\}
}
\]
over $\{0<2\}$ which is \emph{not} a pushout.

\end{remark}

The following result is an $\infty$-categorical version of a result of Giraud's (\cite{giraud}), which is also the main result in~\cite{conduche} of Conduch\'e.
The result articulates a sense in which Remark~\ref{non-exp} demonstrates the only obstruction to exponentiability.  
For convenient latter application, we state one of the assertions in the result in terms of \emph{suspension} of an $\infty$-category:
\begin{itemize}
\item[~] 
For $\cJ$ an $\infty$-category, its \emph{suspension} is the pushout in the diagram among $\infty$-categories
\[
\xymatrix{
\cJ \times \{0<2\}  \ar[r]  \ar[d]
&
\cJ\times[2]  \ar[d]
\\
\{0<2\}  \ar[r]
&
\cJ^{\tl\tr}.
}
\]
\end{itemize}
This construction is evidently functorial in $\cJ$.
Notice the evident fully faithful functors $\cJ^{\tl}  \hookrightarrow \cJ^{\tl\tr}  \hookleftarrow \cJ^{\tr}$ from the cones.
\begin{lemma}\label{conduche}
The following conditions on a functor $\cE \xra{\pi} \cB$ between $\infty$-categories are equivalent.
\begin{enumerate}

\item The functor $\pi$ is an exponentiable fibration. 

\item The base change functor $\pi^\ast \colon {\Cat_\infty}_{/\cB} \to {\Cat_\infty}_{/\cE}$ preserves colimits.  

\item For each functor $[2]\to \cB$, the diagram among pullbacks
\[
\xymatrix{
\cE_{|\{1\}}  \ar[r]  \ar[d]
&
\cE_{|\{1<2\}}   \ar[d]
\\
\cE_{|\{0<1\}}   \ar[r]
&
\cE_{|[2]} 
}
\]
is a pushout among $\infty$-categories.

\item For each functor $[2]\to \cB$, and for each lift $\{0\} \amalg\{2\} \xra{\{e_0\}\amalg \{e_2\}} \cE$ along $\pi$, the canonical functor from the coend
\[
\cE_{|\{0<1\}}(e_0,-)\underset{\cE_{|\{1\}}}\bigotimes \cE_{|\{1<2\}}(-,e_2)
\xra{~\circ~}  
\cE_{|[2]}(e_0,e_2)
\]
is an equivalence of spaces.

\item
For each functor $[2]\to \cB$, the canonical map of spaces
\[
\underset{[p]\in \bdelta^{\op}}\colim~\Map_{/\ast^{\tl\tr}}([p]^{\tl \tr}, \cE_{|\ast^{\tl\tr}})\xra{~\circ~} \Map_{/\{0<2\}}(\{0<2\},\cE_{|\{0<2\}})
\]
is an equivalence.
Here we have identified $[2]\simeq \ast^{\tl\tr}$ as the suspension of the terminal $\infty$-category, and we regard each suspension $[p]^{\tl\tr}$ as an $\infty$-category over $\ast^{\tl\tr}$ by declaring the fiber over the left/right-cone-point to be the left/right-cone-point.

\item For each functor $[2]\to \cB$, and for each lift $\{0<2\}\xra{(e_0\xra{h} e_2)}\cE$ along $\pi$, the $\infty$-category of factorizations of $h$ through $\cE_{|\{1\}}$ over $[2]\to \cB$
\[
\sB ({\cE_{|\{1\}}}^{e_0/})_{/(e_0\xra{h}e_2)}~\simeq~\ast~\simeq~\sB ({\cE_{|\{1\}}}_{/e_2})^{(e_0\xra{h}e_2)/}
\]
has contractible classifying space.  
Here, the two $\infty$-categories in the above expression agree and are the fiber over $h$ of the functor $\ev_{\{0<2\}}\colon \Fun_{/\cB}([2],\cE) \to \Fun_{/\cB}(\{0<2\},\cE)$.

\end{enumerate}

\end{lemma}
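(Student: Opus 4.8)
The plan is to prove the seven conditions equivalent in two stages: first the formal equivalences $(1)\Leftrightarrow(2)\Leftrightarrow(3)$ among statements about colimit-preservation by base change, and then the identification of $(3)$ with the concrete fiberwise conditions $(4)$--$(7)$ by way of an explicit mapping-space computation in a pushout of $\infty$-categories.

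For $(1)\Leftrightarrow(2)$ I would argue formally. The base-change functor $\pi^\ast\colon {\Cat_\infty}_{/\cB}\to{\Cat_\infty}_{/\cE}$ is a functor between presentable $\infty$-categories which, being right adjoint to $\pi_!$, preserves limits and is accessible; by the adjoint functor theorem it admits a further right adjoint --- that is, $\pi$ is exponentiable --- precisely when it preserves colimits, which is $(2)$. For $(2)\Leftrightarrow(3)$, I would use that $\bdelta$ generates $\Cat_\infty$ under colimits, so the objects $([p]\to\cB)$ generate ${\Cat_\infty}_{/\cB}$ under colimits, and that via Segal covers each such object is an iterated pushout of the objects $([2]\to\cB)$, $([1]\to\cB)$, and $([0]\to\cB)$. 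Pullback always preserves coproducts together with the outer face-and-degeneracy structure, so the only colimit it can fail to preserve is the inner Segal pushout $[2]\simeq\{0<1\}\amalg_{\{1\}}\{1<2\}$ --- exactly the failure isolated in Remark~\ref{non-exp}. Hence $\pi^\ast$ preserves all colimits iff it preserves this pushout for every $[2]\to\cB$, which is $(3)$. This reduction from arbitrary colimits to a single family of pushouts is the principal structural input, and here I would lean on the development of exponentiable fibrations in \cite{fibrations}.

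The crux is $(3)\Leftrightarrow(4)$. For a fixed $[2]\to\cB$ the comparison functor from the pushout $\cE_{|\{0<1\}}\amalg_{\cE_{|\{1\}}}\cE_{|\{1<2\}}$ to $\cE_{|[2]}$ is a bijection on objects, with object space $\cE_{|\{0\}}\sqcup\cE_{|\{1\}}\sqcup\cE_{|\{2\}}$ on both sides, so it is an equivalence iff it is an equivalence on all mapping spaces. Since $\cE_{|\{1\}}\hookrightarrow\cE_{|\{0<1\}}$ and $\cE_{|\{1\}}\hookrightarrow\cE_{|\{1<2\}}$ are fully faithful and injective on objects, every mapping space except those from an object $e_0$ of the $0$-fiber to an object $e_2$ of the $2$-fiber is computed inside a single summand and is tautologically preserved; the one remaining mapping space out of the pushout is given by the two-sided bar construction, i.e. the coend $\cE_{|\{0<1\}}(e_0,-)\underset{\cE_{|\{1\}}}\bigotimes\cE_{|\{1<2\}}(-,e_2)$. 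Thus $(3)$ holds iff this coend maps by an equivalence to $\cE_{|[2]}(e_0,e_2)$ for all $e_0,e_2$, which is $(4)$. I expect the hardest single step to be justifying this bar-construction formula for mapping spaces in the pushout; I would prove it by presenting the pushout as a collage-type gluing along fully faithful inclusions and computing through its explicit simplicial model.

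It then remains to see $(4)\Leftrightarrow(5)\Leftrightarrow(6)\Leftrightarrow(7)$, which is comparatively soft once the bar construction is in hand. Unwinding the coend as the geometric realization of the two-sided bar construction identifies its space of $p$-simplices with chains $e_0\to m_1\to\cdots\to m_p\to e_2$ whose interior lies in $\cE_{|\{1\}}$, that is, with $\Map_{/\ast^{\tl\tr}}([p]^{\tl\tr},\cE_{|\ast^{\tl\tr}})$; taking the colimit over $\bdelta^{\op}$ reproduces $(5)$ verbatim, and the target $\cE_{|[2]}(e_0,e_2)\simeq\Map_{/\{0<2\}}(\{0<2\},\cE_{|\{0<2\}})$ shows this to be $(4)$ rewritten. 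Condition $(6)$ is the same comparison phrased through the suspensions $[p]^{\tl\tr}$, and $(7)$ is its fiberwise form: by a Quillen Theorem~A argument the realization map is an equivalence iff, over each morphism $h\colon e_0\to e_2$, the homotopy fiber is contractible, and this fiber is precisely the classifying space of the $\infty$-category of factorizations of $h$ through $\cE_{|\{1\}}$ that appears in $(7)$. Finally, Observation~\ref{exp-op} records that the entire package is self-dual under $(-)^{\op}$, which is what permits the two factorization categories in $(7)$ to be used interchangeably.
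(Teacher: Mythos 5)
First, a bookkeeping point: the lemma has six conditions, not seven. Your ``(5)/(6)/(7)'' conflate two phrasings of what is in fact the single condition~(5) (the colimit over $\bdelta^{\op}$ of maps out of the suspensions $[p]^{\tl\tr}$), with the classifying-space condition being~(6). The mathematical content you describe does cover everything in the statement, but you should recount.

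Your route through $(3)\Leftrightarrow(4)$ is genuinely different from the paper's and is a reasonable alternative. The paper never computes mapping spaces in the pushout $\cE_{|\{0<1\}}\amalg_{\cE_{|\{1\}}}\cE_{|\{1<2\}}$ directly; instead it verifies the universal property of the pushout against an arbitrary test $\infty$-category $\cZ$, working level-by-level in the complete Segal space presentation and splitting simplices $[p]\to[2]$ into the consecutive and non-consecutive cases (the latter reducing to $\{0<2\}\to[2]$, where condition~(5) enters). Your approach instead invokes the two-sided bar formula for mapping spaces in a pushout along fully faithful functors. That formula is correct here, but only because everything lives over $[2]$, so there are no backward morphisms and the general zigzag description collapses to chains $e_0\to m_1\to\cdots\to m_p\to e_2$; you should make that directedness explicit, since for a general pushout along fully faithful functors the mapping spaces are not given by a single coend. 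You correctly flag this as the hardest single step. What your route buys is a cleaner conceptual statement; what the paper's route buys is that it never needs the general gluing formula, only the (easier) universal-property check.

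The genuine gap is in your $(3)\Rightarrow(2)$. You argue that since the objects $[p]\to\cB$ generate ${\Cat_\infty}_{/\cB}$ under colimits and each $[p]$ is an iterated Segal pushout of $[1]$'s over $[0]$'s, preservation of the Segal pushouts (plus coproducts) forces preservation of all colimits. This inference is not valid: colimit-preservation is not detected on a generating family of diagrams. In particular, the canonical presentation of an arbitrary $\cK\to\cB$ as a colimit of its simplices is itself a colimit in $\Cat_\infty$ that $\pi^\ast$ is not yet known to preserve, so the reduction is circular; moreover colimits in $\Cat_\infty$ are computed by localizing presheaf-level colimits, and one must control that localization. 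This is precisely why the paper's proof of the return direction (there phrased as $(4)\Rightarrow(1)$) does real work: it passes to $\Psh(\bdelta)$, where base change has a right adjoint $\w{\pi}_\ast$ for free because colimits are universal, and then checks that $\w{\pi}_\ast$ carries complete Segal spaces over $\cE$ to complete Segal spaces over $\cB$ --- the Segal condition for $\w{\pi}_\ast\tau$ being exactly where the coend condition~(4) is used, and the completeness condition requiring a separate (short) argument. Deferring this step to \cite{fibrations} leaves the central implication of the lemma unproved; as written, your cycle of equivalences does not close.
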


\begin{proof}
By construction, the $\infty$-category $\Cat_\infty$ is presentable, and thereafter each over $\infty$-category ${\Cat_\infty}_{/\cC}$ is presentable.
The equivalence of~(1) and~(2) follows by way of the adjoint functor theorem (Cor. 5.5.2.9 of~\cite{HTT}), using that base-change is defined in terms of finite limits.  
The equivalence of~(4) and~(6) visibly follows from Quillen's Theorem A.  
The equivalence of~(4) and~(5) follows upon observing the map of fiber sequences among spaces
\[
\xymatrix{
\cE_{|\{0<1\}}(e_0,-)\underset{\cE_{|\{1\}}}\bigotimes \cE_{|\{1<2\}}(-,e_2)  \ar[r]  \ar[d]^-{\circ}
&
\underset{[p]\in \bdelta^{\op}}\colim~\Map_{/\ast^{\tl\tr}}([p]^{\tl \tr}, \cE_{|\ast^{\tl\tr}})  \ar[r]^-{\ev_{0,2}}  \ar[d]^-{\circ}
&
\cE_{|\{0\}} \times \cE_{|\{2\}}  \ar[d]^-{=}
\\
\cE_{|[2]}(e_0,e_2)  \ar[r]
&
\Map_{/\{0<2\}}(\{0<2\},\cE_{|\{0<2\}})  \ar[r]^-{\ev_{0,2}} 
&
\cE_{|\{0\}} \times \cE_{|\{2\}},
}
\]
where the top sequence is indeed a fibration sequence because pullbacks are universal in the $\infty$-category of spaces.  
By construction, there is the pushout expression $\{0<1\}\underset{\{1\}}\amalg \{1<2\} \xra{\simeq}[2]$ in $\Cat_\infty$; this shows~(2) implies~(3).

\medskip

We now prove the equivalence between~(3) and~(5).
Consider an $\infty$-category $\cZ$ under the diagram $\cE_{|\{0<1\}} \la \cE_{|\{1\}} \to \cE_{|\{1<2\}}$.  
We must show that there is a unique functor $\cE_{|[2]} \to \cZ$ under this diagram.  
To construct this functor, and show it is unique, it is enough to do so between the complete Segal spaces these $\infty$-categories present:
\[
\Map([\bullet],\cE_{|[2]})~ \overset{\exists !}\dashrightarrow~ \Map([\bullet],\cZ)
\]
under $\Map([\bullet],\cE_{|\{0<1\}}) \la \Map([\bullet],\cE_{|\{1\}}) \to \Map([\bullet],\cE_{|\{1<2\}})$.

So consider a functor $[p]\xra{f} [2]$ between finite non-empty linearly ordered sets.  
Denote the linearly ordered subsets $A_i:=f^{-1}(i)\subset [p]$.  
We have the diagram among $\infty$-categories
\begin{equation}\label{two-squares}
\xymatrix{
A_1  \ar[r]  \ar[d]
&
A_1\star A_2  \ar[d]
&&
\{1\}  \ar[r]  \ar[d]
&
\{1<2\}  \ar[d]
\\
A_0\star A_1  \ar[r]
&
[p]
&
\text{ over the diagram }
&
\{0<1\} \ar[r]
&
[2].
}
\end{equation}
We obtain the solid diagram among spaces of functors
\begin{equation}\label{flat}
\Small
\xymatrix{
\Map_{/\{0<1\}}(A_0\star A_1,\cE_{|\{0<1\}})   \ar[ddd]
&
\Map_{/\{1\}}(A_1,\cE_{|\{1\}})  \ar[r]  \ar[l]  \ar@/_3pc/[ddd]
&
\Map_{/\{1<2\}}(A_1\star A_2, \cE_{|\{1<2\}})  \ar[ddd]
\\
&
\Map_{/[2]}([p],\cE) \ar@{-->}[d]^-{\exists !}  \ar[ul]  \ar[ur]
&
\\
&
\Map([p],\cZ)   \ar[dl]  \ar[dr]
&
\\
\Map_{/\{0<1\}}(A_0\star A_1,\cZ) 
&
\Map_{/\{1\}}(A_1,\cZ)  \ar[r]  \ar[l]
&
\Map_{/\{1<2\}}(A_1\star A_2, \cZ)  
}
\end{equation}
and we wish to show there is a unique filler, as indicated.  
\\
{\bf Case that $f$ is consecutive:}
In this case the left square in~(\ref{two-squares}) is a pushout.
It follows that the upper and the lower flattened squares in~(\ref{flat}) are pullbacks.
And so there is indeed a unique filler making the diagram~(\ref{flat}) commute.  
\\
{\bf Case that $f$ is not consecutive:}
In this case $A_1=\emptyset$ and $A_0\neq \emptyset \neq A_2$.  
Necessarily, there are linearly ordered sets $B_0$ and $B_2$ for which $B_0^{\tr} \simeq A_0$ and $B_2^{\tl} \simeq A_2$.  
We recognize $B_0^{\tr} \underset{\{0\}}\amalg \{0<2\}\underset{\{2\}} \amalg B_2^{\tl} \xra{\simeq} [p]$ as an iterated pushout.  
So the canonical maps among spaces to the iterated pullbacks
\[
\Small
\xymatrix{
\Map_{/[2]}([p],\cE_{|[2]})  \ar[rr]^-{\simeq}
&&
\Map(B_0^{\tr},\cE_{|\{0\}})  \underset{\cE_{|\{0\}}^{\sim}}\times \Map_{/\{0<2\}}(\{0<2\},\cE_{|\{0<2\}}) \underset{\cE_{|\{2\}}^{\sim}} \times \Map(B_2^{\tl},\cE_{|\{2\}})  
}
\]
and
\[
\Small
\xymatrix{
\Map([p],\cZ)  \ar[rr]^-{\simeq}
&&
\Map(B_0^{\tr},\cZ)  \underset{\cZ^{\sim}}\times \Map(\{0<2\},\cZ) \underset{\cZ^{\sim}} \times \Map(B_2^{\tl},\cZ)   
}
\]
are equivalences.
This reduces us to the case that $[p] \to [2]$ is the functor $\{0<2\} \to [2]$.  
We have the solid diagram among spaces
\[
\xymatrix{
\Map_{/\{0<2\}}(\{0<2\},\cE_{|\{0<2\}})   \ar@{-->}[rr]^-{\exists !}
&&
\Map(\{0<2\},\cZ)  
\\
|\Map_{/[2]}([\bullet]^{\tl \tr}, \cE_{|\ast^{\tl\tr}})|  \ar[rr]  \ar[u]^-{\circ}
&&
|\Map([\bullet]^{\tl \tr}, \cZ)|  \ar[u]^-{\simeq}_-{\circ}.
}
\]
The right vertical map is an equivalence by the Yoneda lemma for $\infty$-categories. 
(Alternatively, the codomain is the classifying space of the $\infty$-category which is the unstraightening of the indicated functor from $\bdelta^{\op}$ to spaces, and the domain maps to this $\infty$-category finally.)
Assumption~(5) precisely gives that the left vertical map is an equivalence.  
The unique filler follows.  

It is immediate to check that the unique fillers just constructed are functorial among finite non-empty linearly ordered sets over $[2]$.

\medskip

It remains to show~(4) implies~(1). To do this we make use of the presentation $\Cat_\infty\hookrightarrow \Psh(\bdelta)$ as complete Segal spaces.  
Because limits and colimits are computed value-wise in $\Psh(\bdelta)$, and because colimits in the $\infty$-category $\Spaces$ are universal, then colimits in $\Psh(\bdelta)$ are universal as well.
Therefore, the base change functor
\[
\pi^\ast \colon \Psh(\bdelta)_{/\cB} \longrightarrow \Psh(\bdelta)_{/\cE} \colon \w{\pi}_\ast
\]
has a right adjoint, as notated.
Because the presentation $\Cat_\infty \hookrightarrow \Psh(\bdelta)$ preserves limits, then the functor $\cE\xra{\pi}\cB$ is an exponentiable fibration provided this right adjoint $\w{\pi}_\ast$ carries complete Segal spaces over $\cE$ to complete Segal spaces over $\cB$.  

So let $\tau \to \cE$ be a complete Segal space over $\cE$.  
To show the simplicial space $\w{\pi}_\ast \tau$ satisfies the Segal condition we must verify that, for each functor $[p]\to \cE$ with $p>0$, the canonical map of spaces of simplicial maps over $\cE$
\[
\Map_{/\cE}([p],\w{\pi}_\ast \tau) \longrightarrow \Map_{/\cE}(\{0<1\},\w{\pi}_\ast \tau)  \underset{\Map_{/\cE}(\{1\},\w{\pi}_\ast \tau)} \times  \Map_{/\cE}(\{1<\dots<p\},\w{\pi}_\ast \tau)
\]
is an equivalence.
Using the defining adjunction for $\w{\pi}_\ast$, this map is an equivalence if and only if the canonical map of spaces of functors
\[
\Map_{/\cB}(\pi^\ast [p],\tau) \longrightarrow \Map_{/\cB}(\pi^\ast \{0<1\},\tau)  \underset{\Map_{/\cB}(\pi^\ast \{1\},\tau)} \times  \Map_{/\cB}(\pi^\ast \{1<\dots<p\}, \tau)
\]
is an equivalence.
This is the case provided the canonical functor among pullback $\infty$-categories from the pushout $\infty$-category
\[
\cE_{|\{0<1\}} \underset{\cE_{|\{1\}}} \amalg \cE_{|\{1<\dots<p\}} \longrightarrow \cE_{|[p]}
\]
is an equivalence between $\infty$-categories over $\cB$.  
(Here we used the shift in notation $\pi^\ast \cK := \cE_{|\cK}$ for each functor $\cK\to \cB$.)
This functor is clearly essentially surjective, so it remains to show this functor is fully faithful.
Let $e_i$ and $e_j$ be objects of $\cE$, each which lies over the object of $[p]$ indicated by the subscript.
We must show that the map between spaces of morphisms
\[
\bigl(\cE_{|\{0<1\}} \underset{\cE_{|\{1\}}} \amalg \cE_{|\{1<\dots<p\}} \bigr)(e_i,e_j) \longrightarrow \cE_{|[p]}(e_i,e_j)
\]
is an equivalence.  
This is directly the case whenever $1<i\leq j\leq p$ or $0\leq i \leq j \leq 1$.  
We are reduced to the case $i=0<j$.  
This map is identified with the map from the coend
\[
\cE_{|\{0<1\}}(e_0,-)\underset{\cE_{|\{1\}}}\bigotimes \cE_{|\{1<j\}}(-,e_j)
\xra{~\circ~}  
\cE_{|\{0<1<j\}}(e_0,e_j)~.
\]
Condition~(4) exactly gives that this map is an equivalence, as desired.

It remains to verify this Segal space $\w{\pi}_\ast \tau$ satisfies the univalence condition.
So consider a univalence diagram $\cU^{\tr} \to \cB$.  
We must show that the canonical map
\[
\Map_{/\cB}(\ast,\w{\pi}_\ast \tau) \longrightarrow \Map_{/\cB}(\cU,\w{\pi}_\ast \tau)
\]
is an equivalence of spaces of maps between simplicial spaces over $\cB$.
Using the defining adjunction for $\w{\pi}_\ast$, this map is an equivalence if and only if the map of spaces
\[
\Map_{/\cE}(\cE_{|\ast},\tau)\longrightarrow \Map_{/\cB}(\cE_{|\cU},\tau)
\]
is an equivalence.
Because the presentation of $\cB$ as a simplicial space is complete, there is a canonical equivalence $\cE_{|\cU} \simeq \cE_{|\ast}\times \cU$ over $\cU$.  
That the above map is an equivalence follows because the presentation of $\tau$ as a simplicial space is complete.  
\end{proof}

\subsubsection{\bf (co)Cartesian fibrations}

The construction of the pushforward $\pi_\ast$ implies the following.  
\begin{observation}\label{t5}
Let 
\[
\xymatrix{
\cS'  \ar[rr]   \ar[d]
&&
\cE'  \ar[d]   \ar[rr]^-{\pi'}
&&
\cB' \ar[d]
\\
\cS   \ar[rr]
&&
\cE  \ar[rr]^-{\pi}
&&
\cB
}
\]
be a diagram among $\infty$-categories in which each square is a pullback.
If $\pi$ is an exponentiable fibration, then the functor $\pi'$ is also an exponentiable fibration and the canonical functor between $\infty$-categories over $\cB'$
\[
\pi'_\ast \cS'
\xra{~\simeq~}
(\pi_\ast \cS)_{|\cB'}
\]
is an equivalence.  

\end{observation}

We recall the Definitions~2.4.1.1 and~2.4.2.1 of~\cite{HTT}.
\begin{definition}\label{d1}
Let $\cE \xra{\pi}\cB$ be a functor between $\infty$-categories.  
Let $\cB_0\subset \cB$ be an $\infty$-subcategory.
\begin{itemize}

\item
\begin{itemize}
\item
A morphism $f\colon c_1 \xra{\lag e_s \xra{f} e_t\rag} \cE$ is \emph{$\pi$-coCartesian} if the canonical diagram among $\infty$-categories
\[
\xymatrix{
\cE^{e_t/}  \ar[rr]  \ar[d]
&&
\cE^{e_s/}  \ar[d]
\\
\cB^{\pi e_t/}
\ar[rr]
&&
\cB^{\pi e_s/}
}
\]
is a pullback.  

\item
The functor $\pi \colon \cE\to \cB$ is a \emph{$\cB_0$-coCartesian fibration} if each diagram among $\infty$-categories
\[
\xymatrix{
c_0  \ar[rr]  \ar[d]_-{\lag s \rag}
&&
\cE  \ar[d]^-{\pi}
\\
c_1  \ar[r]  \ar@{-->}[urr]
&
\cB_0 \ar[r]
&
\cB
}
\]
admits a $\pi$-coCartesian filler.

\end{itemize}

\item
\begin{itemize}
\item
A morphism $f\colon c_1 \xra{\lag e_s \xra{f} e_t\rag} \cE$ is \emph{$\pi$-Cartesian} if the canonical diagram among $\infty$-categories
\[
\xymatrix{
\cE_{/e_s} \ar[rr]  \ar[d]
&&
\cE_{/e_t}  \ar[d]
\\
\cB_{/\pi e_s}
\ar[rr]
&&
\cB_{/\pi e_t}
}
\]
is a pullback.  

\item
The functor $\pi \colon \cE\to \cB$ is a \emph{$\cB_0$-Cartesian fibration} if each diagram among $\infty$-categories
\[
\xymatrix{
c_0  \ar[rr]  \ar[d]_-{\lag t \rag}
&&
\cE  \ar[d]^-{\pi}
\\
c_1  \ar[r]  \ar@{-->}[urr]
&
\cB_0 \ar[r]
&
\cB
}
\]
admits a $\pi$-Cartesian filler.

\end{itemize}

\end{itemize}

\end{definition}

\begin{example}
If the inclusion $\cB_0 \hookrightarrow \cB$ is an equivalence, then a functor $\cE\to \cB$ is a $\cB_0$-(co)Cartesian fibration if and only if it is a (co)Cartesian fibration.
If the inclusion $\cB_0 = \cB^\sim \hookrightarrow \cB$ is the maximal $\infty$-subgroupoid, every functor $\cE\to \cB$ is a $\cB_0$-(co)Cartesian fibration.  

\end{example}

In~\S2.4.1 of~\cite{HTT} it is shown that, for $\cE\xra{\pi} \cB$ a (co)Cartesian fibration, a composition of two $\pi$-(co)Cartesian morphisms in $\cE$ is again a $\pi$-(co)Cartesian morphism in $\cE$.  
With this, we make the following.
\begin{observation}\label{t3}
Let $\cE\xra{\pi}\cB$ be a functor between $\infty$-categories, and let $\cB_0\subset \cB$ be an $\infty$-subcategory.
If $\pi$ is a $\cB_0$-(co)Cartesian fibration, there is a unique $\infty$-subcategory $\cE_0\subset \cE$ with the following universal property.
\begin{itemize}
\item[]
A functor $\cJ \to \cE$ factors through $\cE_0$ if and only if each morphism in $\cJ$ is carried to a $\pi$-(co)Cartesian morphism in $\cE$ over a morphism in $\cB_0$.

\end{itemize}

\end{observation}

\begin{terminology}

With the notation of Observation~\ref{t3}, $\cE_0\subset \cE$ is the \emph{$\infty$-subcategory of $\pi$-$\cB_0$-(co)Cartesian morphisms} in $\cE$.

\end{terminology}

\begin{cor}\label{cor.conduche}
If a functor $\cE \xra{\pi} \cB$ is either a coCartesian fibration or a Cartesian fibration then it is an exponentiable fibration.
In particular, both left fibrations and right fibrations are exponentiable fibrations.  

\end{cor}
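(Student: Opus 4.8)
The plan is to verify one of the equivalent conditions of Lemma~\ref{conduche}; condition~(4), phrased directly in terms of mapping spaces, is the most amenable. First I would reduce to a single case using Observation~\ref{exp-op}: since the opposite of a coCartesian fibration is a Cartesian fibration and conversely, and since $\pi$ is exponentiable if and only if $\pi^{\op}$ is, it suffices to treat coCartesian fibrations. The assertions about left and right fibrations are then immediate, as these are precisely the coCartesian and Cartesian fibrations whose fibers are $\infty$-groupoids.

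So suppose $\cE \xra{\pi} \cB$ is a coCartesian fibration, and fix a functor $[2]\to \cB$ classifying a composable pair $b_0 \xra{g} b_1 \xra{h} b_2$. Because coCartesian fibrations are stable under base change, each pullback $\cE_{|\{i<j\}}\to \{i<j\}$ and $\cE_{|[2]}\to [2]$ is again a coCartesian fibration. Over a simplex, such a fibration is classified by the pushforward functors between its fibers: $\cE_{|\{0<1\}}$ corresponds to $g_!\colon \cE_{b_0}\to \cE_{b_1}$, the fibration $\cE_{|\{1<2\}}$ to $h_!\colon \cE_{b_1}\to \cE_{b_2}$, and $\cE_{|[2]}$ to the composite, with the coherence of the coCartesian structure supplying an equivalence $(hg)_!\simeq h_! g_!$.

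Then I would compute the two sides of condition~(4) for lifts $e_0\in \cE_{b_0}$ and $e_2\in \cE_{b_2}$. Using the standard description of mapping spaces in the Grothendieck construction over a linearly ordered set (see~\cite{HTT}), one has $\cE_{|\{0<1\}}(e_0,x)\simeq \Map_{\cE_{b_1}}(g_! e_0, x)$, covariantly in $x\in \cE_{|\{1\}}=\cE_{b_1}$, and $\cE_{|\{1<2\}}(x,e_2)\simeq \Map_{\cE_{b_2}}(h_! x, e_2)$, contravariantly in $x$. The coend appearing in condition~(4) is therefore
\[
\int^{x\in \cE_{b_1}} \Map_{\cE_{b_1}}(g_! e_0, x)\times \Map_{\cE_{b_2}}(h_! x, e_2)~,
\]
and by the co-Yoneda (density) lemma, which collapses a coend of a corepresentable functor against a contravariant functor, this is equivalent to $\Map_{\cE_{b_2}}(h_! g_! e_0, e_2)$. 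On the other hand $\cE_{|[2]}(e_0,e_2)\simeq \Map_{\cE_{b_2}}((hg)_! e_0, e_2)$, and the coherence $(hg)_!\simeq h_! g_!$ identifies this with the same space. Tracing through the definitions shows that the composition map $\circ$ of condition~(4) realizes exactly this identification, so it is an equivalence, and Lemma~\ref{conduche} then gives that $\pi$ is an exponentiable fibration.

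The main obstacle I anticipate is bookkeeping: making precise, and homotopy-coherent, the claim that the abstractly defined composition map $\circ$ agrees with the equivalence produced by co-Yoneda, together with keeping straight the variance conventions in the coend. The underlying inputs---stability of coCartesian fibrations under base change, the classification of fibrations over simplices by pushforward functors, and the explicit mapping-space formula---are all standard, so the real content is confirming that the map $\circ$ is the concrete one computed above rather than checking any genuinely new geometric fact.
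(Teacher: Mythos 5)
Your proposal is correct, but it takes a different route through Lemma~\ref{conduche} than the paper does. You verify criterion~(4): using the classification of coCartesian fibrations over a simplex by pushforward functors, you identify $\cE_{|\{0<1\}}(e_0,-)$ as the corepresentable functor $\Map_{\cE_{b_1}}(g_!e_0,-)$ and then collapse the coend by co-Yoneda to get $\Map_{\cE_{b_2}}(h_!g_!e_0,e_2)\simeq \cE_{|[2]}(e_0,e_2)$. The paper instead verifies criterion~(6): it exhibits the coCartesian factorization $e_0\xra{u^{01}}f^{01}_!(e_0)\xra{\ov{h}\circ u^{12}}e_2$ as an \emph{initial object} of the $\infty$-category of factorizations of $h$ through the middle fiber, whence its classifying space is contractible. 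The two arguments hinge on the same universal property of coCartesian lifts (the corepresenting object of your coend is exactly the paper's initial object), but the executions differ in what they defer to external machinery. The paper's argument is more self-contained: exhibiting an initial object requires only the defining universal property of a $\pi$-coCartesian morphism, and contractibility of the classifying space follows immediately. Your argument buys a cleaner formula at the cost of two nontrivial identifications that you correctly flag as the remaining work: the mapping-space formula for the Grothendieck construction over a linearly ordered set, and --- the genuinely delicate point --- the verification that the abstractly defined composition map $\circ$ of criterion~(4) agrees homotopy-coherently with the equivalence produced by co-Yoneda. That last identification is not purely formal bookkeeping in the $\infty$-categorical setting, and the paper's choice of criterion~(6) is precisely what lets it avoid confronting it.
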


\begin{proof}
Using Observation~\ref{exp-op}, each case implies the other.
So we will concern ourselves only with the coCartesian case.
We will invoke criterion~(6) of Lemma~\ref{conduche}. 
So fix a functor $[2]\to \cB$.
For each $0\leq i<j\leq 2$ we will denote $f^{ij}\colon \{i<j\} \to \cB$ for the resulting morphisms of $\cB$; we will denote $f^{ij}_!\colon \cE_{|\{i\}} \to \cE_{|\{j\}}$ for the coCartesian functor between fiber $\infty$-categories; and we will denote $u^{ij}$ for a $\pi$-coCartesian lift of $f^{ij}$.  Because $f^{12}\circ f^{01} \simeq f^{02}$ then $f^{12}_!\circ f^{01}_!\simeq f^{12}_!$ and $u^{12}\circ u^{01} \simeq u^{02}$, whenever the latter composition has meaning.  

Consider a lift $\{0<2\}\xra{(e_0\xra{h}e_2)} \cE$.  
There is a unique factorization $h\colon e_0 \xra{u^{01}} f^{01}_!(e_0) \xra{u^{12}} f^{02}_!(e_0) \xra{\ov{h}} e_2$ in which $\ov{h}$ is a morphism in the fiber $\infty$-category $\cE_{|\{2\}}$.  
We have the diagram in $\cE$
\[
\xymatrix{
e_0  \ar[rr]^-{h}  \ar[dr]_-{u^{01}}
&&
e_2 
\\
&
f^{01}_!(e_0)  \ar[ur]_-{\ov{h}\circ u^{12}}
&
,
}
\]
which is a lift of $[2]\to \cB$ along $\pi$.
As such, it defines an object of the $\infty$-category $({\cE_{|\{1\}}}^{e_0/})_{/(e_0\xra{h}e_2)}$.
By the universal properties of $\pi$-coCartesian morphisms, this object is initial.  
Therefore, the classifying space of $({\cE_{|\{1\}}}^{e_0/})_{/(e_0\xra{h}e_2)}$ is contractible.  
\end{proof}

\begin{lemma}\label{fib.lemma}
Let $\cB_0\subset \cB$ be an $\infty$-subcategory, and let $\cE\xra{\pi} \cB$ be a $\cB_0$-Cartesian fibration.  
Let $\cE_0\subset \cE$ be the $\infty$-subcategory of $\cB_0$-Cartesian morphisms.  
For each functor $\cS\xra{\tau} \cE$ that is an $\cE_0$-coCartesian fibration, the functor
\[
\pi_\ast \cS \longrightarrow  \cB
\] 
is a $\cB_0$-coCartesian fibration.

\end{lemma}

\begin{proof}
We first characterize $\cB_0$-coCartesian morphisms with respect to the projection $\pi_\ast \cS\to \cB$.  
Let $\{0<1\} \to \cB_0 \subset \cB$ select a morphism.  
By definition of the $\infty$-category $\pi_\ast \cS \to \cB$ over $\cB$, a lift of this morphism in $\cB_0$ to a morphism in $\pi_\ast \cS$ is a functor
\[
F_{01}\colon \cE_{|\{0<1\}} \longrightarrow \cS
~,\qquad
\text{ over }\cE~.
\]
Such a morphism in $\pi_\ast \cS$ over the selected morphism in $\cB_0$ is coCartesian if and only if the following condition is satisfied.
\begin{itemize}
\item[]
Let $\cK$ be an $\infty$-category.
Let $\{0<1\}\star \cK \to \cB$ be a functor extending the given functor $\{0<1\}\to \cB_0 \subset \cB$.  
Then each diagram extending $F_{01}$,
\begin{equation}\label{e100}
\xymatrix{
\cE_{|\{0\}}  \ar[rr]  \ar[d]
&&
\cE_{|\{0<1\}}  \ar[rr]^-{F_{01}}    \ar[d]
&&
\cS  \ar[d]^-{\tau}
\\
\cE_{|\{0\}\star \cK}  \ar[rr]   \ar[urrrr]^{F_{0\cK}~{}~{}~{}~{}~{}~{}~{}~{}~{}}
&&
\cE_{|\{0<1\}\star \cK}  \ar[rr]   \ar@{-->}[urr]_-{\exists !}
&&
\cE    ,
}
\end{equation}
admits a unique filler.  

\end{itemize}
Using that $\cE \to \cB$ is a $\cB_0$-Cartesian fibration, there is a canonical functor $\cE_{|\{0\}\star \cK}\xla{f} \cE_{|\{1\}\star \cK}$ under $\cE_{|\cK}$, equipped with an equivalence between $\infty$-categories over $\{0<1\}\star \cK$:
\begin{eqnarray}
\nonumber
{\sf Cylr}\bigl( \cE_{|\{0\}\star \cK}\xla{f} \cE_{|\{1\}\star \cK} \bigr) 
\underset{ \cE_{|\cK}\times \{0<1\} } \coprod  \cE_{|\cK}
&
~:=~
&
\Bigl( \cE_{|\{0\}\star \cK} \underset{ \cE_{|\{1\}\star \cK}\times \{0\}} \coprod \cE_{|\{1\}\star \cK}\times \{0<1\}  \Bigr)  
\underset{ \cE_{|\cK}\times \{0<1\} } \coprod  \cE_{|\cK}
\\
\label{e102}
&
~\simeq~
&
\cE_{|\{0<1\}\star \cK}~.
\end{eqnarray}
Through this identification, the diagram~(\ref{e100}) is equivalent to the diagram
\begin{equation}\label{e101}
\xymatrix{
\cE_{|\{1\}}  \ar[rrrr]  \ar[d]_-{f}
&&
&&
 \cE_{|\{1\}}\times \{0<1\}  \ar[rrrr]^-{F_{01}}     \ar[dd]
&&
&&
\cS  \ar[d]^-{\tau}
\\
\cE_{|\{0\}}  \ar[rr]
&&
\cE_{|\{0\}\star \cK}  \ar[urrrrrr]
&&
&&
&&
\cE
\\
&&
\cE_{|\{1\}\star \cK} \times\{0\}  \ar[u]  \ar[rr]
&&
\cE_{|\{1\}\star \cK}\times \{0<1\}   \ar[urrrr]   \ar@{-->}[uurrrr]^-{\exists !} 
&&
&&
\\
&&
&&
\cE_{|\cK}\times \{0<1\}  \ar[u]     \ar[rr]
&&
\cE_{|\cK}  \ar[uurr]  \ar@{-->}[uuurr]^-{\exists !} 
&&
.
}
\end{equation}
By construction of this diagram~(\ref{e101}) from~(\ref{e100}), for each $e\in \cE_{|\{1\}\star \cK}$ the composite functor
\[
\{0<1\} = \{e\}\times\{0<1\} \longrightarrow \cE_{|\{1\}\star \cK} \times\{0<1\}\longrightarrow \cE
\]
factors through $\cE_0$; i.e., this composite functor selects a $\pi$-$\cB_0$-Cartesian morphism.  
Consider the subdiagram of~(\ref{e101}):
\begin{equation}\label{e103}
\xymatrix{
\cE_{|\{1\}\star \cK} \times\{0\}    \ar[rr]  \ar[d]
&&
\cS  \ar[d]^-{\tau}
\\
\cE_{|\{1\}\star \cK}\times \{0<1\}   \ar[rr]   \ar@{-->}[urr]^-{F_{1\cK}}
&&
\cE  .
}
\end{equation}
By assumption, the functor $\cS\to \cE$ is an $\cE_0$-coCartesian fibration.
Therefore, the diagram~(\ref{e103}) admits an initial filler, which has the following property: for each $e\in \cE_{|\{1\}\star \cK}$ the composite functor
\[
\{0<1\} = \{e\}\times\{0<1\} \longrightarrow \cE_{|\{1\}\star \cK} \times \{0<1\}
\xra{~F_{1\cK}~}
\cS
\]
selects a $\tau$-$\cE_0$-coCartesian morphism.
Consider the subdiagram of~(\ref{e101}):
\begin{equation}\label{e104}
\xymatrix{
\cE_{|\cK}\times \{0<1\}   \ar[rr]  \ar[d]
&&
\cE_{|\{1\}\star \cK}\times \{0<1\}  \ar[rr]^-{F_{1\cK}} 
&&
\cS  \ar[d]^-{\tau}
\\
\cE_{|\cK}  \ar[rrrr]   \ar@{-->}[urrrr]
&&&&
\cE  .
}
\end{equation}
The right vertical functor in this diagram~(\ref{e104}) is a localization.
Furthermore, a $\tau$-coCartesian morphisms in $\cS$ is an equivalence if it is carried by $\tau$ to an equivalence in $\cE$.  
The existence and uniqueness of a filler in~(\ref{e104}) follows.  
It remains to verify that the functor $F_{1\cK}$ is unique among those for which the subdiagram of~(\ref{e101}),
\begin{equation}\label{e105}
\xymatrix{
\cE_{|\{1\}}\times \{0<1\}   \ar[rr]^-{F_{01}}  \ar[d]
&&
\cS  \ar[d]^-{\tau}
\\
\cE_{|\{1\}\star \cK}\times \{0<1\}    \ar[rr]     \ar@{-->}[urr]^-{F_{1\cK}} 
&&
\cE  ,
}
\end{equation}
commutes.
Using that coCartesian morphisms in $\cS$ compose, this diagram commutes if and only if the following condition is satisfied.
\begin{itemize}
\item[] {\bf (C):}
For each $e\in \cE_{|\{1\}}$, the composite functor
\[
\{0<1\} = \{e\}\times\{0<1\} \longrightarrow \cE_{|\{1\}} \times \{0<1\}
\xra{~F_{10}~}
\cS
\]
selects a $\tau$-coCartesian morphism.  
\end{itemize}
Now assume $F_{01}$ satisfies the condition~{\bf (C)}.
As established above, for each $e\in \cE_{|\cK}$, the composite functor
\[
\{0<1\} = \{e\}\times\{0<1\} \longrightarrow \cE_{|\{1\}\star\cK} \times \{0<1\}
\xra{~F_{1\cK}~}
\cS
\]
selects an equivalence in $\cS$.  
It follows that $F_{1\cK}$ is, in fact, the unique such filler in~(\ref{e105}).
We have concluded that the given morphism $F_{01}$ in $\pi_\ast\cS$ is a $\pi$-coCartesian morphism provided it satisfies the condition~{\bf (C)}.

With the above condition for checking the coCartesian property for certain morphisms in $\pi_\ast \cS\to \cB$, we are at last prepared to show that the functor $\pi_\ast \cS\to \cB$ is a $\cB_0$-coCartesian fibration.
Now let $\{0<1\}\to \cB_0 \subset \cB$ select a morphism.  
Using that $\pi$ is a $\cB_0$-Cartesian fibration, there is a canonical functor $\cE_{|\{0\}} \xla{f}\cE_{|\{1\}}$ equipped with an identification
\begin{equation}\label{e106}
\cE_{|\{0\}} \underset{\cE_{|\{1\}}\times\{0\}}\coprod \cE_{|\{1\}}\times\{0<1\}
~\simeq~
\cE_{|\{0<1\}}
\end{equation}
over $\{0<1\}$. 
Through this identification, for each $e\in \cE_{|\{1\}}$, the composite functor
\[
\{0<1\} = \{e\}\times\{0<1\} \longrightarrow \cE_{|\{1\}}\times\{0<1\} \longrightarrow \cE_{|\{0<1\}} \longrightarrow \cE
\]
selects a $\pi$-$\cB_0$-Cartesian morphism.  
From the definition of the $\infty$-category $\pi_\ast \cS\to \cB$ over $\cB$, it is enough to show that each solid diagram
\[
\xymatrix{
\cE_{|\{1\}}  \ar[rr]^-{f}  \ar[d]
&&
\cE_{|\{0\}}  \ar[rr]^-{F_0} 
&&
\cS  \ar[d]
\\
\cE_{|\{1\}}\times\{0<1\}  \ar[rrrr]  \ar@{-->}[urrrr]_-{F_{01}}
&&&&
\cE
}
\]
admits a filler that satisfies condition~{\bf (C)}. 
Such a filler exists because $\tau$ is assumed to be an $\cE_0$-coCartesian fibration.  
\end{proof}

\subsubsection{\bf Doubly based finite sets}
Recall from 
Example~\ref{t605} 
the $\infty$-category $\Fin_{\ast\star}$ over $\Fin_\ast$.  
The defining functor $\Fin_{\ast \star} \hookrightarrow \Fin_{\ast}^{\star_+/}$ is a monomorphism.
That is, for each functor $\cK \to \Fin_\ast$, the map
\begin{equation}\label{sub-class}
\Map_{/\Fin_\ast}(\cK,\Fin_{\ast \star}) ~\hookrightarrow~ \Map_{/\Fin_\ast}(\cK,\Fin_{\ast}^{\star_+/})
\end{equation}
is a monomorphism of spaces.
The given functor $\cK \to \Fin_\ast$ classifies a left fibration $\cE \to \cK$, equipped with a section $\sigma_+ \colon \cK \to \cE$, with the property that, for each $k\in \cK$, the fiber $\cE_k$ is a $0$-type. 
As such, the monomorphism of spaces~(\ref{sub-class}) is naturally identified as the inclusion of path components
\[
\{\cK \xra{\sigma} \cE\mid \text{ for each } k\in \cK,~\sigma_+(k) \nsim \sigma(k)\in \cE \}~\hookrightarrow~ \Map_{/\cK}(\cK,\cE)
\]
consisting of those sections which are object-wise never equivalent to the given section $\sigma_+$.  
In the case that $\cK$ is a suspension, this monomorphism~(\ref{sub-class}) is yet more explicit, which we highlight as the following simple result of this discussion.

\begin{observation}\label{doubly-classifies}
Let $\cJ^{\tl \tr} \to \Fin_\ast$ be a functor from a suspension.
Denote the value of this functor on the left/right-cone-point as the based finite set $L_+/R_+$, and denote the value of this functor on the unique morphism between cone-points as the based map $L_+\xra{f_{0}}R_+$. 
Evaluation at the left-cone-point defines an identification of the monomorphism of spaces of sections $\Map_{/\Fin_\ast}(\cJ^{\tl\tr},\Fin_{\ast \star})\hookrightarrow \Map_{/\Fin_\ast}(\cJ^{\tl\tr},\Fin_{\ast}^{\star_+/})$ with the inclusion of sets
\[
f_{0}^{-1}(R)~\hookrightarrow~ L_+~.
\]
\end{observation}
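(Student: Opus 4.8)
The plan is to unwind the description of sections already set up for a general base $\cK$ in the paragraph preceding the statement, and then specialize to the suspension $\cK=\cJ^{\tl\tr}$, where two features conspire: the left cone point is initial, and the transition maps of the relevant set-valued functor are basepoint-preserving. Write $F\colon \cJ^{\tl\tr}\to \Spaces$ for the functor classifying the pulled-back left fibration $\cE \to \cJ^{\tl\tr}$, so that $F$ is the composite of the given functor to $\Fin_\ast$ with the underlying-set functor $I_+\mapsto I_+$; thus $F$ takes values in finite sets, with $F(0)=L_+$ on the left cone point, $F(2)=R_+$ on the right cone point, and $F$ of the morphism $0\to 2$ equal to $f_{0}$. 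As recorded just above, $\Map_{/\Fin_\ast}(\cJ^{\tl\tr},\Fin_\ast^{\star_+/})$ is the space of sections of $\cE$, i.e. $\lim_{\cJ^{\tl\tr}} F$, and the subspace $\Map_{/\Fin_\ast}(\cJ^{\tl\tr},\Fin_{\ast\star})$ consists of those sections $\sigma$ that are object-wise never equivalent to $\sigma_+$.

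First I would identify the larger space of sections with $L_+$. Since the left cone point $0$ is initial in $\cJ^{\tl\tr}$, evaluation at $0$ exhibits $\ev_{0}\colon \lim_{\cJ^{\tl\tr}} F \xra{\simeq} F(0)=L_+$; concretely a section is determined by its value $x=\sigma(0)\in L_+$, and for every object $k$ one has $\sigma(k)=F(0\to k)(x)$, the image of $x$ under the transition map along the canonical morphism $0\to k$ supplied by initiality. Because $L_+$ is discrete, this is a bijection of sets.

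Next I would cut out the subspace landing in $\Fin_{\ast\star}$. Fix $x\in L_+$ with corresponding section $\sigma$. Since $2$ is terminal, every object $k$ sits in a factorization $0\to k\to 2$ of the morphism $0\to 2$, and applying $F$ gives $f_{0} = F(k\to 2)\circ F(0\to k)$; in particular $f_{0}(x)=F(k\to 2)(\sigma(k))$. The maps $F(k\to 2)$ are morphisms in $\Fin_\ast$, hence send the basepoint to the basepoint. Thus if $\sigma(k)=\sigma_+(k)$ is the basepoint of $F(k)$ for some $k$, then $f_{0}(x)$ is the basepoint $+$ of $R_+$; contrapositively, if $x\in f_{0}^{-1}(R)$ then $\sigma(k)$ is never the basepoint. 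Conversely, taking $k=2$ shows that a section avoiding $\sigma_+$ everywhere has $f_{0}(x)=\sigma(2)\neq +$, i.e. $x\in f_{0}^{-1}(R)$. Hence under $\ev_{0}$ the subspace $\Map_{/\Fin_\ast}(\cJ^{\tl\tr},\Fin_{\ast\star})$ corresponds precisely to $f_{0}^{-1}(R)\hookrightarrow L_+$, which is the claim.

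The only step that is not pure bookkeeping is the initiality of the left cone point, i.e. that $\ev_{0}$ is an equivalence on sections; this is where the structure of the suspension enters. I would read it off from the identification $\ast^{\tl\tr}\simeq[2]$ together with the fully faithful inclusion $\cJ^{\tl}\hookrightarrow \cJ^{\tl\tr}$ whose cone point is initial, confirming directly from the defining pushout that the mapping space out of the left cone point is contractible to every object. The potentially delicate case is the morphism $0\to 2$, which the homotopy pushout identifies with the morphisms factoring through the middle copy of $\cJ$; once this is checked, restriction of $\lim_{\cJ^{\tl\tr}}F$ to the initial object is an equivalence and the rest of the argument is formal.
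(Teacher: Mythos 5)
Your route is the one the paper intends: the Observation is stated as an immediate specialization of the preceding description of $\Map_{/\Fin_\ast}(\cK,\Fin_{\ast}^{\star_+/})$ as the space of sections of a left fibration with discrete fibers, and your second half --- every object $k$ admits a morphism to the right cone point, the transition maps are based, so a section avoids the basepoint everywhere if and only if its value at the right cone point avoids it, i.e.\ if and only if $x\in f_{0}^{-1}(R)$ --- is complete and correct.

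The one step to be careful with is the one you yourself flag: initiality of the left cone point. Your description of $\Map_{\cJ^{\tl\tr}}(0,2)$ as ``the morphisms factoring through the middle copy of $\cJ$'' is not quite right as stated: the space of such factorizations is the classifying space $\sB\cJ$, which is not contractible in general (take $\cJ=\ast\amalg\ast$). What rescues the claim is the distinguished edge $0\to 2$ contributed by the $\{0<2\}$ factor of the defining pushout; each factorization $0\to j\to 2$ is connected to that edge by the $2$-simplex $(j,0)\to(j,1)\to(j,2)$ of $\cJ\times[2]$, so $\Map_{\cJ^{\tl\tr}}(0,2)$ is the cone on $\sB\cJ$ with cone point the distinguished edge, hence contractible. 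A cleaner way to get what you actually need, bypassing any mapping-space computation: the defining pushout gives
\[
\lim_{\cJ^{\tl\tr}}F~\simeq~\lim_{\{0<2\}}F\underset{\lim_{\cJ\times\{0<2\}}F}\times\lim_{\cJ\times[2]}F~,
\]
and since $\lim_{[2]}G\to\lim_{\{0<2\}}G_{|}$ is an equivalence for every $G\colon[2]\to\Spaces$ (both are computed by evaluation at $0$), the right leg of this pullback is an equivalence; hence $\lim_{\cJ^{\tl\tr}}F\simeq\lim_{\{0<2\}}F_{|}\simeq F(0)=L_+$ via evaluation at the left cone point, for any $F$. This both establishes the initiality you want and renders the rest of your argument, as you say, pure bookkeeping.
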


\begin{lemma}\label{fin-conduche}
The projection $\Fin_{\ast\star} \to \Fin_\ast$ is an exponentiable fibration.

\end{lemma}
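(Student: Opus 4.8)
The plan is to verify one of the equivalent Conduché-type exponentiability criteria of Lemma~\ref{conduche}; I will work with condition~(4), the assertion that the comparison out of a coend is an equivalence. First I would unwind $\Fin_{\ast\star}$ concretely: an object is a based finite set $I_+$ together with a distinguished non-basepoint element $i_\star:=f(\star)\in I$, and a morphism $(I_+,i_\star)\to (J_+,j_\star)$ is a based map $\phi\colon I_+\to J_+$ satisfying $\phi(i_\star)=j_\star$. The projection $\pi\colon \Fin_{\ast\star}\to \Fin_\ast$ simply forgets the distinguished point. The two structural facts I would record are: (i) the fibers are discrete, the fiber over $I_+$ being the set $I$ of non-basepoints with only identity morphisms; and (ii) over a fixed morphism of $\Fin_\ast$, a morphism of $\Fin_{\ast\star}$ is unique if it exists, and it exists precisely when the underlying based map carries the distinguished point of the source to that of the target.

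The main obstacle to flag is that $\pi$ is \emph{neither} a Cartesian nor a coCartesian fibration, so the corollary that (co)Cartesian fibrations are exponentiable does not apply, and condition~(4) must be checked directly. Indeed, pushing a distinguished point $i_\star$ forward along a based map $\psi$ produces $\psi(i_\star)$, which may equal the basepoint $+$, in which case the target object admits no lift; dually, pulling a distinguished point back can give empty or multivalued fibers. What rescues exponentiability is that the forward propagation is nonetheless \emph{deterministic}: whenever a composite morphism exists, the intermediate distinguished point is forced.

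Concretely, I would fix a functor $[2]\to \Fin_\ast$, that is, a composable pair $I^0_+\xra{\alpha} I^1_+\xra{\beta}I^2_+$, together with a lift $e_0=(I^0_+,i^0_\star)$ and $e_2=(I^2_+,i^2_\star)$. Since $\cE_{|\{1\}}$ is the discrete set $I^1$ by fact~(i), the coend in condition~(4) collapses to the coproduct $\coprod_{i^1_\star\in I^1}\cE_{|\{0<1\}}(e_0,i^1_\star)\times \cE_{|\{1<2\}}(i^1_\star,e_2)$, in which each factor is empty or a point by fact~(ii). I would then observe that this coproduct is nonempty exactly when $i^1_\star=\alpha(i^0_\star)$ is a non-basepoint and $\beta(\alpha(i^0_\star))=i^2_\star$, in which case it is a single point; and that the target $\cE_{|[2]}(e_0,e_2)$ is likewise a point precisely when $\beta\alpha(i^0_\star)=i^2_\star$ and empty otherwise. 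These conditions coincide: if $\beta\alpha(i^0_\star)=i^2_\star$ then, since $i^2_\star\neq +$ and $\beta$ is based, necessarily $\alpha(i^0_\star)\neq +$, so $\alpha(i^0_\star)\in I^1$ indexes the unique nonempty summand. Hence the comparison map is an equivalence in every case, and $\pi$ is an exponentiable fibration. Equivalently, one could check condition~(7): given $h\colon e_0\to e_2$, the factorization category through $\cE_{|\{1\}}$ has the single object $\alpha(i^0_\star)$ and thus contractible classifying space.
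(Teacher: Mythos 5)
Your proof is correct and follows the same route as the paper: both verify criterion~(4) of Lemma~\ref{conduche} by collapsing the coend over the discrete fiber $\cE_{|\{1\}}$ to a coproduct and checking that exactly one summand is a point (and the rest empty) precisely when the composite mapping space is a point — your explicit remark that $\alpha(i^0_\star)\neq +$ is forced by $\beta\alpha(i^0_\star)=i^2_\star\neq +$ makes explicit a step the paper leaves implicit. One trivial slip: your closing aside refers to ``condition~(7),'' but the factorization-category criterion you describe is condition~(6) of Lemma~\ref{conduche}.
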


\begin{proof}
We verify (4) of Lemma~\ref{conduche} applied to the functor $\Fin_{\ast\star} \to \Fin_\ast$.  
Let $[2]\xra{I_+\xra{f}J_+\xra{g}K_+} \Fin_\ast$ be a functor.  
Choose lifts $(I_+,i)$ and $(K_+,k)$ to $\Fin_{\ast \star}$.  
Observation~\ref{doubly-classifies} identifies the space of morphisms 
${\Fin_{\ast \star}}_{|[2]}\bigl((I_+,i),(K_+,k)\bigr)$ as terminal if $g(f(i))=k$ and as empty otherwise.  
In the latter case, the coend too is empty, which verifies the criterion.
Now assume $g(f(i))=k$.  
We must show that the coend too is terminal.  
This coend is indexed by a $0$-type, so it is identified simply as the space
\[
\underset{j\in J}\coprod {\Fin_{\ast \star}}_{|\{0<1\}}\bigl((I_+,i),(J_+,j)\bigr)\times {\Fin_{\ast \star}}_{|\{1<2\}}\bigl((J_+,j),(K_+,k)\bigr)~.
\]
Using Observation~\ref{doubly-classifies} we see that the cofactor indexed by $f(i)\in J$ is terminal while the other cofactors are empty. 
\end{proof}

\subsubsection{\bf Finite correspondences}
Recall from the top of~\S\ref{sec.wreath} the $\infty$-category $\ov{\Corr}(\Fin)$ over $\Corr(\Fin)$.  

\begin{lemma}\label{corr.conduche}
The projection $\ov{\Corr}(\Fin) \to \Corr(\Fin)$ is an exponentiable fibration.
Also, for each morphism $c_1\to \Corr(\Fin)$, the $\infty$-category $\Fun_{/\Corr(\Fin)}\bigl(c_1,\ov{\Corr}(\Fin)\bigr)$ of sections is a finite $0$-type.  

\end{lemma}

\begin{proof}
The second statement is immediate from the definition of $\ov{\Corr}(\Fin)$ as an $\infty$-category over $\Corr(\Fin)$. 

For the first statement, we verify (4) of Lemma~\ref{conduche} applied to the functor $\ov{\Corr}(\Fin) \to \Corr(\Fin)$.  
Let 
\[
[2]\xra{I_0 \la K_{01} \ra I_1 \la K_{12} \ra I_2 } \Corr(\Fin)
\]
be a functor.  
Choose lifts $(i_0\in I_0)$ and $(i_2\in I_2)$ in $\ov{\Corr}(\Fin)$ of the values on $0,2\in [2]$.
By definition of the $\infty$-category $\ov{\Corr}(\Fin)$ over $\Corr(\Fin)$, there is a canonical identification of the space of morphisms
\[
\ov{\Corr}(\Fin)_{|[2]}\bigl( (i_0\in I_0) , (i_2\in I_2)\bigr)
~\simeq~
(K_{01})_{|i_0}\underset{I_1}\times (K_{12})_{|i_2}~.
\]
Recognize the righthand side of this identification as precisely the relevant coend in~(4) of Lemma~\ref{conduche}; in this situation, the coend is indexed by the finite $0$-type $I_1$.

\end{proof}

\subsubsection{\bf Absolute exit-paths}
We prove that the functor $\Exit \to \Bun$ between $\infty$-categories, given by forgetting section data, is exponentiable.  
This result is essential for our method of defining structured versions of $\Bun$.

\begin{lemma}\label{exit-exp}
The functor $\Exit \to \Bun$ is an exponentiable fibration.

\end{lemma}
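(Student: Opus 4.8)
The plan is to verify condition~(4) of Lemma~\ref{conduche} applied to the functor $\pi\colon \Exit \to \Bun$. Recall that an object of $\Exit$ is a constructible bundle equipped with a section, and a morphism is a cospan data of a constructible bundle over $\Delta^1$ together with a compatible section. Given a functor $[2]\to \Bun$, classifying a constructible bundle $X\to \Delta^2$, together with lifts of the endpoints $\{0\}$ and $\{2\}$ to $\Exit$—i.e.\ sections $\sigma_0$ of $X_{|\{0\}}$ and $\sigma_2$ of $X_{|\{2\}}$—I must show that the canonical map from the coend
\[
\Exit_{|\{0<1\}}(\sigma_0,-)\underset{\Exit_{|\{1\}}}\bigotimes \Exit_{|\{1<2\}}(-,\sigma_2)
\xra{~\circ~}
\Exit_{|[2]}(\sigma_0,\sigma_2)
\]
is an equivalence of spaces. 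The fiber $\Exit_{|[2]}$ is, by Proposition~\ref{prop.Exit}, identified with $\exit$ of the total space $X$ together with its section data, so this reduces to a concrete question about exit-paths in the constructible bundle $X\to \Delta^2$.

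First I would unwind both sides of the comparison map using Proposition~\ref{prop.Exit}, which identifies the pullback $\Exit_{|\exit(K)}$ with $\exit(X)$ for a constructible bundle $X\to K$, and Lemma~\ref{exit-explicit}, which describes the morphism spaces of $\exit(X)$ in terms of links of strata. Thus a morphism in $\Exit_{|[2]}$ from $\sigma_0$ to $\sigma_2$ is an exit-path in the fiber $X_{|\{0<2\}}$—equivalently, in the bundle over the edge $\{0<2\}$—while the coend assembles such paths as concatenations of an exit-path in $X_{|\{0<1\}}$ from $\sigma_0$ to an intermediate point over $\{1\}$ with an exit-path in $X_{|\{1<2\}}$ from that point to $\sigma_2$. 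The content is therefore that exit-paths over the ``long edge'' $\{0<2\}$ are computed, up to contractible choice, by factoring through the middle fiber $X_{|\{1\}}$.

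The key geometric input is that the constructible bundle $X\to \Delta^2$ is itself exponentiable at the level of links: an exit-path over $\{0<2\}$ records how the section travels through strata as the parameter moves from $0$ to $2$, and by the constructibility of $X$ this path must pass through a well-defined intermediate configuration over the middle vertex $\{1\}$. Concretely, I would use the collaring and link structure supplied by the results of~\cite{aft1} and~\cite{striat}—in particular the unzipping construction and the fact that the link system $\Link_{X_{|\{0\}}}(X)\to X_{|\{0\}}$ together with the gluing map to $X_{|\{1\}}$ exhibits the required factorization—to produce, for each exit-path over $\{0<2\}$, a canonical intermediate point over $\{1\}$ and a canonical decomposition. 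This yields an inverse to the concatenation map, and the contractibility of the space of collarings guarantees that the factorization is unique up to contractible choice, so the comparison map is an equivalence.

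The main obstacle will be handling the contractibility of the space of factorizations uniformly in families, i.e.\ showing the comparison is an equivalence of \emph{spaces} and not merely a bijection on $\pi_0$. This requires checking that the assignment of an intermediate point and a decomposition can be made coherently across a test stratified space $K$ mapping in, and that the ambiguity in the choice of collaring genuinely forms a contractible space—precisely the sort of argument carried out in the proof of Lemma~\ref{emb-init} above, where a fiber space of extensions is shown contractible by a coning-off argument using compactness. I would expect to mirror that technique: given a $K$-point of the fiber of the comparison map, extend it over the cone $\sC(K)$ using the basis of collar-neighborhoods and the properness of the relevant constructible bundles, thereby exhibiting the $K$-point as null-homotopic. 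Once the factorization is established in families, condition~(4) of Lemma~\ref{conduche} is verified and the exponentiability of $\Exit\to\Bun$ follows.
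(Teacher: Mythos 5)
Your overall strategy is sound: conditions (4), (5) and (6) of Lemma~\ref{conduche} are equivalent, so verifying the coend condition (4) is a legitimate route, and you have correctly located the geometric content --- exit-paths of the bundle $X\to\Delta^2$ lying over the long edge $\{0<2\}$ must be shown to factor, up to contractible ambiguity, through the fiber over the middle vertex. (The paper verifies condition (5) instead, but that difference is cosmetic.)

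The gap is in the mechanism you propose for the key step. You assert that an exit-path over $\{0<2\}$ ``must pass through a well-defined intermediate configuration over $\{1\}$'', that this gives a canonical decomposition and hence an inverse to the concatenation map, and that the residual ambiguity lies only in the choice of collaring, to be handled by a coning-off argument as in Lemma~\ref{emb-init}. This is not how the argument closes. What the paper actually does is identify the space of lifts $\Fun_{/\{0<2\}}(\{0<2\},\Exit_{|\{0<2\}})$ with $\exit\bigl(\Link_{X_0}(X_{02})\bigr)$, identify the $\infty$-category of lifts $\Fun_{/[2]}([2],\Exit_{|[2]})$ with $\exit$ of the \emph{iterated} link $\Link_{\Link_{X_0}(X_{01})}\bigl(\Link_{X_0}(X)\bigr)$, and observe that the comparison functor is induced by a \emph{refinement} of stratified spaces $\gamma$ from the iterated link to the link (constructed in \S6 of~\cite{striat} using flows of collaring vector fields). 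The punchline is then the theorem of \S4 of~\cite{aft1} that a refinement $\w{X}\to X$ induces a localization $\exit(\w{X})\to\exit(X)$, so that over each stratum the induced map of classifying spaces is an equivalence. In particular the relevant category of factorizations of a fixed morphism is \emph{not} a contractible space of choices: it is the exit-path $\infty$-category of the fiber of a refinement, which may have many objects and non-invertible morphisms among them, and only its \emph{classifying space} is contractible --- and that fact is precisely the refinement-localization theorem, not a consequence of the contractibility of the space of collarings. The coning-off technique of Lemma~\ref{emb-init} addresses a different problem (contractibility of a space of germs of open embeddings extending a given one) and does not substitute for this input. To repair your proof you would need to (i) make precise the identification of both sides of your coend comparison with exit-path categories of the link and the iterated link, and (ii) invoke the refinement-localization statement rather than a uniqueness-of-factorization claim.
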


\begin{proof}
We check the criterion~(5) of Lemma~\ref{conduche}.
Fix a functor $[2]\to \Bun$.  
This functor classifies a constructible bundle $X\to \Delta^2$.  
We will denote the restriction $X_S:=X_{|\Delta^{S}}$ for each non-empty linearly ordered subset $S\subset\{0<1<2\}$.  

Through~\S3 of~\cite{striat} we establish a natural identification of the $\infty$-category of lifts
\[
\Fun_{/\{0<2\}}(\{0<2\},\Exit_{|\{0<2\}})~\simeq~\exit\bigl(\Link_{X_0}(X_{02})\big)
\]
as the exit-path $\infty$-category of the link.
Likewise, there is an identification of the $\infty$-category of lifts 
\[
\Fun_{/[2]}([2],\Exit_{|[2]})~\simeq~\exit\Bigl(\Link_{\Link_{X_0}(X_{01})}\bigl(\Link_{X_0}(X)\Bigr)
\]
as the exit-path $\infty$-category of the iterated link.  
In~\S6 of~\cite{striat} we construct a refinement morphism among stratified spaces
\[
\gamma\colon  
\Link_{\Link_{X_0}(X_{01})}\bigl(\Link_{X_0}(X)\bigr)  
~\longrightarrow~
\Link_{X_0}(X_{02})
\]
from the iterated link to the link -- in brief, this morphism is obtained using flows of collaring vector fields of links inside blow-ups.
This morphism is compatible with the above identifications in that there is a commutative diagram among $\infty$-categories
\[
\xymatrix{
\Fun_{/[2]}([2],\Exit_{|[2]})  \ar[d]^-{\simeq}     \ar[rr]^-{\circ}_-{\ev_{\{0<2\}}}
&&
\Fun_{/\{0<2\}}(\{0<2\},\Exit_{|\{0<2\}})  \ar[d]^-{\simeq}
\\
\exit\bigl(  \Link_{\Link_{X_0}(X_{01})}\bigl(\Link_{X_0}(X)\bigr)    \bigr)    \ar[rr]^-{\exit(\gamma)}
&&
\exit\bigl(\Link_{X_0}(X_{02})\big).
}
\]
Restricting the righthand terms of this diagram to maximal $\infty$-subgroupoids gives the commutative diagram among $\infty$-categories
\[
\xymatrix{
\Fun_{/[2]}([2],\Exit_{|[2]})_{|\Map_{/\{0<2\}}(\{0<2\},\Exit_{|\{0<2\}}) }\ar[d]^-{\simeq}     \ar[rr]^-{\circ}_-{\ev_{\{0<2\}}}
&&
\Map_{/\{0<2\}}(\{0<2\},\Exit_{|\{0<2\}})  \ar[d]^-{\simeq}
\\
\underset{p\in P} \coprod    \Bigl( \Link_{\Link_{X_0}(X_{01})}\bigl(\Link_{X_0}(X)\bigr)   \Bigr)_{|\Link_{X_0}(X_{02})_p}    \ar[rr]^-{\exit(\gamma)_|}
&&
\underset{p\in P} \coprod \Link_{X_0}(X_{02})_p
}
\]
where the coproducts are indexed by the strata of the link.

To verify criterion~(5) of Lemma~\ref{conduche} we must explain why the top horizontal functor induces an equivalence on classifying spaces.
By the commutativity of the previous recent diagram, we must explain why the bottom horizontal map induces an equivalence on classifying spaces.    
In~\S4 of~\cite{aft1} it is proved that, for $\w{X} \to X$ a refinement between stratified spaces, the associated functor between exit-path $\infty$-categories $\exit(\w{X}) \to \exit(X)$ is a localization.  
In particular, for each stratum $X_p\subset X$, the map $\exit(\w{X}_{|X_p}) \to X_p$ induces an equivalence on classifying spaces. 
\end{proof}

\begin{prop}\label{t4}
The functor $\Exit \xra{\pi}\Bun$ has the following properties.
\begin{itemize}

\item
It is a $\Bun^{\sf cls}$-Cartesian fibration.

\item
It is a $\Bun^{\sf p.cbl}$-Cartesian fibration.

\item
It is a $\Bun^{\sf ref}$-coCartesian fibration.

\item
It is a $\Bun^{\sf opn}$-coCartesian fibration.

\end{itemize}

\end{prop}

\begin{proof}
We first consider the assertion as it concerns the $\infty$-subcategory $\Bun^{\sf cls}\subset \Bun$.

We begin by characterizing some closed-Cartesian morphisms with respect to the functor $\pi\colon \Exit \to \Bun$. 
Let $\{0<1\}\to \Bun^{\sf cls}\subset \Bun$ select a closed morphism in $\Bun$.  
By definition of the $\infty$-category $\Bun$, this morphism is the data of a constructible bundle $X_{01}\to \Delta^1$ over the standardly stratified 1-simplex.  
A lift of this morphism $\{0<1\}\xra{\gamma} \Exit$ is a section, in the category of stratified spaces, of this constructible bundle
\[
\xymatrix{
&&
X_{01} \ar[d]
\\
\Delta^1   \ar[rr]^-=  \ar@{-->}[urr]^-{\gamma}
&&
\Delta^1   .
}
\]
Such a section selects points $x_0 \in X_0:=X_{|\Delta^{\{0\}}}$ and $x_1\in X_1:=X_{|\Delta^{\{1\}}}$ in these fibers.  
By definition, such a morphism $\gamma$ of $\Exit$ is $\pi$-Cartesian if and only if 
the canonical diagram among $\infty$-categories
\[
\xymatrix{
\Exit_{/(x_0\in X_0)} \ar[rr]  \ar[d]
&&
\Exit_{/(x_1\in X_1)}  \ar[d]
\\
\Bun_{/X_0}
\ar[rr]
&&
\Bun_{/X_1}
}
\]
is a pullback. 
From the definition of these $\infty$-overcategories in this diagram, the condition that this square is a pullback is equivalent to the following condition.
\begin{itemize}
\item[]
Let $\cK$ be an $\infty$-category.
Let $\cK\star [1]\to \Bun$ be a functor extending the given functor $[1] =\{0<1\}\to \Bun$.  
Each solid diagram among $\infty$-categories
\[
\xymatrix{
\{1\}  \ar[rr]  \ar[d]
&&
[1]    \ar[d]    \ar[rr]^-{\gamma}
&&
\Exit \ar[d]^-{\pi}
\\
\cK\star \{1\}\ar[rr]  \ar[urrrr]
&&
\cK\star [1]  \ar[rr]  \ar@{-->}[urr]_-{\exists !}
&&
\Bun
}
\]
admits a unique filler.  
\end{itemize}
From the definition of the functor between $\infty$-categories $\Exit\to \Bun$, it is enough to verify the above condition just in the cases that $\cK = \exit(K)$ is the exit-path $\infty$-category of a compact stratified space $K$.  
Using that the functor $\Exit \to \Bun$ is an exponentiable fibration (Lemma~\ref{exit-exp}), the canonical diagram among $\infty$-categories
\[
\xymatrix{
&
\Exit_{|\cK}\times[1]  \ar[dr]^-{s}  \ar[dl]_-{\sf pr}
&&
\cK\times \Exit_{|[1]}  \ar[dr]^-{\sf pr}  \ar[dl]_-{t}
&
\\
\Exit_{|\cK}  \ar[drr]
&&
\Exit_{|\cK\times c_1\times [1]}  \ar[d]
&&
\Exit_{|[1]}  \ar[dll]
\\
&&
\Exit_{|\cK\star [1]}
&&
}
\]
is a colimit diagram over $\cK \star [1]$. 
Putting these two facts together, the above condition can be rephrased as the following condition.
\begin{itemize}
\item[{\bf (C):}]
Let $K$ be a compact stratified space.  
Let $X\to K\times \Delta^{\{s<t\}} \times \Delta^{\{0<1\}}$ be a constructible bundle, equipped with an identification $X \cong K\times \Delta^{\{s<t\}}\times X_{01}$ over $K\times \Delta^{\{s<t\}}\times \Delta^{\{0<1\}}$.  
Each solid diagram among stratified spaces
\[
\xymatrix{
&
K  \ar[r]^-1  \ar[d]^-{t}
&
K\times \Delta^{\{0<1\}}  \ar[dd]^-{t}   \ar[r]^-{\sf pr}
&
\Delta^{\{0<1\}}  \ar[r]^-{\gamma}
&
X  \ar[dd]
\\
K \ar[r]^-{s}  \ar[d]_-{1}
&
K\times \Delta^{\{s<t\}}  \ar[dr]  \ar[urrr]
&
&
&
\\
K\times \Delta^{\{0<1\}}  \ar[rr]^-{s}  \ar[uurrrr]
&
&
K\times \Delta^{\{s<t\}} \times \Delta^{\{0<1\}} \ar[rr]^-=  \ar@{-->}[uurr]_-{\exists !}
&
&
K\times \Delta^{\{s<t\}} \times \Delta^{\{0<1\}}
}
\]
admits a unique filler.  
\end{itemize}

Now, introduced in~\S6.6 of~\cite{striat} is the \emph{reversed cylinder} construction for stratified spaces, where it is shown to define an equivalence between $\infty$-categories,
\[
{\sf Cylr}\colon( \Strat^{\sf p.cbl.inj})^{\op}
\xra{~\simeq~}
\Bun^{\cls}~,
\]
from the opposite of the $\infty$-category associated to the Kan-enriched category of stratified spaces and proper constructible embeddings among them.  
Using this reverse cylinder construction, the assumption that the given morphism $[1]=\{0<1\}\to \Bun$ selects a closed morphism offers the existence of a unique proper constructible embedding between base changes of $X$,
\[
X_{|K\times\Delta^{\{s<t\}}\times\Delta^{\{0\}} }
\xla{~f~}
X_{|K\times\Delta^{\{s<t\}}\times\Delta^{\{1\}} }   ~,
\]
together with an isomorphism between stratified spaces
\[
{\sf Cylr}\Bigl(
X_{|K\times\Delta^{\{s<t\}}\times\Delta^{\{0\}} }
\xla{~f~}
X_{|K\times\Delta^{\{s<t\}}\times\Delta^{\{1\}} }
\Bigr)
\xra{~\cong~}
X
\]
over $K\times\Delta^{\{s<t\}}\times\Delta^{\{0<1\}}$.
This isomorphism results in an identification between $\infty$-categories
\[
{\sf Cylr}\Bigl(  
\exit(X_{|K\times\Delta^{\{s<t\}}\times\Delta^{\{0\}} })
\xla{~f~}
\exit( X_{|K\times\Delta^{\{s<t\}}\times\Delta^{\{1\}} })  
\Bigr)
~\simeq~
\exit(X)
\]
over $\exit(K)\times c_1\times [1]$.
In particular, the composite projection 
\begin{equation}\label{e107}
\exit(X) \longrightarrow \exit(K)\times c_1\times [1]
\longrightarrow [1]
\end{equation}
is a Cartesian fibration.
From this we see that the above condition~{\bf (C)} is true provided $\gamma$ is a Cartesian morphism of the composite functor~(\ref{e107}).
Inspecting the reverse cylinder construction, this is to say that the map from the standardly stratified 1-simplex
\[
\Delta^{\{0<1\}}
\xra{~\gamma~}
X 
\cong
{\sf Cylr}\Bigl(
X_{|K\times\Delta^{\{s<t\}}\times\Delta^{\{0\}} }
\xla{~f~}
X_{|K\times\Delta^{\{s<t\}}\times\Delta^{\{1\}} }
\Bigr)
\xra{~\rm retract~}
X_{|K\times\Delta^{\{s<t\}}\times\Delta^{\{0\}} }
\]
selects an equivalence in the exit-path $\infty$-category of $X_{|K\times\Delta^{\{s<t\}}\times\Delta^{\{0\}} }$.  
This concludes our characterization of some $\pi$-Cartesian morphisms in $\Exit$.

Using this characterization, it is evident that each solid diagram among $\infty$-categories
\[
\xymatrix{
\{1\}  \ar[rr]  \ar[d]
&&
\Exit  \ar[d]^-{\pi}
\\
\{0<1\}  \ar[r]  \ar@{-->}[urr]
&
\Bun^{\sf cls}\ar[r]
&
\Bun
}
\]
admits a $\pi$-Cartesian filler.

Finally, the assertions concerning the $\infty$-subcategory $\Bun^{\sf p.cbl}\subset \Bun$ follows by the same argument as that above as it concerns $\Bun^{\sf cls}$.
The key point being that the \emph{reversed cylinder} construction, which played an essential role above, exists for proper constructible maps, more generally.
Likewise, the other two $\infty$-subcategories of $\Bun$ follow a similar argument, but premised on the \emph{open cylinder} construction, ${\sf Cylo}$, of~\S6.6 of~\cite{striat}.  
\end{proof}

\subsection{Higher idempotents}
In this section we define the notion of a $k$-idempotent in an $(\infty,n)$-category, and more generally in a Segal $\btheta_n$-space.  
We show that, if a Segal $\btheta_n$-space has only identity $k$-idempotents for each $0<k\leq n$, then it presents an $(\infty,n)$-category.

Let ${\sf Idem}$ denote the unique 2-element monoid which is not a group.  
This monoid is commutative, and so defines a functor ${\sf Idem}\colon \Fin_\ast \to {\sf Set} \hookrightarrow \Spaces$ which satisfies a Segal condition in the sense of~\cite{segal-gamma}.
Precomposing with the simplicial circle defines a functor
\[
\fB{\sf Idem} \colon \bdelta^{\op} \xra{c_1/\partial c_1} \Fin_\ast \xra{\sf Idem} \Spaces~.
\]
For each $0<k\leq n$ consider the presheaf ${\sf Idem}^k$ on $\btheta_n$ which is the left Kan extension:
\[
\xymatrix{
\bdelta^{\op} \ar[rr]^-{\fB{\sf Idem}}    \ar[d]_-{c_{k-1}\wr-}
&&
\Spaces
\\
\btheta_k^{\op}  \ar[rr]^-{\iota_k}
&&
\btheta_n^{\op}  \ar@{-->}[u]_-{{\sf Idem}^k}.
}
\]
Because ${\sf Idem}\colon \Fin_\ast \to \Spaces$ is a Segal $\Fin_\ast$-space, then this $\btheta_n$-space ${\sf Idem}^k$ too is Segal. 
Because the maximal subgroup of the monoid ${\sf Idem}$ is trivial, then this Segal $\btheta_n$-space ${\sf Idem}^k$ is univalent.  
Therefore, ${\sf Idem}^k$ presents an $(\infty,n)$-category.
As such, ${\sf Idem}^k$ corepresents a $k$-idempotent of a Segal $\btheta_n$-space.

\begin{observation}\label{idem-epi}
For each $0<k\leq n$, the projection ${\sf Idem}^k \to c_{k-1}$ between Segal $\btheta_n$-spaces is an epimorphism.
This follows by induction, using that the map of monoids ${\sf Idem} \to \ast$ is an epimorphism.  

\end{observation}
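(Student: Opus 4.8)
The plan is to unwind the definition of epimorphism from Definition~\ref{def.epi} and test against Segal $\btheta_n$-spaces directly. By definition ${\sf Idem}^k\to c_{k-1}$ is an epimorphism precisely when, for every Segal $\btheta_n$-space $\cC$, the induced map of spaces
\[
\Map(c_{k-1},\cC)\longrightarrow\Map({\sf Idem}^k,\cC)
\]
is a monomorphism, i.e. an inclusion of path components. Since $c_{k-1}$ is representable, the left-hand space is the space $\cC(c_{k-1})$ of $(k-1)$-morphisms of $\cC$; and since ${\sf Idem}^k$ presents the $(\infty,n)$-category corepresenting a $k$-idempotent (as recorded just above this observation), the right-hand space is the space of $k$-idempotents of $\cC$, with the displayed map sending a $(k-1)$-morphism $g$ to its identity $k$-idempotent. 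So the whole statement reduces to the assertion that this identity assignment is a monomorphism of spaces, uniformly in $\cC$.

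First I would prove this monomorphism conceptually. A $k$-idempotent of $\cC$ has an underlying $(k-1)$-morphism, namely the common source and target of its idempotent $k$-cell, and this assignment recovers $g$ from its identity $k$-idempotent; this gives injectivity on path components. For the higher homotopy, the space of self-equivalences of the identity $k$-idempotent on $g$ is identified with the space of self-equivalences of $g$ itself, because an equivalence of idempotents is exactly an equivalence of underlying $(k-1)$-morphisms compatible with the (identity) idempotent datum, a compatibility which is contractible. Hence the map $\cC(c_{k-1})\to\{k\text{-idempotents}\}$ is an inclusion of path components, as desired.

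The induction enters in making the previous paragraph precise, i.e. in identifying the functor corepresented by ${\sf Idem}^k$ together with its projection to $c_{k-1}$. Here I would peel off one dimension at a time using the inductive definition ${\sf Idem}^k=\lkan_{\iota_k}\lkan_{c_{k-1}\wr-}\fB{\sf Idem}$ and the wreath recursion $\btheta_k^{\op}=\bdelta^{\op}\wr\btheta_{k-1}^{\op}$. Since $\iota_k$ is fully faithful, $\lkan_{\iota_k}$ is fully faithful and colimit-preserving, carries the representable $c_{k-1}$ on $\btheta_k$ to the representable $c_{k-1}$ on $\btheta_n$, and preserves epimorphisms; this reduces matters to the top-dimensional map $\lkan_{c_{k-1}\wr-}\fB{\sf Idem}\to c_{k-1}$ on $\btheta_k$, which is the suspension of the corresponding statement one dimension down. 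The base case $k=1$ is exactly that $\fB{\sf Idem}\to c_0$ is an epimorphism of Segal $\bdelta$-spaces, which in turn follows from the given fact that the map of commutative monoids ${\sf Idem}\to\ast$ is an epimorphism --- equivalently that the classifying space $\sB{\sf Idem}$ is contractible, which holds because the idempotent generator is an absorbing element of ${\sf Idem}$ and the maximal subgroup is trivial.

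The main obstacle I expect is bookkeeping the category in which ``epimorphism'' is taken. The map is emphatically not an epimorphism of raw presheaves on $\btheta_n$: evaluated on an object $T$ it is a map from a space with higher homotopy (a left Kan extension of discrete sets) to a discrete set, which fails the fold-map criterion underlying Observation~\ref{limit-closed}. The argument must therefore be carried out among Segal $\btheta_n$-spaces, where the pushout defining an epimorphism is computed after Segal localization. Concretely, I would either (i) argue by corepresentability against Segal test objects as in the first two paragraphs, so that the ambient category never appears, or (ii) check that Segal localization, $\lkan_{\iota_k}$, and the wreath inclusions all interact correctly with the fold-map characterization, so that each preserves epimorphisms among Segal objects. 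The first route is cleaner and is the one I would write up, using the inductive unwinding only to pin down the corepresented functor and its source map.
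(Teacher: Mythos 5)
Your overall strategy matches the paper's two‑line indication: unwind Definition~\ref{def.epi} to the claim that $\Map(c_{k-1},\cC)\to\Map({\sf Idem}^k,\cC)$ is a monomorphism of spaces for every Segal $\btheta_n$-space $\cC$, peel off the wreath layers by induction, and feed in a fact about ${\sf Idem}$ at the bottom; your $\pi_0$-injectivity argument via the section $c_{k-1}\to{\sf Idem}^k$ (restrict to the underlying $(k-1)$-morphism) is fine, as is your observation that the epimorphism must be taken among Segal objects rather than raw presheaves. The genuine gap is in the higher-homotopy step, which is where all the content sits. You assert that a self-equivalence of the identity $k$-idempotent on $g$ is a self-equivalence of $g$ together with ``a compatibility which is contractible.'' That compatibility space is not contractible for formal reasons: already the single naturality constraint at the idempotent cell contributes a torsor over a loop space of the space of $k$-endomorphisms of $g$, and what collapses is only the \emph{totality} of coherent compatibilities. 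Concretely, the space of self-equivalences of the identity idempotent is a mapping space out of the classifying space of (the twisted arrow category of) the one-object category ${\sf Idem}$, and its agreement with the space of self-equivalences of $g$ is exactly the contractibility $\sB{\sf Idem}\simeq \ast$ in disguise. So the one genuine input you correctly identify --- the absorbing element of ${\sf Idem}$ forces $\sB{\sf Idem}\simeq\ast$ --- must power this central step; in your write-up it appears only in the base case, and the ``conceptual'' paragraph you say you would make the main line of the write-up is circular, asserting precisely what is to be proved.

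There is a second, smaller non sequitur in the base case. You write that ``$\fB{\sf Idem}\to c_0$ is an epimorphism of Segal $\bdelta$-spaces, which in turn follows from \dots\ ${\sf Idem}\to\ast$ is an epimorphism [of monoids] --- equivalently that $\sB{\sf Idem}$ is contractible.'' Neither implication is formal: $\fB$ is fully faithful, but fully faithful functors do not preserve epimorphisms (epimorphy quantifies over \emph{all} targets, not just one-object Segal spaces), so passing from monoids to general Segal $\bdelta$-spaces requires an argument reducing a general target $\cD$ to its endomorphism monoids; and ``epimorphism of monoids'' is not equivalent to contractibility of the classifying space for non-grouplike monoids ($\sB$ is a left adjoint, so only one implication is automatic, and it points the wrong way for you). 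What is actually needed at the bottom is the direct statement that for every Segal $\bdelta$-space $\cD$ the map $\cD[0]\to \Map_{\Psh(\bdelta)}(\fB{\sf Idem},\cD)$ is a monomorphism, proved by computing the mapping space out of the simplicial set $\fB{\sf Idem}$ as a limit over its category of simplices and invoking $\sB{\sf Idem}\simeq\ast$ there; once that is in place, your bookkeeping through $\lkan_{\iota_k}$ and $c_{k-1}\wr-$ does reduce the general case to it.
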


We say a $k$-idempotent of a Segal $\btheta_n$-space, ${\sf Idem}^k \to \cC$, is a \emph{$k$-identity} if there is a factorization ${\sf Idem}^k \to c_{k-1} \to \cC$.  
After Observation~\ref{idem-epi}, it is a \emph{condition} on a $k$-idempotent to be an identity.

\begin{lemma}\label{univ-criterion}
Let $\cC$ be a Segal $\btheta_n$-space.
Suppose, for each $0<k\leq n$, that each $k$-idempotent of $\cC$ is an identity, by which we mean each map of presheaves ${\sf Idem}^k \to \cC$ factors through ${\sf Idem}^k\to c_{k-1}$.
Then $\cC$ presents an $(\infty,n)$-category.

\end{lemma}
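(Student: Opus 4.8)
The plan is to reduce the univalence condition for $\cC$ as a Segal $\btheta_n$-space to the hypothesis about $k$-idempotents by induction on $n$. Recall that $\btheta_n$-spaces are presheaves $\cC\colon \btheta_n^{\op}\to\spaces$, and that by Definition~\ref{def.n-cat} such a Segal $\btheta_n$-space presents an $(\infty,n)$-category precisely when it carries the opposites of the univalence diagrams of Definition~\ref{def.n-Segal-cov} to limit diagrams. There are two families of univalence diagrams in $\btheta_n$: those concentrated in the top $\bdelta$-direction (projecting through $\ast\xra{\{c_1\}}\bdelta$ with the residual $\btheta_{n-1}$-data an iterated univalence diagram), and those that are the genuine $\bdelta$-univalence diagram with trivial residual $\btheta_{n-1}$-data. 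The first family should be handled by induction on $n$, restricting $\cC$ along the inert wreath-structure to obtain Segal $\btheta_{n-1}$-spaces of $(k-1)$-morphisms and invoking the inductive hypothesis; the second family is the essential case and is where the idempotent hypothesis enters.

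For the base case and the essential inductive step, I would proceed as follows. Using the Segal condition, the space $\cC(c_1)$ of $1$-morphisms fibers over $\cC(c_0)\times\cC(c_0)$, and univalence in the $\bdelta$-direction is the assertion that the subspace of equivalences in each mapping space is identified with the degeneracy from the appropriate space of objects --- equivalently, that every morphism admitting a two-sided inverse is already an identity in the groupoid sense. The classical observation (recorded in the proof of Proposition~\ref{diagrams-to-colims}) is that left and right inverses are unique when they exist, so corepresenting the univalence diagram amounts to corepresenting a morphism together with compatible one-sided inverses, and $\cC$ satisfies univalence iff this forces the morphism to lie in the maximal subgroupoid. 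The plan is to show that the failure of univalence is detected exactly by a nonidentity $k$-idempotent: an invertible-up-to-homotopy endomorphism that is \emph{not} an identity would, via the monoid ${\sf Idem}$ and its Segal/univalent structure established just before the statement, produce a map ${\sf Idem}^k\to\cC$ not factoring through $c_{k-1}$, contradicting the hypothesis. Here I would use that ${\sf Idem}^k$ is itself univalent (so it presents an $(\infty,n)$-category) and that ${\sf Idem}^k\to c_{k-1}$ is an epimorphism (Observation~\ref{idem-epi}), so that the factorization is a condition rather than extra data.

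Concretely, the reduction I expect to carry out is: univalence of $\cC$ at level $k$ is equivalent to the statement that the corepresenting map for the $\bdelta$-univalence diagram (suspended $k-1$ times) into $\cC$ is an equivalence onto a space of identities; and the obstruction to this is precisely a $k$-idempotent that is invertible but nontrivial. I would make this identification by comparing the corepresenting presheaf of the univalence diagram with ${\sf Idem}^k$: an idempotent that happens to be invertible is an identity (in any Segal space, an invertible idempotent is trivial), so a nonidentity $k$-idempotent and a univalence failure are manifestations of the same data once one tracks the uniqueness of one-sided inverses. The hypothesis that every $k$-idempotent factors through $c_{k-1}$ then directly supplies the needed factorization.

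The main obstacle I anticipate is the bookkeeping of the inductive wreath structure: one must verify that restricting $\cC$ along the inert functors ${\Fin_{\ast\star}}_{|-}$ and $c_{k-1}\wr-$ carries Segal $\btheta_n$-spaces to Segal $\btheta_{n-1}$-spaces whose $k'$-idempotents for $k'<k$ are again identities, so that the inductive hypothesis applies to the first family of univalence diagrams, while the idempotent hypothesis at level $k$ feeds the essential second family. Establishing that a nonidentity invertible $k$-idempotent really does assemble into a map ${\sf Idem}^k\to\cC$ --- i.e.\ that the univalence-failure data is corepresented by ${\sf Idem}^k$ rather than some larger presheaf --- is the crux, and I would handle it by directly exhibiting the corepresenting object using the explicit description of ${\sf Idem}^k$ as a left Kan extension of $\fB{\sf Idem}$ along $c_{k-1}\wr-$, together with the Segal condition to reconstruct the full $k$-morphism data from its $1$-categorical shadow.
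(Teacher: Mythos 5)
Your strategy---reduce to the $\bdelta$-univalence diagram via the wreath monomorphisms, then compare the Segal-space colimit $\w{E}$ of the univalence diagram with ${\sf Idem}^k$ through a map of corepresenting objects---is the same as the paper's, which constructs ${\sf Idem}^1\to \w{R}\to \w{E}$ (with $\w{R}$ corepresenting a retraction pair) and contemplates the triangle $\cC[0]\to\Map(\w{E},\cC)\to\Map({\sf Idem}^1,\cC)$. The gap sits exactly at the step you yourself single out as the crux: the claim that ``a nonidentity $k$-idempotent and a univalence failure are manifestations of the same data.'' The direction you need is that a non-identity equivalence produces a non-identity idempotent, i.e.\ that restriction along ${\sf Idem}^k\to\w{E}$ is a monomorphism on mapping spaces. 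But the idempotent extracted from equivalence data $(g_{01},g_{12},g_{23})$ with $g_{12}g_{01}\simeq{\sf id}$ and $g_{23}g_{12}\simeq{\sf id}$ is $g_{01}\circ g_{12}$, and since $g_{12}$ then has a two-sided inverse this composite is canonically the identity: no information about the equivalence survives. Concretely, take $n=1$ and $\cC=N(\sB G)$ the levelwise-discrete nerve of a one-object groupoid with $G$ nontrivial. This is a Segal $\bdelta$-space; a map of presheaves $\fB{\sf Idem}\to N(\sB G)$ is a monoid homomorphism ${\sf Idem}\to G$, necessarily trivial because the only idempotent element of a group is its unit, so every $1$-idempotent of $\cC$ factors through $c_0$. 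Yet $\cC$ is not univalent: $\Map(\w{E},\cC)\cong G$ while $\cC[0]\simeq\ast$. So the hypothesis, read literally as a factorization of maps of presheaves, cannot force univalence, and the contrapositive you propose (``a univalence failure yields a nonidentity idempotent'') fails.

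For what it is worth, the paper's own proof stalls at the same place: the final assertion ``it follows that this diagram is comprised of equivalences'' needs the horizontal map $\Map(\w{E},\cC)\to\Map({\sf Idem}^1,\cC)$ to be a monomorphism, which the example above contradicts. So you have faithfully reconstructed the intended argument, but neither version closes as written. What your remark about ``uniqueness of one-sided inverses'' does correctly capture is that $\Map(\w{E},\cC)\to\Map(\w{R},\cC)$ \emph{is} a monomorphism---an equivalence is determined by its underlying retraction pair $(i,r)$ with $r i\simeq {\sf id}$; the information is lost only at the next stage $(i,r)\mapsto i\circ r$. Any repair must strengthen the hypothesis, e.g.\ by demanding that $\cC[0]\to\Map(\w{R},\cC)$ be an equivalence (every retraction pair is trivial), or by imposing the idempotent condition against a classifier whose mapping spaces detect the completion of $\cC$ rather than the strict presheaf maps; your write-up should identify which statement it is actually proving. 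Finally, your parenthetical ``an invertible idempotent is an identity'' is true but points in the unhelpful direction---it converts idempotent data into equivalence data, whereas the proof requires the reverse conversion to be faithful.
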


\begin{proof}
We will make ongoing use of Observation~\ref{theta-stand-bump}, where a number of monomorphisms among the $\btheta_i$ are established.  
Consider a univalence diagram $\cE^{\tr} \to \btheta_n$. 
From its definition, there is a maximal $0<k\leq n$ for which there is a factorization
\[
\cE^{\tr} \longrightarrow \bdelta \xra{~\iota~} \btheta_{n-k+1} \xra{c_{k-1}\wr -} \btheta_n
\]
through a univalence diagram of $\bdelta$.  
In this way, we reduce to the case $n=k=1$, where the $\cE\to \btheta_n$ is the diagram in $\bdelta$
\[
\ast \la \{0<2\}  \to \{0<1<2<3\} \la \{1<3\} \to \ast~.
\]
By definition, the colimit $\w{E}$ of this diagram in the $\infty$-category $\Psh^{\sf Segal}(\bdelta)$, of Segal $\bdelta$-spaces, corepresents an equivalence.
This diagram receives an evident map from the diagram in $\bdelta$
\[
\ast \la \{0<2\} \to \{0<1<2\} ~.
\]
The colimit $\w{R}$ of this latter diagram in Segal $\bdelta$-spaces corepresents a pair of retractions.
By direct inspection, this Segal $\bdelta$-space $\w{R}$ is complete, and therefore presents an $\infty$-category.  
We obtain a composite map of Segal $\bdelta$-spaces
\[
{\sf Idem}^1  \longrightarrow \w{R} \longrightarrow \w{E}~.
\]
Therefore, there is a natural map
$
\Map(\w{E},\cC) \longrightarrow \Map({\sf Idem}^1,\cC)
$
from the space of equivalences in $\cC$ to the space of pairs of idempotents of $\cC$.  
This map fits into a commutative triangle of maps among spaces
\[
\xymatrix{
\Map(\w{E},\cC) \ar[rr]
&&
\Map({\sf Idem}^1,\cC)
\\
&
\Map(c_0,\cC)\simeq \cC[0]  \ar[ur]  \ar[ul]
&
.
}
\]
It is standard that the leftward diagonal map is a monomorphism.  
The assumption on $\cC$ exactly grants that the rightward diagonal map is an equivalence.  
It follows that this diagram is comprised of equivalences among spaces.
This proves that $\cC$ carries the univalent diagram to a limit diagram, as desired.
\end{proof}

\section{Appendix: erratum to previous version}\label{sec.erratum}
\noindent
The statement of Lemma~\ref{autos-suspend} of this article is not true in dimensions at least 3.
Because the statement of Lemma~\ref{autos-suspend} is not true in the cases that $\dim(\sS(L)) \geq 3$, the following results therefore require an additional dimension hypothesis in addition to how they are stated in a previous version of this article:
Theorem~\ref{main.theorem}, Corollary~\ref{no-autos}, Theorem~\ref{theta-ff}, Corollary~\ref{RKE-ff}.
\smallskip
The main construction of the article, factorization homology,
\[
\int\colon 
\Cat_{(\infty,n)}
\longrightarrow
\Fun\bigl( \cMfd_n^{\vfr} , \Spaces \bigr)
~,
\]
is unaffected.  
In particular, the construction of $\Spaces$-valued invariants of closed framed $n$-manifolds from the data of an $(\infty,n)$-category is unaffected.
The assertion of fully faithfulness of factorization homology is unaffected for $n<3$.  

\smallskip

In this appendix, we quote Lemma~\ref{autos-suspend} and its proof. The incorrect clause is highlighted and in bold, and we explain the incorrectness of this step.
Afterward, we construct a counterexample to the statement of Lemma~\ref{autos-suspend} in the case that the dimension $n=\dim(\sS(L))=3$.
\\

\subsection{Lemma~\ref{autos-suspend} and its proof}
We recall the statement and proof of Lemma~\ref{autos-suspend}.
The error in this proof occurs in the clause in bold highlight
This clause is correct only for $\dim(\sS(L)) < 3$; consequently, Lemma~\ref{autos-suspend}, and results that logically depend on it, require an additional dimension hypothesis.

\begin{lemma}[Lemma \ref{autos-suspend} of a previous version]
For each compact vari-framed stratified space $L$, the map between spaces of automorphisms 
\[
\sS^{\fr}\colon \Aut_{\Bun^{\vfr}}(L) \xra{~\simeq~}\Aut_{\Bun^{\vfr}}\bigl(\sS^{\fr}(L)\bigr)
\]
is an equivalence.  

\end{lemma}

\begin{proof}
We seek to establish the equivalence between the maximal connected $\infty$-subgroupoids of $\Bun^{\vfr}$ that contain the respective objects $L$ and $\sS^{\fr}(L)$.
The maximal $\infty$-subgroupoid of $\Bun^{\vfr}$ corresponds, via the main result of~[AFR], to the striation sheaf that classifies fiber bundles among stratified spaces which are equipped with a fiberwise vari-framing.  
Therefore, to prove the equivalence of the lemma it is enough to prove, for each smooth manifold $T$, that each $T$-point of the codomain lifts to an $T$-point of the domain.  
So consider an $T$-point of the striation sheaf ${\sf BAut}_{\Bun^{\vfr}}\bigl(\sS^{\fr}(L)\bigr)$; it classifies a proper fiber bundle $E\xra{p} T$, equipped with a fiberwise vari-framing $\exit(E) \xra{g} \vfr$, with each structured fiber $\bigl(E_t,g_{|\exit(E_t)}\bigr)$ is equivalent in $\Bun^{\vfr}$ to $\sS^{\fr}(L)$.
To construct the desired lift of this $T$-point to ${\sf BAut}_{\Bun^{\vfr}}(L)$ is the problem of constructing a fiberwise vari-framed proper fiber bundle $E_0\to T$ and an equivalence $\sS^{\sf fib,\fr}(E_0) \simeq E$ of fiberwise vari-framed fiber bundles over $T$ from the fiberwise framed suspension.

The fiberwise $0$-dimension strata of $E$ is a 2-sheeted covering over $T$.
The fiberwise vari-framing of $E\to T$ in particular implies this 2-sheeted cover is trivial.
We denote it as $T_-\sqcup T_+ \subset E$, with the subscripts marking if first coordinate agrees or disagrees with a collaring-coordinate about each cofactor.  
Taking the unzip along this closed constructible subspace gives the composable pair of proper constructible bundles
\[
\unzip_{T_-\sqcup T_+}(E) \xra{~q~} E \xra{~p~} T~.
\]
Because $T$ is a smooth manifold the functor $\sT_T \colon \exit(T) \to \Vect^{\sf inj}$ factors through $\Vect^\sim$, the maximal $\infty$-subgroupoid.  
Using that the fiberwise constructible tangent bundle is a morphism $\sT^{\sf fib} \colon \Exit \to \Vect^{\sf inj}$ of $\Vect^\sim$-modules, 
there results the short exact sequence of $\Vect$-valued functors from $\exit\bigl(\unzip_{T_-\sqcup T_+}(E)\bigr)$
\[
0 \longrightarrow \sT_q  \longrightarrow \sT_{pq} \longrightarrow q^\ast \sT_p  \longrightarrow 0~.
\]
(Here, and through this proof, we use the notation established at the beginning of the proof of Lemma~3.12.)
Restricted to $\exit\bigl(\Link_{T_-\sqcup T_+}(E)\bigr)$, the cokernel term vanishes;
restricted to
\[
\exit\bigl(\unzip_{T_-\sqcup T_+}(E) \smallsetminus \Link_{T_-\sqcup T_+}(E)\bigr)\xra{\simeq} \exit(E\smallsetminus T_-\sqcup T_+)~,
\]
the kernel term vanishes and the cokernel term does not.
The first coordinate of the fiberwise vari-framing $\exit(E) \to \vfr$ therefore determines a non-vanishing parallel vector field on $\unzip_{T_-\sqcup T_+}(E) \smallsetminus \Link_{T_-\sqcup T_+}(E)$ in the sense of~\S8 of~[AFT].
We will now extend this vector field to all of $\unzip_{T_-\sqcup T_+}(E)$.

Let $\ov{\alpha}\colon \Link_{T_-\sqcup T_+}(E)\times[0,1) \hookrightarrow \unzip_{T_-\sqcup T_+}(E)$ be a choice of collaring, the existence of which is guaranteed by the results in~\S8 of~[AFT].
Denote the restriction $\alpha\colon \Link_{T_-\sqcup T_+}(E)\times \{\frac{1}{2}\} \to \unzip_{T_-\sqcup T_+}(E)\smallsetminus \Link_{T_-\sqcup T_+}(E) \cong E\smallsetminus (T_-\sqcup T_+)$.
The fiberwise vari-framing of $E\to T$ determines an identification of short exact sequences of $\Vect$-valued functors from $\exit\bigl(\Link_{T_-\sqcup T_+}(E)\bigr)$
\[
\xymatrix{
0  \ar[r]  \ar[d]
&
\sT^{\sf fib}_{\Link_{T_-\sqcup T_+}(E)}  \ar[r]  \ar[d]^-{\simeq}
&
\alpha^\ast \sT_{E\smallsetminus T_-\sqcup T_+}  \ar[r]  \ar[d]^-{\simeq}
&
\epsilon^1_{\Link_{T_-\sqcup T_+}(E)}  \ar[r]  \ar[d]^-{\simeq}
&
0  \ar[d]
\\
0  \ar[r]  \ar[u]
&
\alpha^\ast \epsilon^{\sf fib.dim-1}_{E\smallsetminus T_-\sqcup T_+}   \ar[r]  \ar[u]
&
\alpha^\ast \epsilon^{\sf fib.dim}_{E\smallsetminus T_-\sqcup T_+}  \ar[r]  \ar[u]
&
\alpha^\ast \epsilon^1_{E\smallsetminus T_-\sqcup T_+}  \ar[r]  \ar[u]
&
0 \ar[u]
}
\]
The above diagram grants that this vector field extends to a non-vanishing vector field on $\unzip_{T_-\sqcup T_+}(E)$ in a neighborhood of $\Link_{T_-\sqcup T_+}(E)$ which agrees with the one on $\unzip_{T_-\sqcup T_+}(E)\smallsetminus \Link_{T_-\sqcup T_+}(E)$ constructed in the paragraph above.

Flowing by the vector field on $\unzip_{T_-\sqcup T_+}(E)$ just constructed gives a partially defined conically smooth map
\[
\gamma\colon \unzip_{T_-\sqcup T_+}(E)\times \DD^1~\dashrightarrow~ \unzip_{T_-\sqcup T_+}(E)\times\DD^1
\]
over $T\times \DD^1$ which, upon applying $\exit(-)$, lies over $\vfr$.
This map $\gamma$ extends the map $\alpha$ above, and 
\hl{\bf 
has the property that its restriction}
\[
\mathcolorbox{yellow}{
\gamma_|\colon \Link_{T_-}(E)\times\DD^1\longrightarrow \unzip_{T_-\sqcup T_+}(E)
}
\]  
\hl{\bf is defined and is an isomorphism over $T$} and, upon applying $\exit(-)$, it lies over $\vfr$.
In particular, we recognize an isomorphism of stratified spaces $\sS^{\sf fib, \fr}\bigl(\Link_{T_-}(E)\bigr) \cong E$ over $T$ which, upon applying $\exit(-)$, lies over $\vfr$.
The result follows from from the identification of fiberwise vari-framed constructible bundles $\Link_{T_-}(E) \simeq L$ over $T$.
\end{proof}

\subsection{The incorrect step in Lemma \ref{autos-suspend}}
Flowing by the specified vector field on $\unzip_{T_-\sqcup T_+}(E)$ gives a partially defined continuous map
\[
\gamma\colon \unzip_{T_-\sqcup T_+}(E)\times \RR~\dashrightarrow~ \unzip_{T_-\sqcup T_+}(E)\times\RR
\]
over $T\times \RR$.
It restricts to a partially defined continuous map
\begin{equation}\label{e1}
\gamma_|\colon \Link_{T_-}(E)\times [0,\infty)\longrightarrow \unzip_{T_-\sqcup T_+}(E)
\end{equation}
over $T$.
This map further restricts to a conically smooth isomorphism over $T\times \DD^1$,
\[
\gamma_|\colon \Link_{T_-}(E)\times \DD^1\longrightarrow \unzip_{T_-\sqcup T_+}(E)
\]
if and only if, for each $x\in \Link_{T_-}(E)$, the maximal domain of definition of the partially defined map $\gamma(x,-)\colon [0,\infty) \to \unzip_{T_-\sqcup T_+}(E)$ is $[0,a]$ for some $a\in (0,\infty)$.  
This condition is ensured if $\dim(L)\leq 1$.  
However, if $\dim(L)>1$, this condition might not be satisfied.
For instance, in the case that $T=\ast$ and $L=\DD^2$ so that $E=\sS^{\fr}(\DD^2)=\DD^3$, it is possible to choose a path of vari-framings on $\DD^3$ from the standard one to one for which, for some $x\in \Link_{T_-}(E)\cong \DD^2$, the resulting flow
\[
\gamma(x,-)\colon [0,\infty) \longrightarrow \unzip_{T_-\sqcup T_+}(E)\cong \DD^2\times \DD^1
\]
is defined for all non-negative time.  
\\

\subsection{Counterexample to the statement of Lemma~\ref{autos-suspend}}
We show that the statement of Lemma~\ref{autos-suspend} is not true in the case that $L=\DD^2$.  
For this, we first give a limit expression for the space of vari-framings on a stratified space. 
In summary, we will show
\[
\pi_1\bigl(\Aut^{\vfr}(\DD^3) \bigr)
~\cong~
\ZZ/2\ZZ
\qquad \text{ yet }\qquad
\pi_1\bigl(\Aut^{\vfr}(\DD^2) \bigr) = 0
~.
\]
This calculation also leads to the conclusion that the functor
\[
\lag - \rag \colon
\btheta_n^{\op}
\longrightarrow
\bcD_n^{\vfr}
\]
is not fully faithful for $n \geq 3$, since $\Aut_{\btheta_n}(c_n) \simeq \ast$ for all $n\geq 0$.
\\

\noindent
Let $X$ be a stratified space.
The space $\vfr(X)$ of vari-framings on $X$ is the space of lifts:
\[
\xymatrix{
&&
\ZZ_{\geq 0} \ar[d]^-{i\mapsto \RR^i}
\\
\exit(X) \ar[rr]^-{\sT X} \ar@{-->}[urr]
&&
\Vect^{\sf inj}
~.
}
\]
This space of lifts is a torsor for the limit of the diagram of loop spaces:
\[
\sd\bigl( \exit(X) \bigr)^{\op}
\longrightarrow
\sd(\ZZ_{\geq 0})^{\op}
\xra{~\{i_0<\dots<i_p\}\mapsto \sO(i_0)\times \prod_{0<r\leq p} \sO(i_r - i_{r-1})~}
\Spaces_\Omega
\]
indexed by the subdivision of the exit-path $\oo$-category of $X$.
If there exists vari-framing $\varphi\in \vfr(X)$, then $\varphi$ determines an equivalence with the limit
\[
\vfr(X)
~\simeq~
\limit\Bigl(
\sd\bigl( \exit(X) \bigr)^{\op}
\longrightarrow
\sd(\ZZ_{\geq 0})^{\op}
\xra{~\{i_0<\dots<i_p\}\mapsto \sO(i_0)\times \prod_{0<r\leq p} \sO(i_r - i_{r-1})~}
\Spaces
\Bigr)
~.
\]
Here are some identifications of this limit.
\begin{itemize}
\item
For $X=\DD^1$, this limit is
\[
\vfr(\DD^1)
~\simeq~\sO(1)
~.
\]
In particular, the space $\vfr(\DD^1)$ is a finite $0$-type.

\item
For $X=\DD^2$, this limit is equivalent to the pullback
\[
\vfr(\DD^2)
~\simeq~
\limit\Bigl(
\sO(1)\times\sO(1)
\xra{~\oplus~}
\sO(2)
\xla{~\oplus~}
\sO(1)\times\sO(1)
\Bigr)
~.
\]
So the space $\vfr(\DD^2)$ is a $0$-type, and its set of path- omponents is countably infinite.

\item
For $X=\DD^3$, the space $\vfr(\DD^3)$ is equivalent to the limit of the diagram:
\[
\xymatrix{
\sO(1)\times\sO(1)\times \sO(1) \ar[d]_-{{\sf id} \times \oplus} \ar[rr]^-{\oplus\times {\sf id}}
&&
\sO(2)\times \sO(1) \ar[d]_-{\oplus} 
&&
\sO(1)\times \sO(1)\times \sO(1) \ar[ll]_-{\oplus\times {\sf id}} \ar[d]_-{{\sf id} \times \oplus}
\\
\sO(1)\times\sO(2) \ar[rr]^-{\oplus} 
&&
\sO(3)
&&
\sO(1)\times \sO(2) \ar[ll]_-{\oplus}
\\
\sO(1)\times\sO(1)\times \sO(1) \ar[u]^-{{\sf id} \times \oplus} \ar[rr]^-{\oplus\times {\sf id}}
&&
\sO(2)\times \sO(1) \ar[u]^-{\oplus} 
&&
\sO(1)\times \sO(1)\times \sO(1) \ar[ll]_-{\oplus\times {\sf id}} \ar[u]^-{{\sf id} \times \oplus}
.
}
\]
So each path-component of the space $\vfr(\DD^3)$ is equivalent with the space $\Omega^3_0 \sO(3)$, the space of null-homotopic based maps from $S^3$ to $\sO(3)$.  
In particular, the homotopy group $\pi_\ast\bigl(\vfr(\DD^3)\bigr)$ is nontrivial for infinitely many values of $\ast$; for instance, $\pi_1\bigl(\vfr(\DD^3)\bigr) \cong \ZZ/2\ZZ$.  
\\

\end{itemize}

\noindent
Now, each automorphism of a stratified space $X=(X\to P)$ lies over an automorphism of the stratifying poset $P$: there is a canonical homomorphism
\begin{equation}\label{e2}
\Aut(X)
\longrightarrow
\Aut(P)
~.
\end{equation}
Consider the hemispherically stratified $n$-disk: $X=\DD^n=\bigl(\DD^n \to P_{\DD^n})$ where the stratifying poset $P_{\DD^n}$ is the right-cone of the poset structure on the set $\{\pm\}\times [n-1]$ that declares $(\sigma,i)<(\tau,j)$ to mean $i<j$.  
There is a canonical isomorphism between groups:
\begin{equation}\label{e5}
\sO(1)^{\times n}
~\cong~
\Aut(P_{\DD^n})
~.
\end{equation}
The inclusion, $\Aut_0(\DD^n) \xra{\simeq} \Aut(\DD^n)$, of those automorphisms that preserve the origin $0\in \DD^n$, is an equivalence.
Consider the resulting restriction homomorphism
\[
\Aut(\DD^n) ~\simeq~
\Aut_0(\DD^n)
\longrightarrow
\Aut(\DD^n\smallsetminus\{0\})
~.
\]
Consider the commutative diagram:
\[
\xymatrix{
\Diff_{\sf c}(\RR^n) \ar[rr] \ar[d]
&&
\Aut(\DD^n) \ar[rr]^-{(\ref{e2})}_-{\rm (a)} \ar[d]
&&
\Aut(P_{\DD^n}) 
\overset{(\ref{e5})}\cong
\sO(1)^{\times n}
\ar[d]^-{\cong}
\\
\ast \ar[rr]
&&
\Aut(\DD^n \smallsetminus\{0\}) \ar[rr]^-{(\ref{e2})}_{\rm (b)}
&&
\Aut(P_{\DD^n\smallsetminus\{0\}})
.
}
\]
By inspection, the right vertical arrow is an isomorphism, as indicated.  
Through an application of the isotopy-extension theorem, the left square is a pullback.  
For $n\leq 3$, the topological group $\Diff_{\sf c}(\RR^n)$ is contractible. The $n=3$ case, follows from the equivalence $\Diff(D^3)\simeq \Diff(S^2) \simeq \sO(3)$, which is due to Smale \cite{smale}.
It follows that the homomorphism~${\rm (a)}$ is an equivalence for this range of low dimensions:
\begin{equation}\label{e3}
\Aut(\DD^n)
\xra{~\simeq~}
\Aut(P_n) \cong \sO(1)^{\times n}
\qquad
\text{( $n\leq 3$ )}
~.
\end{equation}
\\

\noindent
By definition of the $\infty$-category $\Bun^{\vfr}$, automorphisms in $\Bun^{\vfr}$ of the object $(X,\varphi)\in \Bun^{\vfr}$ is the pullback in spaces
\[
\xymatrix{
\Aut^{\vfr}(X,\phi)
\ar[rr] \ar[d]
&&
\Aut(X) \ar[d]^-{\rm Orbit_\varphi}
\\
\ast \ar[rr]^{\lag \varphi \rag}
&&
\vfr(X)
.
}
\]
\begin{itemize}
\item
In the case that $X=\DD^1$, this pullback reveals that $\Aut^{\vfr}(\DD^1)\simeq \ast$ is contractible.  

\item
In the case $X=\DD^2$, there is a pullback diagram among spaces:
\[
\xymatrix{
\Aut^{\vfr}(\DD^2)
\ar[rr] \ar[d]
&&
\sO(1)^{\times 2} \ar[d]^-{\rm diagonal}
\\
\ast \ar[rr]^-{\lag \uno \rag}
&&
\bigl( \sO(1)^{\times 2} \underset{\sO(2)}\times \sO(1)^{\times 2} \bigr)
.
}
\]
We conclude that $\Aut^{\vfr}(\DD^2)\simeq \ast$ is contractible.

\item
In the case $X=\DD^3$, there is a pullback diagram among spaces:
\[
\xymatrix{
\Aut^{\vfr}(\DD^3)
\ar[rr] \ar[d]
&&
\sO(1)^{\times 3} \ar[d]^-{\rm diagonal}
\\
\ast \ar[rr]^-{\lag \varphi \rag}
&&
\vfr(\DD^3)
.
}
\]
After the description of $\vfr(\DD^3)$ above, and the identifications of $\vfr(\DD^2)$ and $\Aut(\DD^2)$ above, we conclude that $\Aut^{\vfr}(\DD^3)$ is not contractible: for each positive value of $\ast$, there is an isomorphism
\[
\pi_{\ast+4}(S^3)
~\cong~
\pi_\ast\bigl(\Aut_{\Bun^{\vfr}}(\DD^3) \bigr)
~.
\]
Therefore, the homomorphism
\[
\sS^{\fr}
\colon \Aut^{\vfr}(\DD^2)
\longrightarrow
\Aut^{\vfr}(\DD^3)
\qquad
\qquad
\text{( using that $\DD^3 = \sS^{\fr}(\DD^2)$ )}
\]
is \emph{not} an equivalence.

\end{itemize}

\end{document}